\def\@secnumfont{\bfseries\scshape}
\def\section{\@startsection{section}{1}%
  \z@{.7\linespacing\@plus\linespacing}{.5\linespacing}%
  {\normalfont\large\bfseries\scshape\centering}}
\def\subsection{\@startsection{subsection}{2}%
  \z@{.5\linespacing\@plus.7\linespacing}{-.5em}%
  {\normalfont\bfseries\scshape}}
\def\subsubsection{\@startsection{subsubsection}{3}%
  \z@{.5\linespacing\@plus.7\linespacing}{-.5em}%
  {\normalfont\scshape}}
\def\specialsection{\@startsection{section}{1}%
  \z@{\linespacing\@plus\linespacing}{.5\linespacing}%
  {\normalfont\centering\large\bfseries\scshape}}
\renewenvironment{proof}[1][\proofname]{\par
\pushQED{\qed}%
\normalfont \topsep4\p@\@plus4\p@\relax
\trivlist
\item[\hskip\labelsep
\bfseries
#1\@addpunct{.}]\ignorespaces
}{%
\popQED\endtrivlist\@endpefalse
}
\newcommand \Dotfill {\leavevmode \leaders \hb@xt@ 6pt{\hss .\hss }\hfill \kern \z@}
\def\@tocline#1#2#3#4#5#6#7{\relax
  \ifnum #1>\c@tocdepth % then omit
  \else
    \par \addpenalty\@secpenalty\addvspace{#2}%
    \begingroup \hyphenpenalty\@M
    \@ifempty{#4}{%
      \@tempdima\csname r@tocindent\number#1\endcsname\relax
    }{%
      \@tempdima#4\relax
    }%
    \parindent\z@ \leftskip#3\relax \advance\leftskip\@tempdima\relax
    \rightskip\@pnumwidth plus4em \parfillskip-\@pnumwidth
    #5\leavevmode\hskip-\@tempdima
      \ifcase #1
       \or\or \hskip 1.65em \or \hskip 3.3em \else \hskip 4.95em \fi%
      #6\nobreak\relax
    \Dotfill
    \hbox to\@pnumwidth{\@tocpagenum{#7}}\par
    \nobreak
    \endgroup
  \fi}
\def\l@section{\@tocline{1}{0pt}{1pc}{}{\scshape}}
\renewcommand{\tocsection}[3]{%
\indentlabel{\@ifnotempty{#2}{\ignorespaces#1 #2.\hskip 0.7em}}#3}
\def\l@subsection{\@tocline{2}{0pt}{1pc}{5pc}{}}
\def\l@subsubsection{\@tocline{3}{0pt}{1pc}{7pc}{}}
\numberwithin{equation}{section}
\newtheoremstyle{mytheorem}{.7\linespacing\@plus.3\linespacing}{.7\linespacing\@plus.3\linespacing}%
     {\itshape}%         Body font
     {}%         Indent amount (empty = no indent, \parindent = para indent)
     {\bfseries}% Thm head font (e.g. \bfseries, \scshape, \sfhamily)
     {. }%        Punctuation after thm head
     {0.3ex}%     Space after thm head (\newline = linebreak)
     {\thmname{{\bfseries #1}}\thmnumber{ {\bfseries #2}}\thmnote{ (#3)}}  % Thm head spec
\theoremstyle{mytheorem}
\newtheorem{theorem}{Theorem}[section]
\newtheorem{lemma}[theorem]{Lemma}
\newtheorem{proposition}[theorem]{Proposition}
\newcommand{\bbE}{{\ensuremath{\mathbb E}} }
\newcommand{\bbP}{{\ensuremath{\mathbb P}} }
\newcommand{\cC}{{\ensuremath{\mathcal C}} }
\newcommand{\cF}{{\ensuremath{\mathcal F}} }
\newcommand{\cG}{{\ensuremath{\mathcal G}} }
\newcommand{\cL}{{\ensuremath{\mathcal L}} }
\newcommand{\cZ}{{\ensuremath{\mathcal Z}} }
\newcommand\sfH{\mathsf H}
\newcommand\sfT{\mathsf T}
\newcommand\sfW{\mathsf W}
\newcommand\sfa{\mathsf a}
\newcommand\sfb{\mathsf b}
\newcommand\sfc{\mathsf c}
\newcommand\sfd{\mathsf d}
\newcommand\sfe{\mathsf e}
\newcommand\sfg{\mathsf g}
\newcommand\sfh{\mathsf h}
\newcommand\sfi{\mathsf i}
\newcommand\sfl{\mathsf l}
\newcommand\sfm{\mathsf m}
\newcommand\sfn{\mathsf n}
\newcommand\sfo{\mathsf o}
\newcommand\sfp{\mathsf p}
\newcommand\sfr{\mathsf r}
\newcommand\sfs{\mathsf s}
\newcommand\sft{\mathsf t}
\newcommand\Strip{B^{\sfs \sft \sfr \sfi \sfp}_N}
\renewcommand{\tilde}{\widetilde}          % wider `tilde'
\DeclareMathSymbol{\leqslant}{\mathalpha}{AMSa}{"36} % nicer `smaller or equal'
\DeclareMathSymbol{\geqslant}{\mathalpha}{AMSa}{"3E} % nicer `larger or equal'
\DeclareMathSymbol{\eset}{\mathalpha}{AMSb}{"3F}     % nicer `emptyset'
\newcommand{\dd}{\text{\rm d}}             % a straight d for differentials
\newcommand{\sumtwo}[2]{\sum_{\substack{#1 \\ #2}}} % sum with 2 lines
\newcommand{\sumthree}[3]{\sum_{\substack{#1 \\ #2 \\ #3}}} % sum with 3 lines
\newcommand{\prodtwo}[2]{\prod_{\substack{#1 \\ #2}}}     % product 2 lines
\newcommand{\R}{\mathbb{R}}
\newcommand{\Z}{\mathbb{Z}}
\newcommand{\N}{\mathbb{N}}
\newcommand{\PEfont}{\mathrm}
\DeclareMathOperator{\cov}{\ensuremath{\PEfont Cov}}
\newcommand{\p}{\ensuremath{\PEfont P}}
\newcommand{\e}{\ensuremath{\PEfont E}}
\newcommand{\E}{\e}
\renewcommand{\P}{\p}
\DeclareMathOperator{\bbvar}{\ensuremath{\mathbb{V}ar}}
\newcommand{\ind}{\mathds{1}}
\renewcommand{\epsilon}{\varepsilon}
\renewcommand{\theta}{\vartheta}
\renewcommand{\rho}{\varrho}
\newcommand{\ms}{\scriptscriptstyle}
\newenvironment{myenumerate}{%
\renewcommand{\theenumi}{\arabic{enumi}}%
\renewcommand{\labelenumi}{{\rm(\theenumi)}}%
\begin{list}{\labelenumi}
	{%
	\setlength{\itemsep}{0.4em}%
	\setlength{\topsep}{0.5em}%
	\setlength\leftmargin{2.45em}%
	\setlength\labelwidth{2.05em}%
	\setlength{\labelsep}{0.4em}%
	\usecounter{enumi}%
	}%
	}%
{\end{list}
}
\newenvironment{myitemize}{%
\begin{list}{$\bullet$}%
 	{%
	\setlength{\itemsep}{0.4em}%
	\setlength{\topsep}{0.5em}%
	\setlength\leftmargin{2.45em}%
	\setlength\labelwidth{2.05em}%
	\setlength{\labelsep}{0.4em}%
%	\usecounter{enumi}%
	}%
	}%
{\end{list}}
\renewenvironment{itemize}{
\begin{myitemize}}%
{\end{myitemize}}
\def\dd{\mathrm{d}}
\definecolor{cadmiumgreen}{rgb}{0.0, 0.42, 0.24}
\definecolor{red(munsell)}{rgb}{0.95, 0.0, 0.24}
\renewcommand{\ind}{\mathbb{1}}
\newcommand{\norm}[1]{\left\lVert#1\right\rVert} %added by DL
\DeclareRobustCommand{\ubar}[1]{\underaccent{\bar}{#1}}     %added by DL
\newenvironment{abstracts}{%
  \ifx\maketitle\relax
    \ClassWarning{\@classname}{Abstract should precede
      \protect\maketitle\space in AMS document classes; reported}%
  \fi
  \global\setbox\abstractbox=\vtop \bgroup
    \normalfont\Small
    \list{}{\labelwidth\z@
      \leftmargin3pc \rightmargin\leftmargin
      \listparindent\normalparindent \itemindent\z@
      \parsep\z@ \@plus\p@
      
      \itemsep\medskipamount
    }%
}{%
  \endlist\egroup
  \ifx\@setabstract\relax \@setabstracta \fi
}
\newcommand{\abstractin}[1]{%
  \otherlanguage{#1}%
  \item[\hskip\labelsep\scshape\abstractname.]%
}
\begin{document}

\title[]{Edwards-Wilkinson fluctuations for the directed polymer in the full $L^2$-regime for dimensions $d \geq 3$}

\author[D. Lygkonis, N. Zygouras]{Dimitris Lygkonis, Nikos Zygouras }
\address{Department of Mathematics\\
	University of Warwick\\
	Coventry CV4 7AL, UK}
\email{Dimitris.Lygkonis@warwick.ac.uk, N.Zygouras@warwick.ac.uk}
\begin{abstracts}
	\abstractin{english}
	We prove that in the full $L^2$-regime the partition function of the directed polymer model in
	dimensions $d\geq 3$, if centered, {scaled} and  averaged with respect to a test
	function $\varphi \in C_c(\R^d)$, converges in distribution to a Gaussian
	random variable with explicit variance.
	Introducing a new idea in this context of a martingale difference representation, we also prove that
	the log-partition function, which can be viewed as a discretisation of the KPZ equation,  exhibits the same fluctuations, when centered and averaged with respect to a test function.
	Thus, the two models fall within the Edwards-Wilkinson universality class in the full $L^2$-regime, a result that was only established,  so far,  for a strict subset of this regime in
	$d\geq 3$.

	\vspace{0.5cm}
	\abstractin{french}
	Nous démontrons que dans tout le régime $L^2$, la fonction de partition du modèle de polymères dirigés en dimension $d \geq 3$, si elle est centrée, normalisée et moyennée par rapport à une fonction de test $\phi \in C_c(\mathbb{R}^d)$, converge en distribution vers une variable aléatoire gaussienne dont la variance est explicite. En introduisant une nouvelle idée dans ce contexte de la représentation de différence de martingale, nous démontrons également que le logarithme de la fonction de partition, qui peut être vu comme une discrétisation de l'équation KPZ, possède les mêmes fluctuations, lorsqu'il est centré et moyenné par rapport à une fonction de test. En conséquence, tous les deux modèles se trouvent dans la classe d'universalité d'Edwards-Wilkinson dans tout le régime $L^2$, un résultat qui a seulement été établi jusque-là dans un sous-ensemble strict de ce régime en dimension $d \geq 3$.
\end{abstracts}
\selectlanguage{english}

\date{\today}

\keywords{Directed Polymer Model, Edwards-Wilkinson Fluctuations, Kardar-Parisi-Zhang universality}
\subjclass{Primary: 82B44}

\maketitle
{
	\hypersetup{linkcolor=black}
	\tableofcontents
}

\section{Introduction and results}

%\subsection{The directed polymer model}
In this paper, we study the directed polymer in dimensions $d \geq 3$. The directed polymer model is defined as a coupling of the simple random walk with a random environment given by i.i.d. random variables, whose strength is tuned by a parameter $\beta$, corresponding to the inverse temperature. In particular, let $\displaystyle (\omega_{n,x})_{(n,x)\in \N \times \Z^d}$ be a collection of i.i.d. random variables with  law $\bbP$ such that
\begin{align*}
	\bbE[\omega]=0, \hspace{1cm} \bbE[\omega^2]=1, \hspace{1cm} \lambda(\beta):=\log \bbE[e^{\beta \omega}] < \infty\,, \hspace{0.3cm} \forall  \beta \in (0,\infty) .
\end{align*}
We also consider a simple random walk, whose distribution we denote by $\P_x$ when starting from $x\in \Z^d$. When starting from $0$ we will refrain from using the subscript and just write $\P$. We will use the notation $q_n(x):=\P(S_n=x)$ for the transition kernel of the random walk.
The directed polymer measure on polymer paths of length $N$, starting from position $x$ and at inverse temperature $\beta \in (0,\infty)$
is defined as
\begin{align} \label{radnik}
	\frac{\dd \P_{N,\beta,x}}{\dd \P_x}(S):=\frac{1}{Z_{N,\beta}(x)}\exp\Big(\sum_{n=1}^{N} \big(\hspace{0.2mm} \beta \omega_{n,S_n} -\lambda(\beta)\hspace{0.2mm}\big) \Big),
\end{align}
where
\begin{align} \label{partitionfunction}
	Z_{N,\beta}(x):=\E_{x}\Big[\exp\Big(\sum_{n=1}^{N} \big(\beta \omega_{n,S_n} -\lambda(\beta) \big) \Big)   \Big ] ,
\end{align}
is a {random} normalising constant which makes the polymer measure a probability measure. This is the so-called \textit{partition function} of the model
and will be the object of our main interest in this paper. When the starting point of the random walk is the origin we will simply write $Z_{N,\beta}$ instead of $Z_{N,\beta}(0)$.

The directed polymer model has, by now, a long history starting with the works of Imbrie-Spencer \cite{IS88} and Bolthausen \cite{B89},
who showed the existence of a {\it weak disorder regime} in dimension $d\geq 3$ and when $\beta$ is small enough.
It was then shown that  paths weighted
by the polymer measure exhibit diffusive behaviour. The regime of $\beta$ that was considered in these works was what we name here the ``$L^2$-regime'', which is characterised by the boundedness of the $L^2(\bbP)$ norm of the partition function $Z_{N,\beta}$.
This regime can be explicitly characterised: if we denote by $\lambda_2(\beta):=\lambda(2\beta)-2\lambda(\beta)$ and by $\pi_d$ the
probability that a $d$-dimensional simple random walk, starting from the origin, will return to the origin, then
\begin{align*}
	\beta_{L^2}:=\beta_{L^2}(d):=\sup\big\{\beta \colon \lambda_2(\beta)<\log\big(\tfrac{1}{\pi_d}) \,\big\}.
\end{align*}
This characterisation is achieved via the simple and standard computation
\begin{align} \label{loctime}
	\bbE\big[(Z_{N,\beta}(x))^2\big]=
	\E^{\otimes 2}\,\big[e^{\lambda_2(\beta)\sum_{n=1}^N \ind_{S_n^1=S_n^2} }\big]=
	\E\big[e^{\lambda_2(\beta)\mathcal{L}_N }\big] ,
\end{align}
where $S_n^1, S_n^2$ are two independent copies of the simple random walk, starting from the origin, with joint law denoted by
$\P^{\otimes 2}$. Moreover, $ \mathcal{L}_N:=\sum_{n=1}^N \ind_{S_{2n}=0}$ denotes the number of times that a $d$-dimensional simple random walk returns to zero and for the second equality we made use of the equality in law $\sum_{n=1}^N \ind_{S^1_n=S^2_n} \overset{\text{law}}{=} \sum_{n=1}^N \ind_{S_{2n}=0}$.
Since the simple random walk is transient in dimensions $d \geq 3$, one can see that $\mathcal{L}_N$ converges almost surely to a random variable $\mathcal{L}_{\infty}$ as $N \to \infty$ and the limiting random variable $\mathcal{L}_{\infty}$ follows a geometric distribution with success probability equal to $\pi_{d}<1$. In particular, we have that $\bbE\big[(Z_{N,\beta}(x))^2\big] \xrightarrow{N \to \infty} \E\big[e^{\lambda_2(\beta)\mathcal{L}_\infty }\big] $ and
\begin{align} \label{secondmoment}
	\E\big[e^{\lambda_2(\beta)\mathcal{L}_\infty }\big] =
	\left\{\begin{matrix}
		\frac{1-\pi_d}{1-\pi_d e^{\lambda_2(\beta)}}, & \text{ \quad  if  } \lambda_2(\beta)<\log(\frac{1}{\pi_d}) \\
		\infty \, \, ,                                & \text{otherwise.}
	\end{matrix}\right.
\end{align}
The weak disorder regime was subsequently characterised as the regime $\beta<\beta_c(d)$ where $Z_{N,\beta}$ converges almost surely to a strictly positive random variable. Clearly $\beta_c(d) \geq \beta_{L^2}(d)$ but a concrete characterisation of $\beta_c$ is still missing and in fact it took
some time to resolve the nontriviality {of the interval $(\beta_{L^2}(d), \beta_c(d))$ for $d\geq 3$, \cite{BS10, BS11, BT10, BGH11}}.
The formulation of the weak disorder regime as the regime where $Z_{N,\beta}\xrightarrow{\text{a.s.}} Z_{\infty, \beta}>0$ is largely due to the works
of Comets, Shiga, Yoshida \cite{CSY03, CSY04, CY06}, see also the recent monograph \cite{C17} for a more detailed bibliographical account with respect to these issues.

The above works (as well as several other relevant ones e.g. \cite{CL17, CN19, MSZ16} etc.)
have  focused on studying the partition function at a fixed starting point. Here, on the other hand, we are interested in the spatial fluctuations
of the field of partition functions $\big(Z_{N,\beta}(x)\big)_{x\in\Z^d}$, when then initial point varies, and we will show it exhibits Edwards-Wilkinson (EW) fluctuations in the $L^2$-regime.
{
Let us recall that the Edwards-Wilkinson fluctuations are determined as the fluctuations of the field
that arises as the solution to the additive stochastic heat equation
\begin{equation}\label{eq:ASHE}
	\left\{
	\begin{aligned}
		\partial_t v^{(c)}(t,x) & = \frac{1}{2} \Delta v^{(c)} (t,x) + c \, \xi(t,x) \\
		v^{(c)} (0,x)           & \equiv 0
	\end{aligned}
	\right.
\end{equation}
where $c$ is a model related constant} {and $\xi$ denotes space-time white noise, that is the Gaussian process with covariance structure
$\bbE[\xi(t,x)\xi(s,y)]=\delta(t-s)\delta(x-y)$ for $t,s>0$ and $x,y \in \R^d$.}
Our first result is the following theorem:
\begin{theorem} \label{gaussianity}
	Let $d \geq 3$,  $\beta \in (0,\beta_{L^2}(d))$ and consider the field of
	partition functions of the d-dimensional directed polymer $\big(Z_{N,\beta}(x)\big)_{x\in\Z^d}$. If
	$\varphi \in C_c(\R^d)$ is a test function, denote by
	\begin{align} \label{ZNBF}
		Z_{N,\beta}(\varphi):= \sum_{x \in \Z^d} \Big(Z_{N,\beta}(x)-\bbE\big[Z_{N,\beta}(x)\big]\Big) \frac{\varphi\big(\frac{x}{\sqrt{N}}\big)}{N^{\frac{d}{2}}}= \sum_{x \in \Z^d} \big(Z_{N,\beta}(x)-1\big) \frac{\varphi\big(\frac{x}{\sqrt{N}}\big)}{N^{\frac{d}{2}}} ,
	\end{align}
	the averaged partition function over $\varphi$. Then the rescaled
	sequence $(N^{\frac{d-2}{4}}Z_{N,\beta}(\varphi))_{N \geq 1}$ converges in distribution to a centered Gaussian random variable $\cZ_\beta (\varphi)$ with variance
	given by
	\begin{align} \label{variance}
		{\bbvar[\cZ_\beta(\varphi)] = \cC_{\beta} \int_{0}^{1} \dd t \int_{\R^{d}\times \R^{d}}  \dd x \dd y \: \varphi(x)g_{\frac{2t}{d}}(x-y)\varphi(y) \, ,}
	\end{align}
	{where} $g(\cdot)$  is the $d$-dimensional heat kernel,
	{$\cC_{\beta}= \: \sigma^2(\beta) \: \E[e^{\lambda_2 (\beta) \mathcal{L}_\infty}]$ and $\sigma^2(\beta)=e^{\lambda_2 (\beta)}-1$.}
\end{theorem}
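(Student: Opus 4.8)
The plan is to expand $Z_{N,\beta}(x)$ into its polynomial chaos series, identify the limiting variance by a renewal resummation, and then upgrade convergence of moments to convergence in distribution via a fourth-moment (equivalently Lindeberg-type) central limit theorem for discrete polynomial chaoses; the point is that all the estimates must be pushed to the edge $\beta_{L^2}(d)$. Put $\eta_{n,z}:=e^{\beta\omega_{n,z}-\lambda(\beta)}-1$, so that $\bbE[\eta_{n,z}]=0$ and $\bbE[\eta_{n,z}^2]=\sigma^2(\beta)=e^{\lambda_2(\beta)}-1$; expanding the exponential in \eqref{partitionfunction} yields
\[
	Z_{N,\beta}(x)-1=\sum_{k=1}^{N}\ \sumtwo{0<n_1<\cdots<n_k\leq N}{z_1,\dots,z_k\in\Z^d}\ q_{n_1}(z_1-x)\prod_{i=2}^{k}q_{n_i-n_{i-1}}(z_i-z_{i-1})\prod_{i=1}^{k}\eta_{n_i,z_i}\,,
\]
so that $N^{\frac{d-2}{4}}Z_{N,\beta}(\varphi)=\sum_{k=1}^{N}\Theta_N^{(k)}(\varphi)$, where $\Theta_N^{(k)}(\varphi)$ is a homogeneous multilinear chaos of order $k$ in the i.i.d.\ family $(\eta_{n,z})$, with deterministic coefficients built from the kernels $q$ and from $\varphi(\cdot/\sqrt N)/N^{d/2}$. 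These summands are centered and mutually orthogonal in $L^2(\bbP)$.

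For the variance I would first establish the exact identity
\[
	\bbE\big[(Z_{N,\beta}(x)-1)(Z_{N,\beta}(x')-1)\big]=\sigma^2(\beta)\sum_{n=1}^{N}q_{2n}(x-x')\,G_{N-n}\,,\qquad G_m:=\bbE\big[Z_{m,\beta}^2\big]\,,
\]
obtained by summing out the intermediate spatial variables through Chapman--Kolmogorov (using $\sum_z q_a(z)^2=q_{2a}(0)$ and $\sum_z q_a(z-x)q_a(z-x')=q_{2a}(x-x')$): the chaos contribution of all returns after the first collision resums exactly to $G_{N-n}$. Averaging against $\varphi(\cdot/\sqrt N)/N^{d/2}$ and invoking the local central limit theorem together with a Riemann-sum approximation (legitimate since $\varphi\in C_c(\R^d)$) gives
\[
	\sum_{x,x'\in\Z^d}\frac{\varphi(x/\sqrt N)\,\varphi(x'/\sqrt N)}{N^d}\,q_{2n}(x-x')=\frac{1}{N^{d/2}}\int_{\R^d\times\R^d}\varphi(u)\,g_{\frac{2n}{dN}}(u-v)\,\varphi(v)\,\dd u\,\dd v+o\big(N^{-d/2}\big)\,,
\]
uniformly for $n$ of order $N$. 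Since $\beta<\beta_{L^2}(d)$ one has $G_m\uparrow\E[e^{\lambda_2(\beta)\mathcal{L}_\infty}]<\infty$ by \eqref{secondmoment}, so with $t=n/N$ a dominated-convergence argument in $t$ gives $N^{\frac{d-2}{2}}\bbE[Z_{N,\beta}(\varphi)^2]\to\bbvar[\cZ_\beta(\varphi)]$ with $\bbvar[\cZ_\beta(\varphi)]$ exactly as in \eqref{variance}. The same computation carried out chaos by chaos gives $\bbE[\Theta_N^{(k)}(\varphi)^2]\to v_k\geq0$ with $\sum_k v_k=\bbvar[\cZ_\beta(\varphi)]$, while the uniform bound $G_m\leq\E[e^{\lambda_2(\beta)\mathcal{L}_\infty}]$ yields $\bbE[\Theta_N^{(k)}(\varphi)^2]\leq c_k$ with $\sum_k c_k<\infty$ independently of $N$.

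It remains to prove Gaussianity. By the uniform-in-$N$ chaos tail bound above, it suffices to show that for each fixed $K$ the truncation $\sum_{k=1}^{K}\Theta_N^{(k)}(\varphi)$ converges in law to a centered Gaussian, and then let $K\to\infty$. For a vector of discrete polynomial chaoses of bounded order this follows from a de Jong-type fourth-moment theorem, or from a Lindeberg replacement of the $\eta_{n,z}$ by Gaussians: the hypotheses reduce to (i) the maximal influence $\max_{(n,z)}\sum_{k\leq K}\Inf_{(n,z)}\big(\Theta_N^{(k)}(\varphi)\big)\to0$, which follows from heat-kernel bounds on $q$ and the $N^{-d/2}$ normalisation, and (ii) the vanishing of the fourth cumulant of $\sum_{k\leq K}\Theta_N^{(k)}(\varphi)$, equivalently that in the fourth-moment expansion the fully paired diagrams reproduce the Gaussian value $3\,\bbvar[\cZ_\beta(\varphi)]^2$ while all partially pinched diagrams are negligible. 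Combined with the variance computation, this identifies the limit as $\cZ_\beta(\varphi)\sim\mathcal{N}\big(0,\bbvar[\cZ_\beta(\varphi)]\big)$.

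The main obstacle is uniformity all the way up to $\beta_{L^2}(d)$: both the chaos tail bound and the control of the partially pinched fourth-moment diagrams reduce to renewal series in $\sigma^2(\beta)$ multiplied by collision-local-time expectations, whose radius of convergence is reached exactly at $\beta_{L^2}(d)$. The relevant multi-replica estimates therefore have to be carried out sharply, rather than with the cruder bounds that are available when $\beta$ is bounded away from $\beta_{L^2}(d)$; in particular one must show the partially pinched diagrams are $o(1)$ --- not merely $O(1)$ --- uniformly in $N$ and summably in $k$, with the finiteness of $\E[e^{\lambda_2(\beta)\mathcal{L}_\infty}]$ serving as the borderline input. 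This sharp bookkeeping of multi-walk overlaps is the technical heart of the argument.
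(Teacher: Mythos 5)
Your proposal tracks the paper's own proof closely: the polynomial chaos expansion, the renewal resummation $\bbE[(Z_{N,\beta}(x)-1)(Z_{N,\beta}(x')-1)]=\sigma^2\sum_n q_{2n}(x-x')\,\E[e^{\lambda_2\cL_{N-n}}]$ followed by the local CLT and Riemann-sum limit, the $L^2$-truncation to a fixed chaos order, and the fourth-moment theorem applied (via Cram\'er--Wold) to the truncated sum are all exactly the ingredients of Lemma~\ref{conver}, Proposition~\ref{variancep}, Lemma~\ref{truncation} and Proposition~\ref{lincomb}. You have correctly located the technical heart of the matter --- showing the partially pinched diagrams (the paper's $\sfT_1$, $\sfT_2$, $\sfT_3$ configurations) vanish with $R_\infty=\pi_d/(1-\pi_d)<1$ as the borderline renewal input --- although you understate slightly that the summability in the chaos order is handled separately by the truncation lemma, so the pinched-diagram estimates only need to be carried out at each fixed truncation level rather than summably in $k$.
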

Besides the interest stemming from understanding spatial correlations in the polymer model, the above result is motivated by intense recent activity in the field of singular stochastic PDEs. The field of partition functions ${\big(Z_{N,\beta}(x)\big)_{x\in\Z^d}}$
of the directed polymer model can be seen as a discretisation (via the stochastic Feynman-Kac formula \cite{BC95})  of the stochastic heat
equation (SHE) with multiplicative noise:
\begin{align} \label{SHE}
	\partial_t u= \frac{1}{2} \Delta u+ \beta \xi(t,x) u,\qquad t>0,x \in\R^d,
\end{align}
with flat $u(0,\cdot)\equiv 1$ initial condition.
Contrary to the case of dimension $d=1$, where one can make sense of~\eqref{SHE} by using classical It\^{o} theory, in dimensions $d \geq 2$ this is not possible due to the lack of regularity of the space-time white noise, which makes the product $u \cdot \xi$ ill defined.
Recent works \cite{MSZ16, GRZ18, CCM19} have shown that a meaning to \eqref{SHE} for $d\geq 3$ can be provided when $\beta$ is small
(a strict subset of the $L^2$-regime) by smoothing out the noise via
spatial mollification with a smooth density $j(\cdot)$ as $\xi_\epsilon(t,x):=\epsilon^{-d}\int_{\R^d} \xi(t,x) j(x/\epsilon) \dd x$ and solving
first the regularised equation
\begin{align} \label{smoothSHE}
	\partial_t u_\epsilon= \frac{1}{2} \Delta u_\epsilon+ \beta \epsilon^{\tfrac{d-2}{2}} \,\xi_\epsilon(t,x) u_\epsilon,\qquad t>0,x \in\R^d.
\end{align}
As $\epsilon$ tends to zero, the solution $u_\epsilon(t,\cdot)$, when centered and scaled, converges (as a field), for $\beta$ small,
to the solution of the additive stochastic heat equation, whose statistics determine the Edwards-Wilkinson class. Our result, Theorem \ref{gaussianity},
viewed as a different type of approximation to the SHE, provides the extension of the meaning of \eqref{SHE} to the whole $L^2$ regime.
We also establish a similar result for the field of log-partition functions. In this case we will additionally require that the disorder satisfies a (mild) concentration property \eqref{concentration}. More precisely,
\begin{theorem} \label{loggaussianity}
	Let $d \geq 3$, $\beta \in (0,\beta_{L^2}(d))$ and consider the fields of
	log-partition functions of the $d$-dimensional directed polymer $\big(\log Z_{N,\beta}(x)\big)_{x\in\Z^d}$, with disorder that satisfies
	concentration property \eqref{concentration}. If
	$\varphi \in C_c(\R^d)$ is a test function, we have that
	\begin{align}\label{KPZavg}
		N^{\frac{d-2}{4}} \sum_{x \in \Z^d} \Big(\log Z_{N,\beta}(x) -\bbE\big[\log Z_{N,\beta}(x) \big]\Big) \frac{\varphi\big(\frac{x}{\sqrt{N}}\big)}{N^{\frac{d}{2}}} \, ,
	\end{align}
	converges in distribution to the centered Gaussian random variable $\cZ_\beta (\varphi)$ defined in Theorem \ref{gaussianity}.
\end{theorem}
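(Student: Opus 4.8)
The plan is to deduce Theorem~\ref{loggaussianity} from Theorem~\ref{gaussianity} by a comparison argument. Set
\[
\delta_N(x):=\big(\log Z_{N,\beta}(x)-\bbE[\log Z_{N,\beta}(x)]\big)-\big(Z_{N,\beta}(x)-1\big),
\]
and $R_N:=N^{\frac{d-2}{4}}\sum_{x\in\Z^d}\delta_N(x)\,\varphi(x/\sqrt N)\,N^{-d/2}$, so that the expression in \eqref{KPZavg} equals $N^{\frac{d-2}{4}}Z_{N,\beta}(\varphi)+R_N$. By Theorem~\ref{gaussianity} and Slutsky's lemma it then suffices to show $R_N\to 0$ in probability, and we shall in fact prove $\bbE[R_N^2]\to 0$, i.e.
\[
N^{\frac{d-2}{2}}\,N^{-d}\sum_{x,y\in\Z^d}\varphi\big(\tfrac{x}{\sqrt N}\big)\,\varphi\big(\tfrac{y}{\sqrt N}\big)\,\bbE[\delta_N(x)\delta_N(y)]\ \longrightarrow\ 0 .
\]
The crux is that $\bbE[\delta_N(x)\delta_N(y)]$ decays in $|x-y|$ \emph{strictly faster} than $|x-y|^{-(d-2)}$, which is precisely the borderline rate at which the $N^{\frac{d-2}{4}}$-rescaled spatial average of a translation-covariant field still produces a nondegenerate Gaussian limit; any faster decay makes the rescaled average of the difference field $\delta_N$ vanish.

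To establish this decay I would use the martingale-difference representation announced in the abstract. Let $\cF_n:=\sigma(\omega_{k,z}:k\le n,\ z\in\Z^d)$; then $\log Z_{N,\beta}(x)-\bbE[\log Z_{N,\beta}(x)]=\sum_{n=1}^{N}\Delta_n(x)$ and $Z_{N,\beta}(x)-1=\sum_{n=1}^{N}D_n(x)$, with $\Delta_n(x):=\bbE[\log Z_{N,\beta}(x)\mid\cF_n]-\bbE[\log Z_{N,\beta}(x)\mid\cF_{n-1}]$ and $D_n(x):=Z_{n,\beta}(x)-Z_{n-1,\beta}(x)$ (with $Z_{0,\beta}(x):=1$), both martingale difference sequences in $n$. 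Orthogonality of the increments removes all cross terms, leaving $\bbE[\delta_N(x)\delta_N(y)]=\sum_{n=1}^{N}\bbE[\epsilon_n(x)\epsilon_n(y)]$ with $\epsilon_n(x):=\Delta_n(x)-D_n(x)$. Decomposing the partition function at time $n$,
\[
Z_{N,\beta}(x)=\sum_{z\in\Z^d}\bar\zeta_{n-1}(x,z)\;e^{\beta\omega_{n,z}-\lambda(\beta)}\;\widehat Z^{(n)}_{N-n,\beta}(z),
\]
where $\bar\zeta_{n-1}(x,z)$ is $\cF_{n-1}$-measurable while $\widehat Z^{(n)}_{N-n,\beta}(z)$ depends only on the layers $>n$, is an independent copy of $Z_{N-n,\beta}(z)$, and has mean $1$, one gets $D_n(x)=Z_{n-1,\beta}(x)\,Y_n(x)$ with $Y_n(x):=\sum_{z}\bar\rho^{(n-1)}_z(x)\big(e^{\beta\omega_{n,z}-\lambda(\beta)}-1\big)$, where $\bar\rho^{(n-1)}(x)$ is the (one-step advanced) endpoint law of the length-$(n-1)$ polymer from $x$; hence $\bbE[Y_n(x)\mid\cF_{n-1}]=0$ and $\bbE[Y_n(x)^2\mid\cF_{n-1}]=\sigma^2(\beta)\,\|\bar\rho^{(n-1)}(x)\|_{\ell^2}^2=O(n^{-d/2})$ by the diffusive spreading of the endpoint law. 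The analogous computation inside the logarithm gives $\Delta_n(x)=\log(1+Y_n(x))-\bbE[\log(1+Y_n(x))\mid\cF_{n-1}]+r_n(x)$, where $r_n(x)$ arises from averaging the logarithm of the future contribution and is of smaller order than $\|Y_n(x)\|_2$ for $n$ in the bulk.

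For $n$ large, $Y_n(x)$ is small, so a Taylor expansion yields $\Delta_n(x)=Y_n(x)+O(Y_n(x)^2)+r_n(x)$, whence $\epsilon_n(x)=\big(1-Z_{n-1,\beta}(x)\big)\,Y_n(x)+(\text{lower order})$. The decisive observation is that $1-Z_{n-1,\beta}(x)=-(Z_{n-1,\beta}(x)-1)$ has \emph{no constant term} and is built purely from the disorder in layers $1,\dots,n-1$, which is independent of the layer-$n$ disorder carried by $Y_n(x)$; equivalently, the first-order (in the disorder) parts of $\log Z_{N,\beta}(x)-\bbE[\log Z_{N,\beta}(x)]$ and of $Z_{N,\beta}(x)-1$ coincide — both are the same $\beta$-dependent multiple of $\sum_{n,z}q_n(z-x)\,\omega_{n,z}$ — and cancel in $\delta_N(x)$. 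As a result $\bbE[\epsilon_n(x)\epsilon_n(y)]$ is governed by a \emph{product} of two overlap kernels of the type $\sum_z q_\bullet(z-x)q_\bullet(z-y)$ rather than a single one, producing a decay $|x-y|^{-2(d-2)}$ in place of $|x-y|^{-(d-2)}$, with higher-order terms decaying faster still. Summing over $n$ and against $\varphi$,
\[
\bbE[R_N^2]\ \lesssim\ N^{\frac{d-2}{2}}\,N^{-d}\sum_{x,y}\varphi\big(\tfrac{x}{\sqrt N}\big)\varphi\big(\tfrac{y}{\sqrt N}\big)\,\min\{1,|x-y|^{-2(d-2)}\}\ =\ O(N^{-\theta})
\]
for some $\theta>0$ in every $d\ge 3$: the weight $|x-y|^{-2(d-2)}$ is summable against the diffusively rescaled measure when $d>4$, contributes a logarithm at $d=4$ and a factor $N^{(4-d)/2}$ at $d\in\{3,4\}$, in all cases leaving a spare power of $N$ relative to the critical normalisation $N^{(d+2)/2}$. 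Combined with Theorem~\ref{gaussianity} and Slutsky's lemma this proves Theorem~\ref{loggaussianity}.

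The principal obstacle is to upgrade the heuristic ``$\delta_N(x)$ carries no first-order disorder, hence decorrelates one power faster'' into rigorous bounds on $\bbE[\epsilon_n(x)\epsilon_n(y)]$ that are uniform in $N$ and summable against $\varphi$ in all $d\ge 3$. This requires: (i) controlling the Taylor remainders of $\log(1+Y_n(x))$ and the future-correction terms $r_n(x)$, the delicate regime being $n$ with $N-n$ of order $N$; (ii) handling the boundary layers — the bounded range of small $n$ for which $Y_n(x)$ is not small — whose total contribution to $R_N$ is $o(1)$ by a crude bound exploiting that there $\epsilon_n(x)$ depends only on the disorder in an $O(1)$-neighbourhood of $x$; and (iii) securing uniform-in-$N$ moment bounds on $\log Z_{N,\beta}(x)$, in particular a control of its left tail since $Z_{N,\beta}(x)$ may be small — which is exactly where the concentration hypothesis \eqref{concentration} enters.
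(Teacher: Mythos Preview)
Your strategy differs substantially from the paper's and, as written, has a genuine gap at the step where you pass from the martingale increment $\Delta_n(x)=\bbE[\log Z_{N,\beta}(x)\mid\cF_n]-\bbE[\log Z_{N,\beta}(x)\mid\cF_{n-1}]$ to the approximation $\Delta_n(x)\approx Y_n(x)$. The quantity $\bbE[\log Z_{N,\beta}(x)\mid\cF_n]$ still carries an average over \emph{all} future layers, so the identification $\Delta_n(x)=\log(1+Y_n(x))-\bbE[\log(1+Y_n(x))\mid\cF_{n-1}]+r_n(x)$ with $r_n(x)$ ``of smaller order'' hides the real difficulty: controlling $\bbE_{\text{future}}[\log\sum_z \mu_z\,\xi_z\,\widehat Z^{(n)}_{N-n,\beta}(z)]$ uniformly in $n$ and in the (random) polymer endpoint law $\mu=\bar\rho^{(n-1)}(x)$. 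No mechanism for this is provided, and the paper's Section~2 explains precisely why such ``soft'' remainder bounds are inadequate under the prefactor $N^{(d-2)/4}$.

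The second gap is in the overlap estimate. The claimed decay $\bbE[\epsilon_n(x)\epsilon_n(y)]\lesssim (\text{two overlap kernels})$ rests on two unproved inputs: first, that $\|\bar\rho^{(n-1)}(x)\|_{\ell^2}^2=O(n^{-d/2})$, i.e.\ diffusive spreading of the \emph{polymer} endpoint; second, that $\sum_z\bar\rho^{(n-1)}_z(x)\bar\rho^{(n-1)}_z(y)$ can be replaced by the free-walk overlap $q_{2n}(x-y)$ inside the expectation $\bbE[(1-Z_{n-1,\beta}(x))(1-Z_{n-1,\beta}(y))\cdot]$. Both are local-limit-theorem statements for the polymer measure (in the spirit of \cite{S95,V06}) and are highly nontrivial; since $\bar\rho^{(n-1)}$ is correlated with $Z_{n-1,\beta}$, the ``factorisation'' into two kernels is not automatic. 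This route is essentially the one taken in the parallel work \cite{CNN20}, which the paper cites as a genuinely different method.

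By contrast, the paper avoids both obstacles via a scale separation: it confines the nonlinear object $\log Z^A_{N,\beta}(x)$ to a small window $A_N^x$ of temporal size $N^\epsilon$ (so that for $|x-y|>2N^{\epsilon/2+\alpha}$ the covariance is \emph{exactly} zero and one only needs crude bounds on the diagonal, handled by a \emph{single-site} martingale decomposition and the concentration hypothesis), and it linearises only the long-range piece, where the replacement $q_{r-t}(z-w)\approx q_r(z-x)$ requires merely the free-walk local limit theorem, not the polymer one. Your layer-by-layer filtration does not localise the nonlinearity in this way, which is why you are forced into polymer-LLT territory.
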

Given that $h(t,x):=\log u(t,x)$, with $u(t,x)$ the solution to the SHE, is formally the solution to the KPZ equation
\begin{align} \label{KPZ}
	\partial_t h=\frac{1}{2} \Delta h + \frac{1}{2} |\nabla h|^2 +\beta \xi ,
\end{align}
the field of log-partition functions can be viewed as a discretization of the KPZ equation. Dimensions $d\geq 3$ are known in the recent theory of SPDEs as {\it supercritical} dimensions and thus the theories of regularity structures \cite{H14}, paracontrolled distributions \cite{GIP17}, energy solutions \cite{GJ14}  do not apply. Alternatively, Edwards-Wilkinson limiting fluctuations
for the regularised KPZ
\begin{align} \label{smoothKPZ}
	\partial_t h_\epsilon=\frac{1}{2} \Delta h_\epsilon + \frac{1}{2} |\nabla h_\epsilon|^2 +\beta \epsilon^{\tfrac{d-2}{2}} \xi_\epsilon -
	\frac{1}{2} \beta^2\epsilon^{-2} \|j\|^2_{L^2(\R^d)}\, ,
\end{align}
were recently established  in \cite{GRZ18,DGRZ18} through Malliavin calculus techniques, for small $\beta$. Moreover, in
\cite{MU18} renormalisation and perturbation arguments were used to establish Edwards-Wilkinson fluctuations for small $\beta$, when the mollification is performed in both space and time.
\cite{CCM19b} also studied the one-point limit fluctuations of \eqref{smoothKPZ} in a subset of the $L^2$ regime.

Before closing this introduction we mention that analogous results to Theorems \ref{gaussianity} and \ref{loggaussianity}, for regularisations of
SHE and KPZ as in \eqref{smoothSHE}, \eqref{smoothKPZ} were simultaneously and independently established by Cosco-Nakajima-Nakashima
\cite{CNN20} via quite different methods than ours, based on stochastic calculus and local limit theorems for polymers inspired by earlier works
of Comets-Neveu \cite{CN95} and of Sinai \cite{S95} (see also \cite{V06, CN19, CCM19b}).
Our methods, as we will explain in more detail in the next section, are based on analysis of chaos
expansions inspired by works on scaling limits of disordered systems \cite{CSZ17a, CSZ16} and two dimensional polymers, SHE and KPZ \cite{CSZ17b,CSZ18b} (alternative methods to the two dimensional case, which however do not cover the whole
$L^2$ - in this case also subcritical - regime, are those of \cite{CD18, G18}). A very interesting, open problem is to go beyond
the $L^2$ regimes. Currently the only works in this direction are \cite{CSZ18a, CSZ19, GQT19} on the moments of polymers and SHE {\it on
		the critical temperature} in dimension two. However, these moment estimates are not enough to determine the distribution.

\section{Outline, {main ideas} and comparison to the literature}

We will describe in this section the method we follow as well as the new {ideas} required. The basis of our  analysis is the chaos expansion of the polymer
partition function as
\begin{align}\label{intro-chaos}
	Z_{N,\beta}(x)=1+\sum_{k=1}^N \sigma^k \sumtwo{1 \leq n_1 < \ldots < n_k \le N}
	{z_1, \ldots, z_k \in \Z^d} q_{n_1}(z_1-x)\prod_{i=2}^k q_{n_i-n_{i-1}}(z_i-z_{i-1}) \prod_{i=1}^k \eta_{n_i,z_i} \, ,
\end{align}
where $q_n(x)=\P(S_n=x),\, \sigma=\sigma(\beta):= \sqrt{e^{\lambda_2(\beta)}-1}$ and $\eta_{n,z}:=\sigma^{-1}\big( e^{\beta\omega_{n,z}-\lambda(\beta)}-1\big)$, see \eqref{chaos1} for the details of this derivation.

To prove the central limit theorem for $(N^{\frac{d-2}{4}}Z_{N,\beta}(\varphi))_{N \geq 1}$ we make use of the so called Fourth Moment Theorem \cite{dJ87, NP05, NPR10, CSZ17b}, which states that a sequence of random variables in a fixed Wiener chaos, {normalised to have mean zero and variance one,} converges to a standard normal random variable if its fourth moment {converges} to $3$.
Of course, in order to be able to reduce ourselves to a fixed chaos, we need to perform truncation and for this, the assumption
of bounded second moments ($L^2$ regime) plays an important role. This approach of analysing chaos expansions of partition functions was first used
in \cite{CSZ17b} in a framework that also included the analysis of the two dimensional directed polymer and SHE.  The work, which is needed to carry out this approach in $d\geq 3$, is actually easier than the $d=2$ case in \cite{CSZ17b}. The reason for this is that the variance of $Z_{N,\beta}$ is a functional of the local time
$\cL_N$, see \eqref{loctime}, which stays bounded in $d\geq 3$ but grows logarithmically in $d=2$, introducing,  in the latter case,  a certain
multiscale structure. Still, a careful combinatorial accounting and analytical estimates, which actually deviate from those in \cite{CSZ17b}, are needed to handle the $d\geq 3$ case.
The detailed analysis of such expansion is what allows to go all the way to the $L^2$ critical temperature,
as compared to the previous works \cite{GRZ18}, \cite{MU18}. The work \cite{GRZ18} established the central limit theorem via a ``linearisation'' through Malliavin calculus (Clark-Ocone formula) and homogenisation / mixing estimates only for sufficiently small $\beta$.
On the other hand, the renormalisation methods employed in \cite{MU18} are necessarily restricted to a perturbative (small $\beta$) regime.

For the Edwards-Wilkinson fluctuations of the log-partition function, namely Theorem \ref{loggaussianity}, we also adapt the approach  of
``{linearisation}'' via chaos expansion proposed in \cite{CSZ18b}.
However, the analysis in $d\geq 3$, required to achieve the goal of going all the way to $\beta_{L^2}(d)$, is rather more subtle. The reason is that the power law prefactor $N^{\frac{d-2}{4}}$ in \eqref{KPZavg} (as opposed to the corresponding $\log N$ prefactor in \cite{CSZ18b}) does not allow for any ``soft'' (or even more intricate) bounds \`a la Cauchy-Schwarz or triangle inequalities in the approximations. Instead, we have
to look carefully at the correlation structure that will cancel the $N^{\frac{d-2}{4}}$. This correlation structure is rather obvious in the case of the partition function and can be already understood by looking at the first term of the chaos expansion of $N^{\frac{d-2}{4}} Z_{N,\beta}(\varphi)$ as derived from
\eqref{intro-chaos}, which is
\begin{align*}
	\hspace{-1cm}
	N^{\frac{d-2}{4}} \sum_{x\in\Z^d}  \frac{\varphi(\tfrac{x}{\sqrt{N}})}{N^{\frac{d}{2}}}  \sum_{z \in \Z^d, 1\leq n\leq N} q_n(z-x) \eta_{n,z} \, ,
\end{align*}
and whose variance is easily computed as
\begin{align*}
	\hspace{1cm} & N^{\frac{d-2}{2}} \sum_{x,y\,\in\Z^d}  \frac{\varphi(\tfrac{x}{\sqrt{N}}) \varphi(\tfrac{y}{\sqrt{N}}) }{N^{d}}  \sum_{z \in \Z^d, 1\leq n\leq N} q_n(z-x) q_{n}(z-y) \\
	=            & N^{\frac{d-2}{2}} \sum_{x,y\,\in\Z^d}  \frac{\varphi(\tfrac{x}{\sqrt{N}}) \varphi(\tfrac{y}{\sqrt{N}}) }{N^{d}}  \sum_{1\leq n\leq N} q_{2n}(x-y)\, .
\end{align*}
The factor {$N^{\frac{d-2}{2}}$} is then absorbed by the sum $\sum_n q_{2n}(x-y)$ in a Riemann sum approximation. What underlies the above computation is that correlations are captured by two independent copies of the random walk, one starting at $x$ and another at $y$, meeting at some point by time $N$. The probability of such a coincidence event compensates for the {$N^{\frac{d-2}{2}}$}.

When considering the log-partition functions, the above described mechanism is not obvious, as $\log Z_{N,\beta}$ does not admit an equally nice
and tractable chaos expansion. Nevertheless, it is necessary (which was not the case in \cite{CSZ18b}) to tease out the aforementioned correlation structure, in order to absorb $N^{\frac{d-2}{4}}$ and carry out the approximation. The way we do this is by writing $\log Z_{N,\beta}$ (or
more accurately a certain approximation, which we call $\log Z^A_{N,\beta}$, see \eqref{Z-decomp}) as a martingale difference:
\begin{align*}
	{\log Z_{N,\beta} - \bbE\big[ \log Z_{N,\beta} \big] =\sum_{j\geq 1} \Big(  \bbE\big[ \log Z_{N,\beta}\, |\, \cF_j \big] - \bbE\big[ \log Z_{N,\beta}\, |\, \cF_{j-1} \big] \Big) \, ,}
\end{align*}
where {$\{\cF_j\colon j\geq 1\}, \cF_0=\{\emptyset,\Omega\}$} is a filtration generated as {$\cF_j=\sigma( \omega_{a_i}\colon i=1,...,j)$} with $\{a_1,a_2,...\}$ an
enumeration of $\N \times \Z^d$. By adding the information from the disorder at a single additional site at each time, we keep track of how the
polymer explores the disorder and this allows (after a certain ``resampling'' procedure) to keep track of the correlations. The martingale difference approach we
	{introduce} has in some sense some similarity to the Clark-Ocone formula,
which was used in the work of \cite{GRZ18,DGRZ18}.
However, our approach of exploring a single new site disorder at a time seems to be necessary for the precise estimates that we need, in order to reach the whole $L^2$ regime. Along the way,  {a fine use of} concentration and negative tail estimates of the log-partition function (e.g.
Proposition \ref{lefttail}) is {made}.

Once all the necessary approximations to the log-partition function are completed, the task is then reduced to a central limit theorem for
a partition function of certain sorts, thus bringing us back to the context of Theorem \ref{gaussianity}.
The previous work of \cite{DGRZ18} seems to be necessarily restricted to a small sub-region of $(0,\beta_{L^2})$, as a consequence of
both the linearisation approach employed but also more importantly (as far as we can tell)
due to the use of the so-called ``second order Poincar\'{e} inequality''  for the central limit theorem,
which requires higher moment estimates that lead outside the $L^2$ regime, if $\beta$ is not restricted to be small enough.

The parallel work of Cosco-Nakajima-Nakashima \cite{CNN20} achieves the Edwards-Wilkinson fluctuations for the SHE and the KPZ by quite different methods than ours, by making use
of clever applications of stochastic calculus and the local limit theorem for polymers \cite{S95, V06, CN19}.

\section{The Central Limit theorem for \texorpdfstring{$ Z_{N,\beta}(\varphi)$}{}    }    \label{section3}

This section is devoted to the proof of Theorem \ref{gaussianity}.
Throughout the paper we rely on polynomial chaos expansions of the
partition function. Specifically, consider the partition function of a polymer chain of length $N$ starting from $x$ at time zero. We can write
\begin{align} \label{chaos1}
	Z_{N,\beta}(x) & = \E_x\bigg[\prod_{1 \le n \le N, \, z \in \Z^d} e^{ (\beta \omega_{n,z} - \lambda(\beta)) \ind_{S_n=z}} \bigg]                 \notag \\
	               & = \E_x\bigg[\prod_{1 \le n \le N, \, z \in \Z^d} \big(1+ (e^{\beta \omega_{n,z} - \lambda(\beta)}-1) \ind_{S_n=z} \big)\bigg]  \notag  \\
	               & = 1+\sum_{k=1}^N \sigma^k \sumtwo{1 \leq n_1 < \ldots < n_k \le N}
	{z_1, \ldots, z_k \in \Z^d} q_{n_1}(z_1-x)\prod_{i=2}^k q_{n_i-n_{i-1}}(z_i-z_{i-1}) \prod_{i=1}^k \eta_{n_i,z_i} \, .
\end{align}
For $(n,z) \in \N \times \Z^d$ we have denoted by $\eta_{n,z}$ the centered random variables
\begin{align} \label{etas}
	\eta_{n,z}:=\frac{e^{\beta \omega_{n,z} - \lambda(\beta)}-1}{\sigma} \, .
\end{align}
The number $\sigma=\sigma(\beta)$ is chosen so that for $(n,z) \in \N \times \Z^d$ the
centered random variables $\eta_{n,z}$ have unit variance. A simple calculation shows that $
	\sigma=\sqrt{e^{\lambda(2\beta)-2\lambda(\beta)}-1}$.
Also, the last equality in \eqref{chaos1} comes from expanding the product in the second line of~\eqref{chaos1} and interchanging the expectation with the summation.
By using the expansion~\eqref{chaos1} we can derive an expression for the averaged
partition function. Let us fix a test function $\varphi \in C_c(\R^d)$.
	{For the sake of the presentation, we will adopt the following notation:}
\begin{align} \label{phinotation}
	\varphi_N(x_1,...,x_k):=\prod_{u \in \{x_1,...,x_k\} }\frac{\varphi\big(\frac{u}{\sqrt{N}}\big)}{N^\frac{d}{2}} \, , \qquad k \geq 1 \, .
\end{align}
We have
\begin{align}\label{chaos2}
	Z_{N,\beta}(\varphi) & :=\sum_{x \in \Z^d} (Z_{N,\beta}(x)-1)\, \varphi_N(x) \notag                                                                                      \\
	                     & = \sum_{k=1}^N \sigma^k \sumtwo{1 \leq n_1 < \ldots < n_k \le N}
	{z_1, \ldots, z_k \in \Z^d} \Bigg ( \sum_{x \in \Z^d} \varphi_N(x)\, q_{n_1}(z_1-x)\Bigg) \prod_{i=2}^k q_{n_i-n_{i-1}}(z_i-z_{i-1}) \prod_{i=1}^k \eta_{n_i,z_i} \notag \\
	                     & = \sum_{k=1}^N Z^{(k)}_{N,\beta}(\varphi)  \, ,
\end{align}
where
\begin{align} \label{fixedchaos}
	Z^{(k)}_{N,\beta}(\varphi):= { \sigma^k} \sumtwo{1 \leq n_1 < \ldots < n_k \le N}
	{z_1, \ldots, z_k \in \Z^d} \Bigg ( \sum_{x \in \Z^d} \varphi_N(x) q_{n_1}(z_1-x)\Bigg) \prod_{i=2}^k q_{n_i-n_{i-1}}(z_i-z_{i-1}) \prod_{i=1}^k \eta_{n_i,z_i} \, .
\end{align}

{The first step towards the proof of Theorem \ref{gaussianity} is the following proposition which identifies the limiting variance of the sequence $(N^{\frac{d-2}{4}}Z_{N,\beta}(\varphi))_{N\geq1}$.}

\begin{proposition} \label{variancep}
	Let $d \geq 3$, $\beta \in (0,\beta_{L^2})$ and fix $\varphi \in C_c(\R^d)$ {to be a test function.} Consider the sequence $(N^{\frac{d-2}{4}}Z_{N,\beta}(\varphi))_{N\geq1}$, where $Z_{N,\beta}(\varphi)$ is defined in \eqref{ZNBF}. Then, one has that

	\begin{align*}
		\bbvar\big[N^{\frac{d-2}{4}}Z_{N,\beta}(\varphi)\big] \xrightarrow{N\to \infty}{} \cC_{\beta} \int_{0}^{1} \dd t \int_{\R^{d}\times \R^{d}}  \dd x \dd y \: \varphi(x)g_{\frac{2t}{d}}(x-y)\varphi(y) \, ,
	\end{align*}

	\noindent	where $\cC_{\beta}=\: \sigma^2(\beta) \: \E[e^{\lambda_2 (\beta) \mathcal{L}_\infty}]$, $\sigma^2(\beta)=e^{\lambda_2 (\beta)}-1$ and $g$ denotes the $d$-dimensional heat kernel.
\end{proposition}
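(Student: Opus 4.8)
The plan is to compute $\bbvar\big[N^{\frac{d-2}{4}}Z_{N,\beta}(\varphi)\big]$ directly from the chaos expansion \eqref{chaos2}, using orthogonality of the polynomial chaos to reduce the variance to a sum over $k$ of the second moments of the individual chaoses $Z^{(k)}_{N,\beta}(\varphi)$. Since the $\eta_{n,z}$ are centred, independent and of unit variance, and distinct index sets $\{(n_1,z_1),\dots,(n_k,z_k)\}$ produce uncorrelated monomials, we get
\begin{align*}
	\bbvar\big[Z_{N,\beta}(\varphi)\big] = \sum_{k=1}^N \sigma^{2k} \sumtwo{1\leq n_1<\cdots<n_k\leq N}{z_1,\dots,z_k\in\Z^d}
	\Bigg(\sum_{x}\varphi_N(x)q_{n_1}(z_1-x)\Bigg)\Bigg(\sum_{y}\varphi_N(y)q_{n_1}(z_1-y)\Bigg)\prod_{i=2}^k q_{n_i-n_{i-1}}(z_i-z_{i-1})^2 \, .
\end{align*}
The inner sum over $z_2,\dots,z_k$ and over the gaps $n_i-n_{i-1}$ telescopes into a convolution of squared kernels; writing $u_m(z):=\sum_{\ell\geq 1}\ind_{\{\ell \text{ has the right parity}\}}\cdots$ is awkward, so instead I would sum over $z_1$ and over $n_1$ last, and recognise $\sum_{k\geq 1}\sigma^{2k}\sum_{\text{gaps}}\prod q(\cdot)^2$ as the renewal-type series whose value is $\E[e^{\lambda_2(\beta)\cL_\infty}]$ once the walk has enough room, via the identity \eqref{loctime}. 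Concretely, after swapping to the two-replica picture the whole expression becomes
\begin{align*}
	\sum_{n=1}^N \sum_{z\in\Z^d} \Bigg(\sum_{x}\varphi_N(x)q_n(z-x)\Bigg)\Bigg(\sum_{y}\varphi_N(y)q_n(z-y)\Bigg) \cdot \e\big[e^{\lambda_2(\beta)\cL_{N-n}}\big]\cdot\big(e^{\lambda_2(\beta)}-1\big) \, ,
\end{align*}
where the first factor, upon collapsing the $z$-sum, equals $\sum_{x,y}\varphi_N(x)\varphi_N(y)q_{2n}(x-y)$.

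Next I would insert the prefactor $N^{\frac{d-2}{2}}$ and split the time sum at a level $n=\delta N$. For $n\geq \delta N$ the factor $\e[e^{\lambda_2(\beta)\cL_{N-n}}]$ is close to $\E[e^{\lambda_2(\beta)\cL_\infty}]=:\cC_\beta/\sigma^2$ uniformly (transience in $d\geq 3$, with error controlled by $\P(\cL_\infty\neq\cL_{N-n})\to 0$), so that piece contributes $\cC_\beta$ times
\begin{align*}
	N^{\frac{d-2}{2}}\sum_{n=\delta N}^N \sum_{x,y\in\Z^d}\varphi_N(x)\varphi_N(y) q_{2n}(x-y) \, ,
\end{align*}
which is a Riemann sum: writing $n=tN$, $x=\sqrt{N}\,\xi$, $y=\sqrt{N}\,\eta$ and using the local central limit theorem $q_{2n}(x-y)\approx (N)^{-d/2} g_{2t/d}(\xi-\eta)$ (the factor $2/d$ coming from the covariance matrix $\tfrac{1}{d}I$ of the SRW step at time $2n$), the power of $N$ cancels exactly — $N^{\frac{d-2}{2}}\cdot N \cdot N^{-d} \cdot N^{d/2}\cdot N^{d/2}=1$ after accounting for the two $\varphi_N$ normalisations — leaving $\cC_\beta\int_\delta^1 dt\int\!\!\int \varphi(\xi)g_{2t/d}(\xi-\eta)\varphi(\eta)\,d\xi\,d\eta$. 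For the complementary range $n<\delta N$ I would bound $\e[e^{\lambda_2(\beta)\cL_{N-n}}]\leq \E[e^{\lambda_2(\beta)\cL_\infty}]<\infty$ (here the $L^2$-assumption $\beta<\beta_{L^2}$ is essential) and estimate $N^{\frac{d-2}{2}}\sum_{n<\delta N}\sum_{x,y}\varphi_N(x)\varphi_N(y)q_{2n}(x-y)$ by $C\int_0^\delta \frac{dt}{(2t/d)^{d/2}}\,\|\varphi\|_\infty \cdot(\text{vol of supp})$ or, more carefully, by $C\delta$ for $d=3$ and by $O(1)$ uniformly with an $o(1)$-as-$\delta\to 0$ tail; either way it is $o(1)$ as first $N\to\infty$ then $\delta\to 0$. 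Finally I send $\delta\to 0$ in the main term by monotone/dominated convergence, the integral being finite since $g_{2t/d}(\xi-\eta)$ is integrable in $t$ near $0$ against bounded compactly supported $\varphi$ when $d\geq 3$ is handled — actually one checks $\int_0^1 \|g_{2t/d}*\varphi\|_{\ldots}$ — giving the claimed limit.

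The main obstacle I anticipate is the uniform control of the error in replacing $q_{2n}(x-y)$ by its Gaussian approximation and $\e[e^{\lambda_2(\beta)\cL_{N-n}}]$ by its limit \emph{simultaneously over the whole range $\delta N\leq n\leq N$ and all relevant $x,y$}, together with dominating the small-$n$ contribution tightly enough. The local CLT error is only $o(n^{-d/2})$ for $|x-y|=O(\sqrt n)$ and deteriorates in the large-deviation regime; since $\varphi$ has compact support the relevant $|x-y|$ are $O(\sqrt N)$, so for $n\geq\delta N$ we are in the diffusive window and the standard LCLT with its uniform error term suffices, but making the bookkeeping clean — particularly checking that the $n$ near $\delta N$ do not accumulate a divergence as $\delta\to 0$ in low dimension $d=3$ where $\int_0 t^{-d/2}dt$ would diverge were it not for the integrability of $g_{2t/d}*\varphi$ against $\varphi$ rather than a pointwise bound — is the delicate point. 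A secondary technical nuisance is justifying the interchange of the $k$-sum (equivalently, the renewal series) with the limit, i.e. that the truncation of the chaos expansion at level $k$ is uniform in $N$; this is exactly where the finiteness of $\E[e^{\lambda_2(\beta)\cL_\infty}]$ on $(0,\beta_{L^2})$ does the work, and I would make it quantitative via $\sigma^{2k}\E[\cL_\infty^{\text{-type}}]$ tail bounds to get a dominated-convergence argument in $k$ as well.
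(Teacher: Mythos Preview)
Your overall architecture matches the paper's exactly: expand the variance via the polynomial chaos, collapse the $z_1$-sum into $q_{2n}(x-y)$, recognise the remaining $k$-sum as $\E[e^{\lambda_2(\beta)\cL_{N-n}}]$, arriving at the identity
\[
\bbvar\big[N^{\frac{d-2}{4}}Z_{N,\beta}(\varphi)\big] = N^{\frac{d}{2}-1}\sum_{n=1}^N\sigma^2\sum_{x,y}\varphi_N(x,y)\,q_{2n}(x-y)\,\E\big[e^{\lambda_2(\beta)\cL_{N-n}}\big],
\]
and then split the $n$-sum. (Your ``secondary nuisance'' about interchanging the $k$-sum with the limit is a red herring: the above is an exact identity for each finite $N$, no interchange needed.)

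There is, however, a genuine mistake in your splitting. You claim that for $n\geq\delta N$ the factor $\E[e^{\lambda_2(\beta)\cL_{N-n}}]$ is uniformly close to $\E[e^{\lambda_2(\beta)\cL_\infty}]$. This is false: on that range $N-n$ runs from $0$ up to $(1-\delta)N$, and for $n$ near $N$ one has $\cL_{N-n}=\cL_0=0$, so the factor equals $1$, not $\E[e^{\lambda_2(\beta)\cL_\infty}]$. You have the roles of the two pieces reversed. The approximation $\cL_{N-n}\approx\cL_\infty$ is good precisely when $N-n$ is \emph{large}, i.e.\ when $n$ is \emph{small} relative to $N$; this is where the transience argument $\P(\cL_\infty\neq\cL_{N-n})\to 0$ actually applies.

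The paper splits at $n\leq\theta N$ versus $n>\theta N$ with $\theta\nearrow 1$: on $n\leq\theta N$ one has $N-n\geq(1-\theta)N\to\infty$, so $\E[e^{\lambda_2(\beta)\cL_{N-n}}]\to\E[e^{\lambda_2(\beta)\cL_\infty}]$ uniformly (the paper controls the error via H\"older and $\P(\cL_N>\cL_{(1-\theta)N})\to 0$), while on $n>\theta N$ the crude bound $\E[e^{\lambda_2(\beta)\cL_{N-n}}]\leq\E[e^{\lambda_2(\beta)\cL_\infty}]$ gives a contribution $\leq\cC_\beta\int_\theta^1\ldots$, vanishing as $\theta\to 1$. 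Your argument is easily repaired by swapping the roles of your two ranges (or by introducing a second cutoff near $n=N$), but as written the uniform-closeness claim does not hold.
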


For the proof of Proposition \ref{variancep}, we will need the following standard consequence of the local limit theorem, which we prove for completeness.
\begin{lemma} \label{conver}
	For any test function $\varphi \in C_c(\R^d)$ we have that
	\begin{align*}
		N^{\frac{d}{2}-1}  \sum_{n=1}^N \sum_{x,y \in \Z^d} \varphi_N(x,y) q_{2n}(x-y)  \xrightarrow{N \to \infty}{}
		\int_{0}^{1} \dd t \int_{\R^{d}\times \R^{d}}  \dd x \dd y \: \varphi(x)g_{\frac{2t}{d}}(x-y)\varphi(y) \, .
	\end{align*}
\end{lemma}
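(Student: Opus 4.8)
\emph{The plan} is to recognise the left-hand side as a Riemann sum in a macroscopic time variable $t\approx n/N$, whose $n$-th term converges, by the (local) central limit theorem for the simple random walk, to the stated integrand. Throughout I use $\varphi_N(x,y)=N^{-d}\varphi(x/\sqrt N)\varphi(y/\sqrt N)$. Substituting $w=x-y$, one writes
\begin{align*}
N^{\frac d2-1}\sum_{n=1}^N\sum_{x,y\in\Z^d}\varphi_N(x,y)\,q_{2n}(x-y)=\frac1N\sum_{n=1}^N G_N(n),\qquad G_N(n):=N^{-\frac d2}\sum_{w\in\Z^d}q_{2n}(w)\,A_N(w),
\end{align*}
where $A_N(w):=\sum_{y\in\Z^d}\varphi\big(\tfrac{y+w}{\sqrt N}\big)\varphi\big(\tfrac{y}{\sqrt N}\big)$. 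Since $\varphi\in C_c(\R^d)$ is uniformly continuous with compact support, $N^{-d/2}A_N(w)$ is a Riemann sum, over the mesh-$N^{-1/2}$ grid, for the autocorrelation $\Psi(\xi):=\int_{\R^d}\varphi(v+\xi)\varphi(v)\,\dd v$ evaluated at $\xi=w/\sqrt N$, and this approximation is uniform in $w$ (the integrands are products of a fixed shift of $\varphi$ with $\varphi$, supported in a fixed compact set and sharing a common modulus of continuity), so $\sup_{w\in\Z^d}\big|N^{-d/2}A_N(w)-\Psi(w/\sqrt N)\big|\to0$. In particular $N^{-d/2}|A_N(w)|\le C_\varphi$ for all $w,N$, and since $\sum_{w}q_{2n}(w)=1$ this gives the uniform bound $\sup_{N,\,1\le n\le N}|G_N(n)|\le C_\varphi$.

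Next, fix $t\in(0,1]$ and put $n=n_N:=\lceil tN\rceil$. By the classical central limit theorem, $S_{2n_N}/\sqrt N\Rightarrow \mathcal N\!\big(0,\tfrac{2t}{d}I_d\big)$ --- the one-step covariance of the $d$-dimensional simple random walk being $\tfrac1d I_d$ --- and $g_{\frac{2t}{d}}$ is precisely the density of this Gaussian. Hence, for the bounded continuous function $\Psi$,
\begin{align*}
\sum_{w\in\Z^d}q_{2n_N}(w)\,\Psi(w/\sqrt N)=\E\big[\Psi(S_{2n_N}/\sqrt N)\big]\xrightarrow[N\to\infty]{}\int_{\R^d}g_{\frac{2t}{d}}(\xi)\,\Psi(\xi)\,\dd\xi,
\end{align*}
and combining with the uniform bound from the first step yields $G_N(n_N)\to\int_{\R^d}g_{\frac{2t}{d}}(\xi)\Psi(\xi)\,\dd\xi=\int_{\R^d\times\R^d}g_{\frac{2t}{d}}(x-y)\varphi(x)\varphi(y)\,\dd x\,\dd y=:F(t)$ for every $t\in(0,1]$. (One may equivalently invoke the local limit theorem $q_{2n}(w)\sim 2\,(\tfrac{d}{4\pi n})^{d/2}e^{-d|w|^2/(4n)}$ and carry out the Riemann-sum approximation by hand; the parity factor $2$ is then cancelled by the fact that only half of the lattice sites are accessible to $S_{2n}$.)

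It remains to pass to the limit in the average. Writing $\frac1N\sum_{n=1}^N G_N(n)=\int_0^1 G_N(\lceil tN\rceil)\,\dd t$, the integrand is bounded by $C_\varphi$ uniformly in $N$ and converges pointwise on $(0,1]$ to $F(t)$; moreover $F$ is bounded on $(0,1]$ with $F(t)\to\|\varphi\|_{L^2(\R^d)}^2$ as $t\to0^+$ (e.g. on the Fourier side $F(t)=(2\pi)^{-d}\int|\widehat{\varphi}(k)|^2e^{-t|k|^2/d}\,\dd k$). Dominated convergence then gives $\frac1N\sum_{n=1}^N G_N(n)\to\int_0^1 F(t)\,\dd t$, which is exactly the claimed limit.

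\emph{The main obstacle} is making the joint limit $n,N\to\infty$ rigorous uniformly down to $t=0$: there the error terms in the local CLT degrade and $g_{\frac{2t}{d}}(0)$ blows up. This is handled by the crude but uniform estimate $\sum_w q_{2n}(w)=1$ together with $N^{-d/2}\sum_{x}|\varphi(x/\sqrt N)|=O(1)$, which makes the contribution of $t\in(0,\delta)$ to both sides $O(\delta)$ uniformly in $N$, so the small-$t$ singularity is integrable and harmless. The rest is routine Riemann-sum bookkeeping and the classical CLT (or local limit theorem) for the simple random walk.
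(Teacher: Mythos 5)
Your proof is correct, and it takes a genuinely different (and arguably cleaner) route than the paper. The paper's argument fixes a cutoff $\theta\in(0,1)$, discards the contribution from $n\le\theta N$ via the crude bound $T_{\theta,N}\le\|\varphi\|_\infty\|\varphi\|_1\,\theta$, applies the local limit theorem with its uniform error to the remaining sum $S_{\theta,N}$ and carries out a joint Riemann-sum approximation in the three variables $(x,y,n)$ simultaneously, and finally lets $\theta\to0$. You instead factor the problem: first you perform the spatial Riemann-sum step \emph{alone}, replacing the inner double sum by the autocorrelation $\Psi(w/\sqrt N)$ with an error that is uniform in $w$ and in $n$; then for each fixed macroscopic time $t\in(0,1]$ you invoke only the classical CLT (weak convergence of $S_{2\lceil tN\rceil}/\sqrt N$), which suffices because $\Psi$ is bounded and continuous — you never need the pointwise/uniform local limit theorem, and the periodicity factor of $2$ disappears automatically; finally you replace the paper's explicit $\theta\to0$ limit by dominated convergence, with the uniform bound $|G_N(n)|\le C_\varphi$ (itself the analogue of the paper's estimate on $T_{\theta,N}$) serving as the dominating constant. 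The analytical content — a uniform bound from $\sum_w q_{2n}(w)=1$ to tame small $n$, plus a Gaussian approximation for large $n$ — is the same, but your separation of the spatial and temporal averaging, together with the substitution of the weak CLT for the local one, makes the argument more modular and conceptually lighter. The only small price is having to verify the uniform-in-$w$ Riemann-sum estimate for $N^{-d/2}A_N(w)$, which you correctly reduce to the shared modulus of continuity and compact support of the shifted products $\varphi(\cdot+s)\varphi(\cdot)$.
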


\begin{proof}
	Recall that by the local limit theorem for the $d$-dimensional simple random walk, see \cite{LL10}, one has that
	{$q_{2n}(x)=2\big(g_{\frac{2n}{d}}(x)+o(n^{-{\frac{d}{2}}})\big)\ind_{x \in \Z^{d}_{\text{even}}}$}, uniformly in $x \in \Z^d$, as $n\to \infty $, where $\Z^{d}_{\text{even}}:=\{x=(x_1,...,x_d) \in \Z^d: \: x_1+...+x_d \in 2\Z \}$. {The factor $2$ comes from the periodicity of the random walk. The kernel $g_{\frac{2n}{d}}(x)$ appears instead of $g_{2n}(x)$, because after $n$ steps the $d$-dimensional simple random walk $S_n$ has covariance matrix $\frac{n}{d}I$.}  Let us fix ${\theta} \in (0,1)$. {Let us also use the notation}
	\begin{align*}
		 & T_{{\theta},N}:=N^{\frac{d}{2}-1}  \sum_{n=1}^{{\theta} N} \sum_{x,y \in \Z^d} \varphi_N(x,y)\, q_{2n}(x-y) \notag      \\
		 & S_{{\theta},N}:=N^{\frac{d}{2}-1}  \sum_{n>{\theta} N}^{ N} \sum_{x,y \in \Z^d} \varphi_N(x,y)\, q_{2n}(x-y)       \, .
	\end{align*}
	Observe that if we bound $\varphi(\frac{y}{\sqrt{N}})$ {in $\varphi_N(x,y)$} by its supremum norm and use that $\sum_{z\in\Z^d} q_{2n}(z)=1$ we obtain that
	\begin{align*}
		{T_{{\theta},N} \leq  \frac{\norm{\varphi}_{\infty}}{N}  \sum_{n=1}^{{\theta} N} \sum_{x \in \Z^d} \varphi_N(x) \sum_{y \in \Z^d}q_{2n}(x-y) \leq  \frac{\norm{\varphi}_{\infty}}{N}  \sum_{n=1}^{{\theta} N} \sum_{x \in \Z^d} \varphi_N(x) \leq \norm{\varphi}_{\infty} \norm{\varphi}_{1} {\theta} \, .}
	\end{align*}
	On the other hand, by using the local limit theorem {and Riemann approximation one  obtains that}
	\begin{align*}
		S_{{\theta},N} \xrightarrow{N \to \infty }{}
		\int_{{\theta}}^{1} \dd t \int_{\R^{d}\times \R^{d}}  \dd x \dd y \: \varphi(x)g_{\frac{2t}{d}}(x-y)\varphi(y) \, .
	\end{align*}
	By combining those two facts and letting ${\theta} \to 0$, one  obtains the desired result.
\end{proof}

\noindent We are now ready to present the proof of Proposition \ref{variancep}.

\begin{proof}[Proof of Proposition \ref{variancep}]
	{Recalling} \eqref{chaos2}, one arrives into the following expression for the variance of $Z_{N,\beta}(\varphi)$, by using also the fact that terms of different degree in the chaos expansion are orthogonal in { $L^{2}(\bbP)$:}
	\begin{align*}
		{\bbvar\big[Z_{N,\beta}(\varphi)\big]} & =\sum_{k=1}^N \sigma^{2k} \sum_{1 \leq n_1 < \ldots < n_k \le N} \sum_{x,y \in \Z^d} \varphi_N(x,y)\,
		q_{2n_1}(x-y)\prod_{i=2}^k q_{2(n_i-n_{i-1})}(0) \, .
	\end{align*}
	We can factor out the $k=1$ term and change variables to obtain the expression:
	\begin{align} \label{factor}
		\sum_{n=1}^N \sigma^2\sum_{x,y \in \Z^d} \varphi_N(x,y)\,q_{2n}(x-y)  \Bigg ( 1+ \sum_{k=1}^{N-n} \sigma^{2k} \sum_{1\leq \ell_1 < ...<\ell_{k} \leq N-n} \prod_{i=1}^k q_{2(\ell_i-\ell_{i-1})}(0) \Bigg) \, ,
	\end{align}
	where by convention if  $n=N$ the sum on the rightmost parenthesis is equal to $1$.
	Furthermore, one can observe that the right parenthesis is exactly equal to
	$\E\big[e^{\lambda_2(\beta)\mathcal{L}_{N-n}}\big]$, where we recall that
	$\mathcal{L}_N:=\sum_{k=1}^{N}\ind_{S_{2k}=0}$ denotes the number of times a
	random walk returns to $0$ up to time $N$. Thus,
	\begin{align} \label{convul}
		\bbvar\big[N^{\frac{d-2}{4}}Z_{N,\beta}(\varphi)\big] =N^{\frac{d}{2}-1}  \sum_{n=1}^N \sigma^2\sum_{x,y \in \Z^d} \varphi_N(x,y)\,q_{2n}(x-y)  \E[e^{\lambda_2(\beta)\mathcal{L}_{N-n}}]\, .
	\end{align}

	The heuristic idea here is that{,} if in the expression~\eqref{convul} {we ignore} $n$ in the expectation,
	then the sum would factorise. Then, by noticing that
	$\E[e^{\lambda_2(\beta)\mathcal{L}_{N}}]$ converges and by using also
	Lemma \ref{conver}, we obtain the conclusion of Proposition
	\ref{variancep}. Let us justify this heuristic idea rigorously. We have that

	\begin{align} \label{err}
		\E\big[e^{\lambda_2(\beta) \mathcal{L}_{N-n}}\big]=\E\big[e^{\lambda_2(\beta) \mathcal{L}_{N}}\big]+\E\big[(e^{\lambda_2(\beta) \mathcal{L}_{N-n}}-e^{\lambda_2(\beta) \mathcal{L}_{N}})\ind_{\mathcal{L}_N>\mathcal{L}_{N-n}}\big] \, .
	\end{align}
	Also,
	\begin{align} \label{err2}
		\Big |\E\big[(e^{\lambda_2(\beta) \mathcal{L}_{N-n}}-e^{\lambda_2(\beta) \mathcal{L}_{N}})\ind_{\mathcal{L}_N>\mathcal{L}_{N-n}}\big] \Big| \le 2 \E\big[e^{\lambda_2(\beta) \mathcal{L}_{N}} \ind_{\mathcal{L}_N>\mathcal{L}_{N-n}}\big] \, ,
	\end{align}
	by triangle inequality and because $\mathcal{L}_N$ is non-decreasing. Using
		{Hölder} inequality we can further bound the error in~\eqref{err} as follows: We
	choose $p>1$ very close to 1, such that $p
		\lambda_2(\beta)<\log(\frac{1}{\pi_d})$, thus $\E[e^{p\lambda_2(\beta)
					\mathcal{L}_{N}}]<\infty$, for every $N \in \N$. This is only possible when
	$\beta$ is in the $L^2$-regime. Then, by {H\"{o}lder:}

	\begin{align*}
		\E\big[e^{\lambda_2(\beta) \mathcal{L}_{N}} \ind_{\mathcal{L}_N>\mathcal{L}_{N-n}}\big] \le \E\big[e^{p\lambda_2(\beta) \mathcal{L}_{N}}\big]^{\frac{1}{p}} \P\big({\mathcal{L}_N>\mathcal{L}_{N-n}}\big)^{\frac{1}{q}} \, .
	\end{align*}
	Hence,
	\begin{align*}
		\Big |\E\big[(e^{\lambda_2(\beta) \mathcal{L}_{N-n_1}}-e^{\lambda_2(\beta) \mathcal{L}_{N}})\ind_{\mathcal{L}_N>\mathcal{L}_{N-n}}\big] \Big| \le {c_{p,\beta}} \P\big({\mathcal{L}_N>\mathcal{L}_{N-n}}\big)^{\frac{1}{q}} \, ,
	\end{align*}
	where ${c_{p,\beta}}:= {2}\E\big[e^{p\lambda_2(\beta) \mathcal{L}_{\infty}}\big]^{\frac{1}{p}}<\infty$.\\

	\justify Now, we split the sum in~\eqref{convul} into two parts. Let $\theta \in
		(0,1)$. We distinguish two cases:
	\begin{itemize}
		\item  If $ n \le \theta N $, then $N-n \geq (1-\theta)N$. Thus,
		      \begin{align*}
			      \Big |\E\big[(e^{\lambda_2(\beta) \mathcal{L}_{N-n}}-e^{\lambda_2(\beta) \mathcal{L}_{N}})\ind_{\mathcal{L}_N>\mathcal{L}_{N-n}}\big] \Big| \le {c_{p,\beta}} {\P({\mathcal{L}_N>\mathcal{L}_{(1-\theta)N}})^{\frac{1}{q}}} \, ,
		      \end{align*}
		      since $\mathcal{L}_{N}$ is non-decreasing in $N$. We also have that
		      \begin{align*}
			      \P({\mathcal{L}_N>\mathcal{L}_{(1-\theta)N}})\leq \P\big(\exists\, n>(1-\theta)N: \, S_{2n}=0\big) \leq \sum^{\infty}_{n>(1-\theta)N} q_{2n}(0)\xrightarrow[N \to \infty]{} 0 \, ,
		      \end{align*}
		      since $\sum^{\infty}_{n=1} q_{2n}(0)< \infty$, because $d \geq 3$.
		      Therefore, in this case we obtain that, \\
		      \begin{align*}
			          & N^{\frac{d}{2}-1}  \sum_{n=1}^{\theta N} \sigma^2\sum_{x,y \in \Z^d} \varphi_N(x,y)\,q_{2n}(x-y)  \E[e^{\lambda_2(\beta)\mathcal{L}_{N-n}}] \notag            \\
			      =\: & N^{\frac{d}{2}-1}  \sum_{n=1}^{\theta N} \sigma^2\sum_{x,y \in \Z^d} \varphi_N(x,y)\,q_{2n}(x-y)  \Big(\E[e^{\lambda_2(\beta)\mathcal{L}_{N}}]+o(1) \Big)\, .
		      \end{align*}
		\item If $ n > \theta N $, we have that:
		      \begin{align*}
			          & N^{\frac{d}{2}-1}  \sum_{n>\theta N} \sigma^2\sum_{x,y \in \Z^d} \varphi_N(x,y)\,q_{2n}(x-y)  \E[e^{\lambda_2(\beta)\mathcal{L}_{N-n}}] \notag     \\
			      \le & \:   N^{\frac{d}{2}-1}  \sum_{n>\theta N} \sigma^2\sum_{x,y \in \Z^d} \varphi_N(x,y)\,q_{2n}(x-y) \E[e^{\lambda_2(\beta)\mathcal{L}_{\infty}}]\, .
		      \end{align*}

	\end{itemize}
	By combining the two cases above we get that, for every $\theta \in
		(0,1)$
	\begin{align*}
		\limsup_{N\rightarrow \infty} \bbvar[N^{\frac{d-2}{4}}Z_{N,\beta}(\varphi)] \leq \sigma^2 \int_{0}^{\theta} \dd t \int_{\R^{d}\times \R^{d}}  \dd x \dd y \: \varphi(x)g_{\frac{2t}{d}}(x-y)\varphi(y) \E[e^{\lambda_2 (\beta) \mathcal{L}_\infty}] + k(\theta) \, ,
	\end{align*}
	where
	\begin{align*}
		k(\theta)\le {\E[e^{\lambda_2 (\beta) \mathcal{L}_\infty}]} \: \sigma^2 \int_{\theta}^{1} \dd t \int_{\R^{d}\times \R^{d}} \dd x \dd y \: \varphi(x)g_{\frac{2t}{d}}(x-y)\varphi(y) \, ,
	\end{align*}
	\noindent and
	\begin{align*}
		\liminf_{N\rightarrow \infty} \bbvar[N^{\frac{d-2}{4}}Z_{N,\beta}(\varphi)] \geq  \sigma^2 \int_{0}^{\theta} \dd t \int_{\R^{d}\times \R^{d}}  \dd x \dd y \: \varphi(x)g_{\frac{2t}{d}}(x-y)\varphi(y) \E[e^{\lambda_2 (\beta) \mathcal{L}_\infty}] \, .
	\end{align*}
	It is clear that $k(\theta) \longrightarrow 0$ as $\theta \longrightarrow
		1$, hence we obtain the desired result.
\end{proof}

We proceed towards the proof of the Central Limit Theorem {for} the sequence $
	\big(Z_{N,\beta}(\varphi) \big)_{N \geq 1}$ of the averaged partition functions. In order to determine the limiting distribution of the sequence
$\big(N^{\frac{d-2}{4}} Z_{N,\beta}(\varphi)\big)_{N\geq 1}$, we use the Fourth Moment Theorem, {see \cite{dJ87, NP05, NPR10, CSZ17b}}.\vspace{0.2mm} The strategy we deploy is the following: First, we show that it suffices
to consider a large $M \in \N$ and work with {a truncated version of the partition function,}
namely
\begin{align}
	Z^{\ms{\leq}  M}_{N,\beta}(\varphi):= \sum_{k=1}^M \sigma^k \sumtwo{1 \le n_1 < \ldots < n_k \le N}{z_1, \ldots, z_k \in \Z^d} \Bigg ( \sum_{x \in \Z^d} \varphi_N(x)\, q_{n_1}(z_1-x)\Bigg) \prod_{i=2}^k q_{n_i-n_{i-1}}(z_i-z_{i-1}) \prod_{i=1}^k \eta_{n_i,z_i} \label{trchaos} \, .
\end{align}

\noindent To do this it is enough to show that for any $\epsilon>0$ we can
choose a large $M=M(\epsilon)$ such that
$N^{\frac{d-2}{4}}Z^{\ms{\leq}  M}_{N,\beta}(\varphi)$ and
$N^{\frac{d-2}{4}}Z_{N,\beta}(\varphi)$ are $\epsilon$-close in $L^2(\bbP)$,
uniformly {for} $N \in \N$ {large}. Then, by using the Fourth Moment Theorem and the Crámer-Wold device, we show that the random vector $ N^{\frac{d-2}{4}}\big(Z^{(1)}_{N,\beta}(\varphi),..., Z^{(M)}_{N,\beta}(\varphi)\big)$ converges in distribution to a centered Gaussian random vector. This allows us to conclude that
the limiting distribution of $N^{\frac{d-2}{4}}Z^{\ms{\leq}  M}_{N,\beta}(\varphi)$ is a
centered Gaussian. After removing the truncation in $M$, we obtain the desired
result for $N^{\frac{d-2}{4}}Z_{N,\beta}(\varphi)$, namely Theorem
\ref{gaussianity}. \\

We begin by proving that we can approximate $Z_{N,\beta}(\varphi)$ in $L^2(\bbP)$, uniformly for large enough $N$,
by $Z^{\ms{\leq}  M}_{N,\beta}(\varphi)$ for some large $M \in \N$.

\begin{lemma} \label{truncation}
	{For every $\epsilon >0$,} there exists {$M_0 \in \N$, such that for all $M>M_0$}
	\begin{align*}
		\limsup_{N \to \infty} \norm{N^{\frac{d-2}{4}}Z_{N,\beta}(\varphi)- N^{\frac{d-2}{4}}Z^{\ms{\leq}  M}_{N,\beta}(\varphi)}_{L^2(\bbP)} \le \epsilon \, .
	\end{align*}
\end{lemma}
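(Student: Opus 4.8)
The proof rests on the orthogonality of polynomial chaoses of distinct degree. By \eqref{chaos2} and \eqref{trchaos} we have $Z_{N,\beta}(\varphi)-Z^{\ms{\leq} M}_{N,\beta}(\varphi)=\sum_{k=M+1}^N Z^{(k)}_{N,\beta}(\varphi)$ with $Z^{(k)}_{N,\beta}(\varphi)$ as in \eqref{fixedchaos}; since each $Z^{(k)}_{N,\beta}(\varphi)$, $k\geq 1$, is centered and lives in the $k$-th chaos generated by the i.i.d.\ variables $\eta_{n,z}$ of \eqref{etas}, chaoses of different degree are orthogonal in $L^2(\bbP)$ and
\begin{align*}
	\norm{N^{\frac{d-2}{4}}\big(Z_{N,\beta}(\varphi)-Z^{\ms{\leq} M}_{N,\beta}(\varphi)\big)}_{L^2(\bbP)}^2=N^{\frac{d-2}{2}}\sum_{k=M+1}^N\bbvar\big[Z^{(k)}_{N,\beta}(\varphi)\big]\, .
\end{align*}
So it suffices to dominate $N^{\frac{d-2}{2}}\bbvar[Z^{(k)}_{N,\beta}(\varphi)]$, uniformly in large $N$, by the general term of a convergent series in $k$. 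Repeating the variance computation carried out in the proof of Proposition~\ref{variancep}, but keeping only the degree-$k$ term, and changing variables $m_1=n_1$, $m_i=n_i-n_{i-1}$ for $2\leq i\leq k$, gives
\begin{align*}
	\bbvar\big[Z^{(k)}_{N,\beta}(\varphi)\big]=\sigma^{2k}\sumtwo{m_1,\ldots,m_k\geq 1}{m_1+\cdots+m_k\leq N}\Big(\sum_{x,y\in\Z^d}\varphi_N(x,y)\,q_{2m_1}(x-y)\Big)\prod_{i=2}^k q_{2m_i}(0)\, .
\end{align*}

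Next I would bound the inner factor uniformly, exactly as in the estimate for $T_{\theta,N}$ in the proof of Lemma~\ref{conver}: bounding one copy of $\varphi$ by $\norm{\varphi}_\infty$ and summing $\sum_y q_{2m_1}(x-y)=1$,
\begin{align*}
	\Big|\sum_{x,y\in\Z^d}\varphi_N(x,y)\,q_{2m_1}(x-y)\Big|\leq \frac{\norm{\varphi}_\infty}{N^d}\sum_{x\in\Z^d}\Big|\varphi\Big(\tfrac{x}{\sqrt N}\Big)\Big|\leq \frac{C_\varphi}{N^{d/2}}
\end{align*}
for all large $N$, with $C_\varphi:=2\norm{\varphi}_\infty\norm{\varphi}_1$ (the last step by the Riemann sum $N^{-d/2}\sum_x|\varphi(x/\sqrt N)|\to\norm{\varphi}_1$). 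Dropping the simplex constraint on $m_2,\ldots,m_k$ and noting there are at most $N$ admissible values of $m_1$, together with $\sum_{m\geq1}q_{2m}(0)=\E[\mathcal{L}_\infty]$, one gets
\begin{align*}
	\bbvar\big[Z^{(k)}_{N,\beta}(\varphi)\big]\leq \sigma^{2k}\cdot\frac{C_\varphi}{N^{d/2}}\cdot N\cdot\big(\E[\mathcal{L}_\infty]\big)^{k-1}=C_\varphi\,\sigma^2\,N^{1-\frac d2}\,r^{\,k-1},\qquad r:=\sigma^2\,\E[\mathcal{L}_\infty]\, .
\end{align*}
The decisive point is that $r<1$: since $\mathcal{L}_\infty$ is geometric with parameter $\pi_d$ one has $\E[\mathcal{L}_\infty]=\pi_d/(1-\pi_d)$, while $\sigma^2=e^{\lambda_2(\beta)}-1<\tfrac1{\pi_d}-1$ exactly because $\beta\in(0,\beta_{L^2})$, and the product of these two quantities is $<1$.

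Since $N^{\frac{d-2}{2}}\cdot N^{1-\frac d2}=1$, summing the geometric tail yields, for all large $N$,
\begin{align*}
	N^{\frac{d-2}{2}}\sum_{k=M+1}^N\bbvar\big[Z^{(k)}_{N,\beta}(\varphi)\big]\leq C_\varphi\,\sigma^2\sum_{k>M}r^{\,k-1}=\frac{C_\varphi\,\sigma^2}{1-r}\,r^{\,M}\, ,
\end{align*}
a bound uniform in $N$; taking $\limsup_{N\to\infty}$ and then choosing $M_0$ so that $C_\varphi\sigma^2 r^{M_0}/(1-r)\leq\epsilon^2$ concludes the proof. The only genuinely substantive ingredient here is the strict inequality $r<1$, which is precisely where the $L^2$-condition $\beta<\beta_{L^2}(d)$ enters and makes the sum over the chaos degree $k$ summable; everything else is orthogonality bookkeeping and a crude, local-limit-theorem-free estimate on the inner sum. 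The only minor care needed is that $\varphi$ is not assumed nonnegative, which affects only the inner factor and is handled by the absolute-value bound above.
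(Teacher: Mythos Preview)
Your proof is correct and follows essentially the same strategy as the paper: orthogonality of the chaos components reduces the $L^2$-norm to a tail sum over $k$, which is controlled by a geometric series with ratio $\sigma^2\,\E[\mathcal L_\infty]=\sigma^2 R_\infty<1$, exactly the $L^2$-condition. The only minor difference is that the paper invokes Lemma~\ref{conver} to handle the $\varphi$-dependent prefactor, while you bypass this with the more elementary bound $\big|\sum_{x,y}\varphi_N(x,y)\,q_{2m_1}(x-y)\big|\leq C_\varphi N^{-d/2}$, which avoids the local limit theorem altogether.
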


\begin{proof}
	{Consider $\epsilon >0$.} One has that
	\begin{align*}
		  & Z_{N,\beta}(\varphi)- Z^{\ms{\leq}  M}_{N,\beta}(\varphi)              \notag \\
		= & \sum_{k>M}^N \sigma^k \sumtwo{1 \le n_1 < \ldots < n_k \le N}
		{z_1, \ldots, z_k \in \Z^d} \Bigg ( \sum_{x \in \Z^d} \varphi_N(x)\, q_{n_1}(z_1-x)\Bigg) \prod_{i=2}^k q_{n_i-n_{i-1}}(z_i-z_{i-1}) \prod_{i=1}^k \eta_{n_i,z_i} \, .
	\end{align*}

	By an analogous computation as in Proposition \ref{variancep} we have that
	\begin{align*}
		     & \norm{N^{\frac{d-2}{4}}Z_{N,\beta}(\varphi)- N^{\frac{d-2}{4}}Z^{\ms{\leq}  M}_{N,\beta}(\varphi)}^2_{L^2(\bbP)} \notag                                                                                                               \\
		\leq & \,  N^{\frac{d}{2}-1} \sum_{n=1}^N \sigma^2\sum_{x,y \in \Z^d} \varphi_N(x,y)\,q_{2n}(x-y)  \Big ( \sum_{k \geq M }^{N-n} \sigma^{2k} \sum_{1 \le \ell_1 <...<\ell_{k} \leq N-n}    \prod_{i=1}^k q_{2(\ell_i-\ell_{i-1})}(0) \Big)   \\
		\leq & \,  {N^{\frac{d}{2}-1} \sum_{n=1}^N \sigma^2\sum_{x,y \in \Z^d} \varphi_N(x,y)\,q_{2n}(x-y)  \Big ( \sum_{k \geq M }^{N} \sigma^{2k} \sum_{1 \le \ell_1 <...<\ell_{k} \leq N}    \prod_{i=1}^k q_{2(\ell_i-\ell_{i-1})}(0) \Big)}\, .
	\end{align*}
	By {Lemma} \ref{conver}  we have that
	\begin{align*}
		{N^{\frac{d}{2}-1} \sum_{n=1}^N \sigma^2\sum_{x,y \in \Z^d} \varphi_N(x,y)\,q_{2n}(x-y) \xrightarrow[N \to \infty]{} \int_{0}^{1} \dd t \int_{\R^{d}\times \R^{d}}  \dd x \dd y \: \varphi(x)g_{\frac{2t}{d}}(x-y)\varphi(y)} \, .
	\end{align*}
	The sum in the rightmost parenthesis can be bounded by
	\begin{align*}
		\Big ( \sum_{k \geq M }^{N} \sigma^{2k} \sum_{1 \le \ell_1 <...<\ell_{k} \leq N}   \prod_{i=1}^k q_{2(\ell_i-\ell_{i-1})}(0) \Big) \leq \sum_{k\geq M}^{N} \sigma^{2k} R^k_{N}\leq \sum_{k\geq M}^{N} \sigma^{2k} {R_{\infty}^k} \leq \sum_{k\geq M}^{\infty} \sigma^{2k} R_{\infty}^k \, ,
	\end{align*}
	where $R_N=\sum_{k=1}^{N} q_{2n}(0)$ is the expected number of visits to zero
	before time $N$ of the simple random walk and $R_{\infty}=\lim_{N\to \infty} R_N=\sum_{k=1}^{\infty} q_{2n}(0)$. {Since $\beta$ is in the $L^2$-regime, the
	series $\sum_{k\geq 1}^{\infty} {\sigma(\beta)^{2k}} R_{\infty}^k$ is convergent. Therefore, we have that  }
	\begin{align*}
		\sum_{k\geq M}^{\infty} \sigma^{2k} R_{\infty}^k \xrightarrow[M \to \infty]{} 0 \, .
	\end{align*}
	{Therefore, we conclude that if we take $M$ to be sufficiently large we have that}
	\begin{align*}
		\norm{N^{\frac{d-2}{4}}Z_{N,\beta}(\varphi)-
		N^{\frac{d-2}{4}}Z^{\ms{\leq}  M}_{N,\beta}(\varphi)}_{L^2(\bbP)} \le \epsilon \, ,
	\end{align*}
	uniformly for all large enough $ N \in \N$, hence there exists $M_0 \in \N$, so that for $M>M_0$:
	\begin{align*}
		\limsup_{N \to \infty} \norm{N^{\frac{d-2}{4}}Z_{N,\beta}(\varphi)- N^{\frac{d-2}{4}}Z^{\ms{\leq}  M}_{N,\beta}(\varphi)}_{L^2(\bbP)} \le \epsilon \, .
	\end{align*}

\end{proof}

\noindent We proceed by showing that for any $M \in \N$, the random vector
$ N^{\frac{d-2}{4}}\big(Z^{(1)}_{N,\beta}(\varphi),..., Z^{(M)}_{N,\beta}(\varphi)\big)$
converges in distribution to a Gaussian vector. To do this we employ the Cram\'{e}r-Wold device. Namely, we  prove that for any $M$-tuple of real numbers $(t_1,...,t_{M})$ the linear combination $N^{\frac{d-2}{4}} \sum_{k=1}^M t_k Z^{(k)}_{N,\beta}(\varphi)$ converges in distribution to a Gaussian random variable.

\begin{proposition} \label{lincomb}
	For all $M \in \N$ and $(t_1,...,t_M)\in \R^M$, $N^{\frac{d-2}{4}} \sum_{k=1}^M t_k
		Z^{(k)}_{N,\beta}(\varphi)$ converges in distribution to a Gaussian random variable with mean zero and variance equal to
	\begin{align*}
		\sum_{k=1}^M t^2_k \,\mathcal{C}^{(k)}_{\beta} \int_0^1 \dd t \,\int_{\R^{2d} } \dd x \, \dd y\, \varphi(x) g_{\frac{2t}{d}}(x-y) \varphi(y) \, ,
	\end{align*}
	where $ \displaystyle {C^{(k)}_{\beta}= \sigma(\beta)^{2k} \sumtwo{1\leq \ell_1 <...<\ell_{k-1}}{\ell_0:=0} \prod_{i=1}^{k-1} q_{2(\ell_i-\ell_{i-1})}(0)}$ for $k>1$ and $C^{(1)}_{\beta}=\sigma(\beta)^{2}$.
\end{proposition}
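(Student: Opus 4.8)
The plan is to invoke the Fourth Moment Theorem for polynomial chaos (de Jong's CLT; \cite{dJ87,NP05,NPR10}, see also \cite{CSZ17b}). For each $1\le k\le M$ the variable $N^{\frac{d-2}{4}}Z^{(k)}_{N,\beta}(\varphi)$ belongs to the $k$-th homogeneous polynomial chaos generated by the i.i.d.\ family $(\eta_{n,z})_{(n,z)\in\N\times\Z^d}$, so the linear combination is a finite sum of chaoses of degrees $1,\dots,M$. I would first establish convergence of the full covariance structure of the vector $N^{\frac{d-2}{4}}\big(Z^{(1)}_{N,\beta}(\varphi),\dots,Z^{(M)}_{N,\beta}(\varphi)\big)$, then verify that the maximal influence of each component vanishes, and finally read off the Gaussian limit; convergence of the scalar linear combination is then immediate, and the Cram\'er--Wold device (invoked just before the statement) upgrades this to the vector.

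For the covariance: since chaoses of distinct degrees are orthogonal in $L^2(\bbP)$, one has $\bbE\big[Z^{(k)}_{N,\beta}(\varphi)Z^{(k')}_{N,\beta}(\varphi)\big]=0$ for $k\neq k'$, so it suffices to identify $\lim_N\bbvar\big[N^{\frac{d-2}{4}}Z^{(k)}_{N,\beta}(\varphi)\big]$ for each $k$. Repeating the computation that led to \eqref{convul}, but halting the chaos expansion at degree $k$ instead of resumming the geometric series, gives
\begin{align*}
\bbvar\big[N^{\frac{d-2}{4}}Z^{(k)}_{N,\beta}(\varphi)\big]=\sigma^{2k}\,N^{\frac{d}{2}-1}\sum_{n=1}^N\sum_{x,y\in\Z^d}\varphi_N(x,y)\,q_{2n}(x-y)\!\!\sum_{\substack{0=\ell_0<\ell_1<\dots<\ell_{k-1}\\ \ell_{k-1}\le N-n}}\prod_{i=1}^{k-1}q_{2(\ell_i-\ell_{i-1})}(0)\, ,
\end{align*}
and the inner $\ell$-sum is non-decreasing in $N-n$, bounded by $R_\infty^{k-1}<\infty$ with $R_\infty:=\sum_{m\ge1}q_{2m}(0)$ finite for $d\ge3$, and convergent to $C^{(k)}_\beta/\sigma^{2k}$ as $N-n\to\infty$. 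Splitting the sum over $n$ at $\theta N$ exactly as in the proof of Proposition \ref{variancep} (replace the inner sum by its limit up to $o(1)$ on $n\le\theta N$, bound it crudely by $R_\infty^{k-1}$ on $n>\theta N$ and use that the associated heat-kernel Riemann sum over $(\theta,1)$ is $o_\theta(1)$) and applying Lemma \ref{conver} yields $\bbvar\big[N^{\frac{d-2}{4}}Z^{(k)}_{N,\beta}(\varphi)\big]\to C^{(k)}_\beta\,I_\varphi$, where $I_\varphi:=\int_0^1\dd t\int_{\R^{2d}}\dd x\,\dd y\,\varphi(x)g_{\frac{2t}{d}}(x-y)\varphi(y)$; summing with the weights $t_k^2$ gives the stated limiting variance. (As a consistency check, $\sum_{k\ge1}C^{(k)}_\beta=\cC_\beta$, recovering Proposition \ref{variancep}.)

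The core of the argument, and the step I expect to be the main obstacle, is the influence estimate. Writing $N^{\frac{d-2}{4}}Z^{(k)}_{N,\beta}(\varphi)=\sum_{|I|=k}f^{(k)}_N(I)\prod_{a\in I}\eta_a$, where for $I=\{(n_1,z_1),\dots,(n_k,z_k)\}$ with $n_1<\dots<n_k$ one has $f^{(k)}_N(I)=\sigma^k N^{\frac{d-2}{4}}\psi_N(z_1,n_1)\prod_{i=2}^k q_{n_i-n_{i-1}}(z_i-z_{i-1})$ and $\psi_N(z,n):=\sum_x\varphi_N(x)q_n(z-x)$, and noting that $\lambda(\beta)<\infty$ forces finite moments of all orders for the $\eta$'s, the discrete Fourth Moment Theorem reduces the fourth-cumulant condition for each component to the maximal-influence condition $\Inf^{(k)}_N:=\max_{(m,w)}\sum_{I\ni(m,w)}f^{(k)}_N(I)^2\to0$. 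I would estimate this by fixing the position $j$ of the marked site $(m,w)$ in the ordered tuple and summing $f^{(k)}_N(I)^2$ over the remaining coordinates: the factors to the right of position $j$ telescope, via $\sum_z q_m(z-a)^2=q_{2m}(0)$, to a quantity $\le R_\infty^{k-j}$ independent of $w$; the factors to the left are handled with $q_n(\cdot)^2\le\norm{q_n}_\infty q_n(\cdot)\le Cn^{-d/2}q_n(\cdot)$ followed by a cascade of Chapman--Kolmogorov convolutions that collapses $\prod_{i\le j}q(z_i-z_{i-1})$ to a single kernel, after which $\sum_{z_1}\psi_N(z_1,n_1)^2\le C\norm{\varphi}_\infty\norm{\varphi}_1 N^{-d/2}$; the remaining sum over the free times $n_1<\dots<n_{j-1}<m\le N$ is a convergent (since $d\ge3$) iterated convolution of the summable sequence $(t^{-d/2})_{t\ge1}$, hence $O(1)$ uniformly in $m$. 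This gives $\Inf^{(k)}_N=O\big(N^{\frac{d-2}{2}}\cdot N^{-d/2}\big)=O(N^{-1})\to0$; the $N^{\frac{d-2}{2}}$ prefactor is exactly compensated by the $N^{-d/2}$ gained from the one spatial convolution that is ``missing'' because a site has been pinned, and this is precisely where transience of the walk in $d\ge3$ is used. (For $k=1$ this reduces to the Lindeberg condition $\max_{(n,z)}\psi_N(z,n)^2/\bbvar\big[Z^{(1)}_{N,\beta}(\varphi)\big]\to0$ for the triangular array $(\eta_{n,z})$.) Finally, with the covariance of the vector converging to $\mathrm{diag}\big(C^{(1)}_\beta I_\varphi,\dots,C^{(M)}_\beta I_\varphi\big)$ and the maximal influence of each component vanishing, the multidimensional Fourth Moment Theorem yields joint convergence to a centered Gaussian vector with that diagonal covariance; in particular $N^{\frac{d-2}{4}}\sum_{k=1}^M t_k Z^{(k)}_{N,\beta}(\varphi)$ converges in distribution to a centered Gaussian with variance $\sum_{k=1}^M t_k^2\,C^{(k)}_\beta\,I_\varphi$, as asserted.
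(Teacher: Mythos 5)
Your setup (Cram\'er--Wold reduction, orthogonality of distinct chaos degrees, and the variance computation by halting the expansion at a fixed degree and splitting at $\theta N$ as in Proposition~\ref{variancep}) is correct and follows the paper. The maximal-influence estimate you sketch is also genuinely useful and is a hypothesis one must verify to apply the non-Gaussian Fourth Moment Theorem. However, there is a real gap in the core step: you claim that ``the discrete Fourth Moment Theorem reduces the fourth-cumulant condition for each component to the maximal-influence condition'' and you then verify only the influence condition. This reduction is false for chaos of degree $k\geq 2$. The de Jong / Nourdin--Peccati--Reinert framework requires \emph{two} independent hypotheses: vanishing maximal influence \emph{and} the fourth-moment (equivalently, vanishing contraction) condition $\bbE[(\cdot)^4]\to 3\,\bbvar(\cdot)^2$. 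The influence condition buys you only universality — it lets you replace the $\eta$'s by Gaussians without changing the limit law — but it says nothing about whether that Gaussian chaos is itself asymptotically normal. A standard counterexample at $k=2$: take $f_N(i,j)=g_N(i)g_N(j)$ for $i\neq j$ with $\sum_i g_N(i)^2=1$ and $\max_i g_N(i)^2\to 0$; then maximal influence vanishes, but $\sum_{i<j}g_N(i)g_N(j)X_iX_j\to \tfrac12(Z^2-1)$, a (shifted) chi-square, not a Gaussian.

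What is missing from your argument is precisely the content of the paper's proof: a direct verification that
\begin{align*}
\bbE\Big[\big(N^{\frac{d-2}{4}}\textstyle\sum_k t_k Z^{(k)}_{N,\beta}(\varphi)\big)^4\Big]=3\,\bbvar\Big[N^{\frac{d-2}{4}}\textstyle\sum_k t_k Z^{(k)}_{N,\beta}(\varphi)\Big]^2+o(1)\,.
\end{align*}
Expanding the fourth moment via \eqref{fixedchaos} forces the $\eta$ variables to match in pairs, triples, or quadruples, and the argument must show that the only surviving configurations in the $N\to\infty$ limit are those where the four sequences $\tau_x^{(\sfa)},\tau_y^{(\sfb)},\tau_z^{(\sfc)},\tau_w^{(\sfd)}$ pair off without ever ``switching partners'' or overlapping higher than pairwise — this gives the factor $3\,(\text{variance})^2$. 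The contribution of the bad configurations (what the paper calls $\sfT_1$, $\sfT_2$, $\sfT_3$: paths that switch pairs at some $(f_{i_\star},h_{i_\star})$, or meet in triples/quadruples) is shown to be $O(N^{1-\frac d2})$ by repeatedly exploiting $\sum_n q_{2n}(0)=R_\infty<1$ and Lemma~\ref{conver}. This combinatorial estimate is the heart of the argument — it is what actually uses $d\geq 3$ and $\beta<\beta_{L^2}$ in a non-trivial way — and it cannot be replaced by the influence bound. To repair your proof you would need to add this fourth-moment/contraction analysis alongside the influence estimate you already have.
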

\begin{proof}
	{We start by introducing some shorthand notation} that is going to be useful for a concise presentation of the rest of the proof.
	For any $u \in \Z^d$, $\tau^{\ms{(k)}}_{u}$ will denote a time-increasing sequence of $(k+1)$ space-time points
	$(n_i,z_i)_{0 \leq i \leq k}\subset \N \times \Z^d$  with a starting point $(n_0,z_0):=(0,u)$.
	We will use the convention that for two sequences $\tau_x^{(k)}=(n_i,z_i)_{0\leq i\leq k}$ and
	$\tau_y^{(\ell)}=(m_i,w_i)_{0\leq i\leq \ell}$, the equality $\tau_x^{(k)}=\tau_y^{(\ell)}$ means that $k=\ell$
	and $(n_i,z_i)=(m_i,w_i)$ for $i=1,...,k$, that is for all points in the sequences $\tau_x^{(k)}$ and $\tau_y^{(\ell)}$ except the starting ones.

	Given a sequence $\tau^{\ms{(k)}}_{u}= (n_i,z_i)_{1\leq i \leq k}$, we will use the following notation
	\begin{align*}
		q(\tau^{\ms{(k)}}_u)\,{:=}\,q_{n_1}(z_1-u)\prod_{i=2}^k q_{n_i-n_{i-1}} (z_i-z_{i-1}) \hspace{0,5cm}\text{   and    }\hspace{0,5cm}\eta(\tau^{\ms{(k)}}_u)\,{:=}\,\prod_{i=1}^k \eta_{n_i,z_i} \, .
	\end{align*}
	Furthermore, we recall from \eqref{phinotation}, that for a finite set $\{x_1,... ,x_k\} \subset \Z^d$ we use the notation
	\begin{align}
		\varphi_N(x_1,...,x_k):=\prod_{u \in \{x_1,...,x_k\} }\frac{\varphi\big(\frac{u}{\sqrt{N}}\big)}{N^\frac{d}{2}} \, .
	\end{align}
	{We start by deriving} the limiting variance of $N^{\frac{d-2}{4}}\sum_{k=1}^M t_k \,Z^{(k)}_{N,\beta}(\varphi)$. We have that
	\begin{align*}
		{\bbvar}\bigg(N^{\frac{d-2}{4}}\sum_{k=1}^M t_k Z^{(k)}_{N,\beta}(\varphi)\bigg)={\sum_{k=1}^M t_k^2 \,  N^{\frac{d}{2}-1}\,  \bbE\Big[ \big(Z^{(k)}_{N,\beta}(\varphi)\big)^2\Big]} \, ,
	\end{align*}
	because for every $k \geq 1$, $\bbE \Big[Z^{(k)}_{N,\beta}(\varphi)\Big]=0$ and if $1\leq  k < \ell $, we have that $\bbE \Big[Z^{(k)}_{N,\beta}(\varphi)\, Z^{(\ell)}_{N,\beta}(\varphi) \Big]=0$, see \eqref{fixedchaos}. One can follow the steps of the proof of Proposition \ref{variancep}, to see that
	\begin{align*}
		\lim_{N \to \infty} N^{\frac{d}{2}-1}\, \bbE\Big[ \big(Z^{(k)}_{N,\beta}(\varphi)\big)^2\Big]=\mathcal{C}^{(k)}_{\beta} \int_0^1 \dd t \,\int_{\R^{2d} } \dd x \, \dd y\, \varphi(x) g_{\frac{2t}{d}}(x-y) \varphi(y) \, ,
	\end{align*}
	where $ \displaystyle C^{(k)}_{\beta} \,{: =}\, \sigma(\beta)^{2k} \sumtwo{1\leq \ell_1 <...<\ell_{k-1}}{\ell_0:=0} \prod_{i=1}^{k-1} q_{2(\ell_i-\ell_{i-1})}(0)$ for $k>1$ and $C^{(1)}_{\beta}\,{: =}\,\sigma(\beta)^2$. \\

	In order to show that $N^{\frac{d-2}{4}}\sum_{k=1}^M t_k\, Z^{(k)}_{N,\beta}(\varphi)$ converges in distribution to a Gaussian limit we will employ the Fourth Moment Theorem,  { which states that a sequence of random variables in a fixed Wiener chaos or multilinear polynomials of finite degree converge to a Gaussian random variable if the 4th moment converges to three times the square of the variance}, see \cite{dJ87, NP05, NPR10, CSZ17b} for more details. Namely, we will show that as $N \to \infty$,
	\begin{align*}
		\vspace{0.2mm}
		\bbE \Bigg[\bigg( N^{\frac{d-2}{4}} \sum_{k=1}^M t_k Z^{(k)}_{N,\beta}(\varphi)\bigg)^4 \Bigg] = 3 \bbvar \Bigg[ N^{\frac{d-2}{4}} \sum_{k=1}^M t_k Z^{(k)}_{N,\beta}(\varphi) \Bigg]^2 +o(1) \, .
	\end{align*}
	that is, the fourth moment of $ N^{\frac{d-2}{4}} \sum_{k=1}^M t_k Z^{(k)}_{N,\beta}(\varphi)$ converges to 3 times its variance, squared.
	\noindent In view of the chaos expansion \eqref{fixedchaos} we have that
	\begin{align}
		\bbE \Bigg[\bigg( N^{\frac{d-2}{4}} \sum_{k=1}^M t_k Z^{(k)}_{N,\beta}(\varphi)\bigg)^4 \Bigg]= &
		N^{d-2} \sum_{1\leq \sfa, \sfb, \sfc, \sfd \leq M } t_\sfa t_\sfb t_\sfc t_\sfd \: \bbE \Big[Z^{(\sfa)}_{N,\beta}(\varphi) Z^{(\sfb)}_{N,\beta}(\varphi) Z^{(\sfc)}_{N,\beta}(\varphi) Z^{(\sfd)}_{N,\beta}(\varphi)\Big] \notag                                                                                                                                                                                                                                       \\
		=                                                                                               & N^{d-2} \sum_{1\leq \sfa, \sfb, \sfc, \sfd \leq M} t_\sfa t_\sfb t_\sfc t_\sfd \, \,
		{ \sigma^{ \sfa+ \sfb+ \sfc+ \sfd}}\sum_{x,y,z,w \,\in \Z^d} \varphi_N(x,y,z,w) \notag                                                                                                                                                                                                                                                                                                                                                                                 \\
		\times                                                                                          & \sum_{\tau^{\ms{(\sfa)}}_x, \tau^{\ms{(\sfb)}}_y, \tau^{\ms{(\sfc)}}_z, \tau^{\ms{(\sfd)}}_w} \, \, \prodtwo{(u,\sfs)\, \in  \{(x,\sfa), (y,\sfb),}{(z,\sfc), (w,\sfd)\}} q(\tau^{\ms(\sfs)}_u)  \,  \bbE\Big[\prodtwo{(u,\sfs) \, \in \{(x,\sfa),(y,\sfb),}{(z,\sfc),(w,\sfd)\}}  \eta(\tau^{\ms(\sfs)}_u)\Big ]\, .                                 \label{4thmom}
	\end{align}

	Since $M$ is finite, we can fix a quadruple $(\sfa,\sfb,\sfc,\sfd)$ and deal with the rest of the sum which varies as $N \to \infty$. Thus, we will focus on the sum
	\begin{align} \label{secondform}
		N^{d-2}  \sum_{x,y,z,w \,\in \Z^d} \varphi_N(x,y,z,w) \, \,	{ \sigma^{ \sfa+ \sfb+ \sfc+ \sfd}} \hspace{-0.3cm}
		\sum_{\tau^{\ms{(\sfa)}}_x, \tau^{\ms{(\sfb)}}_y, \tau^{\ms{(\sfc)}}_z, \tau^{\ms{(\sfd)}}_w} \, \, \prodtwo{(u,\sfs)\, \in  \{(x,\sfa), (y,\sfb),}{(z,\sfc), (w,\sfd)\}} q(\tau^{\ms(\sfs)}_u)  \,  \bbE\Big[\prodtwo{(u,\sfs) \, \in \{(x,\sfa),(y,\sfb),}{(z,\sfc),(w,\sfd)\}}  \eta(\tau^{\ms(\sfs)}_u)\Big ] \, ,
	\end{align}
	instead of \eqref{4thmom}.
	We note that the expectation
	\begin{align} \label{matchingconstr}
		\bbE\Big[\prodtwo{(u,\sfs) \, \in \{(x,\sfa),(y,\sfb),}{(z,\sfc),(w,\sfd)\}}  \eta(\tau^{\ms(\sfs)}_u)\Big ] \, ,
	\end{align} is non-zero only if the random variables $\eta$ appearing in the product,
	are matched to each other. This is because, if a random variable $\eta$ stands alone in the expectation \eqref{matchingconstr}, then due to independence and the fact that every $\eta$ has mean zero, the expectation is trivially zero.
	The possible matchings among the $\eta$ variables can be double, triple or quadruple.  We {cannot} have more than quadruple matchings, because points in a sequence $\tau^{\ms(\sfs)}_u$ are strictly increasing in time, thus they {cannot} match with each other.

	We will show that when $N \to \infty$, only one type of matchings  contributes to \eqref{secondform} and hence also to \eqref{4thmom}. Specifically, the only configuration that contributes{, asymptotically,} is the one where four random walk paths meet in pairs without switching their pair. In terms of the sequences  $\tau^{\ms{(\sfa)}}_x, \tau^{\ms{(\sfb)}}_y, \tau^{\ms{(\sfc)}}_z, \tau^{\ms{(\sfd)}}_w$, this condition translates to that $\tau^{\ms{(\sfa)}}_x, \tau^{\ms{(\sfb)}}_y, \tau^{\ms{(\sfc)}}_z, \tau^{\ms{(\sfd)}}_w$ must be pairwise equal {to two sequences which do not share any common points. For the rest of the proof, when we say pairwise equal we will always mean pairwise equal to two distinct sequences which do not share any common points.}
	We will first focus on sequences $\tau^{\ms{(\sfa)}}_x, \tau^{\ms{(\sfb)}}_y, \tau^{\ms{(\sfc)}}_z, \tau^{\ms{(\sfd)}}_w$, which do not satisfy this condition { and show that their contribution is negligible.}

		%identification of i_\star
		{Consider sequences $ \tau^{\ms{(\sfa)}}_x , \tau^{\ms{(\sfb)}}_y , \tau^{\ms{(\sfc)}}_z , \tau^{\ms{(\sfd)}}_w$ and let $\tau:=\tau^{\ms{(\sfa)}}_x \cup \tau^{\ms{(\sfb)}}_y \cup \tau^{\ms{(\sfc)}}_z \cup \tau^{\ms{(\sfd)}}_w=(f_i,h_i)_{1 \leq i \leq |\tau|}$ with $f_1\leq f_2 \leq\dots\leq f_{|\tau|}.$ Let $1\leq i_\star \leq |\tau|$ be the {first} index, so that for all $(u,\sfs)\, \in  \{(x,\sfa), (y,\sfb),(z,\sfc), (w,\sfd)\}$, the sequences $\tau_u^{\ms{(\sfs)}}\cap \big([1,f_{i_\star})\times \Z^d \big)$ are pairwise equal, {but this fails to hold for $\tau_u^{\ms{(\sfs)}}\cap \big([1,f_{i_\star}]\times \Z^d \big)$,} see figures \ref{fig:Type1}, \ref{fig:Type2}}.

	If there does not exist such index $1 \leq i_\star \leq |\tau|$, then the four random walks meet pairwise without switching their pair.
	For this kind of sequences $\tau^{\ms{(\sfa)}}_x, \tau^{\ms{(\sfb)}}_y, \tau^{\ms{(\sfc)}}_z,\tau^{\ms{(\sfd)}}_w$, {for which $i_\star$ does not exist}, we have that $\tau^{\ms{(\sfa)}}_x, \tau^{\ms{(\sfb)}}_y, \tau^{\ms{(\sfc)}}_z,\tau^{\ms{(\sfd)}}_w$ have to be pairwise equal. Their contribution to \eqref{4thmom} is
	\begin{align} \label{4thmom_simpl}
		3\, N^{d-2} \sum_{1\leq \sfa,\sfb\leq M} t^2_\sfa t^2_\sfb 	\, \,{ \sigma^{2( \sfa+ \sfb)}} \sum_{x,y,z,w \in \Z^d}   \varphi_N(x,y,z,w)  \sumtwo{\tau^{\ms{(\sfa)}}_x=\tau^{\ms{(\sfa)}}_y,\tau^{\ms{(\sfb)}}_w=\tau^{\ms{(\sfb)}}_z}{\tau^{\ms{(\sfa)}}_x\cap \tau^{\ms{(\sfb)}}_z=\eset}   q(\tau^{\ms{(\sfa)}}_x)\,q(\tau^{\ms{(\sfa)}}_y)\, q(\tau^{\ms{(\sfb)}}_w)\,q(\tau^{\ms{(\sfb)}}_z) \, .
	\end{align}
	The factor $3$ accounts for the number of ways we can pair the sequences $\tau^{\ms{(\sfa)}}_x, \tau^{\ms{(\sfb)}}_y, \tau^{\ms{(\sfc)}}_z,\tau^{\ms{(\sfd)}}_w$.
	The sum in \eqref{4thmom_simpl} equals $ 3 N^{d-2} \: \bbE\Big[\big(\sum_{k=1}^M t_k Z^{(k)}_{N,\beta}(\varphi)\big)^2\Big]^2+o(1)$ as $N\to \infty$. The $o(1)$ factor is a consequence of the restriction  $\tau^{\ms{(\sfa)}}_x\cap \tau^{\ms{(\sfb)}}_z\neq \eset$ in \eqref{4thmom_simpl}, which  excludes configurations of the four random walk paths such that four walks meet simultaneously at a single point. It is part of the proof below to show that the contribution of these configurations is negligible in the large $N$ limit.

	Hence, for now we can focus on the {cases} for which such a point $(f_{i_\star},h_{i_\star})$ exists {and show that their contribution is negligible for \eqref{4thmom}}.

	\begin{figure}  %Type1

		\centering
		\tikzset{every picture/.style={line width=0.50pt}} %set default line width to 0.75pt

		\begin{tikzpicture}[x=0.35pt,y=0.4pt,yscale=-1,xscale=1]
			%uncomment if require: \path (0,467); %set diagram left start at 0, and has height of 467

			%Straight Lines [id:da6972863274663337]
			\draw    (100,62.25) -- (100,333.5) ;
			%Shape: Circle [id:dp38283723033481687]
			\draw  [fill={rgb, 255:red, 0; green, 0; blue, 0 }  ,fill opacity=1 ] (96,98.38) .. controls (96,96.23) and (97.73,94.5) .. (99.88,94.5) .. controls (102.02,94.5) and (103.75,96.23) .. (103.75,98.38) .. controls (103.75,100.52) and (102.02,102.25) .. (99.88,102.25) .. controls (97.73,102.25) and (96,100.52) .. (96,98.38) -- cycle ;
			%Shape: Circle [id:dp773489310290302]
			\draw  [fill={rgb, 255:red, 0; green, 0; blue, 0 }  ,fill opacity=1 ] (96,148.38) .. controls (96,146.23) and (97.73,144.5) .. (99.88,144.5) .. controls (102.02,144.5) and (103.75,146.23) .. (103.75,148.38) .. controls (103.75,150.52) and (102.02,152.25) .. (99.88,152.25) .. controls (97.73,152.25) and (96,150.52) .. (96,148.38) -- cycle ;
			%Curve Lines [id:da13530030160914808]
			\draw    (99.88,98.38) .. controls (155.75,98.5) and (158.75,105.5) .. (185.75,125.5) ;
			%Curve Lines [id:da0633189442004497]
			\draw    (99.88,148.38) .. controls (144.75,146.5) and (152.75,149.5) .. (185.75,125.5) ;
			%Curve Lines [id:da4203345821186676]
			\draw [color={rgb, 255:red, 0; green, 0; blue, 0}  ,draw opacity=1 ]   (99.88,218.38) .. controls (139.75,218.5) and (178.63,245.63) .. (190.75,257.5) ;
			%Curve Lines [id:da32200486186972577]
			\draw [color={rgb, 255:red, 0; green, 0; blue, 0 }  ,draw opacity=1 ]   (99.88,298.38) .. controls (151.75,293.5) and (174.63,269.63) .. (190.75,257.5) ;
			%Curve Lines [id:da6887443441167758]
			\draw [color={rgb, 255:red, 0; green, 0; blue, 0 }  ,draw opacity=1 ]   (314.6,122.08) .. controls (332.36,138.33) and (383.29,149.69) .. (392.17,184.63) ;
			%Curve Lines [id:da9693746415752875]
			\draw [color={rgb, 255:red, 0; green, 0; blue, 0 }  ,draw opacity=1 ]   (319.55,254.25) .. controls (353.8,218.48) and (379.72,230.66) .. (392.17,184.63) ;
			%Curve Lines [id:da5034905880568952]
			\draw    (314.6,122.08) .. controls (354.81,92.36) and (459.13,148.88) .. (484.91,180.4) ;
			%Curve Lines [id:da15788778156330996]
			\draw    (319.55,254.25) .. controls (369.56,278.26) and (444.7,210.12) .. (484.91,180.4) ;
			%Shape: Circle [id:dp490543929932473]
			\draw  [fill={rgb, 255:red, 0; green, 0; blue, 0 }  ,fill opacity=1 ] (478.32,180.23) .. controls (478.34,178.09) and (480.09,176.36) .. (482.23,176.38) .. controls (484.37,176.39) and (486.09,178.14) .. (486.07,180.28) .. controls (486.06,182.42) and (484.31,184.14) .. (482.17,184.13) .. controls (480.03,184.11) and (478.31,182.37) .. (478.32,180.23) -- cycle ;
			%Curve Lines [id:da08339238714696362]
			\draw    (391.17,184.62) .. controls (407.6,176.54) and (418.28,174.61) .. (437.69,164.07) ;
			%Curve Lines [id:da8195167418159921]
			\draw  [dash pattern={on 0.84pt off 2.51pt}]  (437.69,164.07) .. controls (449.76,154.15) and (426.3,172.66) .. (466.51,142.94) ;
			%Curve Lines [id:da5423700876690077]
			\draw    (466.51,142.94) .. controls (506.72,113.22) and (563.63,104.26) .. (604.23,113.87) ;
			%Curve Lines [id:da5433408074513384]
			\draw    (392.17,184.63) .. controls (404.79,197.84) and (404.03,210.51) .. (419.97,219.28) ;
			%Curve Lines [id:da043757459762350126]
			\draw  [dash pattern={on 0.84pt off 2.51pt}]  (419.97,219.28) .. controls (453.88,232.19) and (418.63,220.6) .. (455.88,232.2) ;
			%Curve Lines [id:da32689953922069037]
			\draw    (453.89,231.53) .. controls (470.95,232.51) and (577.73,256.7) .. (606.05,210.22) ;
			%Curve Lines [id:da12321371974698248]
			\draw    (482.2,180.25) .. controls (493.75,174.25) and (511.75,163.75) .. (541.25,179.25) ;
			%Curve Lines [id:da3997930075259374]
			\draw    (482.2,180.25) .. controls (488.25,188.75) and (531.25,192.75) .. (541.25,179.25) ;
			%Shape: Circle [id:dp21137453177869492]
			\draw  [fill={rgb, 255:red, 0; green, 0; blue, 0 }  ,fill opacity=1 ] (537.38,179.22) .. controls (537.39,177.08) and (539.14,175.36) .. (541.28,175.38) .. controls (543.42,175.39) and (545.14,177.14) .. (545.12,179.28) .. controls (545.11,181.42) and (543.36,183.14) .. (541.22,183.12) .. controls (539.08,183.11) and (537.36,181.36) .. (537.38,179.22) -- cycle ;
			%Curve Lines [id:da8852472629673471]
			\draw    (541.25,179.25) .. controls (580.03,162.98) and (592.83,144.4) .. (604.39,111.81) ;
			%Curve Lines [id:da8740926214071829]
			\draw    (541.25,179.25) .. controls (581.24,180.99) and (587.81,194.37) .. (605.71,209.16) ;
			%Shape: Circle [id:dp16458411948857432]
			\draw  [fill={rgb, 255:red, 0; green, 0; blue, 0 }  ,fill opacity=1 ] (600.36,113.85) .. controls (600.37,111.71) and (602.12,109.98) .. (604.26,110) .. controls (606.4,110.01) and (608.12,111.76) .. (608.11,113.9) .. controls (608.09,116.04) and (606.34,117.76) .. (604.2,117.75) .. controls (602.06,117.73) and (600.34,115.99) .. (600.36,113.85) -- cycle ;
			%Shape: Circle [id:dp8840084479604959]
			\draw  [fill={rgb, 255:red, 0; green, 0; blue, 0 }  ,fill opacity=1 ] (602.18,210.19) .. controls (602.19,208.05) and (603.94,206.33) .. (606.08,206.34) .. controls (608.22,206.36) and (609.94,208.1) .. (609.93,210.24) .. controls (609.91,212.38) and (608.16,214.11) .. (606.02,214.09) .. controls (603.88,214.08) and (602.16,212.33) .. (602.18,210.19) -- cycle ;
			%Curve Lines telikos panw komvos a
			\draw    (604.23,113.87) .. controls (608.54,117.15) and (593.2,108) .. (637.62,105.51) ;
			%Curve Lines telikos panw komvos b
			\draw    (604.23,113.87) .. controls (609.91,120.92) and (617.5,122.57) .. (637.9,123.11) ;
			%Curve Lines telikos katw komvos a
			\draw    (606.05,210.22) .. controls (610.36,213.5) and (595.02,204.35) .. (639.44,201.86) ;
			%Curve Lines telikos katw komvos a
			\draw    (606.05,210.22) .. controls (611.73,217.26) and (619.32,218.92) .. (639.72,219.46) ;
			%Shape: Circle [id:dp5603559374440793]
			\draw  [fill={rgb, 255:red, 0; green, 0; blue, 0 }  ,fill opacity=1 ] (96,218.38) .. controls (96,216.23) and (97.73,214.5) .. (99.88,214.5) .. controls (102.02,214.5) and (103.75,216.23) .. (103.75,218.38) .. controls (103.75,220.52) and (102.02,222.25) .. (99.88,222.25) .. controls (97.73,222.25) and (96,220.52) .. (96,218.38) -- cycle ;
			%Shape: Circle [id:dp44014285196921243]
			\draw  [fill={rgb, 255:red, 0; green, 0; blue, 0 }  ,fill opacity=1 ] (96,298.38) .. controls (96,296.23) and (97.73,294.5) .. (99.88,294.5) .. controls (102.02,294.5) and (103.75,296.23) .. (103.75,298.38) .. controls (103.75,300.52) and (102.02,302.25) .. (99.88,302.25) .. controls (97.73,302.25) and (96,300.52) .. (96,298.38) -- cycle ;
			%Shape: Circle [id:dp44389804889556994]
			\draw  [fill={rgb, 255:red, 0; green, 0; blue, 0 }  ,fill opacity=1 ] (186.88,257.5) .. controls (186.88,255.36) and (188.61,253.63) .. (190.75,253.63) .. controls (192.89,253.63) and (194.63,255.36) .. (194.63,257.5) .. controls (194.63,259.64) and (192.89,261.38) .. (190.75,261.38) .. controls (188.61,261.38) and (186.88,259.64) .. (186.88,257.5) -- cycle ;
			%Shape: Circle [id:dp5951281005732444]
			\draw  [fill={rgb, 255:red, 0; green, 0; blue, 0 }  ,fill opacity=1 ] (181,125.38) .. controls (181,123.23) and (182.73,121.5) .. (184.88,121.5) .. controls (187.02,121.5) and (188.75,123.23) .. (188.75,125.38) .. controls (188.75,127.52) and (187.02,129.25) .. (184.88,129.25) .. controls (182.73,129.25) and (181,127.52) .. (181,125.38) -- cycle ;
			%Shape: Circle [id:dp4789772486862559]
			\draw  [fill={rgb, 255:red, 0; green, 0; blue, 0 }  ,fill opacity=1 ] (387.29,184.59) .. controls (387.31,182.45) and (389.05,180.73) .. (391.19,180.74) .. controls (393.33,180.76) and (395.06,182.51) .. (395.04,184.65) .. controls (395.03,186.79) and (393.28,188.51) .. (391.14,188.49) .. controls (389,188.48) and (387.28,186.73) .. (387.29,184.59) -- cycle ;
			%Curve Lines [id:da8684296921571022]
			\draw  (184.88,125.38) .. controls (189.18,128.66) and (173.85,119.5) .. (218.27,117.01) ;
			%Curve Lines [id:da5888310721745509]
			\draw   (185.75,125.5) .. controls (191.43,132.55) and (199.02,134.2) .. (219.42,134.74) ;
			%Curve Lines [id:da9722074236361711]
			\draw   (190.75,257.5) .. controls (195.05,260.78) and (179.72,251.63) .. (224.14,249.14) ;
			%Curve Lines [id:da5635137416157614]
			\draw    (190.75,257.5) .. controls (196.43,264.55) and (204.02,266.2) .. (224.42,266.74) ;
			%Curve Lines [id:da5191179544962801]
			\draw    (314.84,122.66) .. controls (310.36,119.63) and (326.19,127.89) .. (281.98,132.91) ;
			%Curve Lines [id:da009536816659161618]
			\draw   (314.84,122.66) .. controls (308.77,115.95) and (301.09,114.73) .. (280.7,115.35) ;
			%Curve Lines [id:da24314043167736865]
			\draw   (319.55,254.25) .. controls (315.07,251.22) and (330.9,259.48) .. (286.69,264.5) ;
			%Curve Lines [id:da8900160225533555]
			\draw    (320.04,255.46) .. controls (313.97,248.75) and (306.29,247.53) .. (285.9,248.15) ;
			%Shape: Circle [id:dp06582969756544488]
			\draw  [fill={rgb, 255:red, 0; green, 0; blue, 0 }  ,fill opacity=1 ] (316.17,255.46) .. controls (316.17,253.32) and (317.9,251.58) .. (320.04,251.58) .. controls (322.18,251.58) and (323.92,253.32) .. (323.92,255.46) .. controls (323.92,257.6) and (322.18,259.33) .. (320.04,259.33) .. controls (317.9,259.33) and (316.17,257.6) .. (316.17,255.46) -- cycle ;
			%Shape: Circle [id:dp7258649663657133]
			\draw  [fill={rgb, 255:red, 0; green, 0; blue, 0 }  ,fill opacity=1 ] (310.73,122.08) .. controls (310.73,119.94) and (312.46,118.21) .. (314.6,118.21) .. controls (316.74,118.21) and (318.48,119.94) .. (318.48,122.08) .. controls (318.48,124.22) and (316.74,125.96) .. (314.6,125.96) .. controls (312.46,125.96) and (310.73,124.22) .. (310.73,122.08) -- cycle ;

			% Text Node
			\draw (100,344.25) node  [font=\tiny]  {$\mathbb{Z}^{d}$};
			% Text Node
			\draw (649.33,113) node  [rotate=-0.4]  {\scalebox{0.75}{$...$}};
			% Text Node
			\draw (649.15,210) node  [rotate=-0.4]  {\scalebox{0.75}{$...$}};
			% Text Node
			\draw (97.88,101.78) node [anchor=north east] [inner sep=0.75pt]  [font=\tiny]  {\scalebox{0.8}{$( 0,x)$}};
			% Text Node
			\draw (97.88,151.78) node [anchor=north east] [inner sep=0.75pt]  [font=\tiny]  {\scalebox{0.8}{$( 0,y)$}};
			% Text Node
			\draw (97.88,221.78) node [anchor=north east] [inner sep=0.75pt]  [font=\tiny]  {\scalebox{0.8}{$( 0,z)$}};
			% Text Node
			\draw (97.88,301.77) node [anchor=north east] [inner sep=0.75pt]  [font=\tiny]  {\scalebox{0.8}{$( 0,w)$}};
			% Text Node
			\draw (385.29,184.58) node [anchor=east] [inner sep=0.75pt]  [font=\tiny]  {\scalebox{0.75}{$( {f_{i_{\star }}} ,{h_{i_{\star }}})$}};
			% Text Node
			\draw (315.8,112.81) node [anchor=south] [inner sep=0.75pt]  [font=\tiny] {\scalebox{0.75}{$(\bar{f}_{a} ,\bar{h}_{a})$}};
			% Text Node
			\draw (320.04,267.53) node [anchor=north] [inner sep=0.75pt]  [font=\tiny]  {\scalebox{0.75}{$(\ubar{f}_{b} ,\ubar{h}_{b})$}};
			% Text Node
			\draw (229.16,125) node  [rotate=-0.4]  {\scalebox{0.75}{$...$}};
			% Text Node
			\draw (235.83,257) node  [rotate=-0.4]  {\scalebox{0.75}{$...$}};
			% Text Node
			\draw (270.36,125) node  [rotate=-178.56]  {\scalebox{0.75}{$...$}};
			% Text Node
			\draw (275.96,257) node  [rotate=-178.56]  {\scalebox{0.75}{$...$}};
			% Text Node
			\draw (187.68,110) node [anchor=south] [inner sep=0.75pt]  [font=\tiny]  {\scalebox{0.75}{$(\bar{f}_{1} ,\bar{h}_{{1}})$}};
			% Text Node
			\draw (193.75,273) node [anchor=north] [inner sep=0.75pt]  [font=\tiny]  {\scalebox{0.75}{$(\ubar{f}_{1} ,\ubar{h}_{{1}})$}};

			\draw (320.04,380) node [font=\tiny] {(a)};
		\end{tikzpicture}
		\begin{tikzpicture}[x=0.35pt,y=0.4pt,yscale=-1,xscale=1]
			%uncomment if require: \path (0,467); %set diagram left start at 0, and has height of 467

			%Straight Lines [id:da6972863274663337]
			\draw    (100,62.25) -- (100,333.5) ;
			%Shape: Circle [id:dp38283723033481687]
			\draw  [fill={rgb, 255:red, 0; green, 0; blue, 0 }  ,fill opacity=1 ] (96,98.38) .. controls (96,96.23) and (97.73,94.5) .. (99.88,94.5) .. controls (102.02,94.5) and (103.75,96.23) .. (103.75,98.38) .. controls (103.75,100.52) and (102.02,102.25) .. (99.88,102.25) .. controls (97.73,102.25) and (96,100.52) .. (96,98.38) -- cycle ;
			%Shape: Circle [id:dp773489310290302]
			\draw  [fill={rgb, 255:red, 0; green, 0; blue, 0 }  ,fill opacity=1 ] (96,148.38) .. controls (96,146.23) and (97.73,144.5) .. (99.88,144.5) .. controls (102.02,144.5) and (103.75,146.23) .. (103.75,148.38) .. controls (103.75,150.52) and (102.02,152.25) .. (99.88,152.25) .. controls (97.73,152.25) and (96,150.52) .. (96,148.38) -- cycle ;
			%Curve Lines [id:da13530030160914808]
			\draw    (99.88,98.38) .. controls (155.75,98.5) and (158.75,105.5) .. (185.75,125.5) ;
			%Curve Lines [id:da0633189442004497]
			\draw    (99.88,148.38) .. controls (144.75,146.5) and (152.75,149.5) .. (185.75,125.5) ;
			%Curve Lines [id:da4203345821186676]
			\draw [dash pattern={on 0.84pt off 2.51pt}] [color={rgb, 255:red, 0; green, 0; blue, 0}  ,draw opacity=1 ]   (99.88,218.38) .. controls (139.75,218.5) and (178.63,245.63) .. (190.75,257.5) ;
			%Curve Lines [id:da32200486186972577]
			\draw [dash pattern={on 0.84pt off 2.51pt}][color={rgb, 255:red, 0; green, 0; blue, 0 }  ,draw opacity=1 ]   (99.88,298.38) .. controls (151.75,293.5) and (174.63,269.63) .. (190.75,257.5) ;
			%Curve Lines [id:da6887443441167758]
			\draw [color={rgb, 255:red, 0; green, 0; blue, 0}  ,draw opacity=1 ]   (314.6,122.08) .. controls (332.36,138.33) and (383.29,149.69) .. (392.17,184.63) ;
			%Curve Lines [id:da9693746415752875]
			\draw [color={rgb, 255:red, 0; green, 0; blue, 0}  ,draw opacity=1 ]   (319.55,254.25) .. controls (353.8,218.48) and (379.72,230.66) .. (392.17,184.63) ;
			%Curve Lines [id:da5034905880568952]
			\draw  [dash pattern={on 0.84pt off 2.51pt}]    (314.6,122.08) .. controls (354.81,92.36) and (459.13,148.88) .. (484.91,180.4) ;
			%Curve Lines [id:da15788778156330996]
			\draw  [dash pattern={on 0.84pt off 2.51pt}]    (319.55,254.25) .. controls (369.56,278.26) and (444.7,210.12) .. (484.91,180.4) ;
			%Shape: Circle [id:dp490543929932473]
			\draw  [fill={rgb, 255:red, 0; green, 0; blue, 0 }  ,fill opacity=1 ] (478.32,180.23) .. controls (478.34,178.09) and (480.09,176.36) .. (482.23,176.38) .. controls (484.37,176.39) and (486.09,178.14) .. (486.07,180.28) .. controls (486.06,182.42) and (484.31,184.14) .. (482.17,184.13) .. controls (480.03,184.11) and (478.31,182.37) .. (478.32,180.23) -- cycle ;
			%Curve Lines [id:da08339238714696362]
			\draw   [dash pattern={on 0.84pt off 2.51pt}]   (391.17,184.62) .. controls (407.6,176.54) and (418.28,174.61) .. (437.69,164.07) ;
			%Curve Lines [id:da8195167418159921]
			\draw  [dash pattern={on 0.84pt off 2.51pt}]  (437.69,164.07) .. controls (449.76,154.15) and (426.3,172.66) .. (466.51,142.94) ;
			%Curve Lines [id:da5423700876690077]
			\draw   [dash pattern={on 0.84pt off 2.51pt}]   (466.51,142.94) .. controls (506.72,113.22) and (563.63,104.26) .. (604.23,113.87) ;
			%Curve Lines [id:da5433408074513384]
			\draw   [dash pattern={on 0.84pt off 2.51pt}]   (392.17,184.63) .. controls (404.79,197.84) and (404.03,210.51) .. (419.97,219.28) ;
			%Curve Lines [id:da043757459762350126]
			\draw  [dash pattern={on 0.84pt off 2.51pt}]  (419.97,219.28) .. controls (453.88,232.19) and (418.63,220.6) .. (455.88,232.2) ;
			%Curve Lines [id:da32689953922069037]
			\draw   [dash pattern={on 0.84pt off 2.51pt}]   (453.89,231.53) .. controls (470.95,232.51) and (577.73,256.7) .. (606.05,210.22) ;
			%Curve Lines [id:da12321371974698248]
			\draw   [dash pattern={on 0.84pt off 2.51pt}]   (482.2,180.25) .. controls (493.75,174.25) and (511.75,163.75) .. (541.25,179.25) ;
			%Curve Lines [id:da3997930075259374]
			\draw   [dash pattern={on 0.84pt off 2.51pt}]   (482.2,180.25) .. controls (488.25,188.75) and (531.25,192.75) .. (541.25,179.25) ;
			%Shape: Circle [id:dp21137453177869492]
			\draw  [fill={rgb, 255:red, 0; green, 0; blue, 0 }  ,fill opacity=1 ] (537.38,179.22) .. controls (537.39,177.08) and (539.14,175.36) .. (541.28,175.38) .. controls (543.42,175.39) and (545.14,177.14) .. (545.12,179.28) .. controls (545.11,181.42) and (543.36,183.14) .. (541.22,183.12) .. controls (539.08,183.11) and (537.36,181.36) .. (537.38,179.22) -- cycle ;
			%Curve Lines [id:da8852472629673471]
			\draw   [dash pattern={on 0.84pt off 2.51pt}]   (541.25,179.25) .. controls (580.03,162.98) and (592.83,144.4) .. (604.39,111.81) ;
			%Curve Lines [id:da8740926214071829]
			\draw  [dash pattern={on 0.84pt off 2.51pt}]    (541.25,179.25) .. controls (581.24,180.99) and (587.81,194.37) .. (605.71,209.16) ;
			%Shape: Circle [id:dp16458411948857432]
			\draw  [fill={rgb, 255:red, 0; green, 0; blue, 0 }  ,fill opacity=1 ] (600.36,113.85) .. controls (600.37,111.71) and (602.12,109.98) .. (604.26,110) .. controls (606.4,110.01) and (608.12,111.76) .. (608.11,113.9) .. controls (608.09,116.04) and (606.34,117.76) .. (604.2,117.75) .. controls (602.06,117.73) and (600.34,115.99) .. (600.36,113.85) -- cycle ;
			%Shape: Circle [id:dp8840084479604959]
			\draw  [fill={rgb, 255:red, 0; green, 0; blue, 0 }  ,fill opacity=1 ] (602.18,210.19) .. controls (602.19,208.05) and (603.94,206.33) .. (606.08,206.34) .. controls (608.22,206.36) and (609.94,208.1) .. (609.93,210.24) .. controls (609.91,212.38) and (608.16,214.11) .. (606.02,214.09) .. controls (603.88,214.08) and (602.16,212.33) .. (602.18,210.19) -- cycle ;
			%Curve Lines [id:da956709830045911]
			\draw  [dash pattern={on 0.84pt off 2.51pt}]  (604.23,113.87) .. controls (608.54,117.15) and (593.2,108) .. (637.62,105.51) ;
			%Curve Lines [id:da7357451092760604]
			\draw  [dash pattern={on 0.84pt off 2.51pt}]  (604.23,113.87) .. controls (609.91,120.92) and (617.5,122.57) .. (637.9,123.11) ;
			%Curve Lines [id:da6843592226032527]
			\draw  [dash pattern={on 0.84pt off 2.51pt}]  (606.05,210.22) .. controls (610.36,213.5) and (595.02,204.35) .. (639.44,201.86) ;
			%Curve Lines [id:da1973650441960909]
			\draw  [dash pattern={on 0.84pt off 2.51pt}]  (606.05,210.22) .. controls (611.73,217.26) and (619.32,218.92) .. (639.72,219.46) ;
			%Shape: Circle [id:dp5603559374440793]
			\draw  [fill={rgb, 255:red, 0; green, 0; blue, 0 }  ,fill opacity=1 ] (96,218.38) .. controls (96,216.23) and (97.73,214.5) .. (99.88,214.5) .. controls (102.02,214.5) and (103.75,216.23) .. (103.75,218.38) .. controls (103.75,220.52) and (102.02,222.25) .. (99.88,222.25) .. controls (97.73,222.25) and (96,220.52) .. (96,218.38) -- cycle ;
			%Shape: Circle [id:dp44014285196921243]
			\draw  [fill={rgb, 255:red, 0; green, 0; blue, 0 }  ,fill opacity=1 ] (96,298.38) .. controls (96,296.23) and (97.73,294.5) .. (99.88,294.5) .. controls (102.02,294.5) and (103.75,296.23) .. (103.75,298.38) .. controls (103.75,300.52) and (102.02,302.25) .. (99.88,302.25) .. controls (97.73,302.25) and (96,300.52) .. (96,298.38) -- cycle ;
			%Shape: Circle [id:dp44389804889556994]
			\draw  [fill={rgb, 255:red, 0; green, 0; blue, 0 }  ,fill opacity=1 ] (186.88,257.5) .. controls (186.88,255.36) and (188.61,253.63) .. (190.75,253.63) .. controls (192.89,253.63) and (194.63,255.36) .. (194.63,257.5) .. controls (194.63,259.64) and (192.89,261.38) .. (190.75,261.38) .. controls (188.61,261.38) and (186.88,259.64) .. (186.88,257.5) -- cycle ;
			%Shape: Circle [id:dp5951281005732444]
			\draw  [fill={rgb, 255:red, 0; green, 0; blue, 0 }  ,fill opacity=1 ] (181,125.38) .. controls (181,123.23) and (182.73,121.5) .. (184.88,121.5) .. controls (187.02,121.5) and (188.75,123.23) .. (188.75,125.38) .. controls (188.75,127.52) and (187.02,129.25) .. (184.88,129.25) .. controls (182.73,129.25) and (181,127.52) .. (181,125.38) -- cycle ;
			%Shape: Circle [id:dp4789772486862559]
			\draw  [fill={rgb, 255:red, 0; green, 0; blue, 0 }  ,fill opacity=1 ] (387.29,184.59) .. controls (387.31,182.45) and (389.05,180.73) .. (391.19,180.74) .. controls (393.33,180.76) and (395.06,182.51) .. (395.04,184.65) .. controls (395.03,186.79) and (393.28,188.51) .. (391.14,188.49) .. controls (389,188.48) and (387.28,186.73) .. (387.29,184.59) -- cycle ;
			%Curve Lines [id:da8684296921571022]
			\draw   (184.88,125.38) .. controls (189.18,128.66) and (173.85,119.5) .. (218.27,117.01) ;
			%Curve Lines [id:da5888310721745509]
			\draw    (185.75,125.5) .. controls (191.43,132.55) and (199.02,134.2) .. (219.42,134.74) ;
			%Curve Lines [id:da9722074236361711]
			\draw  [dash pattern={on 0.84pt off 2.51pt}]  (190.75,257.5) .. controls (195.05,260.78) and (179.72,251.63) .. (224.14,249.14) ;
			%Curve Lines [id:da5635137416157614]
			\draw  [dash pattern={on 0.84pt off 2.51pt}]  (190.75,257.5) .. controls (196.43,264.55) and (204.02,266.2) .. (224.42,266.74) ;
			%Curve Lines [id:da5191179544962801]
			\draw    (314.84,122.66) .. controls (310.36,119.63) and (326.19,127.89) .. (281.98,132.91) ;
			%Curve Lines [id:da009536816659161618]
			\draw  (314.84,122.66) .. controls (308.77,115.95) and (301.09,114.73) .. (280.7,115.35) ;
			%Curve Lines [id:da24314043167736865]
			\draw  [dash pattern={on 0.84pt off 2.51pt}]  (319.55,254.25) .. controls (315.07,251.22) and (330.9,259.48) .. (286.69,264.5) ;
			%Curve Lines [id:da8900160225533555]
			\draw  [dash pattern={on 0.84pt off 2.51pt}]  (320.04,255.46) .. controls (313.97,248.75) and (306.29,247.53) .. (285.9,248.15) ;
			%Shape: Circle [id:dp06582969756544488]
			\draw  [fill={rgb, 255:red, 0; green, 0; blue, 0 }  ,fill opacity=1 ] (316.17,255.46) .. controls (316.17,253.32) and (317.9,251.58) .. (320.04,251.58) .. controls (322.18,251.58) and (323.92,253.32) .. (323.92,255.46) .. controls (323.92,257.6) and (322.18,259.33) .. (320.04,259.33) .. controls (317.9,259.33) and (316.17,257.6) .. (316.17,255.46) -- cycle ;
			%Shape: Circle [id:dp7258649663657133]
			\draw  [fill={rgb, 255:red, 0; green, 0; blue, 0 }  ,fill opacity=1 ] (310.73,122.08) .. controls (310.73,119.94) and (312.46,118.21) .. (314.6,118.21) .. controls (316.74,118.21) and (318.48,119.94) .. (318.48,122.08) .. controls (318.48,124.22) and (316.74,125.96) .. (314.6,125.96) .. controls (312.46,125.96) and (310.73,124.22) .. (310.73,122.08) -- cycle ;

			% Text Node
			\draw (100,344.25) node  [font=\tiny]  {$\mathbb{Z}^{d}$};
			% Text Node
			\draw (649.33,113) node  [rotate=-0.4]  {\scalebox{0.75}{$...$}};
			% Text Node
			\draw (649.15,210) node  [rotate=-0.4]  {\scalebox{0.75}{$...$}};
			% Text Node
			\draw (97.88,101.78) node [anchor=north east] [inner sep=0.75pt]  [font=\tiny]  {\scalebox{0.8}{$( 0,x)$}};
			% Text Node
			\draw (97.88,151.78) node [anchor=north east] [inner sep=0.75pt]  [font=\tiny]  {\scalebox{0.8}{$( 0,y)$}};
			% Text Node
			\draw (97.88,221.78) node [anchor=north east] [inner sep=0.75pt]  [font=\tiny]  {\scalebox{0.8}{$( 0,z)$}};
			% Text Node
			\draw (97.88,301.77) node [anchor=north east] [inner sep=0.75pt]  [font=\tiny]  {\scalebox{0.8}{$( 0,w)$}};
			% Text Node
			\draw (385.29,184.58) node [anchor=east] [inner sep=0.75pt]  [font=\tiny]  {\scalebox{0.8}{$( {f_{i_{\star }}} ,{h_{i_{\star }}})$}};
			% Text Node
			\draw (315.8,112.81) node [anchor=south] [inner sep=0.75pt]  [font=\tiny] {\scalebox{0.8}{$(\bar{f}_{a} ,\bar{h}_{a})$}};
			% Text Node
			\draw (320.04,267.53) node [anchor=north] [inner sep=0.75pt]  [font=\tiny]  {\scalebox{0.8}{$(\ubar{f}_{b} ,\ubar{h}_{b})$}};
			% Text Node
			\draw (229.16,125) node  [rotate=-0.4]  {\scalebox{0.75}{$...$}};
			% Text Node
			\draw (235.83,257) node  [rotate=-0.4]  {\scalebox{0.75}{$...$}};
			% Text Node
			\draw (270.36,125) node  [rotate=-178.56]  {\scalebox{0.75}{$...$}};;
			% Text Node
			\draw (275.96,257) node  [rotate=-178.56]  {\scalebox{0.75}{$...$}};;
			% Text Node
			\draw (187.68,110) node [anchor=south] [inner sep=0.75pt]  [font=\tiny]  {\scalebox{0.8}{$(\bar{f}_{1} ,\bar{h}_{{1}})$}};
			% Text Node
			% \draw (193.75,273) node [anchor=north] [inner sep=0.75pt]  [font=\tiny]  {\scalebox{0.8}{$(\ubar{f}_{1} ,\ubar{h}_{{1}})$}};

			\draw (320.04,380) node [font=\tiny] {(b)};
		\end{tikzpicture} \hspace{0.1cm}

		\caption{ \scriptsize (a) A sample \textbf{$\sfT_1$} configuration. The walks start matching in pairs $(x\leftrightarrow y, z \leftrightarrow w)$, but then switch pair at $(f_{i_\star},h_{i_\star})$. (b) The same configuration after summation of all the possible values of the points $(f_{i},h_{i})_{i>i_\star}$, of the initial positions $(0,z), (0,w)$ and of all the points $(\ubar{f}_{i},\ubar{h}_{i})_{1\leq i<b}$.  }
		\label{fig:Type1}
	\end{figure}
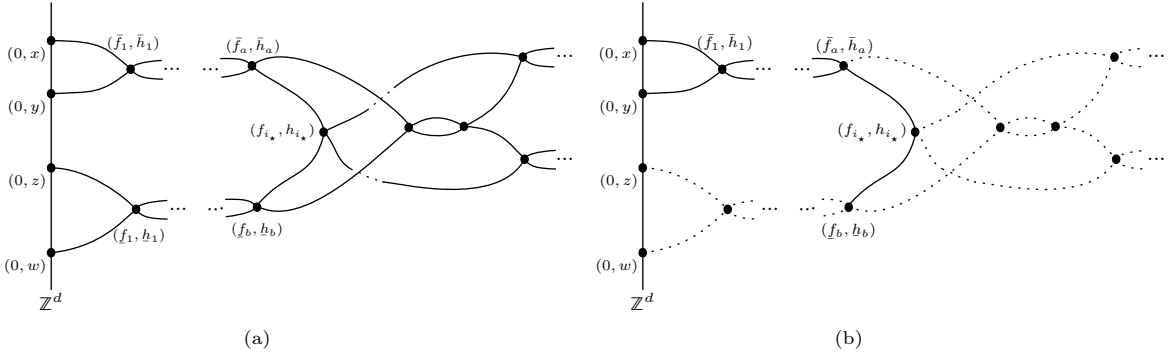

	\noindent We distinguish the following cases {for} such sequences $\tau^{\ms{(\sfa)}}_x, \tau^{\ms{(\sfb)}}_y, \tau^{\ms{(\sfc)}}_z,\tau^{\ms{(\sfd)}}_w$:\\
	\begin{itemize}
		\item  \textbf{Type 1 $(\sfT_1)$}. For all  $(u,\sfs)\, \in  \{(x,\sfa), (y,\sfb),(z,\sfc), (w,\sfd)\}$, we have  $\tau_u^{\ms{(\sfs)}}\cap \big([1,f_{i_\star})\times \Z^d \big)\neq \eset$. \\

		\item \textbf{Type 2 $(\sfT_2)$}. \vspace{0.1cm} For exactly two of the points $(u,\sfs) \in  \{(x,\sfa), (y,\sfb),(z,\sfc), (w,\sfd)\}$, we have that $\tau_u^{\ms{(\sfs)}}\cap \big([1,f_{i_\star})\times \Z^d \big)\neq \eset$. \\

		\item \textbf{Type 3 $(\sfT_3)$}. For all  $(u,\sfs) \in  \{(x,\sfa), (y,\sfb),(z,\sfc), (w,\sfd)\}$ we have that $\tau_u^{\ms{(\sfs)}}\cap \big([1,f_{i_\star})\times \Z^d \big)= \eset$.
	\end{itemize}
	Note that {we have not included the case that} three of the sets $\tau_u^{\ms{(\sfs)}}\cap \big([1,f_{i_\star})\times \Z^d \big)$ are non-empty. This is because, in this case, by the definition of $i_\star$, we have that $\tau_u^{\ms{(\sfs)}}\cap \big([1,f_{i_\star})\times \Z^d \big)$ have to be pairwise equal, therefore all four of them are non-empty. Thus, this is the case of $\sfT_1$ sequences. \\

	\noindent \textbf{($\sfT_1$ sequences)}.
	We begin with the case of $\sfT_1$ sequences $\tau^{\ms{(\sfa)}}_x, \tau^{\ms{(\sfb)}}_y, \tau^{\ms{(\sfc)}}_z,\tau^{\ms{(\sfd)}}_w$. In this case, the four random walks meet pairwise without switching their pair before time $f_{i_\star}$. Let us suppose at first that the walk starting from $(0,x)$ is paired to the walk starting from $(0,y)$ and the walk starting from $(0,z)$ is paired to the walk starting from $(0,w)$, that is
	\begin{align*}
		\tau_x^{\ms{(\sfa)}}\cap \big([1,f_{i_\star})\times \Z^d \big) & =\tau_y^{\ms{(\sfb)}}\cap \big([1,f_{i_\star})\times \Z^d \big) \notag \\
		                                                               & \text{and} \notag                                                      \\
		\tau_z^{\ms{(\sfc)}}\cap \big([1,f_{i_\star})\times \Z^d \big) & =\tau_w^{\ms{(\sfd)}}\cap \big([1,f_{i_\star})\times \Z^d \big) \, .
	\end{align*}
	We shall refer to this type of sequences as $\sfT^{\ms{x \leftrightarrow y}}_1$. Analogously, we define $\sfT^{\ms{x \leftrightarrow z}}_1$ and $\sfT^{\ms{x \leftrightarrow w}}_1$. {By symmetry it only suffices to consider $\sfT^{\ms{x \leftrightarrow y}}_1$}. We will first show how we can perform the summation
	\begin{align} \label{T1xy}
		N^{d-2} \hspace{-0.3cm} \sum_{x,y,z,w \,\in \Z^d} \varphi_N(x,y,z,w) \,\,	{ \sigma^{ \sfa+ \sfb+ \sfc+ \sfd}} \hspace{-0.5cm}
		\sum_{\tau^{\ms{(\sfa)}}_x, \tau^{\ms{(\sfb)}}_y, \tau^{\ms{(\sfc)}}_z, \tau^{\ms{(\sfd)}}_w \,\in \sfT^{\ms{x \leftrightarrow y}}_1} \,  \prodtwo{(u,\sfs)\, \in  \{(x,\sfa), (y,\sfb),}{(z,\sfc), (w,\sfd)\}} q(\tau^{\ms(\sfs)}_u)  \,  \bbE\Big[\prodtwo{(u,\sfs) \, \in \{(x,\sfa),(y,\sfb),}{(z,\sfc),(w,\sfd)\}}  \eta(\tau^{\ms(\sfs)}_u)\Big ] \, .
	\end{align}
	Since the $\eta$ variables have to be paired to each other, we can bound the expectation in \eqref{T1xy} as
	\begin{align} \label{momentsC}
		\bbE\Big[\prodtwo{(u,\sfs) \, \in \{(x,\sfa),(y,\sfb),}{(z,\sfc),(w,\sfd)\}}  \eta(\tau^{\ms(\sfs)}_u)\Big ] \leq C^{2M} \, , \qquad \qquad C=\max\big\{1,\bbE[\eta^3],\bbE[\eta^4]\big\} \, .
	\end{align}
	Moreover, since $M$ is fixed and $1 \leq \sfa,\sfb,\sfc,\sfd \leq M$ we have that ${ \sigma^{ \sfa+ \sfb+ \sfc+ \sfd} \leq (\sigma\vee 1)^{4M}}$. Therefore,
	\begin{align} \label{expboundbyC}
		     & N^{d-2} \hspace{-0.25cm} \sum_{x,y,z,w \,\in \Z^d} \varphi_N(x,y,z,w)\, \, { \sigma^{ \sfa+ \sfb+ \sfc+ \sfd}} \hspace{-0.7cm}
		\sum_{\tau^{\ms{(\sfa)}}_x, \tau^{\ms{(\sfb)}}_y, \tau^{\ms{(\sfc)}}_z, \tau^{\ms{(\sfd)}}_w \,\in \sfT^{\ms{x \leftrightarrow y}}_1} \,  \prodtwo{(u,\sfs)\, \in  \{(x,\sfa), (y,\sfb),}{(z,\sfc), (w,\sfd)\}} q(\tau^{\ms(\sfs)}_u)  \,  \bbE\Big[\prodtwo{(u,\sfs) \, \in \{(x,\sfa),(y,\sfb),}{(z,\sfc),(w,\sfd)\}}  \eta(\tau^{\ms(\sfs)}_u)\Big ]  \notag \\
		\leq & \, C^{2M} {(\sigma\vee 1)^{4M}} N^{d-2} \hspace{-0.25cm} \sum_{x,y,z,w \,\in \Z^d} \varphi_N(x,y,z,w) \hspace{-0.20cm}
		\sum_{\tau^{\ms{(\sfa)}}_x, \tau^{\ms{(\sfb)}}_y, \tau^{\ms{(\sfc)}}_z, \tau^{\ms{(\sfd)}}_w \,\in \sfT^{\ms{x \leftrightarrow y}}_1} \,  \prodtwo{(u,\sfs)\, \in  \{(x,\sfa), (y,\sfb),}{(z,\sfc), (w,\sfd)\}} q(\tau^{\ms(\sfs)}_u)  \, .
	\end{align}
	By the definition of $\sfT_1$ sequences, we have that for {a given $\sfT^{\ms{x \leftrightarrow y}}_1$ sequence} $\tau^{\ms{(\sfa)}}_x, \tau^{\ms{(\sfb)}}_y, \tau^{\ms{(\sfc)}}_z,\tau^{\ms{(\sfd)}}_w$, with $\tau=\tau^{\ms{(\sfa)}}_x \cup \tau^{\ms{(\sfb)}}_y \cup \tau^{\ms{(\sfc)}}_z \cup \tau^{\ms{(\sfd)}}_w=(f_i,h_i)_{1 \leq i \leq p}$ {and} $p=|\tau|$, we can decompose the sequence $(f_i,h_i)_{1 \leq i < i_{\star}}$ into two disjoint subsequences $
		(\bar{f}_{1},\bar{h}_{1}), ..., (\bar{f}_{a},\bar{h}_{a})$ and $(\ubar{f}_{1},\ubar{h}_{1}), ..., (\ubar{f}_{b},\ubar{h}_{b})$, see Figure \ref{fig:Type1}, so that
	\begin{align} \label{T1xyqexpansion}
		\prodtwo{(u,\sfs)\, \in  \{(x,\sfa), (y,\sfb),}{(z,\sfc), (w,\sfd)\}} q(\tau^{\ms(\sfs)}_u)= & \, q_{\bar{f}_1}(\bar{h}_1-x)q_{\bar{f}_1}(\bar{h}_1-y) \prod_{i=2}^{a} q^2_{(\bar{f}_{i}-\bar{f}_{i-1})} (\bar{h}_{i}-\bar{h}_{i-1})          \notag                                                \\
		\times                                                                                       & \, q_{\ubar{f}_1}(\ubar{h}_1-z) q_{\ubar{f}_1}(\ubar{h}_1-w) \prod_{i=2}^{b} q^2_{(\ubar{f}_{i}-\ubar{f}_{i-1})} (\ubar{h}_{i}-\ubar{h}_{i-1})  \notag                                               \\
		\times                                                                                       & \,  q^{ \nu_a }_{(f_{i_{\star}}-\bar{f}_{a})}(h_{i_{\star}}-\bar{h}_a) \, q^{ \nu_b }_{(f_{i_{\star}}-\ubar{f}_{b})}(h_{i_{\star}}-\ubar{h}_b)  \notag                                               \\
		\times                                                                                       & \prod_{m=1}^{\sfm_{i_\star+1}} q_{f_{i_\star+1}-f^{({i_\star+1})}_{r_m}}(h_{i_\star+1}-h^{({i_\star+1})}_{r_m}) \:  ... \prod_{m=1}^{\sfm_{p}} q_{f_{p}-f^{({p})}_{r_m}}(h_{p}-h^{({p})}_{r_m}) \, .
	\end{align}
	For every $i_\star+1 \leq j \leq p$, the number $\sfm_j$ ranges from $2$ to $4$ and indicates whether $(f_j,h_j)$ {is} a double, triple or quadruple matching. Furthermore,
	for every $i_\star+1 \leq j \leq p$ and $1\leq m \leq \sfm_j$, $(f^{(j)}_{r_m},h^{(j)}_{r_m})$ is some space-time point which belongs to the sequence $(f_i,h_i)_{i_\star \leq i \leq p}\cup \{(\bar{f}_{a},\bar{h}_{a}), (\ubar{f}_{b},\ubar{h}_{b}) \}$, such that $f^{(j)}_{r_m}<f_j$. Also, the exponents $\nu_a,\nu_b$ in \eqref{T1xyqexpansion} can take values in $\{1,2\}$ and indicate whether the matching in $(f_{i_\star},h_{i_\star})$
	was double, triple or quadruple. In any case the product above is bounded by the corresponding expression for $\nu_a,\nu_b=1$, since we have {$q_n(x)\leq 1$}.

	In order to perform the summation in \eqref{T1xy} for $\sfT^{\ms{x \leftrightarrow y}}_1$ sequences we make the following observation. We can {start by summing} the last point $(f_p,h_p)$ as follows:
	We use the fact that $q_n(x)\leq 1$ and Cauchy-Schwarz to obtain that
	\begin{align} \label{overlapbound}
		     & \sum_{(f_{p},h_{p})} \prod_{m=1}^{\sfm_{p}} q_{f_{p}-f^{\ms{(p)} }_{r_m}}(h_{p}-h^{\ms{(p)} }_{r_m})  \leq \sum_{(f_{p},h_{p})}  q_{f_{p}-f^{\ms{(p)} }_{r_1}}(h_{p}-h^{\ms{(p)} }_{r_1}) q_{f_{p}-f^{\ms{(p)} }_{r_2}}(h_{p}-h^{\ms{(p)} }_{r_2})  \notag \\
		\leq & \Big ( \sum_{(f_{p},h_{p})}  q^2_{f_{p}-f^{\ms{(p)} }_{r_1}}(h_{p}-h^{\ms{(p)} }_{r_1})  \Big ) ^{\frac{1}{2}} \Big ( \sum_{(f_{p},h_{p})}  q^2_{f_{p}-f^{\ms{(p)} }_{r_2}}(h_{p}-h^{\ms{(p)} }_{r_2})  \Big ) ^{\frac{1}{2}} \notag                       \\
		=    & \Big ( \sum_{f_{p}}  q_{2(f_{p}-f^{\ms{(p)} }_{r_1})}(0) \Big ) ^{\frac{1}{2}} \Big ( \sum_{f_{p}}  q_{2(f_{p}-f^{\ms{(p)} }_{r_2})}(0) \Big) ^{\frac{1}{2}}  \notag                                                                                       \\
		\leq & (\sqrt{R_N})^2=R_N\leq R_{\infty}{=\frac{\pi_d}{1-\pi_d}} <1 \, .
	\end{align}
	For the last inequality, we used that the range of $f_{p}-f^{\ms{(p)} }_{r_i}$ is contained in $\{1,2,...,N\}$ {and the fact that, $\pi_d<\frac{1}{2}$ for $d\geq 3$, since $\pi_3 \approx 0.34$, see \cite{Sp76}, and $\pi_{d+1}<\pi_d$ for $d \geq 3$, see \cite{OS96}.}
	We can successively iterate this estimate for all values of $(f_i,h_i)$ as long as $i>i_{\star}$. Therefore,
	by recalling \eqref{T1xy}, \eqref{expboundbyC} and \eqref{T1xyqexpansion} we deduce that
	\begin{align} \label{T1xylastbound}
		       & {(\sigma\vee 1)^{4M}} C^{2M}  N^{d-2}  \sum_{x,y,z,w \,\in \Z^d} \varphi_N(x,y,z,w)
		\sum_{\tau^{\ms{(\sfa)}}_x, \tau^{\ms{(\sfb)}}_y, \tau^{\ms{(\sfc)}}_z, \tau^{\ms{(\sfd)}}_w\, \in\, \sfT^{\ms{x \leftrightarrow y}}_1}  \, \prodtwo{(u,\sfs)\, \in  \{(x,\sfa), (y,\sfb),}{(z,\sfc), (w,\sfd)\}} q(\tau^{\ms(\sfs)}_u) \notag \\
		\leq   & \, c_M \, {(\sigma\vee 1)^{4M}} \, C^{2M}  N^{d-2}  \sum_{x,y,z,w \,\in \Z^d} \varphi_N(x,y,z,w)  \notag                                                                                                                              \\
		\times & \sum_{a,b=1}^{2M} \Big( \sum_{(\bar{f}_{i},\bar{h}_{i})_{1 \leq i \leq a}} q_{\bar{f}_1}(\bar{h}_1-x)q_{\bar{f}_1}(\bar{h}_1-y) \prod_{i=2}^{a} q^2_{(\bar{f}_{i}-\bar{f}_{i-1})} (\bar{h}_{i}-\bar{h}_{i-1}) \Big )\notag            \\
		\times & \Big(  \sum_{(\ubar{f}_{i},\ubar{h}_{i})_{1 \leq i \leq b}} q_{\ubar{f}_1}(\ubar{h}_1-z) q_{\ubar{f}_1}(\ubar{h}_1-w) \prod_{i=2}^{b} q^2_{(\ubar{f}_{i}-\ubar{f}_{i-1})} (\ubar{h}_{i}-\ubar{h}_{i-1}) \Big ) \notag                 \\
		\times & \Big(  \sum_{(f_{i_\star},h_{i_\star})} q_{(f_{i_{\star}}-\bar{f}_{a})}(h_{i_{\star}}-\bar{h}_a) q_{(f_{i_{\star}}-\ubar{f}_{b})}(h_{i_{\star}}-\ubar{h}_b) \Big )                                 \, ,
	\end{align}
	where $c_M$ is {a} constant combinatorial factor which bounds the number of different ways that the points of $\sfT^{\ms{x \leftrightarrow y}}_1$ can be mapped to a fixed sequence
	$(f_i,h_i)_{1 \leq i \leq p}$, for all $p\leq \frac{\sfa+\sfb+\sfc+\sfd}{2}\leq 2M$.
	Therefore, the last step for showing that the sum \eqref{T1xy} has negligible contribution in \eqref{4thmom} is to show that for all fixed $a,b$ the following sum vanishes when $N$ goes to infinity:

	\begin{align} \label{T1full}
		\tilde{C}_M \,  N^{d-2} \hspace{-0.2cm}\sum_{x,y,z,w \in \Z^d}  \varphi_N(x,y,z,w) & \Big( \sum_{(\bar{f}_{i},\bar{h}_{i})_{1 \leq i \leq a}} q_{\bar{f}_1}(\bar{h}_1-x)q_{\bar{f}_1}(\bar{h}_1-y) \prod_{i=2}^{a} q^2_{(\bar{f}_{i}-\bar{f}_{i-1})} (\bar{h}_{i}-\bar{h}_{i-1}) \Big )\notag              \\
		\times                                                                             & \Big(  \sum_{(\ubar{f}_{i},\ubar{h}_{i})_{1 \leq i \leq b}} q_{\ubar{f}_1}(\ubar{h}_1-z) q_{\ubar{f}_1}(\ubar{h}_1-w) \prod_{i=2}^{b} q^2_{(\ubar{f}_{i}-\ubar{f}_{i-1})} (\ubar{h}_{i}-\ubar{h}_{i-1}) \Big ) \notag \\
		\times                                                                             & \Big(  \sum_{(f_{i_\star},h_{i_\star})} q_{(f_{i_{\star}}-\bar{f}_{a})}(h_{i_{\star}}-\bar{h}_a) q_{(f_{i_{\star}}-\ubar{f}_{b})}(h_{i_{\star}}-\ubar{h}_b) \Big )                                 \, ,
	\end{align}
	where $\tilde{C}_M=c_M \,  {(\sigma\vee 1)^{4M}}\, C^{2M}\,$.
	Let us describe how this can be done. Recall that
	\begin{align*}
		\varphi_N(x,y,z,w)=\prod_{u \in \{x,y,z,w\} }\frac{\varphi\big(\frac{u}{\sqrt{N}}\big)}{N^\frac{d}{2}} \, .
	\end{align*}
	In \eqref{T1full}, we can bound $\varphi(\frac{z}{\sqrt{N}})\varphi(\frac{w}{\sqrt{N}})$ by $\norm{\varphi}_{\infty}^2$ and sum out $z,w$ using that $ \sum_{u \in \Z^d} q_n(u)=1$ so that we bound \eqref{T1full} by
	\begin{align} \label{T1sumoutzw}
		\frac{\tilde{C}_M \norm{\varphi}_{\infty}^2 }{N^2} \sum_{x,y \, \in \Z^d} \varphi_N(x,y) & \Big( \sum_{(\bar{f}_{i},\bar{h}_{i})_{1 \leq i \leq a}} q_{\bar{f}_1}(\bar{h}_1-x)q_{\bar{f}_1}(\bar{h}_1-y) \prod_{i=2}^{a} q^2_{(\bar{f}_{i}-\bar{f}_{i-1})} (\bar{h}_{i}-\bar{h}_{i-1}) \Big )\notag \\
		\times                                                                                   & \Big(  \sum_{(\ubar{f}_{i},\ubar{h}_{i})_{1 \leq i \leq b}}  \prod_{{i=2}}^{b} q^2_{(\ubar{f}_{i}-\ubar{f}_{i-1})} (\ubar{h}_{i}-\ubar{h}_{i-1}) \Big ) \notag                                           \\
		\times                                                                                   & \Big(  \sum_{(f_{i_\star},h_{i_\star})} q_{(f_{i_{\star}}-\bar{f}_{a})}(h_{i_{\star}}-\bar{h}_a) q_{(f_{i_{\star}}-\ubar{f}_{b})}(h_{i_{\star}}-\ubar{h}_b) \Big ) \, .
	\end{align}
	{We} sum out all points $(\ubar{f}_{i-1},\ubar{h}_{i-1})_{2 \leq i <b}$ successively, starting from $(\ubar{f}_{1},\ubar{h}_{1})$ and moving forward.  The contribution of each of these summations is bounded by $R_N<1$, since for each $2 \leq i<b$,
	\begin{align} \label{T1sumbubble}
		\sum_{(\ubar{f}_{i-1},\ubar{h}_{i-1})} q^2_{(\ubar{f}_{i}-\ubar{f}_{i-1})}(\ubar{h}_{i}-\ubar{h}_{i-1})=\sum_{\ubar{f}_{i-1}}q_{2(\ubar{f}_{i}-\ubar{f}_{i-1})}(0)\leq R_N<1 \, .
	\end{align}
	because the range of $\ubar{f}_{i}-\ubar{f}_{i-1}$ is contained in $\{1,...,N\}$.
	Therefore, we are left {with estimating}
	\begin{align*}
		\frac{\tilde{C}_M \norm{\varphi}_{\infty}^2 }{N^2} \sum_{x,y \, \in \Z^d} \varphi_N(x,y) & \Big( \sum_{(\bar{f}_{i},\bar{h}_{i})_{1 \leq i \leq a}} q_{\bar{f}_1}(\bar{h}_1-x)q_{\bar{f}_1}(\bar{h}_1-y) \prod_{i=2}^{a} q^2_{(\bar{f}_{i}-\bar{f}_{i-1})} (\bar{h}_{i}-\bar{h}_{i-1}) \Big )\notag      \\
		\times                                                                                   & \Big(  \sum_{(f_{i_\star},h_{i_\star})}  \sum_{(\ubar{f}_{b},\ubar{h}_{b})}   q_{(f_{i_{\star}}-\bar{f}_{a})}(h_{i_{\star}}-\bar{h}_a) q_{(f_{i_{\star}}-\ubar{f}_{b})}(h_{i_{\star}}-\ubar{h}_b) \Big ) \, .
	\end{align*}
	The contribution of the sums over $(\ubar{f}_b,\ubar{h}_b)$ and $(f_{i_\star},h_{i_\star})$ is
	\begin{align} \label{T1N2}
		\sum_{(f_{i_\star},h_{i_\star})}   q_{(f_{i_{\star}}-\bar{f}_{a})}(h_{i_{\star}}-\bar{h}_a)\, \sum_{(\ubar{f}_{b},\ubar{h}_{b})}   q_{(f_{i_{\star}}-\ubar{f}_{b})}(h_{i_{\star}}-\ubar{h}_b) \leq N^2 \, .
	\end{align}
	{by summing first over space, using that $\sum_{u \in \Z^d} q_n(u)=1$ and then summing over time using that
	the range of $f_{i_{\star}}-\bar{f}_{a}$ and $f_{i_{\star}}-\ubar{f}_{b}$ is contained in $\{1,...,N\}$.}
	Therefore, it remains to show that the following sum vanishes as $N\to \infty$:
	\begin{align*}
		\tilde{C}_M \norm{\varphi}_{\infty}^2  \sum_{x,y \, \in \Z^d} \varphi_N(x,y) & \Big( \sum_{(\bar{f}_{i},\bar{h}_{i})_{1 \leq i \leq a}} q_{\bar{f}_1}(\bar{h}_1-x)q_{\bar{f}_1}(\bar{h}_1-y) \prod_{i=2}^{a} q^2_{(\bar{f}_{i}-\bar{f}_{i-1})} (\bar{h}_{i}-\bar{h}_{i-1}) \Big )\notag \, .
	\end{align*}
	We perform the summation over $(\bar{f}_i,\bar{h}_i)$ for $2 \leq i \leq a $ starting from $(\bar{f}_a,\bar{h}_a)$ and moving backward. The contribution of each of these summations is bounded by $R_N<1$. Consequently, we need to show that
	\begin{align*}
		\tilde{C}_M \norm{\varphi}_{\infty}^2  \sum_{x,y \, \in \Z^d} \varphi_N(x,y) & \sum_{(\bar{f}_{1},\bar{h}_{1})} q_{\bar{f}_1}(\bar{h}_1-x)q_{\bar{f}_1}(\bar{h}_1-y) \xrightarrow[N \to \infty]{} 0\, .
	\end{align*}
	By summing out the points $\bar{h}_1 \in \Z^d$ it suffices to show that
	\begin{align*}
		\tilde{C}_M \norm{\varphi}_{\infty}^2  \sum_{x,y \, \in \Z^d} \varphi_N(x,y) & \sum_{\bar{f}_{1}} q_{2\bar{f}_1}(x-y) \xrightarrow[N \to \infty]{} 0\, .
	\end{align*}
	But it follows from Lemma \ref{conver} that the last sum is $O(N^{1-\frac{d}{2}})$ hence vanishes as $N \to \infty$, since $d\geq 3$. Therefore, we have proved that the sum \eqref{T1xy} vanishes as $N\to \infty$. It is exactly the same to prove the analoguous sums for $\sfT_1^{\ms{x \leftrightarrow z}}$ and $\sfT_1^{\ms{x \leftrightarrow w}}$ sequences vanish as $N \to \infty$. \vspace{0.2cm}

	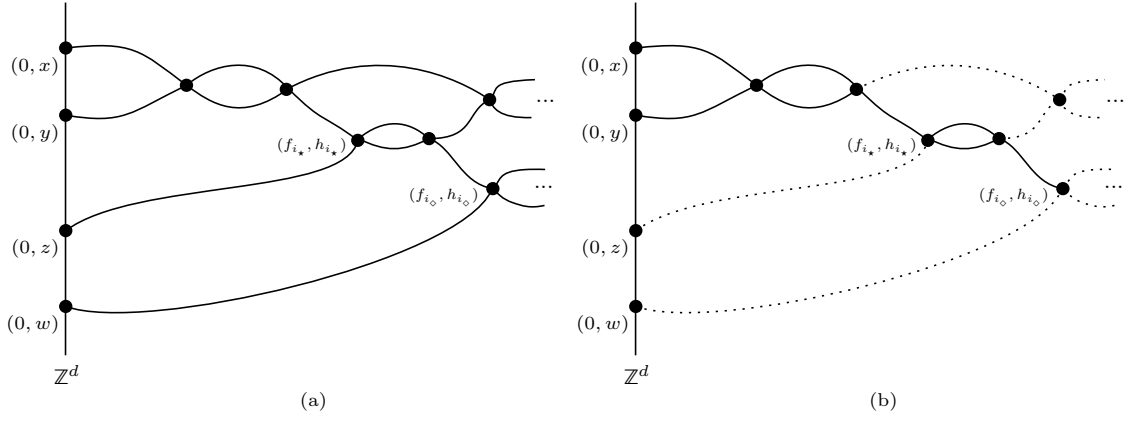
\begin{figure} %Type2
		\centering
		\tikzset{every picture/.style={line width=0.60pt}}

		\begin{tikzpicture}[x=0.65pt,y=0.65pt,yscale=-1,xscale=1]
			%uncomment if require: \path (0,310); %set diagram left start at 0, and has height of 310

			%Straight Lines [id:da2309152695408263]
			\draw    (37.25,38.25) -- (37.25,242.99) ;
			%Shape: Circle [id:dp32422108771909675]
			\draw  [fill={rgb, 255:red, 0; green, 0; blue, 0 }  ,fill opacity=1 ] (34,64.63) .. controls (34,62.76) and (35.51,61.25) .. (37.38,61.25) .. controls (39.24,61.25) and (40.75,62.76) .. (40.75,64.63) .. controls (40.75,66.49) and (39.24,68) .. (37.38,68) .. controls (35.51,68) and (34,66.49) .. (34,64.63) -- cycle ;
			%Shape: Circle [id:dp12824575779400904]
			\draw  [fill={rgb, 255:red, 0; green, 0; blue, 0 }  ,fill opacity=1 ] (34,103.63) .. controls (34,101.76) and (35.51,100.25) .. (37.38,100.25) .. controls (39.24,100.25) and (40.75,101.76) .. (40.75,103.63) .. controls (40.75,105.49) and (39.24,107) .. (37.38,107) .. controls (35.51,107) and (34,105.49) .. (34,103.63) -- cycle ;
			%Shape: Circle [id:dp6590885460372745]
			\draw  [fill={rgb, 255:red, 0; green, 0; blue, 0 }  ,fill opacity=1 ] (34,170.63) .. controls (34,168.76) and (35.51,167.25) .. (37.38,167.25) .. controls (39.24,167.25) and (40.75,168.76) .. (40.75,170.63) .. controls (40.75,172.49) and (39.24,174) .. (37.38,174) .. controls (35.51,174) and (34,172.49) .. (34,170.63) -- cycle ;
			%Shape: Circle [id:dp26588238909903006]
			\draw  [fill={rgb, 255:red, 0; green, 0; blue, 0 }  ,fill opacity=1 ] (34,214.63) .. controls (34,212.76) and (35.51,211.25) .. (37.38,211.25) .. controls (39.24,211.25) and (40.75,212.76) .. (40.75,214.63) .. controls (40.75,216.49) and (39.24,218) .. (37.38,218) .. controls (35.51,218) and (34,216.49) .. (34,214.63) -- cycle ;
			%Curve Lines [id:da23762388044883598]
			\draw    (37.38,64.63) .. controls (76.51,60.22) and (80.51,66.22) .. (106.88,86.25) ;
			%Curve Lines [id:da43590765122454456]
			\draw    (37.38,103.63) .. controls (71.25,109.25) and (74.51,102.22) .. (106.88,86.25) ;
			%Shape: Circle [id:dp7268593859196367]
			\draw  [fill={rgb, 255:red, 0; green, 0; blue, 0 }  ,fill opacity=1 ] (103.5,86.25) .. controls (103.5,84.39) and (105.01,82.88) .. (106.88,82.88) .. controls (108.74,82.88) and (110.25,84.39) .. (110.25,86.25) .. controls (110.25,88.11) and (108.74,89.63) .. (106.88,89.63) .. controls (105.01,89.63) and (103.5,88.11) .. (103.5,86.25) -- cycle ;
			%Curve Lines [id:da869936593855657]
			\draw [color={rgb, 255:red, 0; green, 0; blue, 0 }  ,draw opacity=1 ]   (165.51,88.22) .. controls (186.51,109.22) and (184.51,102.22) .. (205.51,118.22) ;
			%Shape: Circle [id:dp3248323995252773]
			\draw  [fill={rgb, 255:red, 0; green, 0; blue, 0 }  ,fill opacity=1 ] (202.14,118.22) .. controls (202.14,116.36) and (203.65,114.85) .. (205.51,114.85) .. controls (207.38,114.85) and (208.89,116.36) .. (208.89,118.22) .. controls (208.89,120.09) and (207.38,121.6) .. (205.51,121.6) .. controls (203.65,121.6) and (202.14,120.09) .. (202.14,118.22) -- cycle ;
			%Curve Lines [id:da9655708578117012]
			\draw    (37.38,170.63) .. controls (77.38,140.63) and (198.51,151.22) .. (205.51,118.22) ;
			%Curve Lines [id:da769579729034449]
			\draw    (37.38,214.63) .. controls (82.51,230.99) and (263.51,188.99) .. (282.51,147.22) ;
			%Curve Lines [id:da4523537213457707]
			\draw    (281.38,94.63) .. controls (287.25,89.25) and (279,84) .. (307.5,83) ;
			%Curve Lines [id:da8925282613203716]
			\draw    (281.38,94.63) .. controls (283.25,99.25) and (286.5,105.5) .. (305.5,105) ;
			%Curve Lines [id:da3174115893674295]
			\draw   (283.25,146.25) .. controls (288.18,141.74) and (285.28,136.26) .. (301.64,135.21) .. controls (304.78,135) and (308.63,134.96) .. (313.38,135.13) ;
			%Curve Lines [id:da542343039092747]
			\draw    (284.25,146.25) .. controls (285.27,148.77) and (288.7,152.29) .. (294.52,154.33) .. controls (300.34,156.36) and (304.8,157.82) .. (313,156) ;
			%Curve Lines [id:da33988345847881396]
			\draw    (106.88,86.25) .. controls (121.51,73.99) and (146.51,66.99) .. (165.51,88.22) ;
			%Curve Lines [id:da9515545350324273]
			\draw    (106.88,86.25) .. controls (121.51,97.22) and (138.51,108.22) .. (165.51,88.22) ;
			%Shape: Circle [id:dp36427554198216927]
			\draw  [fill={rgb, 255:red, 0; green, 0; blue, 0 }  ,fill opacity=1 ] (161,88.63) .. controls (161,86.76) and (162.51,85.25) .. (164.38,85.25) .. controls (166.24,85.25) and (167.75,86.76) .. (167.75,88.63) .. controls (167.75,90.49) and (166.24,92) .. (164.38,92) .. controls (162.51,92) and (161,90.49) .. (161,88.63) -- cycle ;
			%Curve Lines [id:da8222718763365955]
			\draw    (205.88,117.25) .. controls (220.51,105.22) and (234.51,106.22) .. (246.51,117.22) ;
			%Curve Lines [id:da0009733587364553209]
			\draw    (205.88,117.25) .. controls (221.51,124.99) and (231.51,124.99) .. (246.51,117.22) ;
			%Curve Lines [id:da03808429234614108]
			\draw [color={rgb, 255:red, 0; green, 0; blue, 0 }  ,draw opacity=1 ]   (246.51,117.22) .. controls (260.51,122.22) and (264.51,143.22) .. (283.25,146.25) ;
			%Curve Lines [id:da5944468035738766]
			\draw    (164.38,88.63) .. controls (217.51,60.22) and (263.51,82.22) .. (281.38,94.63) ;
			%Curve Lines [id:da8897861043445314]
			\draw    (246.51,117.22) .. controls (273.51,117.22) and (267.51,101.22) .. (281.38,94.63) ;
			%Shape: Circle [id:dp29037324198984016]
			\draw  [fill={rgb, 255:red, 0; green, 0; blue, 0 }  ,fill opacity=1 ] (278,94.63) .. controls (278,92.76) and (279.51,91.25) .. (281.38,91.25) .. controls (283.24,91.25) and (284.75,92.76) .. (284.75,94.63) .. controls (284.75,96.49) and (283.24,98) .. (281.38,98) .. controls (279.51,98) and (278,96.49) .. (278,94.63) -- cycle ;
			%Shape: Circle [id:dp48878751515838204]
			\draw  [fill={rgb, 255:red, 0; green, 0; blue, 0 }  ,fill opacity=1 ] (243.14,117.22) .. controls (243.14,115.36) and (244.65,113.85) .. (246.51,113.85) .. controls (248.38,113.85) and (249.89,115.36) .. (249.89,117.22) .. controls (249.89,119.09) and (248.38,120.6) .. (246.51,120.6) .. controls (244.65,120.6) and (243.14,119.09) .. (243.14,117.22) -- cycle ;
			%Shape: Circle [id:dp6661935169322197]
			\draw  [fill={rgb, 255:red, 0; green, 0; blue, 0 }  ,fill opacity=1 ] (279.88,146.25) .. controls (279.88,144.39) and (281.39,142.88) .. (283.25,142.88) .. controls (285.11,142.88) and (286.63,144.39) .. (286.63,146.25) .. controls (286.63,148.11) and (285.11,149.63) .. (283.25,149.63) .. controls (281.39,149.63) and (279.88,148.11) .. (279.88,146.25) -- cycle ;

			% Text Node
			\draw (38,254) node  [font=\scriptsize]  {$\mathbb{Z}^{d}$};
			% Text Node
			\draw (35.38,68.03) node [anchor=north east] [inner sep=0.75pt]  [font=\tiny]  {$( 0,x)$};
			% Text Node
			\draw (35.38,107.03) node [anchor=north east] [inner sep=0.75pt]  [font=\tiny]  {$( 0,y)$};
			% Text Node
			\draw (35.38,174.03) node [anchor=north east] [inner sep=0.75pt]  [font=\tiny]  {$( 0,z)$};
			% Text Node
			\draw (35.38,218.03) node [anchor=north east] [inner sep=0.75pt]  [font=\tiny]  {$( 0,w)$};
			% Text Node
			\draw (313.5,95) node  [font=\scriptsize]  {$...$};
			% Text Node
			\draw (312.5,145) node  [font=\scriptsize]  {$...$};
			% Text Node
			\draw (179,122) node  [font=\tiny]  {\scalebox{0.8}{$( {f_{i_{\star }}} ,{h_{i_{\star }}})$}};
			% Text Node
			\draw (255,151) node  [font=\tiny]  {\scalebox{0.8}{$( {f_{i_{\diamond }}} ,{h_{i_{\diamond }}})$}};
			\draw (180,270) node [font=\tiny] {(a)};

		\end{tikzpicture}
		\begin{tikzpicture}[x=0.65pt,y=0.65pt,yscale=-1,xscale=1]
			%Straight Lines [id:da2309152695408263]
			\draw    (37.25,38.25) -- (37.25,242.99) ;
			%Shape: Circle [id:dp32422108771909675]
			\draw  [fill={rgb, 255:red, 0; green, 0; blue, 0 }  ,fill opacity=1 ] (34,64.63) .. controls (34,62.76) and (35.51,61.25) .. (37.38,61.25) .. controls (39.24,61.25) and (40.75,62.76) .. (40.75,64.63) .. controls (40.75,66.49) and (39.24,68) .. (37.38,68) .. controls (35.51,68) and (34,66.49) .. (34,64.63) -- cycle ;
			%Shape: Circle [id:dp12824575779400904]
			\draw  [fill={rgb, 255:red, 0; green, 0; blue, 0 }  ,fill opacity=1 ] (34,103.63) .. controls (34,101.76) and (35.51,100.25) .. (37.38,100.25) .. controls (39.24,100.25) and (40.75,101.76) .. (40.75,103.63) .. controls (40.75,105.49) and (39.24,107) .. (37.38,107) .. controls (35.51,107) and (34,105.49) .. (34,103.63) -- cycle ;
			%Shape: Circle [id:dp6590885460372745]
			\draw  [fill={rgb, 255:red, 0; green, 0; blue, 0 }  ,fill opacity=1 ] (34,170.63) .. controls (34,168.76) and (35.51,167.25) .. (37.38,167.25) .. controls (39.24,167.25) and (40.75,168.76) .. (40.75,170.63) .. controls (40.75,172.49) and (39.24,174) .. (37.38,174) .. controls (35.51,174) and (34,172.49) .. (34,170.63) -- cycle ;
			%Shape: Circle [id:dp26588238909903006]
			\draw  [fill={rgb, 255:red, 0; green, 0; blue, 0 }  ,fill opacity=1 ] (34,214.63) .. controls (34,212.76) and (35.51,211.25) .. (37.38,211.25) .. controls (39.24,211.25) and (40.75,212.76) .. (40.75,214.63) .. controls (40.75,216.49) and (39.24,218) .. (37.38,218) .. controls (35.51,218) and (34,216.49) .. (34,214.63) -- cycle ;
			%Curve Lines [id:da23762388044883598]
			\draw    (37.38,64.63) .. controls (76.51,60.22) and (80.51,66.22) .. (106.88,86.25) ;
			%Curve Lines [id:da43590765122454456]
			\draw    (37.38,103.63) .. controls (71.25,109.25) and (74.51,102.22) .. (106.88,86.25) ;
			%Shape: Circle [id:dp7268593859196367]
			\draw  [fill={rgb, 255:red, 0; green, 0; blue, 0 }  ,fill opacity=1 ] (103.5,86.25) .. controls (103.5,84.39) and (105.01,82.88) .. (106.88,82.88) .. controls (108.74,82.88) and (110.25,84.39) .. (110.25,86.25) .. controls (110.25,88.11) and (108.74,89.63) .. (106.88,89.63) .. controls (105.01,89.63) and (103.5,88.11) .. (103.5,86.25) -- cycle ;
			%Curve Lines [id:da869936593855657]
			\draw [color={rgb, 255:red, 0; green, 0; blue, 0}  ,draw opacity=1 ]   (165.51,88.22) .. controls (186.51,109.22) and (184.51,102.22) .. (205.51,118.22) ;
			%Shape: Circle [id:dp3248323995252773]
			\draw  [fill={rgb, 255:red, 0; green, 0; blue, 0 }  ,fill opacity=1 ] (202.14,118.22) .. controls (202.14,116.36) and (203.65,114.85) .. (205.51,114.85) .. controls (207.38,114.85) and (208.89,116.36) .. (208.89,118.22) .. controls (208.89,120.09) and (207.38,121.6) .. (205.51,121.6) .. controls (203.65,121.6) and (202.14,120.09) .. (202.14,118.22) -- cycle ;
			%Curve Lines [id:da9655708578117012]
			\draw  [dash pattern={on 0.84pt off 2.51pt}]  (37.38,170.63) .. controls (77.38,140.63) and (198.51,151.22) .. (205.51,118.22) ;
			%Curve Lines [id:da769579729034449]
			\draw  [dash pattern={on 0.84pt off 2.51pt}]  (37.38,214.63) .. controls (82.51,230.99) and (263.51,188.99) .. (282.51,147.22) ;
			%Curve Lines [id:da4523537213457707]
			\draw  [dash pattern={on 0.84pt off 2.51pt}]  (281.38,94.63) .. controls (287.25,89.25) and (279,84) .. (307.5,83) ;
			%Curve Lines [id:da8925282613203716]
			\draw  [dash pattern={on 0.84pt off 2.51pt}]  (281.38,94.63) .. controls (283.25,99.25) and (286.5,105.5) .. (305.5,105) ;
			%Curve Lines [id:da3174115893674295]
			\draw  [dash pattern={on 0.84pt off 2.51pt}]  (283.25,146.25) .. controls (288.18,141.74) and (285.28,136.26) .. (301.64,135.21) .. controls (304.78,135) and (308.63,134.96) .. (313.38,135.13) ;
			%Curve Lines [id:da542343039092747]
			\draw  [dash pattern={on 0.84pt off 2.51pt}]  (284.25,146.25) .. controls (285.27,148.77) and (288.7,152.29) .. (294.52,154.33) .. controls (300.34,156.36) and (304.8,157.82) .. (313,156) ;
			%Curve Lines [id:da33988345847881396]
			\draw    (106.88,86.25) .. controls (121.51,73.99) and (146.51,66.99) .. (165.51,88.22) ;
			%Curve Lines [id:da9515545350324273]
			\draw    (106.88,86.25) .. controls (121.51,97.22) and (138.51,108.22) .. (165.51,88.22) ;
			%Shape: Circle [id:dp36427554198216927]
			\draw  [fill={rgb, 255:red, 0; green, 0; blue, 0 }  ,fill opacity=1 ] (161,88.63) .. controls (161,86.76) and (162.51,85.25) .. (164.38,85.25) .. controls (166.24,85.25) and (167.75,86.76) .. (167.75,88.63) .. controls (167.75,90.49) and (166.24,92) .. (164.38,92) .. controls (162.51,92) and (161,90.49) .. (161,88.63) -- cycle ;
			%Curve Lines [id:da8222718763365955]
			\draw    (205.88,117.25) .. controls (220.51,105.22) and (234.51,106.22) .. (246.51,117.22) ;
			%Curve Lines [id:da0009733587364553209]
			\draw   (205.88,117.25) .. controls (221.51,124.99) and (231.51,124.99) .. (246.51,117.22) ;
			%Curve Lines [id:da03808429234614108]
			\draw [color={rgb, 255:red, 0; green, 0; blue, 0}  ,draw opacity=1 ]   (246.51,117.22) .. controls (260.51,122.22) and (264.51,143.22) .. (283.25,146.25) ;
			%Curve Lines [id:da5944468035738766]
			\draw  [dash pattern={on 0.84pt off 2.51pt}]  (164.38,88.63) .. controls (217.51,60.22) and (263.51,82.22) .. (281.38,94.63) ;
			%Curve Lines [id:da8897861043445314]
			\draw  [dash pattern={on 0.84pt off 2.51pt}]  (246.51,117.22) .. controls (273.51,117.22) and (267.51,101.22) .. (281.38,94.63) ;
			%Shape: Circle [id:dp29037324198984016]
			\draw  [fill={rgb, 255:red, 0; green, 0; blue, 0 }  ,fill opacity=1 ] (278,94.63) .. controls (278,92.76) and (279.51,91.25) .. (281.38,91.25) .. controls (283.24,91.25) and (284.75,92.76) .. (284.75,94.63) .. controls (284.75,96.49) and (283.24,98) .. (281.38,98) .. controls (279.51,98) and (278,96.49) .. (278,94.63) -- cycle ;
			%Shape: Circle [id:dp48878751515838204]
			\draw  [fill={rgb, 255:red, 0; green, 0; blue, 0 }  ,fill opacity=1 ] (243.14,117.22) .. controls (243.14,115.36) and (244.65,113.85) .. (246.51,113.85) .. controls (248.38,113.85) and (249.89,115.36) .. (249.89,117.22) .. controls (249.89,119.09) and (248.38,120.6) .. (246.51,120.6) .. controls (244.65,120.6) and (243.14,119.09) .. (243.14,117.22) -- cycle ;
			%Shape: Circle [id:dp6661935169322197]
			\draw  [fill={rgb, 255:red, 0; green, 0; blue, 0 }  ,fill opacity=1 ] (279.88,146.25) .. controls (279.88,144.39) and (281.39,142.88) .. (283.25,142.88) .. controls (285.11,142.88) and (286.63,144.39) .. (286.63,146.25) .. controls (286.63,148.11) and (285.11,149.63) .. (283.25,149.63) .. controls (281.39,149.63) and (279.88,148.11) .. (279.88,146.25) -- cycle ;

			% Text Node
			\draw (38,254) node  [font=\scriptsize]  {$\mathbb{Z}^{d}$};
			% Text Node
			\draw (35.38,68.03) node [anchor=north east] [inner sep=0.75pt]  [font=\tiny]  {$( 0,x)$};
			% Text Node
			\draw (35.38,107.03) node [anchor=north east] [inner sep=0.75pt]  [font=\tiny]  {$( 0,y)$};
			% Text Node
			\draw (35.38,174.03) node [anchor=north east] [inner sep=0.75pt]  [font=\tiny]  {$( 0,z)$};
			% Text Node
			\draw (35.38,218.03) node [anchor=north east] [inner sep=0.75pt]  [font=\tiny]  {$( 0,w)$};
			% Text Node
			\draw (313.5,95) node  [font=\scriptsize]  {$...$};
			% Text Node
			\draw (312.5,145) node  [font=\scriptsize]  {$...$};
			% Text Node
			\draw (179,122) node  [font=\tiny]  {\scalebox{0.8}{$( {f_{i_{\star }}} ,{h_{i_{\star }}})$}};
			% Text Node
			\draw (255,151) node  [font=\tiny]  {\scalebox{0.8}{$( {f_{i_{\diamond }}} ,{h_{i_{\diamond }}})$}};
			\draw (180,270) node [font=\tiny] {(b)};
		\end{tikzpicture}
		\caption{ \scriptsize (a) A sample \textbf{$\sfT_2$} configuration. (b) The same configuration after summation of all possible values of the points $(f_{i},h_{i})_{i>i_\diamond}$ and  of the initial positions $(0,z), (0,w)$.}
		\label{fig:Type2}
	\end{figure}

	\noindent \textbf{($\sfT_2$ sequences)} Recall that by the definition of $\sfT_2$ sequences we have that for exactly two of the points $(u,\sfs) \in  \{(x,\sfa), (y,\sfb),(z,\sfc), (w,\sfd)\}$, it holds for the corresponding sets $\tau_u^{\ms{(\sfs)}}\cap \big([1,f_{i_\star})\times \Z^d \big)\neq \eset$ that
	\begin{align*}
		\tau_x^{\ms{(\sfa)}}\cap \big([1,f_{i_\star})\times \Z^d \big) & = \tau_y^{\ms{(\sfb)}}\cap \big([1,f_{i_\star})\times \Z^d \big)\neq \eset \notag           \\ \vspace{0.2cm}
		                                                               & \text{ and }	\notag                                                                          \\
		\tau_z^{\ms{(\sfc)}}\cap \big([1,f_{i_\star})\times \Z^d \big) & = \tau_w^{\ms{(\sfd)}}\cap \big([1,f_{i_\star})\times \Z^d \big)= \eset \, . \vspace{0.2cm}
	\end{align*}
	We will refer to this type of $\sfT_2$ sequences as $\sfT^{\ms{x \leftrightarrow y}}_2$. Analogously, we {can} define $\sfT^{\ms{x \leftrightarrow z}}_2$ and $\sfT^{\ms{x \leftrightarrow w}}_2$. {We will show} that the sum
	\begin{align} \label{T2xy}
		N^{d-2} \hspace{-0.3cm} \sum_{x,y,z,w \,\in \Z^d} \varphi_N(x,y,z,w)\,\,	{ \sigma^{ \sfa+ \sfb+ \sfc+ \sfd}} \hspace{-0.6cm}
		\sum_{\tau^{\ms{(\sfa)}}_x, \tau^{\ms{(\sfb)}}_y, \tau^{\ms{(\sfc)}}_z, \tau^{\ms{(\sfd)}}_w \,\in \sfT^{\ms{x \leftrightarrow y}}_2} \,  \prodtwo{(u,\sfs)\, \in  \{(x,\sfa), (y,\sfb),}{(z,\sfc), (w,\sfd)\}} q(\tau^{\ms(\sfs)}_u)  \,  \bbE\Big[\prodtwo{(u,\sfs) \, \in \{(x,\sfa),(y,\sfb),}{(z,\sfc),(w,\sfd)\}}  \eta(\tau^{\ms(\sfs)}_u)\Big ] \, ,
	\end{align}
	vanishes as $N \to \infty$. {By using \eqref{momentsC} { and the bound $\sigma^{\sfa+\sfb+\sfc+\sfd} \leq (\sigma\vee 1)^{4M}$} we obtain that}
	\begin{align} \label{T2xywithout}
		     & N^{d-2} \hspace{-0.3cm} \sum_{x,y,z,w \,\in \Z^d} \varphi_N(x,y,z,w)\,\,	{ \sigma^{ \sfa+ \sfb+ \sfc+ \sfd}} \hspace{-0.6cm}
		\sum_{\tau^{\ms{(\sfa)}}_x, \tau^{\ms{(\sfb)}}_y, \tau^{\ms{(\sfc)}}_z, \tau^{\ms{(\sfd)}}_w \,\in \sfT^{\ms{x \leftrightarrow y}}_2} \,  \prodtwo{(u,\sfs)\, \in  \{(x,\sfa), (y,\sfb),}{(z,\sfc), (w,\sfd)\}} q(\tau^{\ms(\sfs)}_u)  \,  \bbE\Big[\prodtwo{(u,\sfs) \, \in \{(x,\sfa),(y,\sfb),}{(z,\sfc),(w,\sfd)\}}  \eta(\tau^{\ms(\sfs)}_u)\Big ] \notag \\
		\leq & \, {(\sigma\vee 1)^{4M}} C^{2M}\, N^{d-2} \hspace{-0.25cm} \sum_{x,y,z,w \,\in \Z^d} \varphi_N(x,y,z,w) \hspace{-0.20cm}
		\sum_{\tau^{\ms{(\sfa)}}_x, \tau^{\ms{(\sfb)}}_y, \tau^{\ms{(\sfc)}}_z, \tau^{\ms{(\sfd)}}_w \,\in \sfT^{\ms{x \leftrightarrow y}}_2} \,  \prodtwo{(u,\sfs)\, \in  \{(x,\sfa), (y,\sfb),}{(z,\sfc), (w,\sfd)\}} q(\tau^{\ms(\sfs)}_u)  \, .
	\end{align}

	{By the definition of }$(f_{i_\star},h_{i_\star})$ we have that $(f_{i_\star},h_{i_\star})$ is the first point of at least one of the sequences $\tau_z^{\ms{(\sfc)}}, \tau_w^{\ms{(\sfd)}}$. Let us assume {that} it is the first point of exactly one of them.  We will refer to this type of sequences, $\tau^{\ms{(\sfa)}}_x, \tau^{\ms{(\sfb)}}_y, \tau^{\ms{(\sfc)}}_z, \tau^{\ms{(\sfd)}}_w$, as $\sfT^{\ms{x \leftrightarrow y}}_{2,\diamond}$ sequences, see figure \ref{fig:Type2}.
	Without loss of generality, we may assume that $(f_{i_\star},h_{i_\star})$ is the first point of $\tau_z^{\ms{(\sfc)}}$. In that case, $(f_{i_{\star}},h_{i_{\star}})$ can be a double or triple matching. Let $(f_{i_{\diamond}},h_{i_{\diamond}})$ be the first point of $\tau_w^{\ms{(\sfd)}}$. We have that $f_{i_{\star}} \leq f_{i_{\diamond}} $. Therefore, we first show that
	\begin{align} \label{T2diamond}
		{(\sigma\vee 1)^{4M}}	C^{2M}\, N^{d-2} \hspace{-0.25cm} \sum_{x,y,z,w \,\in \Z^d} \varphi_N(x,y,z,w) \hspace{-0.20cm}
		\sum_{\tau^{\ms{(\sfa)}}_x, \tau^{\ms{(\sfb)}}_y, \tau^{\ms{(\sfc)}}_z, \tau^{\ms{(\sfd)}}_w \,\in \sfT^{\ms{x \leftrightarrow y}}_{2,\diamond}} \,  \prodtwo{(u,\sfs)\, \in  \{(x,\sfa), (y,\sfb),}{(z,\sfc), (w,\sfd)\}} q(\tau^{\ms(\sfs)}_u) \xrightarrow[N\to \infty]{} 0 \, .
	\end{align}
	Similarly to the case of $\sfT_1$ sequences, for given $\sfT^{\ms{x \leftrightarrow y}}_{2,\diamond}$ sequences $\tau^{\ms{(\sfa)}}_x , \tau^{\ms{(\sfb)}}_y , \tau^{\ms{(\sfc)}}_z , \tau^{\ms{(\sfd)}}_w$ with $\tau=\tau^{\ms{(\sfa)}}_x \cup \tau^{\ms{(\sfb)}}_y \cup \tau^{\ms{(\sfc)}}_z \cup \tau^{\ms{(\sfd)}}_w=(f_i,h_i)_{1 \leq i \leq p}$ and $p=|\tau|$, the cardinality of $\tau$, we have that (see Figure \ref{fig:Type2})

	\begin{align} \label{T2xyqexpansion}
		\prodtwo{(u,\sfs)\, \in  \{(x,\sfa), (y,\sfb),}{(z,\sfc), (w,\sfd)\}} q(\tau^{\ms(\sfs)}_u)= & \, q_{\bar{f}_1}(\bar{h}_1-x)q_{\bar{f}_1}(\bar{h}_1-y) \prod_{i=2}^{a} q^2_{(\bar{f}_{i}-\bar{f}_{i-1})} (\bar{h}_{i}-\bar{h}_{i-1})          \notag                                                                                                 \\
		\times                                                                                       & q^{ \nu_a }_{(f_{i_{\star}}-\bar{f}_{a})}(h_{i_{\star}}-\bar{h}_a) \, q_{f_{i_{\star}}}(h_{i_{\star}}-z)  \,q_{f_{i_{\diamond}}}(h_{i_{\diamond}}-w)\notag                                                                                            \\
		\times                                                                                       & \prod_{m=1}^{\sfm_{i_\star+1}} q_{f_{i_\star+1}-f^{({i_\star+1})}_{r_m}}(h_{i_\star+1}-h^{({i_\star+1})}_{r_m}) \:  ... \prod_{m=1}^{\sfm_{i_\diamond}-1} q_{f_{i_\diamond}-f^{({i_\diamond})}_{r_m}}(h_{i_\diamond}-h^{({i_\diamond})}_{r_m}) \notag \\
		\times                                                                                       & \prod_{m=1}^{\sfm_{i_\diamond+1}} q_{f_{i_\diamond+1}-f^{({i_\diamond+1})}_{r_m}}(h_{i_\diamond+1}-h^{({i_\diamond+1})}_{r_m}) \:  ... \prod_{m=1}^{\sfm_{p}} q_{f_{p}-f^{({p})}_{r_m}}(h_{p}-h^{({p})}_{r_m})  \, ,
	\end{align}
	{where,} for every $i_\star+1 \leq j \leq p$, the number $\sfm_j$ ranges from $2$ to $4$ and indicates whether $(f_j,h_j)$ was a double, triple or quadruple matching. Also,
	for every $i_\star+1 \leq j \leq p$ and $1\leq m \leq \sfm_j$, $(f^{(j)}_{r_m},h^{(j)}_{r_m})$ is some space-time point which belongs to the sequence $(f_i,h_i)_{i_\star \leq i \leq p}\cup \{(\bar{f}_{a},\bar{h}_{a}) \}$, such that $f^{(j)}_{r_m}<f_j$.
	However, note that in the third line of \eqref{T2xyqexpansion}, the product for $(f_{i_\diamond},h_{i_\diamond})$ runs from $m=1$ to $\sfm_{i_\diamond}-1$, since $q_{f_{i_{\diamond}}}(h_{i_{\diamond}}-w)$ appears in the second line. The exponent $\nu_a$ in the second line of \eqref{T2xyqexpansion} can take values $1$ or $2$ and indicates whether $(f_{i_\star},h_{i_\star})$ {is} a double or triple matching; it cannot be a quadruple matching since we assumed that it is contained only in $\tau_z^{(c)}$ {and not in $\tau_w^{(d)}$.} In any case, we can bound $q^{ \nu_a }_{(f_{i_{\star}}-\bar{f}_{a})}(h_{i_{\star}}-\bar{h}_a)$ by $q_{(f_{i_{\star}}-\bar{f}_{a})}(h_{i_{\star}}-\bar{h}_a)$.

	We first make some observations so that the presentation is more concise.
	By iterating  \eqref{overlapbound} we obtain that
	\begin{align} \label{dobs1}
		\sum_{ (f_{i_{\diamond}+1},h_{i_{\diamond}+1})} \prod_{m=1}^{\sfm_{i_\diamond+1}} q_{f_{i_\diamond+1}-f^{({i_\diamond+1})}_{r_m}}(h_{i_\diamond+1}-h^{({i_\diamond+1})}_{r_m}) \:  ... \sum_{ (f_{p},h_p) }\prod_{m=1}^{\sfm_{p}} q_{f_{p}-f^{({p})}_{r_m}}(h_{p}-h^{({p})}_{r_m}) \leq 1 \, .
	\end{align}
	We also have that
	\begin{align} \label{dobs2}
		{\sum_{w \in \Z^d} \varphi_N(w) \, q_{f_{i_{\diamond}}}(h_{i_{\diamond}}-w) = \frac{1}{N^{\frac{d}{2}}} \sum_{w \in \Z^d} \varphi\big(\tfrac{w}{\sqrt{N}}\big) \, q_{f_{i_{\diamond}}}(h_{i_{\diamond}}-w) \leq \frac{\norm{\varphi}_{\infty}}{N^{\frac{d}{2}}} \sum_{w \in \Z^d} q_{f_{i_{\diamond}}}(h_{i_{\diamond}}-w) =\frac{\norm{\varphi}_{\infty}}{N^{\frac{d}{2}}} \, ,}
	\end{align}
	and {then we can sum}
	\begin{align} \label{dobs3}
		{\sum_{(f_{i_{\diamond}},h_{i_{\diamond}}) }\prod_{m=1}^{\sfm_{i_\diamond}-1} q_{f_{i_\diamond}-f^{({i_\diamond})}_{r_m}}(h_{i_\diamond}-h^{({i_\diamond})}_{r_m}) \leq \sum_{(f_{i_{\diamond}},h_{i_{\diamond}}) } q_{f_{i_\diamond}-f^{({i_\diamond})}_{r_1}}(h_{i_\diamond}-h^{({i_\diamond})}_{r_1}) \leq N \, ,}
	\end{align}
	{Having summed out the points} $(f_i,h_i)_{i\geq i_\diamond}$, we can iterate estimate \eqref{overlapbound} again to obtain that
	\begin{align} \label{dobs4}
		\sum_{ (f_{i_{\star}+1},h_{i_{\star}+1})} \prod_{m=1}^{\sfm_{i_\star+1}} q_{f_{i_\star+1}-f^{({i_\star+1})}_{r_m}}(h_{i_\star+1}-h^{({i_\star+1})}_{r_m}) \:  ... \hspace{-0.4cm} \sum_{ (f_{i_{\diamond}-1},h_{i_{\diamond}-1})} \prod_{m=1}^{\sfm_{i_\diamond-1}} q_{f_{i_\diamond-1}-f^{({i_\diamond-1})}_{r_m}}(h_{i_\diamond-1}-h^{({i_\diamond-1})}_{r_m}) \leq 1 \, .
	\end{align}
	\noindent {Therefore, in view of \eqref{T2xyqexpansion}, \eqref{T2diamond} and by using \eqref{dobs1}, \eqref{dobs2}, \eqref{dobs3} and \eqref{dobs4} in their respective order, we get that}
	\begin{align*}
		       & {(\sigma\vee 1)^{4M}} C^{2M}  N^{d-2}  \sum_{x,y,z,w \,\in \Z^d} \varphi_N(x,y,z,w)
		\sum_{\tau^{\ms{(\sfa)}}_x, \tau^{\ms{(\sfb)}}_y, \tau^{\ms{(\sfc)}}_z, \tau^{\ms{(\sfd)}}_w\, \in\, \sfT^{\ms{x \leftrightarrow y}}_{2,\diamond}}  \, \prodtwo{(u,\sfs)\, \in  \{(x,\sfa), (y,\sfb),}{(z,\sfc), (w,\sfd)\}} q(\tau^{\ms(\sfs)}_u) \notag \\
		\leq   & \, \norm{\varphi}_{\infty} c_{M,\diamond} \,  {(\sigma\vee 1)^{4M}} \, C^{2M}  N^{\frac{d}{2}-1}  \sum_{x,y,z \,\in \Z^d} \varphi_N(x,y,z)  \notag                                                                                               \\
		\times & \sum_{a=1}^{2M} \Big(\sum_{(\bar{f}_{i},\bar{h}_{i})_{1 \leq i \leq a}}q_{\bar{f}_1}(\bar{h}_1-x)q_{\bar{f}_1}(\bar{h}_1-y) \prod_{i=2}^{a} q^2_{(\bar{f}_{i}-\bar{f}_{i-1})} (\bar{h}_{i}-\bar{h}_{i-1})      \Big)    \notag                   \\
		\times & \Big(\sum_{(f_{i_\star},h_{i_\star})}   q_{(f_{i_{\star}}-\bar{f}_{a})}(h_{i_{\star}}-\bar{h}_a) \, q_{f_{i_{\star}}}(h_{i_{\star}}-z) \Big)  \, ,
	\end{align*}
	where {$c_{M,\diamond}$ is a constant combinatorial factor  which bounds the number of possible assignments of $\sfT^{\ms{x \leftrightarrow y}}_{2,\diamond}$ sequences, $\tau^{\ms{(\sfa)}}_x, \tau^{\ms{(\sfb)}}_y, \tau^{\ms{(\sfc)}}_z, \tau^{\ms{(\sfd)}}_w$ to $(f_i,h_i)_{1\leq i \leq p}$.} {  We set $\tilde{C}_{M,\diamond}:=  c_{M,\diamond} \, (\sigma\vee 1)^{4M} \, C^{2M}$}. In order to establish \eqref{T2diamond}, we need to show that for all fixed {$a\leq 2M$ }
	\begin{align*}
		\norm{\varphi}_{\infty} {\tilde{C}_{M,\diamond} \, }  N^{\frac{d}{2}-1}
		\times & \Big(\sum_{(\bar{f}_{i},\bar{h}_{i})_{1 \leq i \leq a}}q_{\bar{f}_1}(\bar{h}_1-x)q_{\bar{f}_1}(\bar{h}_1-y) \prod_{i=2}^{a} q^2_{(\bar{f}_{i}-\bar{f}_{i-1})} (\bar{h}_{i}-\bar{h}_{i-1})      \Big)    \notag \\
		\times & \Big(\sum_{(f_{i_\star},h_{i_\star})}   q_{(f_{i_{\star}}-\bar{f}_{a})}(h_{i_{\star}}-\bar{h}_a) \, q_{f_{i_{\star}}}(h_{i_{\star}}-z) \Big)  \,\notag  \xrightarrow[N \to \infty]{} 0 \, .
	\end{align*}
	{In analogy to \eqref{dobs2}, we have  that}
	\begin{align*}
		\sum_{z \in \Z^d} \varphi_N(z) \, q_{f_{i_{\star}}}(h_{i_{\star}}-z) \leq \frac{\norm{\varphi}_{\infty}}{N^{\frac{d}{2}}} \, .
	\end{align*}
	{Furthermore, by summing {over} $(f_{i_\star},h_{i_\star})$ we deduce that}
	\begin{align*}
		\sum_{(f_{i_\star},h_{i_\star})}  q_{(f_{i_{\star}}-\bar{f}_{a})}(h_{i_{\star}}-\bar{h}_a) \leq N \, ,
	\end{align*}
	{since the spatial sum is equal to $1$} and $f_{i_{\star}}-\bar{f}_{a} \in \{1,...,N\}$.  Therefore, the last step {in order} to establish \eqref{T2diamond} is to show that
	\begin{align*}
		{\tilde{C}_{M,\diamond} \, } \norm{\varphi}^2_{\infty}  \sum_{x,y \,\in \Z^d} \varphi_N(x,y)
		 & \sum_{(\bar{f}_{i},\bar{h}_{i})_{1 \leq i \leq a}}q_{\bar{f}_1}(\bar{h}_1-x)q_{\bar{f}_1}(\bar{h}_1-y) \prod_{i=2}^{a} q^2_{(\bar{f}_{i}-\bar{f}_{i-1})} (\bar{h}_{i}-\bar{h}_{i-1})  \xrightarrow[N \to \infty]{} 0 \, .
	\end{align*}
	By summing over the points $(\bar{f}_{i},\bar{h}_{i})_{2 \leq i \leq a}$, this amounts to proving that
	\begin{align*}
		{\tilde{C}_{M,\diamond} \, } \norm{\varphi}^2_{\infty} \sum_{x,y \,\in \Z^d} \varphi_N(x,y) \sum_{\bar{f}_{1}} q_{2\bar{f}_1}(x-y)\xrightarrow[N \to \infty]{} 0 \, ,
	\end{align*}
	which is true by Lemma \ref{conver}.
	The same procedure can be followed for sequences of type $\sfT^{\ms{x \leftrightarrow z}}_{2,\diamond}$ and $\sfT^{\ms{x \leftrightarrow w}}_{2,\diamond}$. {So, this concludes the estimate for $\sfT^{\ms{x \leftrightarrow y}}_2$ sequences in the case that $(f_{i_\star},h_{i_\star})$ is the first point of {only} one of the sequences $\tau_z^{\ms{(\sfc)}}, \tau_w^{\ms{(\sfd)}}$ and by symmetry also for the analogous cases for $\sfT^{\ms{x \leftrightarrow z}}_2$ and $\sfT^{\ms{x \leftrightarrow w}}_2$. }

	{Let us treat the case where $(f_{i_\star},h_{i_\star})$ is the first point of both sequences $\tau_z^{\ms{(\sfc)}}, \tau_w^{\ms{(\sfd)}}$}.
	Then, $(f_{i_{\star}},h_{i_{\star}})$ is a triple or quadruple matching, { i.e. either $(f_{i_{\star}},h_{i_{\star}}) \in \tau_x^{\ms{(\sfa)}},\tau_z^{\ms{(\sfc)}}, \tau_w^{\ms{(\sfd)}}$, or $(f_{i_{\star}},h_{i_{\star}}) \in \tau_y^{\ms{(\sfb)}},\tau_z^{\ms{(\sfc)}}, \tau_w^{\ms{(\sfd)}}$, or $(f_{i_{\star}},h_{i_{\star}}) \in \tau_x^{\ms{(\sfa)}},\tau_y^{\ms{(\sfb)}},\tau_z^{\ms{(\sfc)}}, \tau_w^{\ms{(\sfd)}}$.}
	Both cases can be treated as we did for $T_1$ sequences. Namely, we can first restrict ourselves to the sequence $(f_i,h_i)_{1 \leq i \leq i_\star}$ by using the bound we used in \eqref{overlapbound}. After following the procedure we described for $\sfT_1$ sequences we get that the sum in this case is either $O(N^{-\frac{d}{2}})$ if $(f_{i_{\star}},h_{i_{\star}})$ is a triple matching and $O(N^{-1-\frac{d}{2}})$ when  $(f_{i_{\star}},h_{i_{\star}})$ is a quadruple matching. Thus, in total the contribution of $\sfT_2$ sequences to \eqref{4thmom}, is $O(N^{1-\frac{d}{2}})$.\\

	\noindent\textbf{($\sfT_3$ sequences)}. For all  $(u,\sfs) \in  \{(x,\sfa), (y,\sfb),(z,\sfc), (w,\sfd)\}$ we have that $\tau_u^{\ms{(\sfs)}}\cap \big([1,f_{i_\star})\times \Z^d \big)= \eset$. This implies that $i_{\star}=1$ and $(f_{i_{\star}},h_{i_{\star}})$ is a triple or quadruple matching. It is easy {to see}, using the technique for $\sfT_1$ and $\sfT_2$ sequences, that the contribution of $\sfT_3$ sequences to \eqref{4thmom} is $O(N^{-\frac{d}{2}})$.\\

	Therefore, we have showed that the part of the sum \eqref{4thmom} which is over sequences of Type 1 ($\sfT_1$), Type 2 ($\sfT_2$) or Type 3 ($\sfT_3$) is negligible in the $N \to \infty$ limit. Thus, the proof is complete.

\end{proof}

\begin{proof}[Proof of Theorem \ref{gaussianity}]
	By Proposition \ref{lincomb} we obtain that $Z^{\ms{\leq}  M}_{N,\beta}(\varphi)$ converges in distribution to a centered Gaussian random variable $\mathcal{G}_M$ as $N \to \infty$, with variance equal to
	\begin{align*}
		\bbvar\big[\cG_M\big]=\sum_{k=1}^M  \,\mathcal{C}^{(k)}_{\beta} \int_0^1 \dd t \,\int_{\R^{2d} } \dd x \, \dd y\, \varphi(x) g_{\frac{2t}{d}}(x-y) \varphi(y) .
	\end{align*}
	We also have that
	\begin{align*}
		\lim_{M \to \infty} \bbvar\big[\cG_M\big]=\sum_{k=1}^\infty  \,\mathcal{C}^{(k)}_{\beta} \int_0^1 \dd t \,\int_{\R^{2d} } \dd x \, \dd y\, \varphi(x) g_{\frac{2t}{d}}(x-y) \varphi(y)=\bbvar{\mathcal{Z}_{\beta}(\varphi)} \, ,
	\end{align*}
	where $\mathcal{Z}_{\beta}(\varphi)$ is the random variable defined by Theorem \ref{gaussianity}, since
	\begin{align*}
		\sum_{k=1}^\infty  \,\mathcal{C}^{(k)}_{\beta}=\sigma^2(\beta) \sum_{k=1}^{\infty}\sigma(\beta)^{2(k-1)} \sumtwo{1\leq \ell_1 <...<\ell_{k-1}}{\ell_0:=0} \prod_{i=1}^{k-1} q_{2(\ell_i-\ell_{i-1})}(0)=\sigma^2(\beta) \E[e^{\lambda_2(\beta) \cL_{\infty}}] \, .
	\end{align*}
	Combining this with Lemma \ref{truncation}, we obtain the conclusion of Theorem \ref{gaussianity}, that is $Z_{N,\beta}(\varphi)\xrightarrow[N \to \infty]{(d)} \mathcal{Z}_{\beta}(\varphi)$.
\end{proof}

\section{Edwards-Wilkinson fluctuations for the log-partition function}
\justify

In this section we prove Theorem \ref{loggaussianity}, namely, the Edwards-Wilkinson fluctuations for the log-partition function.

We will need to impose one more condition to the random environment for technical reasons. Specifically, we require that the law of the random environment satisfies a concentration inequality. In particular, we assume that there exists an exponent $\gamma>1$ and constants $C_1,C_2>0$, such that for every $n \in \N$,  $1$-Lipschitz function $f:\R^n \to \R$ and i.i.d. random variables $\omega_1,...,\omega_n$ having law $\bbP$, we have that
\begin{align} \label{concentration}
	\bbP \Big( \big|f(\omega_1,...,\omega_n)-M_f\big| \geq t \Big) \leq C_1 \exp\Big( -\frac{t^\gamma}{C_2} \Big)\, ,
\end{align}
where $M_f$ denotes a median of $f(\omega_1,...,\omega_n)$. One can replace the median by $\bbE[f(\omega_1,...,\omega_n)]$, by changing the constants $C_1,C_2$ appropriately. {Condition} \eqref{concentration} is satisfied if $\omega$ has a density of the form $\exp(-V(\cdot)+U(\cdot))$, where $V$ is uniformly strictly convex and $U$ is bounded, see \cite{Led01}. {It also} enables us to formulate the following left-tail estimate. For $\Lambda \subseteq \N \times \Z^d$, let $Z^{\Lambda}_{N,\beta}(x)$ denote the partition function which contains disorder only from $\Lambda$, {that is
		\begin{align*}
			Z^{\Lambda}_{N,\beta}(x)= \E_x\Big[\exp\big(\sum_{(n,z) \in \Lambda } \big(\beta \omega_{n,z}-\lambda(\beta)\big)  \ind_{S_n=z} \big) \Big] \, .
		\end{align*} Then, we have the following Proposition: }
\begin{proposition}[Left-tail estimate] \label{lefttail}
	For every $\beta \in (0,\beta_{L^2})$ there exists a constant $c_{\beta}>0$, such that: for every $N \in \N$, $\Lambda \subseteq \N \times \Z^d$, one has that $\forall t \geq0$
	\begin{align*}
		{\bbP\Big(\log Z^{\Lambda}_{N,\beta}(x) \leq -t\Big) \leq c_\beta \, \exp\Big(-\frac{t^\gamma}{c_\beta}\Big) } \, ,
	\end{align*}
	where $\gamma$, is the exponent in \eqref{concentration}.
\end{proposition}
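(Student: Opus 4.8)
The plan is to deduce the estimate from the concentration inequality \eqref{concentration}, applied not to $F:=\log Z^{\Lambda}_{N,\beta}(x)$ itself (which has Lipschitz constant growing with $N$) but to a globally Lipschitz modification of it. Replacing $\Lambda$ by $\Lambda\cap([1,N]\times\Z^{d})$ we may assume $F$ is a smooth function of finitely many of the i.i.d.\ variables $\omega=(\omega_{n,z})$. Three structural facts drive the argument. First, $F$ is a \emph{convex} function of $\omega$: for each fixed path $S$ the exponent $\sum_{(n,z)\in\Lambda}(\beta\omega_{n,z}-\lambda(\beta))\ind_{S_n=z}$ is affine in $\omega$, and the logarithm of an average of exponentials of affine functions is convex. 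Second, the gradient is explicit, $\partial_{\omega_{n,z}}F=\beta\,\P^{\Lambda}_{N,\beta,x}(S_n=z)$ for $(n,z)\in\Lambda$ (and $0$ otherwise), where $\P^{\Lambda}_{N,\beta,x}$ denotes the polymer measure built from the disorder in $\Lambda$; hence $\norm{\nabla F}_{2}^{2}\le\beta^{2}\sum_{n\le N}\big(\P^{\Lambda}_{N,\beta,x}\big)^{\otimes 2}(S^{1}_{n}=S^{2}_{n})=:\beta^{2}\cR^{\Lambda}_{N}$, the replica overlap. Third, in the $L^{2}$-regime the computation behind \eqref{loctime} gives, uniformly in $N$ and $\Lambda$,
\[
\bbE\big[(Z^{\Lambda}_{N,\beta}(x))^{2}\big]\le\bbE\big[e^{\lambda_2(\beta)\cL_{\infty}}\big]=:C_{\beta}<\infty,\qquad
\bbE\big[(Z^{\Lambda}_{N,\beta}(x))^{2}\,\cR^{\Lambda}_{N}\big]\le\bbE\big[\cL_{\infty}e^{\lambda_2(\beta)\cL_{\infty}}\big]=:C_{\beta}'<\infty,
\]
the second because $\bbE_{\omega}[e^{H^{\Lambda}(S^{1})+H^{\Lambda}(S^{2})}]\le e^{\lambda_2(\beta)\sum_{n}\ind_{S^{1}_{n}=S^{2}_{n}}}$ and $\sum_{n\le N}\ind_{S^{1}_{n}=S^{2}_{n}}$ has the same law as $\cL_{N}\le\cL_{\infty}$; both expectations are finite precisely because $\lambda_2(\beta)<\log(1/\pi_{d})$, i.e.\ $\beta<\beta_{L^{2}}$ (cf.\ \eqref{secondmoment}).

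Next I would fix $p_{0}:=1/(4C_{\beta})$ and $K_{\beta}:=8C_{\beta}'/p_{0}$, and set $\widetilde F(\omega):=\inf_{\omega'}\big(F(\omega')+\beta\sqrt{K_{\beta}}\,\norm{\omega-\omega'}_{2}\big)$. Then $\widetilde F$ is $\beta\sqrt{K_{\beta}}$-Lipschitz and $\widetilde F\le F$ everywhere; moreover, by convexity of $F$ together with $\norm{\nabla F(\omega)}_{2}\le\beta\sqrt{\cR^{\Lambda}_{N}(\omega)}$, on the event $\{\cR^{\Lambda}_{N}\le K_{\beta}\}$ one has $F(\omega')\ge F(\omega)-\beta\sqrt{K_{\beta}}\norm{\omega'-\omega}_{2}$ for all $\omega'$, so $\widetilde F=F$ there. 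To anchor the location of $\widetilde F$ I would combine Paley--Zygmund with the first bound above, $\bbP(Z^{\Lambda}_{N,\beta}(x)\ge\tfrac12)\ge(1-\tfrac12)^{2}/C_{\beta}=p_{0}$, and Markov with the second, $\bbP\big(\{Z^{\Lambda}_{N,\beta}(x)\ge\tfrac12\}\cap\{\cR^{\Lambda}_{N}>K_{\beta}\}\big)\le\bbP\big((Z^{\Lambda}_{N,\beta}(x))^{2}\cR^{\Lambda}_{N}>K_{\beta}/4\big)\le 4C_{\beta}'/K_{\beta}=p_{0}/2$. Hence $\bbP\big(\{Z^{\Lambda}_{N,\beta}(x)\ge\tfrac12\}\cap\{\cR^{\Lambda}_{N}\le K_{\beta}\}\big)\ge p_{0}/2$, and on this event $\widetilde F=F=\log Z^{\Lambda}_{N,\beta}(x)\ge-\log2$, so that $\bbP(\widetilde F\ge-\log2)\ge p_{0}/2$, uniformly in $N$ and $\Lambda$.

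Finally I would apply \eqref{concentration} to the $1$-Lipschitz function $\widetilde F/(\beta\sqrt{K_{\beta}})$: for a median $m$ of it, $\bbP(|\widetilde F/(\beta\sqrt{K_{\beta}})-m|\ge s)\le C_{1}\exp(-s^{\gamma}/C_{2})$. Since $\bbP(\widetilde F/(\beta\sqrt{K_{\beta}})\ge -\log2/(\beta\sqrt{K_{\beta}}))\ge p_{0}/2$, picking $s_{0}$ with $C_{1}e^{-s_{0}^{\gamma}/C_{2}}<p_{0}/2$ forces $m\ge -\log2/(\beta\sqrt{K_{\beta}})-s_{0}$, whence, with $c_{0}:=\log2+\beta\sqrt{K_{\beta}}\,s_{0}$ (a constant depending only on $\beta$), $\bbP(\widetilde F\le -c_{0}-u)\le C_{1}\exp\!\big(-(u/(\beta\sqrt{K_{\beta}}))^{\gamma}/C_{2}\big)$ for all $u\ge0$. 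As $\bbP(\log Z^{\Lambda}_{N,\beta}(x)\le -t)\le\bbP(\widetilde F\le -t)$, taking $u=t-c_{0}$ gives the asserted bound for $t\ge2c_{0}$, the range $t<2c_{0}$ being trivial after enlarging $c_{\beta}$; the whole argument is uniform in $N$ and $\Lambda$ by construction. I expect the only step requiring genuine care to be the verification of the moment identity $\bbE[(Z^{\Lambda}_{N,\beta}(x))^{2}\cR^{\Lambda}_{N}]\le\bbE[\cL_{\infty}e^{\lambda_2(\beta)\cL_{\infty}}]$ uniformly in $N$ and $\Lambda$ — together with checking that restricting the disorder to $\Lambda$ only improves the relevant estimates — while the remaining ingredients (convexity of $\log Z^{\Lambda}$, the inf-convolution identity $\widetilde F=F$ on $\{\cR^{\Lambda}_{N}\le K_{\beta}\}$, Paley--Zygmund, and the quantile-to-median comparison) are soft.
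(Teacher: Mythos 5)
Your proposal is correct, and it reconstructs essentially verbatim the argument the paper invokes by reference to \cite{CSZ18b} and \cite{CTT17}: the paper does not re-prove Proposition~\ref{lefttail} but states that "the method presented there can be followed exactly," and that method is precisely the one you use. In particular, the three pillars of your argument --- convexity of $\omega \mapsto \log Z^{\Lambda}_{N,\beta}(x)$ with gradient $\partial_{\omega_{n,z}} = \beta\, \P^{\Lambda}_{N,\beta,x}(S_n=z)$; the inf-convolution $\widetilde F$ agreeing with $F$ on the good event $\{\cR^{\Lambda}_N \le K_\beta\}$; and the combination of Paley--Zygmund with a Markov bound on $\bbE[(Z^{\Lambda}_{N,\beta}(x))^2\cR^{\Lambda}_N]$ to get a uniformly positive lower bound for $\bbP(\widetilde F \ge -\log 2)$ --- are exactly the ingredients in \cite{CSZ18b}, Section~3, and \cite{CTT17}. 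The one place you flagged as deserving genuine care, the uniform-in-$(N,\Lambda)$ moment bound $\bbE[(Z^{\Lambda}_{N,\beta}(x))^2\cR^{\Lambda}_N] \le \E[\cL_\infty e^{\lambda_2(\beta)\cL_\infty}]$, is handled correctly: integrating out the disorder turns the replica weight into $\exp\bigl(\lambda_2(\beta)\sum_{(n,z)\in\Lambda}\ind_{S^1_n=z}\ind_{S^2_n=z}\bigr)$, which, since $\lambda_2(\beta)>0$, is dominated by the unrestricted exponential, and finiteness of $\E[\cL_\infty e^{\lambda_2(\beta)\cL_\infty}]$ is exactly the $L^2$-condition $e^{\lambda_2(\beta)}\pi_d<1$. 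The remaining small detail --- that only finitely many disorder variables enter, since $Z^{\Lambda}_{N,\beta}(x)$ depends only on $\omega_{n,z}$ with $n\le N$ and $|z-x|\le N$ --- is also noted. No gap.
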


Proposition \ref{lefttail} provides an additional advantage to our analysis and that is the existence of all negative moments for the partition function and all positive moments for the log-partition function. In particular, the following is in our disposal,
\begin{proposition} \label{negmom}
	For every $\beta \in (0,\beta_{L^2})$, $\Lambda \subseteq \N \times \Z^d$ and $p>0$ one has that there exist constants $C^{\ms{\sfn \sfe\sfg}}_{p,\beta},C^{\ms{\sfl \sfo \sfg}}_{p,\beta}  $ such that
	\begin{align*}
		 & \sup_{N \in \N} \bbE \Big[ \big(Z^{\Lambda}_{N,\beta}(x)\big)^{-p}\Big] \leq C^{\ms{\sfn \sfe\sfg}}_{p,\beta} \, , \notag  \\
		 & {\sup_{N \in \N} \bbE \Big[\big| \log Z^{\Lambda}_{N,\beta}(x)\big|^{p}\Big] \leq C^{\ms{\sfl \sfo \sfg}}_{p,\beta} \, . }
	\end{align*}
\end{proposition}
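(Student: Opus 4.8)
The plan is to derive both bounds directly from the left-tail estimate of Proposition \ref{lefttail} by integrating tail probabilities. Let me treat the two statements in turn; they are essentially equivalent once the tail bound is in hand.

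For the negative-moment bound, fix $p>0$, $\beta \in (0,\beta_{L^2})$, $\Lambda \subseteq \N \times \Z^d$ and $N \in \N$. Writing $\big(Z^{\Lambda}_{N,\beta}(x)\big)^{-p} = \exp\big(-p \log Z^{\Lambda}_{N,\beta}(x)\big)$ and using the layer-cake (tail integration) formula for the expectation of a nonnegative random variable, one has
\begin{align*}
	\bbE\Big[\big(Z^{\Lambda}_{N,\beta}(x)\big)^{-p}\Big]
	= \int_0^\infty \bbP\Big( e^{-p\log Z^{\Lambda}_{N,\beta}(x)} > s \Big)\, \dd s
	= 1 + \int_1^\infty \bbP\Big( \log Z^{\Lambda}_{N,\beta}(x) < -\tfrac{\log s}{p} \Big)\, \dd s \, .
\end{align*}
By Proposition \ref{lefttail} with $t = \tfrac{\log s}{p} \geq 0$ (valid for $s \geq 1$), the integrand is bounded by $c_\beta \exp\big(-(\log s)^\gamma/(c_\beta p^\gamma)\big)$. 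Since $\gamma>1$, this decays faster than any negative power of $s$ (indeed $(\log s)^\gamma / s \to \infty$, and more strongly $\exp(-(\log s)^\gamma/C) \leq s^{-2}$ for $s$ large), so the integral converges to a finite constant depending only on $p,\beta,\gamma,c_\beta$ but not on $N$, $\Lambda$ or $x$. Call this bound $C^{\ms{\sfn \sfe\sfg}}_{p,\beta}$; taking the supremum over $N$ gives the first inequality.

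For the moment bound on $|\log Z^{\Lambda}_{N,\beta}(x)|$, split into positive and negative parts. The negative part $(\log Z^{\Lambda}_{N,\beta}(x))_- $ has all moments finite uniformly in $N,\Lambda$ by exactly the same tail integration: $\bbE\big[(\log Z^{\Lambda}_{N,\beta}(x))_-^p\big] = \int_0^\infty p t^{p-1} \bbP\big(\log Z^{\Lambda}_{N,\beta}(x) < -t\big)\,\dd t \leq p c_\beta \int_0^\infty t^{p-1} e^{-t^\gamma/c_\beta}\,\dd t < \infty$, the last integral being a Gamma-type integral finite for every $p>0$ since $\gamma>0$. For the positive part, I would use Jensen's inequality: $\log Z^{\Lambda}_{N,\beta}(x) = \log \E_x[\cdots] \geq \E_x[\sum_{(n,z)\in\Lambda}(\beta\omega_{n,z}-\lambda(\beta))\ind_{S_n=z}]$ is not directly what we want for an upper bound; instead, for the \emph{upper} tail, note $\bbE[(\log Z^{\Lambda}_{N,\beta}(x))_+^p]$ can be controlled by $\bbE[(Z^{\Lambda}_{N,\beta}(x))^q]$ for any $q>0$ via $\log^p(z) \leq C_{p,q}\, z^q$ for $z \geq 1$; and $\bbE[(Z^{\Lambda}_{N,\beta}(x))^q]$ is bounded uniformly in $N$ and $\Lambda$ — for $q \leq 2$ this follows from the $L^2$-regime assumption via the second-moment computation \eqref{loctime}--\eqref{secondmoment} applied to the sub-environment $\Lambda$ (restricting the disorder only decreases the second moment, by the chaos expansion \eqref{chaos1} since it removes non-negative terms from the variance), and for general $q$ one can interpolate or simply take $q \leq 2$ which suffices. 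Combining the two parts yields the constant $C^{\ms{\sfl \sfo \sfg}}_{p,\beta}$.

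The main obstacle, and the only place requiring genuine care, is the uniformity in $\Lambda$: one must check that both the left-tail estimate (inherited from Proposition \ref{lefttail}, which already asserts uniformity in $\Lambda$) and the positive-moment control on $Z^{\Lambda}_{N,\beta}(x)$ hold with constants independent of $\Lambda$. For the latter, the key observation is that $\bbE[(Z^{\Lambda}_{N,\beta}(x))^2] \leq \bbE[(Z_{N,\beta}(x))^2] \leq \E[e^{\lambda_2(\beta)\mathcal{L}_\infty}] < \infty$ uniformly, because passing to a sub-environment $\Lambda$ only deletes terms with non-negative coefficients from the (orthogonal) chaos expansion of the variance. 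Everything else is a routine tail integration exploiting $\gamma>1$.
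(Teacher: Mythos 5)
Your proof is correct and is essentially the argument from \cite{CSZ18b} to which the paper defers: both estimates follow by integrating the super-polynomial left tail of Proposition \ref{lefttail} (which requires $\gamma>1$), with the positive part of $\log Z^{\Lambda}_{N,\beta}(x)$ absorbed into the uniform second-moment bound $\bbE\big[(Z^{\Lambda}_{N,\beta}(x))^2\big]\leq\E\big[e^{\lambda_2(\beta)\mathcal{L}_\infty}\big]$, which is indeed uniform in $\Lambda$ and $N$ by term-wise comparison in the orthogonal chaos expansion. The only blemish is that your first display should read ``$\leq$'' rather than ``$=$'', since you are bounding $\int_0^1\bbP(\cdot)\,\dd s$ by $1$; otherwise the argument, including your observation that taking $q\leq 2$ avoids any appeal to hypercontractivity, is sound.
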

We refer to \cite{CSZ18b} for the proofs of Propositions \ref{lefttail}, \ref{negmom}, as the method presented there can be followed exactly to give those results in our case. For Proposition \ref{lefttail} see also \cite{CTT17}, where this method appeared in the context of {pinning models.}

We will also need the existence of $2+\delta$ moments for the partition function. This can be established with the use of hypercontractivity, for which we refer to {Section 3 of} \cite{CSZ18b} for a detailed exposition. In particular, we have the following proposition:
\begin{proposition} \label{hyperc}
	For every $\beta \in (0,\beta_{L^2})$, there exists $p=p_{\beta} \in (2,\infty)$, such that
	\begin{align*}
		\sup_{N \in \N} \bbE\Big[\big(Z_{N,\beta}(x)\big)^p\Big] <\infty \, .
	\end{align*}
\end{proposition}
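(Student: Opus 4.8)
The plan is to deduce the uniform boundedness of a $p$-th moment with $p>2$ from the boundedness of the second moment, via a hypercontractivity argument applied to the polynomial chaos expansion \eqref{chaos1} of $Z_{N,\beta}(x)$. The key structural fact is that $Z_{N,\beta}(x)-1$ is a (convergent, in $L^2(\bbP)$) sum of multilinear polynomials in the i.i.d.\ centered, unit-variance random variables $(\eta_{n,z})_{(n,z)\in\N\times\Z^d}$, the $k$-th homogeneous component being $Z^{(k)}$ with kernel $\sigma^k q_{n_1}(z_1-x)\prod_{i=2}^k q_{n_i-n_{i-1}}(z_i-z_{i-1})$ supported on strictly time-ordered tuples. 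Hypercontractivity for such chaos expansions (see Section~3 of \cite{CSZ18b}) states that if the $\eta$'s have finite moments of all orders—which holds here because $\lambda(\beta)<\infty$ for all $\beta$, so $\eta_{n,z}$ is an exponential random variable of $\omega$ with all moments finite—then for each fixed degree $k$ there is a constant $c_{p,k}$ with $\|Z^{(k)}\|_{L^p(\bbP)}\le c_{p,k}\,\|Z^{(k)}\|_{L^2(\bbP)}$, and moreover $c_{p,k}$ grows only like $(c_p)^k$ for a constant $c_p$ depending on $p$ and the law of $\eta$ but not on $k$ or $N$.

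First I would record the exact $L^2$ computation already done in the paper: from the orthogonality of distinct chaoses and \eqref{loctime}–\eqref{secondmoment},
\begin{align*}
	\|Z^{(k)}_{N,\beta}(x)\|_{L^2(\bbP)}^2 = \sigma^{2k}\!\!\sum_{1\le n_1<\dots<n_k\le N}\prod_{i=1}^k q_{2(n_i-n_{i-1})}(0) \le \sigma^{2k} R_\infty^k,
\end{align*}
with $n_0:=0$ and $R_\infty=\sum_{m\ge 1}q_{2m}(0)=\tfrac{\pi_d}{1-\pi_d}<\infty$; since $\beta<\beta_{L^2}$ we have $\sigma^2 R_\infty<1$. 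Then by the triangle inequality in $L^p(\bbP)$ and hypercontractivity,
\begin{align*}
	\|Z_{N,\beta}(x)\|_{L^p(\bbP)} \le 1 + \sum_{k=1}^N \|Z^{(k)}_{N,\beta}(x)\|_{L^p(\bbP)} \le 1 + \sum_{k=1}^\infty (c_p)^k\, \sigma^k R_\infty^{k/2}.
\end{align*}
The right-hand side is finite and independent of $N$ and $x$ as soon as $c_p\,\sigma\,R_\infty^{1/2}<1$. Since $\sigma\,R_\infty^{1/2}<1$ strictly (this is exactly the $L^2$-condition), and $c_p\to 1$ as $p\downarrow 2$ in the relevant hypercontractive estimates, one can choose $p=p_\beta>2$ close enough to $2$ that $c_{p_\beta}\,\sigma\,R_\infty^{1/2}<1$; taking the supremum over $N$ and raising to the $p$-th power gives the claim.

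The main obstacle is making precise the hypercontractive bound $\|Z^{(k)}\|_{L^p}\le (c_p)^k\|Z^{(k)}\|_{L^2}$ with a constant $c_p$ that is \emph{uniform in the degree} $k$ and degrades to $1$ as $p\downarrow 2$: a naive application of the classical Bonami–Beckner inequality gives a constant that is exponential in $k$ but with base $\sqrt{p-1}^{-1}$, which could exceed $\sigma^{-1}R_\infty^{-1/2}$ unless $p$ is taken close to $2$; one must therefore track the dependence carefully (the $\eta_{n,z}$ are not $\pm1$ Rademacher variables, so one invokes the version of hypercontractivity for general random variables with finite moments, e.g.\ the results quoted in Section~3 of \cite{CSZ18b}), and then exploit the strict inequality $\sigma^2 R_\infty<1$ to absorb the geometric growth. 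Since the paper explicitly refers to \cite{CSZ18b} for exactly this proposition, I would cite that reference for the hypercontractive input and present only the short geometric-series bookkeeping above as the new content.
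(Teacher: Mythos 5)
Your proposal follows exactly the route the paper takes: the paper does not give a proof of Proposition~\ref{hyperc} but instead refers to Section~3 of \cite{CSZ18b}, where precisely this hypercontractivity-plus-geometric-series argument is carried out for the polynomial chaos expansion, using that $\sigma^2 R_\infty<1$ in the $L^2$-regime and that the hypercontractivity constant $c_p\to 1$ as $p\downarrow 2$. One small slip: the Bonami--Beckner constant grows in $k$ with base $\sqrt{p-1}$ (not $\sqrt{p-1}^{\,-1}$), but your subsequent discussion is consistent with the correct form, so this reads as a typo rather than a conceptual error.
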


Let us proceed to the sketch of the proof for the {Edwards-Wilkinson} fluctuations for the log-partition function.
For every $ x \in \Z^d $ we define a space-time window around $x$ as follows
\begin{align} \label{Asets}
	A_N^x=\Big\{(n,z):\:1 \leq n \leq N^{\epsilon}, |x-z|< N^{\frac{\epsilon}{ 2}+{\alpha} }  \Big \} \, ,
\end{align}
{for $\epsilon \in (0,1)$ and $\alpha \in (0,\frac{\epsilon}{2})$, much smaller than $\frac{\epsilon}{2}$. These scale parameters are going to be determined later in the proofs.}
{We decompose} the partition function as:
\begin{align*}
	Z_{N,\beta}(x)=Z^{A}_{N,\beta}(x)+\hat{Z}^{A}_{N,\beta}(x)  \, ,
\end{align*}
where
\begin{align*}
	{Z^{A}_{N,\beta}(x)=\E_x\Big[\exp\big(\sum_{(n,z) \in A_N^x } \big(\beta \omega_{n,z}-\lambda(\beta)\big)  \ind_{S_n=z} \big) \Big] \, ,}
\end{align*}
is the partition function which contains disorder only from the set $A_N^x$, while the remainder, {$\hat{Z}^{A}_{N,\beta}(x)=Z_{N,\beta}(x)-Z^{A}_{N,\beta}(x)$}, necessarily contains disorder from points outside of $A_N^x$ in its chaos decomposition, see also \cite{CSZ18b}, Section 2, for analoguous definitions. The chaos expansions of $Z^{A}_{N,\beta}(x),\hat{Z}^{A}_{N,\beta}(x)$ are
\begin{align} \label{Z_A_chaos}
	Z^{A}_{N,\beta}(x)=1+\sum_{k\geq 1} \sigma^k \sum_{(n_i,z_i)_{1\leq i\leq k}\subset A_N^x} q_{n_1}(z_1-x)\prod_{i=2}^k q_{n_i-n_{i-1}}(z_i-z_{i-1}) \prod_{i=1}^k \eta_{n_i,z_i} \, ,
\end{align}
and
\begin{align} \label{Z_hat_chaos}
	\hat{Z}^{A}_{N,\beta}(x)=\sum_{k\geq1}\sigma^k \sum_{(n_i,z_i)_{1\leq i\leq k}\cap (A_N^x)^{\ms c}\neq \eset} q_{n_1}(z_1-x)\prod_{i=2}^k q_{n_i-n_{i-1}}(z_i-z_{i-1}) \prod_{i=1}^k \eta_{n_i,z_i} \, .
\end{align}
We can then write, for every $x \in \Z^d$,

\begin{align} \label{decomposition}
	{	\log Z_{N,\beta}(x)=\log  Z^{A}_{N,\beta}(x)+\log\bigg(1+ \frac{\hat{Z}^{A}_{N,\beta}(x)}{Z^{A}_{N,\beta}(x)}\bigg) } \, .
\end{align}

The first step we take is to show that the contribution of the term {$\log  Z^{A}_{N,\beta}(x)$} to the fluctuations of {$\log Z_{N,\beta}(x)$} is negligible, when averaged over $x$, in the following sense
\begin{proposition} \label{first}
	Let  $\varphi \in C_c(\R^d)$ be a test function. Then, we have that
	\begin{align} \label{logA}
		N^{\frac{d-2}{4}}  \sum_{x \in \Z^d} \varphi_N(x)\, {\Big(\log   Z^{A}_{N,\beta}(x)-\bbE\big[\log  Z^{A}_{N,\beta}(x) \big] \Big)} \xrightarrow[N \to \infty]{L^2(\bbP)} 0 \, .
	\end{align}
\end{proposition}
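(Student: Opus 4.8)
The plan is to compute the $L^2(\bbP)$-norm directly and show it tends to zero. Write $W_N(x):=\log Z^A_{N,\beta}(x)-\bbE[\log Z^A_{N,\beta}(x)]$, so that the expression in \eqref{logA} is $N^{\frac{d-2}{4}}\sum_x \varphi_N(x) W_N(x)$. Expanding the square,
\begin{align*}
	\bbE\Big[\Big( N^{\frac{d-2}{4}} \sum_{x} \varphi_N(x) W_N(x)\Big)^2\Big]
	= N^{\frac{d-2}{2}} \sum_{x,y} \varphi_N(x)\varphi_N(y)\, \bbE[W_N(x) W_N(y)] \, .
\end{align*}
The key structural observation is that $Z^A_{N,\beta}(x)$ depends only on the disorder inside the window $A_N^x$. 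Hence if $|x-y|$ is so large that $A_N^x \cap A_N^y = \eset$, the variables $W_N(x)$ and $W_N(y)$ are \emph{independent} and the covariance vanishes. Since the spatial extent of $A_N^x$ is $N^{\frac{\epsilon}{2}+\alpha}$, the covariance is supported on the set $\{|x-y| \lesssim N^{\frac{\epsilon}{2}+\alpha}\}$.

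The next step is to bound $|\bbE[W_N(x)W_N(y)]|$ uniformly. By Cauchy--Schwarz this is at most $\var(\log Z^A_{N,\beta}(x))^{1/2}\var(\log Z^A_{N,\beta}(y))^{1/2}$, so it suffices to control a single variance $\var(\log Z^A_{N,\beta}(x))$. Here I would invoke the concentration inequality \eqref{concentration}: the map $\omega \mapsto \log Z^A_{N,\beta}(x)$, viewed as a function of the (finitely many) disorder variables in $A_N^x$, is Lipschitz in a suitable sense after rescaling, and combined with Proposition \ref{negmom} (which gives all positive moments of $|\log Z^A_{N,\beta}|$, and in particular a uniform bound on the variance), one obtains $\var(\log Z^A_{N,\beta}(x)) \leq C$ uniformly in $N$ and $x$. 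In fact one expects better: since $A_N^x$ only contains the first $N^\epsilon$ time layers, a more refined estimate à la Proposition \ref{variancep} (truncated at time $N^\epsilon$) should give $\var(\log Z^A_{N,\beta}(x)) = O\big( (\text{number of returns up to }N^\epsilon) \big)$, but for the present purpose the uniform bound $O(1)$ is enough.

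Combining the two facts: the double sum is restricted to $\{|x-y| \lesssim N^{\frac{\epsilon}{2}+\alpha}\}$, on which the covariance is $O(1)$, and $|\varphi_N(x)|\leq \|\varphi\|_\infty N^{-d/2}$ with $\varphi$ compactly supported, so $\sum_x |\varphi_N(x)| \mathbf{1}_{x/\sqrt N \in \mathrm{supp}\,\varphi} \leq C$ and for each fixed $x$ the inner sum over $y$ ranges over $O\big(N^{d(\frac{\epsilon}{2}+\alpha)}\big)$ sites, each contributing a weight $\|\varphi\|_\infty N^{-d/2}$. Altogether
\begin{align*}
	N^{\frac{d-2}{2}} \sum_{x,y} |\varphi_N(x)\varphi_N(y)|\,\big|\bbE[W_N(x)W_N(y)]\big|
	\leq C\, N^{\frac{d-2}{2}} \cdot N^{-d/2} \cdot N^{-d/2} \cdot N^{d/2} \cdot N^{d(\frac{\epsilon}{2}+\alpha)}
	= C\, N^{-1 + d(\frac{\epsilon}{2}+\alpha)} \, .
\end{align*}
Choosing $\epsilon$ small enough (and $\alpha \ll \epsilon/2$) so that $d(\frac{\epsilon}{2}+\alpha) < 1$, the right-hand side tends to $0$, which proves \eqref{logA}.

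\textbf{The main obstacle} is the uniform variance bound $\var(\log Z^A_{N,\beta}(x)) \leq C$: one must either push the concentration inequality \eqref{concentration} through the (possibly large) number of disorder sites in $A_N^x$ while tracking that the Lipschitz constant of $\log Z^A_{N,\beta}(x)$ as a function of $(\omega_{n,z})_{(n,z)\in A_N^x}$ is controlled (this is where $\beta$ being in the $L^2$-regime and Proposition \ref{negmom} enter, to avoid the noise derivative blowing up when $Z^A_{N,\beta}(x)$ is small), or alternatively derive a second-moment estimate for $\log Z^A_{N,\beta}(x)$ directly from the chaos expansion \eqref{Z_A_chaos} truncated at time $N^\epsilon$ as in the proof of Proposition \ref{variancep}. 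Once this bound is in hand, everything else is the elementary counting above; the freedom to take $\epsilon$ as small as we wish makes the final power of $N$ comfortably negative.
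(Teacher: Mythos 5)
Your overall scaffolding is correct: the covariance $\bbE[W_N(x)W_N(y)]$ vanishes when $|x-y|>2N^{\frac{\epsilon}{2}+\alpha}$, so the double sum localises to a diagonal strip and one needs a bound on the covariance there. The genuine gap is in the bound you use inside the strip. Replacing $|\bbE[W_N(x)W_N(y)]|$ by a uniform $O(1)$ via Cauchy--Schwarz and then counting lattice sites gives, as you compute, $N^{-1+d(\frac{\epsilon}{2}+\alpha)}$. For this to vanish you must take $\epsilon<\tfrac{2}{d}$, i.e.\ $\epsilon<\tfrac{2}{3}$ already for $d=3$. But $\epsilon$ is a global parameter of the decomposition \eqref{decomposition}, and the proof of Proposition~\ref{remainder} (removing the Taylor remainder $O_N(x)$) forces $\epsilon>\tfrac{7}{8}$, which is incompatible with your requirement for every $d\geq 3$. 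So the freedom to ``take $\epsilon$ as small as we wish'' that your argument relies on does not exist; the small window needed by your covariance bound would break the later linearisation step. This is precisely the phenomenon flagged in Section~2 of the paper: because the prefactor is a power $N^{\frac{d-2}{4}}$ rather than a log, soft Cauchy--Schwarz bounds are too lossy.

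What is actually needed is decay of the covariance in $|x-y|$, not just boundedness. The paper obtains it through the martingale-difference decomposition \eqref{Z-decomp} along a one-site-at-a-time filtration, followed by a resampling identity for each increment $D_j(x)$ and the truncation at level $N^{-\sfh}$ to linearise the logarithm. This yields
\begin{align*}
	\sum_{j:\,a_j\in A_N^x\cap A_N^y}\bbE\big[D^{(\sfb)}_j(x)D^{(\sfb)}_j(y)\big]
	\;\leq\; c_\beta\, N^{2\sfh}\sum_{n=1}^{N^\epsilon} q_{2n}(x-y)\,,
\end{align*}
with $\sfh\in(0,\tfrac{1-\epsilon}{2})$. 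Summing $q_{2n}(x-y)$ over $y$ gives $1$, so the $y$-sum over the strip costs only $N^\epsilon$ rather than the volume $N^{d(\frac{\epsilon}{2}+\alpha)}$, and the final bound is $O(N^{2\sfh+\epsilon-1})$, which vanishes for every $\epsilon<1$. In other words, the heat-kernel decay in the covariance is exactly what cancels the extra powers of $N$; your uniform bound discards it and is therefore too weak for the $\epsilon$ that the rest of the argument mandates.
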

The second step is to prove {that we} can replace
$\displaystyle \log\Big(1+ \tfrac{\hat{Z}^{A}_{N,\beta}(x)}{Z^{A}_{N,\beta}(x)}\Big) $ by $\tfrac{\hat{Z}^{A}_{N,\beta}(x)}{Z^{A}_{N,\beta}(x)}
$.
In particular, if we define $O_N(x):=  \log\Big(1+ \tfrac{\hat{Z}^{A}_{N,\beta}(x)}{Z^{A}_{N,\beta}(x)}\Big) - \tfrac{\hat{Z}^{A}_{N,\beta}(x)}{Z^{A}_{N,\beta}(x)}  $, then we will show {that}
\begin{proposition} \label{remainder}
	Let  $\varphi \in C_c(\R^d)$ be a test function. Then, we have that
	\begin{align*}
		N^{\frac{d-2}{4}}  \sum_{x \in \Z^d} \varphi_N(x)\, \Big(O_N(x)-\bbE[O_N(x)] \Big) \xrightarrow[N \to \infty]{L^1(\bbP)} 0 \, .
	\end{align*}
\end{proposition}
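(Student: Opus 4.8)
\textbf{Proof proposal for Proposition \ref{remainder}.}
The plan is to bound the whole expression in absolute value and reduce to a purely quantitative estimate on $\bbE|O_N(0)|$. By the triangle inequality, $\bbE|O_N(x)-\bbE[O_N(x)]|\le 2\,\bbE|O_N(x)|$; moreover $\bbE|O_N(x)|$ is independent of $x$, since the disorder is i.i.d.\ (hence stationary under $\Z^d$-shifts) and $A_N^x$ is the $x$-translate of $A_N^0$. Finally $\sum_{x\in\Z^d}|\varphi_N(x)|=N^{-d/2}\sum_{x}|\varphi(x/\sqrt N)|\to\norm{\varphi}_1$ as a Riemann sum ($\varphi\in C_c(\R^d)$). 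Hence
\begin{align*}
	\bbE\Big[\,\big|N^{\frac{d-2}{4}}\textstyle\sum_{x}\varphi_N(x)(O_N(x)-\bbE[O_N(x)])\big|\,\Big]\le 2\big(\norm{\varphi}_1+o(1)\big)\,N^{\frac{d-2}{4}}\,\bbE|O_N(0)|\,,
\end{align*}
so it suffices to show $N^{\frac{d-2}{4}}\,\bbE|O_N(0)|\to 0$.

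The next step is a pointwise bound. Since $Z_{N,\beta}(x)=Z^{A}_{N,\beta}(x)+\hat{Z}^{A}_{N,\beta}(x)$ and both $Z_{N,\beta}(x),Z^{A}_{N,\beta}(x)$ are strictly positive, we have $1+\hat{Z}^{A}_{N,\beta}(x)/Z^{A}_{N,\beta}(x)=Z_{N,\beta}(x)/Z^{A}_{N,\beta}(x)>0$, so $O_N(x)$ is always well defined, and the elementary inequality $|\log v-(v-1)|\le\tfrac12(v-1)^2\max(1,v^{-2})$ for $v>0$ applied with $v=Z_{N,\beta}(x)/Z^{A}_{N,\beta}(x)$ yields
\begin{align*}
	|O_N(x)|\le \tfrac12\,\big(\hat{Z}^{A}_{N,\beta}(x)\big)^2\Big(\big(Z^{A}_{N,\beta}(x)\big)^{-2}+\big(Z_{N,\beta}(x)\big)^{-2}\Big)\,.
\end{align*}
To take expectations I would use three facts: (i) $Z^{A}_{N,\beta}(x)=\bbE[\,Z_{N,\beta}(x)\mid\sigma(\omega_{n,z}:(n,z)\in A_N^x)\,]$, because the disorder outside $A_N^x$ integrates to $1$ along each random-walk path; consequently $\hat{Z}^{A}_{N,\beta}(x)=Z_{N,\beta}(x)-\bbE[Z_{N,\beta}(x)\mid\,\cdot\,]$, and by conditional Jensen together with Proposition \ref{hyperc} it has $p_\beta$-th moment bounded uniformly in $N$ for some $p_\beta>2$; (ii) all negative moments of $Z^{A}_{N,\beta}(x)$ and of $Z_{N,\beta}(x)$ are bounded uniformly in $N$, by Proposition \ref{negmom}; (iii) interpolating $L^q$-norms of $\hat{Z}^{A}_{N,\beta}(x)$ between $L^2$ and $L^{p_\beta}$. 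Combining (i)--(iii) through H\"older's inequality, for every $\delta>0$ there is $C_\delta$ with $\bbE[(\hat{Z}^{A}_{N,\beta}(0))^2\,W]\le C_\delta\,\bbE[(\hat{Z}^{A}_{N,\beta}(0))^2]^{\,1-\delta}$ for $W\in\{(Z^{A}_{N,\beta}(0))^{-2},(Z_{N,\beta}(0))^{-2}\}$, whence $\bbE|O_N(0)|\le C_\delta\,\bbE[(\hat{Z}^{A}_{N,\beta}(0))^2]^{\,1-\delta}$.

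The remaining, and main, task is a quantitative decay estimate for $\bbE[(\hat{Z}^{A}_{N,\beta}(0))^2]$. By (i) and orthogonality, $\bbE[(\hat{Z}^{A}_{N,\beta}(0))^2]=\bbE[(Z_{N,\beta}(0))^2]-\bbE[(Z^{A}_{N,\beta}(0))^2]$, and the second-moment computation as in \eqref{loctime} gives $\bbE[(Z^{A}_{N,\beta}(0))^2]=\E^{\otimes 2}[e^{\lambda_2(\beta)\,\mathcal{L}_N^A}]$ where $\mathcal{L}_N^A:=\#\{n\le N:S^1_n=S^2_n,\ (n,S^1_n)\in A_N^0\}$; therefore
\begin{align*}
	\bbE\big[(\hat{Z}^{A}_{N,\beta}(0))^2\big]\le \E^{\otimes 2}\Big[e^{\lambda_2(\beta)\,\mathcal{L}_N}\,\ind_{\{\exists\,n\le N:\ S^1_n=S^2_n,\ (n,S^1_n)\notin A_N^0\}}\Big]\,.
\end{align*}
Picking $p>1$ close to $1$ with $p\lambda_2(\beta)<\log(1/\pi_d)$ (possible since $\beta\in(0,\beta_{L^2})$, exactly as in the proof of Proposition \ref{variancep}), H\"older bounds the right-hand side by $C\,\P^{\otimes 2}\big(\exists\,n\le N:\ S^1_n=S^2_n,\ (n,S^1_n)\notin A_N^0\big)^{1/q}$. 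Splitting the coincidence event according to whether it occurs at a time $n>N^{\epsilon}$ or at a space point of norm $\ge N^{\epsilon/2+\alpha}$: the first part is $\le\sum_{n>N^\epsilon}q_{2n}(0)\le C N^{-\epsilon(d-2)/2}$ (transience tail in $d\ge3$), while the second is $\le\sum_{n\le N^\epsilon}\sup_{|z|\ge N^{\epsilon/2+\alpha}}q_n(z)\le C e^{-cN^{2\alpha}}$, using the Gaussian heat-kernel bound $q_n(z)\le Cn^{-d/2}e^{-c|z|^2/n}$ from \cite{LL10} (the constraint $\alpha<\epsilon/2$ ensures $|z|\le n$ in the relevant range). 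Hence $\bbE[(\hat{Z}^{A}_{N,\beta}(0))^2]\le C N^{-\epsilon(d-2)(1-\delta)/2}$ up to an arbitrarily small H\"older loss $\delta$, so that $N^{\frac{d-2}{4}}\bbE|O_N(0)|\le C N^{(d-2)(\frac14-\frac{\epsilon}{2}(1-\delta))}\to 0$ provided $\epsilon>\tfrac12$ and $\delta$ is small enough --- this is one of the conditions pinning down the scales $\epsilon,\alpha$ in \eqref{Asets}. The main obstacle is precisely this last step: producing a polynomial rate with exponent beating $N^{(d-2)/4}$, which forces $\epsilon>1/2$ and requires carrying the two truncation scales (time $N^\epsilon$, space $N^{\epsilon/2+\alpha}$) jointly through the second-moment bookkeeping, while the moment inputs (i)--(iii) only cost arbitrarily small losses in the exponent.
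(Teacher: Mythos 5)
Your pointwise inequality $|\log v-(v-1)|\le\tfrac12(v-1)^2\max(1,v^{-2})$ gives $|O_N(x)|\le\tfrac12(\hat Z^A_{N,\beta}(x))^2\big((Z^A_{N,\beta}(x))^{-2}+(Z_{N,\beta}(x))^{-2}\big)$, which together with the observation $Z^A_{N,\beta}(x)=\bbE[Z_{N,\beta}(x)\mid\sigma(\omega_a:a\in A_N^x)]$, conditional Jensen, Proposition \ref{hyperc}, Proposition \ref{negmom} and a H\"older/interpolation step cleanly yields $\bbE|O_N(0)|\le C_\delta\,\bbE[(\hat Z^A_{N,\beta}(0))^2]^{1-\delta}$ for every $\delta>0$. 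This is more economical than the paper's three-branch Taylor split over the events $D_N^\pm$, and if the decay of $\bbE[(\hat Z^A)^2]$ were as you claim it would even relax the paper's constraint $\epsilon>7/8$ to $\epsilon>1/2$. However, the derivation of that decay contains a genuine gap. From $\bbE[(\hat Z^A_{N,\beta}(0))^2]\le\E^{\otimes 2}[e^{\lambda_2(\beta)\cL_N}\ind_E]$ you apply H\"older, producing $\E[e^{p\lambda_2\cL_\infty}]^{1/p}\,\P^{\otimes2}(E)^{1/q}$; finiteness of the first factor forces $p<p_0(\beta):=\log(1/\pi_d)/\lambda_2(\beta)$, so the exponent on $\P(E)$ is $1/q=1-1/p<1-1/p_0(\beta)$, which is a fixed $\beta$-dependent quantity strictly below $1$ and which tends to $0$ as $\beta\uparrow\beta_{L^2}$ --- it is \emph{not} an arbitrarily small loss. (Your phrasing ``$p>1$ close to $1$'' makes the situation worse, since that drives $1/q$ to $0$.) Concretely, your route requires $\epsilon>q_0(\beta)/2$ with $q_0(\beta)=p_0(\beta)/(p_0(\beta)-1)$, and since $\epsilon<1$ this only closes when $q_0(\beta)<2$, i.e.\ $\lambda_2(\beta)<\tfrac12\log(1/\pi_d)$ --- a strict sub-region of the $L^2$-regime, defeating the purpose of the result.

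The paper avoids this loss entirely in Lemma \ref{hat}, which proves $\bbE[(\hat Z^A_{N,\beta}(x))^2]\le C_\beta N^{-\lambda(\frac d2-1)}$ for every $\lambda\in(0,\epsilon)$ with a $\beta$-independent exponent. The mechanism is not a H\"older split on $e^{\lambda_2\cL_N}$ but a direct chaos expansion: at least one increment $(\ell_i,w_i)$ must have $\ell_i>N^\epsilon/k$ or $|w_i|>N^{\epsilon/2+\alpha}/k$; summing out the remaining bubbles produces $R_N-R_{N^\lambda}=O(N^{-\lambda(d/2-1)})$ for $k\le N^{\epsilon-\lambda}$, plus a geometric $k$-tail controlled by $\sigma^2(\beta)R_\infty<1$, with no degradation of the exponent. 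If you replace your H\"older estimate by an invocation of Lemma \ref{hat}, your interpolation framework goes through on the full $L^2$-regime and the cleaner constraint $\epsilon>1/2$ becomes correct.
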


\noindent Therefore, we need to identify the fluctuations of the quotient
$\displaystyle \frac{\hat{Z}^{A}_{N,\beta}(x)}{Z^{A}_{N,\beta}(x)} $. Note that this quantity has mean zero since each term in the chaos expansion of $\hat{Z}^{A}_{N,\beta}(x)$ contains disorder outside $A_N^x$, see \eqref{Z_hat_chaos}. To study the fluctuations of $\displaystyle \frac{\hat{Z}^{A}_{N,\beta}(x)}{Z^{A}_{N,\beta}(x)}$ we define, for a suitable $\rho \in (\epsilon,1)$, the set
\begin{align} \label{rhodef}
	B^{\geq}_N=\big( (N^\rho,N] \cap \N \big) \times \Z^d\, ,
\end{align}
and show, employing the local limit theorem for random walks,  that the asymptotic factorisation $\hat{Z}^{A}_{N,\beta}(x) \approx Z^{A}_N(x)\big(Z^{B^{\geq}}_{N,\beta}(x)-1 \big)$ takes place when we average over $x$, namely
\begin{proposition} \label{factorisation}
	Let  $\varphi \in C_c(\R^d)$ be a test function. Then, we have that
	\begin{align*}
		N^{\frac{d-2}{4}}  \sum_{x \in \Z^d} \varphi_N(x)\, \bigg(\frac{\hat{Z}^{A}_{N,\beta}(x)}{Z^{A}_{N,\beta}(x)}- \big(Z_{N,\beta}^{B^{\geq}}(x)-1\big) \bigg) \xrightarrow[N \to \infty]{L^1(\bbP)} 0 \, .
	\end{align*}
\end{proposition}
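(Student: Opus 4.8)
The plan is to bring the difference over a common denominator, writing the summand as
$\dfrac{\hat{Z}^{A}_{N,\beta}(x)-Z^{A}_{N,\beta}(x)\big(Z^{B^{\geq}}_{N,\beta}(x)-1\big)}{Z^{A}_{N,\beta}(x)}=:\dfrac{W_N(x)}{Z^{A}_{N,\beta}(x)}$, and to decompose the numerator via the chaos expansion \eqref{Z_hat_chaos}. For a sequence $(n_i,z_i)_{1\le i\le k}$ contributing to $\hat{Z}^{A}_{N,\beta}(x)$, let $(n_\star,z_\star)$ be its first point outside $A_N^x$; since the $n_i$ increase, all earlier points lie in $A_N^x$. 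This splits $\hat{Z}^{A}_{N,\beta}(x)$ into three parts according to whether $(\mathrm{i})$ $n_\star\le N^{\epsilon}$ (so $|z_\star-x|\ge N^{\epsilon/2+\alpha}$), $(\mathrm{ii})$ $N^{\epsilon}<n_\star\le N^{\rho}$, or $(\mathrm{iii})$ $n_\star> N^{\rho}$. In case $(\mathrm{iii})$ the whole tail $(n_i,z_i)_{i\ge j+1}$ lies in $(N^{\rho},N]\times\Z^d$ and the sole link to the $A_N^x$-part is the kernel $q_{n_\star-n_j}(z_\star-z_j)$, which the local limit theorem allows one to replace by $q_{n_\star}(z_\star-x)$ at the cost of an error which, in the relevant $\ell^2$-sense, is a negative power of $N$ — here it is used that $n_j\le N^{\epsilon}$ and $|z_j-x|<N^{\epsilon/2+\alpha}$ are negligible against $n_\star>N^{\rho}$, which is why $\rho$ must be taken close to $1$ and $\epsilon,\alpha$ small. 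Performing this replacement decouples the two sums and produces exactly $Z^{A}_{N,\beta}(x)\big(Z^{B^{\geq}}_{N,\beta}(x)-1\big)$. Hence $W_N(x)$ is the sum of the contribution of $(\mathrm{i})$, of $(\mathrm{ii})$, and of the local limit error generated in $(\mathrm{iii})$, and it remains to show that each of these, divided by $Z^{A}_{N,\beta}(x)$, averaged against $\varphi_N$ and multiplied by $N^{\frac{d-2}{4}}$, tends to $0$ in $L^1(\bbP)$.

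To handle the division, I would first note that $W_N(x)/Z^{A}_{N,\beta}(x)$ lies in $L^2(\bbP)$ uniformly in $N$ and $x$ (Hölder, together with the uniform negative moments of Proposition \ref{negmom} and the $2+\delta$ moments of Proposition \ref{hyperc}, which apply verbatim to the restricted partition functions $Z^{A}_{N,\beta}$ and $Z^{B^{\geq}}_{N,\beta}$). By Cauchy--Schwarz the desired $L^1$ bound follows from
\[
N^{\frac{d-2}{2}}\sum_{x,y\in\Z^d}\varphi_N(x)\,\varphi_N(y)\,\bbE\left[\frac{W^{(r)}_N(x)\,W^{(r)}_N(y)}{Z^{A}_{N,\beta}(x)\,Z^{A}_{N,\beta}(y)}\right]\xrightarrow[N\to\infty]{}0
\]
for each of the three pieces $r\in\{\mathrm{i},\mathrm{ii},\mathrm{iii}\}$ (the cross terms being controlled by the diagonal ones). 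Since $Z^{A}_{N,\beta}(x)$ is measurable with respect to the disorder in $A_N^x$, I would condition on the $\sigma$-algebra generated by the disorder in $A_N^x\cup A_N^y$, pull the two denominators out of the expectation, and expand the remaining conditional expectation of $W^{(r)}_N(x)W^{(r)}_N(y)$ into matched polynomial chaos exactly as in the proofs of Propositions \ref{variancep} and \ref{lincomb}. For the overwhelming majority of pairs, namely those with $A_N^x\cap A_N^y=\eset$, the two $A$-parts factorise off with uniformly bounded $L^1$-norms and the computation becomes numerical; the remaining pairs, those with $|x-y|\le 2N^{\epsilon/2+\alpha}$, are harmless after the factor $N^{\frac{d-2}{2}}$ once $\epsilon,\alpha$ are chosen small enough.

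The core of the matter is then the by-now familiar ``two walks must meet'' mechanism. For the piece $(\mathrm{ii})$, the presence of a disorder point at a time in $(N^{\epsilon},N^{\rho}]$ forces, in the matched chaos, the walks from $x$ and $y$ to coincide for the first time at some $n\le N^{\rho}$; consequently the displayed sum is bounded by $\mathrm{const}\cdot N^{\frac{d-2}{2}}\sum_{x,y}|\varphi_N(x)\varphi_N(y)|\sum_{n\le N^{\rho}}q_{2n}(x-y)\le \mathrm{const}\cdot\|\varphi\|_\infty\|\varphi\|_1\,N^{\rho-1}\to0$, since $\rho<1$ (this is Lemma \ref{conver}, truncated at $N^\rho$). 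For the local limit error coming from $(\mathrm{iii})$ the same meeting mechanism applies, but now each configuration carries the extra small factor $O(N^{-c})$, $c=c(\epsilon,\alpha,\rho)>0$, produced by the kernel replacement, which again defeats the $N^{\frac{d-2}{2}}$ prefactor. Finally, the piece $(\mathrm{i})$ is the simplest: any contributing configuration contains a random-walk kernel $q_{n_\star}(z_\star-x)$ with $n_\star\le N^{\epsilon}$ and $|z_\star-x|\ge N^{\epsilon/2+\alpha}$, hence $\le \exp(-cN^{2\alpha})$, and combined with the moment bounds this contribution is super-polynomially small.

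The main obstacle I anticipate is exactly the interplay between the division by $Z^{A}_{N,\beta}(x)$ and the need to exploit the averaging over $x$: no estimate of the form $\|W_N(x)\|_{L^2}=o(N^{-(d-2)/4})$ holds uniformly in $x$ (such a bound would fail for large $d$), so the cancellation has to come from the $\varphi$-averaging and the transience of the walk, yet that cancellation is cleanest in a chaos expansion where the factors $Z^{A}_{N,\beta}(x)^{-1}$ obstruct a direct expansion. The conditioning-on-$A_N^x\cup A_N^y$ step is what reconciles the two; after that, the argument parallels the combinatorial and analytic bookkeeping of double, triple and quadruple matchings already carried out in the proof of Proposition \ref{lincomb}, and the only genuinely new inputs are the truncated estimate $\sum_{n\le N^{\rho}}q_{2n}(x-y)$ and the careful use of the local limit theorem for the kernel replacement.
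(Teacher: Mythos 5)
Your three-way split of $\hat Z^A_{N,\beta}(x)$ according to the first point $(n_\star,z_\star)$ leaving $A_N^x$ — (i) $n_\star\le N^\epsilon$ with $|z_\star-x|\ge N^{\epsilon/2+\alpha}$, (ii) $N^\epsilon<n_\star\le N^\rho$, (iii) $n_\star>N^\rho$ — is exactly the paper's decomposition $\hat Z^A = Z^{A,C}+Z^{A,B^<}+Z^{A,B^\geq}$, and your treatment of piece (iii) (factor the chaos at the last $A$-point and first $B^\geq$-point, replace $q_{r-t}(z-w)$ by $q_r(z-x)$ via the local limit theorem, absorb the error by Cauchy--Schwarz and the negative-moment bound of Proposition~\ref{negmom}) is essentially the paper's Step~3.

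However, there is a genuine gap in your handling of pieces (i) and (ii). You propose to control $\bbE\bigl[W(x)W(y)/(Z^A_{N,\beta}(x)Z^A_{N,\beta}(y))\bigr]$ by conditioning on the disorder in $A_N^x\cup A_N^y$, pulling the denominators out, and expanding the remaining conditional expectation ``into matched polynomial chaos as in Propositions~\ref{variancep} and \ref{lincomb}.'' But conditioning is only Fubini: it does not by itself let you decouple $1/(Z^A(x)Z^A(y))$ from the conditional expectation $\bbE[W(x)W(y)\mid\mathcal F_A]$, which is itself a random variable built from the disorder in $A_N^x\cup A_N^y$. Your claim that after factorization ``the two $A$-parts factorise off with uniformly bounded $L^1$-norms'' is exactly the crux, and as stated it is not justified: the relevant ratios (of restricted point-to-point partition functions to $Z^A(x)$) are large precisely on the rare event where $Z^A(x)$ is small, and a uniform second-moment bound on $W(x)/Z^A(x)$ (which you do obtain from Propositions~\ref{negmom}, \ref{hyperc} via Hölder) only gives a bound that does not decay in $N$; it cannot defeat the prefactor $N^{(d-2)/2}$ without extracting the decay of the cross-covariance. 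The paper closes this gap by first rewriting $\bbE\bigl[(Z^{A,B^<}(x)/Z^A(x))(Z^{A,B^<}(y)/Z^A(y))\bigr]$ as $\bbE\bigl[(F_N(x,y)-F^{\mathsf n\mathsf s}_N(x,y))/(Z^A(x)Z^A(y))\bigr]$, and then \emph{truncating on the good event} $E_N=\{Z^A(x),Z^A(y)\ge N^{-\mathsf h}\}$ — on $E_N$ the denominators are dropped at the cost of $N^{2\mathsf h}$, and the meeting-time conditioning produces the truncated sum $\sum_{n\le N^\rho}q_{2n}(x-y)$; on $E_N^c$ the left-tail estimate (Proposition~\ref{lefttail}) kills the contribution super-polynomially. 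Your sketch nowhere invokes Proposition~\ref{lefttail} for pieces (i)--(ii), and without that mechanism (or an equivalent one) the division by $Z^A$ in those pieces is not controlled.
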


The last step is to show that the fluctuations of $Z^{B^{\geq}}_{N,\beta}(x)-1$ when averaged over $x$, are Gaussian with variance equal to that of Theorem \ref{gaussianity}, namely

\begin{proposition} \label{fourth2}
	Let  $\varphi \in C_c(\R^d)$ be a test function. Then, we have the following convergence in distribution,
	\begin{align*}
		N^{\frac{d-2}{4}}  \sum_{x \in \Z^d} \varphi_N(x)\,
		\big(Z^{B^{\geq}}_{N,\beta}(x)-1 \big) \xrightarrow[N \to \infty]{(d)} \cZ_{\beta}(\varphi) \, ,
	\end{align*}
	where $\cZ_{\beta}(\varphi)$ is the centered normal random variable appearing in Theorem \ref{gaussianity}.
\end{proposition}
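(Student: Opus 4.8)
The plan is to mirror, almost verbatim, the proof of Theorem~\ref{gaussianity}: truncate the chaos expansion to finitely many degrees, apply the Fourth Moment Theorem together with the Cram\'er--Wold device to the truncation, and then remove the truncation. Restricting the chaos expansion \eqref{chaos1} to the space-time slab $B^{\geq}_N=\big((N^\rho,N]\cap\N\big)\times\Z^d$ gives
\begin{align*}
	Z^{B^{\geq}}_{N,\beta}(x)-1=\sum_{k\geq1}\sigma^k\sumtwo{N^\rho<n_1<\dots<n_k\leq N}{z_1,\dots,z_k\in\Z^d} q_{n_1}(z_1-x)\prod_{i=2}^k q_{n_i-n_{i-1}}(z_i-z_{i-1})\prod_{i=1}^k\eta_{n_i,z_i}\,,
\end{align*}
and, in analogy with \eqref{chaos2}--\eqref{fixedchaos}, I would write $Z^{B^{\geq}}_{N,\beta}(\varphi):=\sum_{x\in\Z^d}\varphi_N(x)\big(Z^{B^{\geq}}_{N,\beta}(x)-1\big)=\sum_{k\geq1}Z^{B^{\geq},(k)}_{N,\beta}(\varphi)$, with $Z^{B^{\geq},(k)}_{N,\beta}(\varphi)$ the degree-$k$ term.

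First I would identify the limiting variance. Using orthogonality of distinct chaos degrees and factoring out the first space-time point exactly as in \eqref{factor}--\eqref{convul} --- the inner bracket is again $\E[e^{\lambda_2(\beta)\mathcal{L}_{N-n}}]$, since the constraint $n_i>N^\rho$ on the remaining indices is automatic once $n_1>N^\rho$ --- one obtains
\begin{align*}
	\bbvar\big[N^{\frac{d-2}{4}}Z^{B^{\geq}}_{N,\beta}(\varphi)\big]=N^{\frac{d}{2}-1}\,\sigma^2\sum_{N^\rho<n\leq N}\sum_{x,y\in\Z^d}\varphi_N(x,y)\,q_{2n}(x-y)\,\E\big[e^{\lambda_2(\beta)\mathcal{L}_{N-n}}\big]\,,
\end{align*}
which is precisely \eqref{convul} except that the summation over $n$ now starts at $N^\rho+1$. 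Since $\rho<1$, the discarded block $1\leq n\leq N^\rho$ contributes at most $\E[e^{\lambda_2(\beta)\mathcal{L}_\infty}]\cdot N^{\frac{d}{2}-1}\sum_{n=1}^{N^\rho}\sum_{x,y}\varphi_N(x,y)q_{2n}(x-y)\leq c_\beta\,\|\varphi\|_\infty\|\varphi\|_1\,N^{\rho-1}\to0$, by the same bound used for $T_{\theta,N}$ in the proof of Lemma~\ref{conver} (with $\theta$ there replaced by $N^{\rho-1}$). Hence the variance converges to the expression of Proposition~\ref{variancep}, i.e. to $\bbvar[\cZ_\beta(\varphi)]$; the identical computation applied to each $Z^{B^{\geq},(k)}_{N,\beta}(\varphi)$ recovers the constants $\mathcal{C}^{(k)}_\beta$ of Proposition~\ref{lincomb}.

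Next I would truncate in the chaos degree: the $L^2$-estimate of Lemma~\ref{truncation} holds verbatim, because imposing $n_i>N^\rho$ only shrinks the domains of summation and leaves the geometric bound $\sum_{k\geq M}\sigma^{2k}R_\infty^k\to0$ intact. It then suffices to show that for each fixed $M$ the vector $N^{\frac{d-2}{4}}\big(Z^{B^{\geq},(1)}_{N,\beta}(\varphi),\dots,Z^{B^{\geq},(M)}_{N,\beta}(\varphi)\big)$ is asymptotically Gaussian, which by Cram\'er--Wold reduces to a fourth-moment computation for $N^{\frac{d-2}{4}}\sum_{k\leq M}t_k Z^{B^{\geq},(k)}_{N,\beta}(\varphi)$, structurally identical to \eqref{4thmom}. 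The classification of the $\eta$-matchings into pairwise-matched configurations and the types $\sfT_1,\sfT_2,\sfT_3$ carries over word for word; the negligibility of $\sfT_1,\sfT_2,\sfT_3$ uses only the pointwise bound $q_n\leq1$, the return bound $R_N\leq R_\infty<1$, and Lemma~\ref{conver}, each of which is insensitive to the extra constraint $n_i>N^\rho$ (which can only make the sums smaller), while the pairwise-matched configurations contribute $3\,\bbvar\big[N^{\frac{d-2}{4}}\sum_{k\leq M}t_kZ^{B^{\geq},(k)}_{N,\beta}(\varphi)\big]^2+o(1)$ by the variance computation above. Invoking the Fourth Moment Theorem and letting $M\to\infty$ as in the proof of Theorem~\ref{gaussianity} gives the asserted convergence to $\cZ_\beta(\varphi)$.

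The only genuinely new point to check is that the lower cutoff $N^\rho$ does not disturb the leading-order behaviour, and this is the main (mild) obstacle: it is resolved by the observation that the dominant contribution to the variance already comes from $n$ of order $N$ (the regime $n>\theta N$ in the proof of Proposition~\ref{variancep}), which the cutoff leaves untouched, and by the fact that every error term arising in the fourth-moment analysis is dominated by a sum that the cutoff only diminishes. Consequently Proposition~\ref{fourth2} follows from the machinery already developed for Theorem~\ref{gaussianity}, with no additional estimates required.
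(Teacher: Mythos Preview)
Your proof is correct, but it takes a longer route than the paper. You choose to rerun the entire machinery of Theorem~\ref{gaussianity} (variance computation, truncation, fourth-moment analysis) with the extra constraint $n_1>N^\rho$, noting correctly that once $n_1>N^\rho$ the constraint on the remaining indices is automatic and that every error bound only improves under the restriction. The paper instead observes that Proposition~\ref{fourth2} is an immediate corollary of the already-proven Theorem~\ref{gaussianity}: it simply shows that
\[
\Big\| N^{\frac{d-2}{4}}Z_{N,\beta}(\varphi)-N^{\frac{d-2}{4}}Z^{B^\geq}_{N,\beta}(\varphi)\Big\|_{L^2(\bbP)}^2
\;\leq\; \E[e^{\lambda_2(\beta)\mathcal L_\infty}]\cdot N^{\frac{d}{2}-1}\sigma^2\sum_{n=1}^{N^\rho}\sum_{x,y}\varphi_N(x,y)\,q_{2n}(x-y)\xrightarrow[N\to\infty]{}0,
\]
which is precisely the $T_{\theta,N}$-type estimate you already invoke. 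Convergence in distribution then transfers from $Z_{N,\beta}(\varphi)$ to $Z^{B^\geq}_{N,\beta}(\varphi)$ by this $L^2$-closeness. Your approach has the virtue of being self-contained and making explicit that every ingredient is monotone under the cutoff; the paper's approach is shorter and avoids repeating any of the fourth-moment bookkeeping.
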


We begin with the proof of Proposition \ref{first}.

\begin{proof}[Proof of Proposition \ref{first}]

	It suffices to restrict the summation and show that
	\begin{align} \label{firstsum}
		N^{\frac{d}{2}-1} \sum_{|x-y|\leq 2N^{\frac{\epsilon}{ 2}+\alpha}} \varphi_N(x,y)\,{ \cov\big( \log  Z^{A}_{N,\beta}(x), \log  Z^{A}_{N,\beta}(y) \big) }\xrightarrow[N \to \infty]{} \ 0 \, ,
	\end{align}
	because, by the definition of the sets $A_N^x$, if $|x-y|> 2N^{\frac{\epsilon}{ 2}+\alpha}$, then {$\log  Z^{A}_{N,\beta}(x)$ and $\log  Z^{A}_{N,\beta}(y)$} are independent, { so the covariance vanishes.} The proof will be divided in four steps.\\

	\noindent  \textbf{(Step 1) - Martingale decomposition.}
	We will  expand the covariance appearing in \eqref{firstsum} by using a martingale difference decomposition.
	Let $\{\omega_{a_1},\omega_{a_2},...\}$ be an arbitrary enumeration of the disorder indexed by ${\N} \times \Z^d$. We can then define a filtration $(\mathcal{F}_j)_{j\geq1}$, such that $\mathcal{F}_j=\sigma(\omega_{a_1},...,\omega_{a_j})$. We define also $\mathcal{F}_0=\{\eset,\Omega\}$, where $\Omega$ is the underlying sample space where the random variables $(\omega_{n,z})_{(n,z) \in \N \times \Z^d}$, are defined. Using this filtration we can write the difference {$\log  Z^{A}_{N,\beta}(x)-\bbE[\log  Z^{A}_{N,\beta}(x)]$} as
	a telescoping sum, namely

	\begin{align} \label{Z-decomp}
		{\log  Z^{A}_{N,\beta}(x)-\bbE\big[\log  Z^{A}_{N,\beta}(x)\big]= \sum_{j \geq 1} \Big(\bbE[\log  Z^{A}_{N,\beta}(x)| \mathcal{F}_j] - \bbE[\log  Z^{A}_{N,\beta}(x)| \mathcal{F}_{j-1}]\Big) \, . }
	\end{align}
	{Then, using} the shorthand notation {$D_j(x)= \bbE[\log  Z^{A}_{N,\beta}(x)| \mathcal{F}_j] - \bbE[\log  Z^{A}_{N,\beta}(x)| \mathcal{F}_{j-1}]$}  we have that:
	\begin{align*}
		{	\cov\big(\log  Z^{A}_{N,\beta}(x),\log Z^{A}_{N,\beta}(y)\big)= }
		\sum_{k,j \geq 1} \bbE[ D_k(x)D_j(y)]
		=  \sum_{j \geq 1} \bbE[ D_j(x)D_j(y)]  \, ,
	\end{align*}
	where we used the fact that if $j<k$, conditioning on $\mathcal{F}_j$ shows that $D_j(x), D_k(y)$ are orthogonal in $L^2(\bbP)$. Therefore, we are able to rewrite the sum in~\eqref{firstsum} as
	\begin{align} \label{scndsum}
		N^{\frac{d}{2}-1} \sum_{|x-y|\leq 2N^{\frac{\epsilon}{ 2}+\alpha}} \varphi_N(x,y)\, \sum_{j \geq 1} \bbE[ D_j(x)D_j(y)]  \, .
	\end{align}
	One has to make an important observation at this point. If $a_j$ is not contained in $A_N^x$, then $D_j(x)=0$. Hence, the rightmost sum in ~\eqref{scndsum} is non-zero only for $j\geq1$, such that $a_j \in A_N^x \cap A_N^y$. \\

	\noindent  \textbf{(Step 2) - Resampling.}
	Let us now look more closely to {the} martingale differences { $D_j(x)$}. We {will rewrite} them in a closed form using a local resampling scheme. Fix $j$ such that {$a_j \in A_N^x \cap A_N^y$}. We can write
	\begin{align*}
		{\log Z^{A}_{N,\beta}(x)=\log Z_{N,\beta}^{A,\sfT_{a_j}}(x)+\big(\log Z^{A}_{N,\beta}(x)-\log Z_{N,\beta}^{A,\sfT_{a_j}}(x)\big) \, ,}
	\end{align*}
	where we used the notation $\sfT_{a_j}\omega$ to denote the {disorder} environment, where the $\omega_{a_j}$ {disorder} variable has been replaced {by} an independent copy $\tilde \omega_{a_j}$. {We also have}
	\begin{align*}
		\log Z^{A}_{N,\beta}(x)
		=                       \bbE_{\tilde{\omega}} \big[\log Z^{A,\sfT_{a_j}}_{N,\beta}(x)\big]+ \bbE_{\tilde{\omega}} \Bigg[\log\Bigg( \frac{Z^{A}_{N,\beta}(x)}{Z^{A,\sfT_{a_j}}_{N,\beta}(x)}\Bigg)\Bigg] \, ,
	\end{align*}
	where $\bbE_{\tilde \omega}[\cdot]$ denotes the expectation with respect to the resampled noise
	(which in the expression above reduces to just integration of $\tilde\omega_{a_j}$), since the left hand side of the above equation does not depend on $\tilde{\omega}$. We note that the following equality is true:
	\begin{align} \label{reduct}
		\bbE\Big[ \bbE_{\tilde{\omega}} \big[\log Z^{A,\sfT_{a_j}}_{N,\beta}(x)\big]\Big|\mathcal{F}_j\Big]=  \bbE\Big[ \log Z^{A}_{N,\beta}(x)\Big|\mathcal{F}_{j-1}\Big] \, .
	\end{align}
	One can see this by rewriting both sides of the equation, using the fact that, given a random function $f(\omega)$, where $\omega=(\omega_k)_{k\geq 1}$ is a sequence of i.i.d. random variables, then
	$ \bbE[f(\omega)|\mathcal{F}_j]=\int f(\omega) \prod_{k>j} \bbP( \dd \omega_{k}) $.

	\noindent In conclusion, we have managed to rewrite the difference $D_j(x)$ as
	\begin{align} \label{difference}
		D_j(x)= \bbE\bigg[\bbE_{\tilde \omega}\Big[ \log Z_{N,\beta}^{A}(x)-\log Z_{N,\beta}^{A,\sfT_{a_j}}(x) \Big]\Big|\mathcal{F}_j\bigg]\, .
	\end{align}
	The next step shows how we can remove the logarithms.	\\

	\noindent \textbf{(Step 3) - Removing the logarithms.}  { We fix a positive number $\sfh \in (0, \frac{1-\epsilon}{2})$ and for $x \in \Z^d$, we define }
	\begin{align} \label{ejsets}
		E_j(x):=\Big \{Z_{N,\beta}^{A}(x),Z_{N,\beta}^{A,\sfT_{a_j}}(x) \geq {N^{-\sfh}} \Big \}  \, .
	\end{align}
	We then decompose $D_j(x)$ as follows
	\begin{align*}
		D_j(x)=\, & \bbE\bigg[\bbE_{\tilde \omega}\Big[ \Big ( \log Z_{N,\beta}^{A}(x) -\log Z_{N,\beta}^{A,\sfT_{a_j}}(x)  \Big) \ind_{E_j(x)} \Big]\Big|\mathcal{F}_j\bigg] \notag    \\
		+         & \bbE\bigg[\bbE_{\tilde \omega}\Big[ \Big ( \log Z_{N,\beta}^{A}(x) -\log Z_{N,\beta}^{A,\sfT_{a_j}}(x)  \Big) \ind_{E^c_j(x)} \Big]\Big|\mathcal{F}_j\bigg]    \, .
	\end{align*}
	{We hereafter use the notation }
	\begin{align*}
		 & {D^{\ms (\sfb)}_j(x)}:= \bbE\bigg[\bbE_{\tilde \omega}\Big[ \Big ( \log Z_{N,\beta}^{A}(x) -\log Z_{N,\beta}^{A,\sfT_{a_j}}(x)  \Big) \ind_{E_j(x)} \Big]\Big|\mathcal{F}_j\bigg] \, ,\notag \\
		 & {D^{\ms (\sfs)}_j(x)}:=\bbE\bigg[\bbE_{\tilde \omega}\Big[ \Big ( \log Z_{N,\beta}^{A}(x) -\log Z_{N,\beta}^{A,\sfT_{a_j}}(x)  \Big) \ind_{E^c_j(x)} \Big]\Big|\mathcal{F}_j\bigg] \, .
	\end{align*}
	for the two summands of this decomposition.
	We then have that
	\begin{align} \label{martexp}
		\sum_{{j\, :\, a_j \in A_N^x \cap A_N^y}} \bbE \Big [ D_j(x)D_j(y) \Big]=
		  & \sum_{{j\, :\, a_j \in A_N^x \cap A_N^y}} \bbE \Big [ D^{\ms (\sfb)}_j(x)D^{\ms (\sfb)}_j(y) \Big] \notag                                                                                                                \\
		+ & \sum_{{j\, :\, a_j \in A_N^x \cap A_N^y}} \bbE \Big [ D^{\ms (\sfb)}_j(x)D^{\ms (\sfs)}_j(y) \Big]+\bbE \Big [ D^{\ms (\sfs)}_j(x)D^{\ms (\sfb)}_j(y) \Big]+\bbE \Big [ D^{\ms (\sfs)}_j(x)D^{\ms (\sfs)}_j(y) \Big] \,.
	\end{align}
	We will first prove that
	\begin{align} \label{bigbigestimate}
		N^{\frac{d}{2}-1} \sum_{|x-y|\leq 2N^{\frac{\epsilon}{ 2}+\alpha}} \varphi_N(x,y)\, \sum_{{j\, :\, a_j \in A_N^x \cap A_N^y}} \bbE\Big[ D^{\ms (\sfb)}_j(x)D^{\ms (\sfb)}_j(y)\Big] \xrightarrow[N\to \infty]{} 0  \, .
	\end{align}
	Note that
	\begin{align} \label{difflogbound}
		\big|D^{\ms (\sfb)}_j(x)\big| \leq & \, \bbE\bigg[\bbE_{\tilde \omega}\Big[ \, \big | \log Z_{N,\beta}^{A}(x) -\log Z_{N,\beta}^{A,\sfT_{a_j}}(x)  \big| \ind_{E_j(x)} \, \Big]\Big|\mathcal{F}_j\bigg] \notag \\
		\leq                               & \, N^{\sfh}\, \bbE\bigg[\bbE_{\tilde \omega}\Big[\, \big | Z_{N,\beta}^{A}(x)-Z_{N,\beta}^{A,\sfT_{a_j}}(x) \big| \ind_{E_j(x)} \,\Big]\Big|\mathcal{F}_j\bigg] \notag    \\
		\leq                               & \,N^{\sfh} \, \bbE\bigg[\bbE_{\tilde \omega}\Big[ \, \big | Z_{N,\beta}^{A}(x)-Z_{N,\beta}^{A,\sfT_{a_j}}(x) \big| \, \Big]\Big|\mathcal{F}_j\bigg]          \, ,
	\end{align}
	where we used the fact that if $x,y \in [t,\infty)$ for some positive $t>0$, then $|\log x-\log y| \leq \frac{1}{t} |x-y|$, {for the second inequality}. For the sake of the presentation, we shall adopt the notation
	\begin{align} \label{Wjdef}
		{\bbE\bigg[\bbE_{\tilde \omega}\Big[ \, \big | Z_{N,\beta}^{A}(x)-Z_{N,\beta}^{A,\sfT_{a_j}}(x) \big| \, \Big]\Big|\mathcal{F}_j\bigg]:= \sfW_{j}(x)} \, ,
	\end{align}
	by omitting the dependence in $N$. By using the estimate \eqref{difflogbound} and summing {over} $j\, :\, a_j \in A_N^x \cap A_N^y$ we deduce that
	\begin{align} \label{boundnologs}
		\sum_{ j\, :\, a_j \in A_N^x \cap A_N^y }  \bbE\Big[ \big|D^{\ms (\sfb)}_j(x) D^{\ms (\sfb)}_j(y)\big|\Big]
		\leq   N^{2 \sfh} \sum_{ j\, :\, a_j \in A_N^x \cap A_N^y } \bbE \Big[ \sfW_j(x)\sfW_j(y) \Big]  \, .
	\end{align}
	If we denote by $S^x$ the path of a random walk starting at $x$ we have
	\begin{align} \label{partfdiffj}
		Z_{N,\beta}^{A}(x)-Z_{N,\beta}^{A,\sfT_{a_j}}(x)=\sigma(\beta) (\eta_{a_j}-\tilde{\eta}_{a_j}) \E_{x}\big[e^{{\sfH}^{x}_{A \fgebackslash a_j}} \ind_{a_j \in S^x }\big]  \, ,
	\end{align}
	where
	\begin{align} \label{Hnotajdef}
		{\sfH}^{x}_{A \fgebackslash a_j}(\omega):=\sumtwo{a \in A^{x}_{N}}{a \neq a_j} \big[ \beta \omega_{a}-\lambda(\beta)\big ] \ind_{a \in S^x}  \, ,
	\end{align}
	and recall from \eqref{etas} that
	\begin{align*}
		\eta_{a_j}=\frac{e^{\beta \omega_{a_j} - \lambda(\beta)}-1}{\sigma(\beta)} \quad \text{   and  } \quad  \tilde{\eta} _{a_j}=\frac{e^{\beta \tilde{\omega}_{a_j} - \lambda(\beta)}-1}{\sigma(\beta)} \, .
	\end{align*}
	At this point, we will bound $\sfW_j(x)$. By \eqref{Wjdef} and \eqref{partfdiffj} we have that
	\begin{align*}
		\sfW_j(x)= & \int \prod_{k>j}  \bbP(\dd \omega_{a_k})   				\int  \bbP(\dd \tilde{\omega}_{a_j})  \, \big|Z_{N,\beta}^{A}(x)-Z_{N,\beta}^{A,\sfT_{a_j}}(x)\big| \notag                                                                         \\
		=          & \int \prod_{k>j}  \bbP(\dd \omega_{a_k})   \int    \bbP(\dd \tilde{\omega}_{a_j})\, \sigma(\beta)\, |\eta_{a_j}-\tilde{\eta}_{a_j}| \, \E_{x}\big[e^{{\sfH}^{x}_{A \fgebackslash a_j} (\omega)} \ind_{a_j \in S^x }\big] \, .
	\end{align*}
	We will perform this integration in steps.	The expectation, $\E_{x}\big[e^{{\sfH}^{x}_{A \fgebackslash a_j} (\omega)} \ind_{a_j \in S^x }\big]$, does not depend on $\tilde{\omega}_{a_j}$ by \eqref{Hnotajdef}, and we have by triangle inequality
	\begin{align} \label{intWj1}
		\int   \bbP(\dd \tilde{\omega}_{a_j})\, \sigma(\beta)\, |\eta_{a_j}-\tilde{\eta}_{a_j}| \leq & \,\sigma(\beta) \Big(|\eta_{a_j}|+1\Big) \, .
	\end{align}
	Furthermore, by exchanging the integral and the expectation we deduce that
	\begin{align} \label{intWj2}
		\int \prod_{k>j}  \bbP(\dd \omega_{a_k}) \E_{x}\big[e^{{\sfH}^{x}_{A \fgebackslash a_j} (\omega)} \ind_{a_j \in S^x }\big] =  \E_{x}\big[e^{{\sfH}^{x}_{A \cap \{a_1,...,a_{j-1}\} } (\omega)} \ind_{a_j \in S^x }\big] \, ,
	\end{align}
	where
	\begin{align*}
		{\sfH}^{x}_{A \cap \{a_1,...,a_{j-1}\}}(\omega):=\sumtwo{1\leq k \leq j-1 \vspace{0.3mm}}{a_k \in A_N^x} \big[ \beta \omega_{a_k}-\lambda(\beta)\big ] \ind_{a_k \in S^x}  \, .
	\end{align*}
	If $j=1$, we set the corresponding energy to be equal to $0$.
	Hence, combining \eqref{intWj1} and \eqref{intWj2} we obtain that
	\begin{align*}
		\sfW_j(x)\leq \sigma(\beta) \Big(|\eta_{a_j}|+1\Big) \E_{x}\big[e^{{\sfH}^{x}_{A \cap \{a_1,...,a_{j-1}\} } (\omega)} \ind_{a_j \in S^x }\big] \, .
	\end{align*}
	Therefore, by Fubini we get that
	\begin{align*}
		\sfW_j(x)\,\sfW_j(y) \leq \sigma^{2}(\beta) \Big(|\eta_{a_j}|+1\Big)^2 \,  \E_{x,y}\big[e^{{\sfH}^{x}_{A \cap \{a_1,...,a_{j-1}\} } (\omega)+{\sfH}^{y}_{A \cap \{a_1,...,a_{j-1}\}} (\omega)} \ind_{a_j \in S^x \cap S^y }\big] \, ,
	\end{align*}
	which after taking the expectation $\bbE[\,\cdot\,]$ leads to
	\begin{align} \label{appearanceofloctime}
		\bbE \Big[\sfW_j(x)\,\sfW_j(y)\Big]  \leq 4 \sigma^2(\beta) \E_{x,y}\big[e^{\lambda_2(\beta)\mathcal{L}_{N}(x,y)} \ind_{a_j \in S^x \cap S^y } \big] \, .
	\end{align}
	Therefore, by summing over $j\, :\, a_j \in A_N^x \cap A_N^y$ we deduce that
	\begin{align}
		\sum_{j\, :\, a_j \in A_N^x \cap A_N^y} \bbE \Big[\sfW_j(x)\,\sfW_j(y)\Big]  \leq 4 \sigma^2(\beta) \E_{x,y}\big[e^{\lambda_2(\beta)\mathcal{L}_{N}(x,y)} \mathcal{L}_{N^{\epsilon}}(x,y) \big] \, .
	\end{align}
	Note that the rightmost overlap, $\mathcal{L}_{N^{\epsilon}}(x,y)$, goes up to time $N^\epsilon$, since by \eqref{Asets}, for every $j\, :\, a_j \in A_N^x \cap A_N^y$,  $a_j$ has time index $t\leq N^{\epsilon}$ therefore,
	\begin{align*}
		\sum_{j\, :\, a_j \in A_N^x \cap A_N^y} \ind_{a_j \in S^x \cap S^y} \leq \sum_{n=1}^{N^\epsilon} \ind_{S_n^x=S_n^y}:=\cL_{N^{\epsilon}}(x,y) \, .
	\end{align*}
	Recalling \eqref{boundnologs} we get that
	\begin{align*}
		\sum_{ j\, :\, a_j \in A_N^x \cap A_N^y }  \bbE\Big[ \big|D^{\ms (\sfb)}_j(x) D^{\ms (\sfb)}_j(y)\big|\Big] \leq \, {N^{2\sfh}} \,
		4 \sigma^2(\beta) \,  \E_{x,y}\big[e^{\lambda_2(\beta)\mathcal{L}_{N}(x,y)} \mathcal{L}_{N^{\epsilon}}(x,y) \big] \, .
	\end{align*}
	So far, we have shown that
	\begin{align} \label{loctimeestimate}
		     & N^{\frac{d}{2}-1} \sum_{|x-y|\leq 2N^{\frac{\epsilon}{ 2}+\alpha}} \varphi_N(x,y)\, \sum_{j\, :\, a_j \in A_N^x \cap A_N^y} \bbE\Big[ D^{\ms (\sfb)}_j(x)D^{\ms (\sfb)}_j(y)\Big] \notag                                  \\
		\leq & \, 4 \sigma^2(\beta) \,N^{\frac{d}{2}-1+2\sfh} \,\sum_{|x-y|\leq 2N^{\frac{\epsilon}{ 2}+\alpha}} \varphi_N(x,y)\,  \,  \E_{x,y}\big[e^{\lambda_2(\beta)\mathcal{L}_{N}(x,y)} \mathcal{L}_{N^{\epsilon}}(x,y) \big]  \, .
	\end{align}
	Therefore, to establish \eqref{bigbigestimate}, we derive an estimate for $\E_{x,y}\big[e^{\lambda_2(\beta)\mathcal{L}_{N}(x,y)} \mathcal{L}_{N^{\epsilon}}(x,y) \big]$. Let us denote by $\tau_{x,y}$ the first meeting time of two independent random walks starting from $x,y \in \Z^d$, respectively. By conditioning on $\tau_{x,y}$  we obtain
	\begin{align*}
		\E_{x,y}\Big[e^{\lambda_2(\beta)\mathcal{L}_{N}(x,y)} \mathcal{L}_{N^\epsilon}(x,y)\Big] & =\sum_{n=1}^{N^{\epsilon}} \E_{x,y}\Big[e^{\lambda_2(\beta)\mathcal{L}_{N}(x,y)} \mathcal{L}_{N^\epsilon}(x,y)|\tau_{x,y}=n\Big]\P(\tau_{x,y}=n) \, .
	\end{align*}
	Using the Markov property we obtain
	\begin{align*}
		\sum_{n=1}^{N^{\epsilon}} \E_{x,y}\Big[e^{\lambda_2(\beta)\mathcal{L}_{N}(x,y)} \mathcal{L}_{N^\epsilon}(x,y)|\tau_{x,y}=n\Big]\P(\tau_{x,y}=n) \hspace{-0.1cm}
		= \hspace{-0.15cm} \sum_{n=1}^{N^{\epsilon}} \E \Big[e^{\lambda_2(\beta)(1+\mathcal{L}_{N-n})} \big(1+\mathcal{L}_{N^\epsilon-n}\big)\Big]\P(\tau_{x,y}=n)     \, .
	\end{align*}
	For every $1 \leq n \leq N^{\epsilon}$, we can bound the expectation
	\begin{align*}
		\E \Big[e^{\lambda_2(\beta)(1+\mathcal{L}_{N-n})} \big(1+\mathcal{L}_{N^\epsilon-n}\big)\Big]\leq e^{\lambda_2(\beta)}\Big (\E \big [e^{\lambda_2(\beta)\mathcal{L}_{\infty}} \big] + \E \big [e^{\lambda_2(\beta)\mathcal{L}_{\infty}}\mathcal{L}_{\infty} \big] \Big) :=c(\beta) <\infty \, ,
	\end{align*}
	because $\beta \in (0,\beta_{L^2})$, see \eqref{secondmoment}. Moreover, we have that \begin{align*}
		\P(\tau_{x,y}=n) \leq \sum_{z \in \Z^d} q_n(z-x)q_n(z-y) =  q_{2n}(x-y) \, .
	\end{align*}
	Therefore,
	\begin{align} \label{kerestimate}
		\E_{x,y}\Big[e^{\lambda_2(\beta)\mathcal{L}_{N}(x,y)} \mathcal{L}_{N^\epsilon}(x,y)\Big] \leq c(\beta)  \sum_{n=1}^{N^{\epsilon}} q_{2n}(x-y)  \, .
	\end{align}
	Recalling \eqref{bigbigestimate}, \eqref{loctimeestimate} and \eqref{kerestimate}, in order to conclude Step 3, we need to show that

	\begin{align*}
		N^{\frac{d}{2}-1+2\sfh} \sum_{|x-y| \leq 2N^{\frac{\epsilon}{2}+\alpha}} \varphi_N(x,y)\,
		\sum_{n=1}^{N^{\epsilon}} q_{2n}(x-y) \xrightarrow[N \to \infty]{} 0 \, .
	\end{align*}
	We bound $\varphi(\frac{y}{\sqrt{N}})$ by its supremum norm and use the fact that $\sum_{z \in \Z^d} q_{2n}(z)=1$, to obtain that
	\begin{align} \label{erbound}
		N^{\frac{d}{2}-1+2\sfh} \sum_{|x-y| \leq 2N^{\frac{\epsilon}{2}+\alpha}} \varphi_N(x,y)\,
		\sum_{n=1}^{N^{\epsilon}} q_{2n}(x-y)
		\leq \norm{\varphi}_{\infty}N^{2\sfh+ \epsilon-1} \sum_{x \in \Z^d} \varphi_N(x)\,
		\leq  \norm{\varphi}_{\infty} \norm{\varphi}_{1 } N^{2\sfh+ \epsilon-1} \, .
	\end{align}
	Since $\sfh \in (0,\frac{1-\epsilon}{2})$, we have that $2\sfh+\epsilon<1$, hence the last bound vanishes as $N \to \infty$, which concludes the proof of \eqref{bigbigestimate}.

	\noindent \textbf{(Step 4) - Events of small partition functions.}
	Let us see how one can treat the rest of the terms in the expansion \eqref{martexp}, which involve the complementary events $E^c_j(x),E^c_j(y)$, recall their definition from \eqref{ejsets}. We need to show that
	\begin{align*}
		 & N^{\frac{d}{2}-1} \sum_{|x-y|\leq 2N^{\frac{\epsilon}{ 2}+\alpha}} \varphi_N(x,y)\, \sum_{ j\, :\, a_j \in A_N^x \cap A_N^y }  \bbE\Big[ D^{\ms (\sfs)}_j(x) D^{\ms (\sfb)}_j(y)\Big] \xrightarrow[N \to \infty]{} 0  \ , \notag  \vspace{0.2cm} \\
		 & N^{\frac{d}{2}-1} \sum_{|x-y|\leq 2N^{\frac{\epsilon}{ 2}+\alpha}} \varphi_N(x,y)\, \sum_{ j\, :\, a_j \in A_N^x \cap A_N^y }  \bbE\Big[ D^{\ms (\sfb)}_j(x) D^{\ms (\sfs)}_j(y)\Big] \xrightarrow[N \to \infty]{} 0  \ , \vspace{0.2cm} \notag  \\
		 & N^{\frac{d}{2}-1} \sum_{|x-y|\leq 2N^{\frac{\epsilon}{ 2}+\alpha}} \varphi_N(x,y)\, \sum_{ j\, :\, a_j \in A_N^x \cap A_N^y }  \bbE\Big[ D^{\ms (\sfs)}_j(x) D^{\ms (\sfs)}_j(y)\Big] \xrightarrow[N \to \infty]{} 0  \ .
	\end{align*}
	It suffices to show one of the these results, since all of them
	can be treated with similar arguments. Let us present for example the proof that
	\begin{align*}
		N^{\frac{d}{2}-1} \sum_{|x-y|\leq 2N^{\frac{\epsilon}{ 2}+\alpha}} \varphi_N(x,y)\, \sum_{ j\, :\, a_j \in A_N^x \cap A_N^y }  \bbE\Big[ D^{\ms (\sfb)}_j(x) D^{\ms (\sfs)}_j(y)\Big] \xrightarrow[N \to \infty]{} 0 \, .
	\end{align*}
	Recall that
	\begin{align*}
		D^{\ms (\sfb)}_j(x)=\bbE\bigg[\bbE_{\tilde \omega}\Big[ \Big ( \log Z_{N,\beta}^{A}(x) -\log Z_{N,\beta}^{A,\sfT_{a_j}}(x)  \Big) \ind_{E_j(x)} \Big]\Big|\mathcal{F}_j\bigg] \, ,
	\end{align*}
	and
	\begin{align*}
		D^{\ms (\sfs)}_j(y)=\bbE\bigg[\bbE_{\tilde \omega}\Big[ \Big ( \log Z_{N,\beta}^{A}(y) -\log Z_{N,\beta}^{A,\sfT_{a_j}}(y)  \Big) \ind_{E^c_j(y)} \Big]\Big|\mathcal{F}_j\bigg] \, .
	\end{align*}
	By Cauchy-Schwarz one has that
	\begin{align*}
		\bbE\Big[ D^{\ms (\sfb)}_j(x) D^{\ms (\sfs)}_j(y)\Big] \leq \bbE\Big[\big(D^{\ms (\sfb)}_j(x)\big)^2\Big]^{\frac{1}{2}}\, \bbE\Big[\big(D^{\ms (\sfs)}_j(y)\big)^2\Big]^{\frac{1}{2}} \, .
	\end{align*}
	Note that,
	\begin{align*}
		\bbE\Big[\big(D^{\ms (\sfb)}_j(x)\big)^2\Big] \leq \bbE\bigg[\bbE_{\tilde \omega}\Big[ \Big ( \log Z_{N,\beta}^{A}(x) -\log Z_{N,\beta}^{A,\sfT_{a_j}}(x)  \Big) \ind_{E_j(x)} \Big]^2 \bigg] \, ,
	\end{align*}
	and similarly
	\begin{align*}
		\bbE\Big[\big(D^{\ms (\sfs)}_j(y)\big)^2\Big] \leq \bbE\bigg[\bbE_{\tilde \omega}\Big[ \Big ( \log Z_{N,\beta}^{A}(y) -\log Z_{N,\beta}^{A,\sfT_{a_j}}(y)  \Big) \ind_{E^c_j(y)} \Big]^2 \bigg] \, ,
	\end{align*}
	since it is true that $\bbE\big[ \bbE[X |\mathcal{G}]^2\big ]\leq   \bbE \big[X^2\big ]$ for a random variable $X:(\Omega,\cF,\bbP)\rightarrow \R$ and a $\sigma$-algebra $\cG \subset \cF$. We note here that we will use the notation $\bbE_{\ms \omega,\tilde{\omega}}[\,\cdot\,]$ to denote the expectation with respect to $\omega$ and $\tilde{\omega}$, i.e. the resampled disorder.
	We use Jensen inequality for the expectation $\bbE_{\tilde{\omega}} [\cdot]$ and bound the indicator $\ind_{E_j(x)}\leq 1$ to obtain that
	\begin{align*}
		\bbE\Big[\big(D^{\ms (\sfb)}_j(x)\big)^2\Big] \leq & \,\bbE\bigg[\bbE_{\tilde \omega}\Big[ \Big ( \log Z_{N,\beta}^{A}(x) -\log Z_{N,\beta}^{A,\sfT_{a_j}}(x)  \Big) \ind_{E_j(x)} \Big]^2 \bigg] \notag \\
		\leq                                               & \,\bbE\bigg[\bbE_{ \tilde \omega}\Big[ \Big ( \log Z_{N,\beta}^{A}(x) -\log Z_{N,\beta}^{A,\sfT_{a_j}}(x)  \Big)^2  \Big] \bigg] \notag             \\
		=                                                  & \bbE_{\ms \omega,\tilde{\omega}} \bigg[ \Big ( \log Z_{N,\beta}^{A}(x) -\log Z_{N,\beta}^{A,\sfT_{a_j}}(x)  \Big)^2   \bigg] \notag                 \\
		\leq
		                                                   & 4 \, \bbE\Big[\big( \log Z_{N,\beta}^{A}(x) \big)^2\Big]<\infty    \, .
	\end{align*}
	by using the inequality $(a+b)^2\leq 2(a^2+b^2)$ and the fact that $\log Z_{N,\beta}^{A}(x)$ and $\log Z_{N,\beta}^{A,\sfT_{a_j}}(x)$ have the same distribution. Also, $\bbE\Big[\big( \log Z_{N,\beta}^{A}(x) \big)^2\Big]<\infty  $
	by Proposition \ref{negmom}.

	For $\bbE\Big[\big(D^{\ms (\sfs)}_j(y)\big)^2\Big]$, we have that
	\begin{align*}
		\quad \qquad  \bbE\Big[\big(D^{\ms (\sfs)}_j(y)\big)^2\Big]\leq & \,\bbE\bigg[\bbE_{ \tilde \omega}\Big[ \Big ( \log Z_{N,\beta}^{A}(y) -\log Z_{N,\beta}^{A,\sfT_{a_j}}(y)  \Big) \ind_{E^c_j(y)} \Big]^2 \bigg] \notag                                                          \\
		\leq                                                            & \,\bbE_{\ms \omega,\tilde{\omega}}\Big[ \Big ( \log Z_{N,\beta}^{A}(y) -\log Z_{N,\beta}^{A,\sfT_{a_j}}(y)  \Big)^2 \ind_{E^c_j(y)} \Big] \notag                                                                \\
		\leq                                                            & \,\bbE_{\ms \omega,\tilde{\omega}} \Big[ \Big ( \log Z_{N,\beta}^{A}(y) -\log Z_{N,\beta}^{A,\sfT_{a_j}}(y)  \Big)^4  \Big]^\frac{1}{2} \bbP_{\ms\omega, \tilde{\omega}} \big(E_j^c(y) \big)^\frac{1}{2} \notag \\
		\leq                                                            & \,4 \bbE \Big[ \big ( \log Z_{N,\beta}^{A}(y) \big)^4  \Big]^\frac{1}{2} \bbP_{\ms \omega,\tilde{\omega}} \big(E_j^c(y) \big)^\frac{1}{2} \, .
	\end{align*}
	Last, by a union bound we have that $ \bbP_{\ms \omega,\tilde{\omega}}\big(E_j^c(y) \big) \leq 2 \bbP(Z^A_{N,\beta}(y)<N^{-\sfh})=2 \bbP(Z^A_{N,\beta}(0)<N^{-\sfh})$. Therefore, there exists a constant $\tilde{{C}}_\beta$, such that for all $j \geq 1$,
	\begin{align*}
		\bbE\Big[ D^{\ms (\sfb)}_j(x) D^{\ms (\sfs)}_j(y)\Big]  \leq \tilde{C}_\beta\, \bbP(Z^A_{N,\beta}(0)<N^{-\sfh})^{\frac{1}{4}} \, .
	\end{align*}
	Hence, we have that
	\begin{align*}
		     & N^{\frac{d}{2}-1} \sum_{|x-y|\leq 2N^{\frac{\epsilon}{ 2}+\alpha}} \varphi_N(x,y)\, \sum_{j\, :\, a_j \in A_N^x \cap A_N^y} \bbE\Big[ D^{\ms (\sfb)}_j(x) D^{\ms (\sfs)}_j(y)\Big]  \notag               \\
		\leq & \, \tilde{C}_{\beta}\,  N^{\frac{d}{2}-1} \sum_{|x-y|\leq 2N^{\frac{\epsilon}{ 2}+\alpha}} \varphi_N(x,y)\, \sum_{j\, :\, a_j \in A_N^x \cap A_N^y}  \bbP(Z^A_{N,\beta}(0)<N^{-\sfh})^{\frac{1}{4}} \, .
	\end{align*}
	{From the definition \eqref{Asets}, we can bound
	$| A_N^x \cap A_N^y|\leq  N^{\epsilon(\frac{d}{2}+1)}\leq N^{(\frac{d}{2}+1)}$. }We also have that the probability $\bbP(Z^A_{N,\beta}(0)<N^{-\sfh})$ decays super-polynomially by Proposition \ref{lefttail} and so does $\bbP(Z^A_{N,\beta}(0)<N^{-\sfh})^{\frac{1}{4}}$. Indeed, by Proposition \ref{lefttail}, we have that
	\begin{align*}
		\bbP\Big(Z^{A}_{N,\beta}(x)<N^{-\sfh} \Big )^{\frac{1}{4}} \leq c_{\beta}^{\frac{1}{4}} \exp\bigg(-\frac{(\sfh \log N
				)^\gamma}{4c_{\beta}} \bigg)\, ,   \quad \gamma>1\, ,
	\end{align*}
	Thus, we have that
	\begin{align*}
		     & N^{\frac{d}{2}-1} \sum_{|x-y|\leq 2N^{\frac{\epsilon}{ 2}+\alpha}} \varphi_N(x,y)\, \sum_{j\, :\, a_j \in A_N^x \cap A_N^y} \bbE\Big[ D^{\ms (\sfb)}_j(x) D^{\ms (\sfs)}_j(y)\Big] \notag \\
		\leq & \, \tilde{C}_{\beta}\,  N^{\frac{d}{2}-1} |A_N^x \cap A_N^y| \sum_{|x-y|\leq 2N^{\frac{\epsilon}{ 2}+\alpha}} \varphi_N(x,y)\,   \bbP(Z^A_{N,\beta}(0)<N^{-\sfh})^{\frac{1}{4}} \notag    \\
		\leq & \, \tilde{C}_\beta \, N^{\frac{d}{2}-1} N^{d +1} \sum_{|x-y|\leq 2N^{\frac{\epsilon}{ 2}+\alpha}} \varphi_N(x,y)\, \bbP(Z^A_{N,\beta}(0)<N^{-\sfh})^{\frac{1}{4}} \notag                  \\
		\leq & \, \tilde{C}_\beta\,  \norm{\varphi}^2_1\,  N^{\frac{d}{2}+d }\, \bbP(Z^A_{N,\beta}(0)<N^{-\sfh})^{\frac{1}{4}} = O(N^{2d})\exp\Big(-O\big(\log N)^\gamma\Big)\, .
	\end{align*}
	Since $\gamma>1$, the last bound vanishes and therefore we conclude that
	\begin{align*}
		N^{\frac{d}{2}-1} \sum_{|x-y|\leq 2N^{\frac{\epsilon}{ 2}+\alpha}} \varphi_N(x,y)\, \sum_{j\, :\, a_j \in A_N^x \cap A_N^y} \bbE\Big[ D^{\ms (\sfb)}_j(x) D^{\ms (\sfs)}_j(y)\Big]  \xrightarrow[N \to \infty]{} 0 \, .
	\end{align*}
\end{proof}

We now proceed to the proof of Proposition \ref{remainder}. We will need the following lemma which provides a bound on the rate of decay of $\bbE\Big[\big(\hat{Z}^{A}_{N,\beta}(x)\big)^2\Big]$.

\begin{lemma} \label{hat}
	For every $\beta \in (0,\beta_{L^2})$, there exists a constant $C_\beta$, such that for every
	$\lambda \in (0,\epsilon)$we have that $\bbE\Big[\big(\hat{Z}^{A}_{N,\beta}(x)\big)^2\Big]\leq C_{\beta} N^{-\lambda (\frac{d}{2}-1)}$.
\end{lemma}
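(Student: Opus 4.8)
The starting point is the chaos expansion \eqref{Z_hat_chaos} together with the orthogonality of the noise variables: since the $\eta_{n,z}$ are i.i.d., centered and of unit variance, all cross terms vanish and
\begin{align*}
	\bbE\big[\big(\hat{Z}^{A}_{N,\beta}(x)\big)^2\big]=\sum_{k\geq1}\sigma^{2k}\sumtwo{1\leq n_1<\dots<n_k\leq N,\ z_1,\dots,z_k\in\Z^d}{\{(n_i,z_i)\}_{1\leq i\leq k}\not\subset A_N^x}q_{n_1}(z_1-x)^2\prod_{i=2}^{k} q_{n_i-n_{i-1}}(z_i-z_{i-1})^2\,.
\end{align*}
(The right-hand side is independent of $x$ by translation invariance.) I would then bound the constraint by $\ind_{\{(n_i,z_i)\}\not\subset A_N^x}\leq \ind_{n_k>N^{\epsilon}}+\ind_{n_k\leq N^{\epsilon}}\,\ind_{\exists\, i:\,|z_i-x|\geq N^{\frac{\epsilon}{2}+\alpha}}$, thereby splitting the sum into a \emph{temporal overshoot} term $I_{\mathrm{time}}$ and a \emph{spatial overshoot} term $I_{\mathrm{space}}$, and estimate the two separately.

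For $I_{\mathrm{time}}$ I would first sum out all the $z_i$, using $\sum_{z\in\Z^d}q_n(z)^2=q_{2n}(0)$, which reduces $I_{\mathrm{time}}$ to the one-dimensional renewal-type sum $\sum_{k\geq1}\sigma^{2k}\sum_{0<n_1<\dots<n_k\leq N,\ n_k>N^{\epsilon}}\prod_{i=1}^k q_{2(n_i-n_{i-1})}(0)$ with $n_0:=0$. Decomposing according to the first index $j$ with $n_j>N^{\epsilon}$, the points with index $>j$ contribute at most $\sum_{m\geq0}(\sigma^2 R_\infty)^m=(1-\sigma^2 R_\infty)^{-1}<\infty$ — finite precisely because $\beta$ is in the $L^2$-regime, i.e. $\sigma^2 R_\infty<1$ — the points with index $<j$ give the renewal mass $V(m):=\sum_{\ell\geq0}\sigma^{2\ell}\sum_{0<n_1<\dots<n_\ell=m}\prod_{i=1}^\ell q_{2(n_i-n_{i-1})}(0)$ at $m=n_{j-1}\leq N^{\epsilon}$ (so $\sum_m V(m)=\E[e^{\lambda_2(\beta)\mathcal{L}_\infty}]<\infty$), and the straddling step contributes $\sigma^2\sum_{\ell>N^{\epsilon}-m}q_{2\ell}(0)$. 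Using $q_{2\ell}(0)\leq C\ell^{-d/2}$ (local limit theorem, see \cite{LL10}) so that $\sum_{\ell>s}q_{2\ell}(0)\leq C(s+1)^{1-\frac{d}{2}}$, one gets
\begin{align*}
	I_{\mathrm{time}}\leq \frac{C\sigma^2}{1-\sigma^2 R_\infty}\sum_{m=0}^{N^{\epsilon}}V(m)\,\big(N^{\epsilon}-m+1\big)^{1-\frac{d}{2}}\,.
\end{align*}
I would split this at $m\leq N^{\epsilon}/2$, where $(N^{\epsilon}-m+1)^{1-\frac{d}{2}}\leq C N^{-\epsilon(\frac{d}{2}-1)}$ and $\sum_m V(m)<\infty$, and at $N^{\epsilon}/2<m\leq N^{\epsilon}$, where $(N^{\epsilon}-m+1)^{1-\frac{d}{2}}\leq1$ and $\sum_{m>N^{\epsilon}/2}V(m)\leq C_\beta N^{-\epsilon(\frac{d}{2}-1)}$; the last tail estimate follows by writing $\sum_{m>T}V(m)=\sum_{\ell\geq1}(\sigma^2 R_\infty)^\ell\,\P(\xi_1+\dots+\xi_\ell>T)$ with $(\xi_i)$ i.i.d. of law $m\mapsto q_{2m}(0)/R_\infty$, and using $\P(\xi_1+\dots+\xi_\ell>T)\leq\ell\,\P(\xi_1>T/\ell)\leq C\ell^{\frac{d}{2}}T^{1-\frac{d}{2}}$ together with $\sum_\ell(\sigma^2 R_\infty)^\ell\ell^{\frac{d}{2}}<\infty$. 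This yields $I_{\mathrm{time}}\leq C_\beta N^{-\epsilon(\frac{d}{2}-1)}$.

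For $I_{\mathrm{space}}$ I would keep the $z_i$ and re-sum the chaos series probabilistically: since $q_{n_1}(z_1-x)^2\prod_i q_{n_i-n_{i-1}}(z_i-z_{i-1})^2=\P_x^{\otimes2}(S^1_{n_i}=S^2_{n_i}=z_i,\ 1\leq i\leq k)$ and $\sum_{k\geq1}\sigma^{2k}\binom{L}{k}=e^{\lambda_2(\beta)L}-1$ with $L=\mathcal{L}_{N^{\epsilon}}(x,x):=\#\{1\leq n\leq N^{\epsilon}:S^1_n=S^2_n\}$, together with the crude bound $\ind_{\exists\, i\leq k:\,|S^1_{n_i}-x|\geq N^{\frac{\epsilon}{2}+\alpha}}\leq\ind_{\max_{n\leq N^{\epsilon}}|S^1_n-x|\geq N^{\frac{\epsilon}{2}+\alpha}}$, I obtain
\begin{align*}
	I_{\mathrm{space}}\leq \E_{x}^{\otimes2}\Big[e^{\lambda_2(\beta)\,\mathcal{L}_{N^{\epsilon}}(x,x)}\,\ind_{\,\max_{1\leq n\leq N^{\epsilon}}|S^1_n-x|\,\geq\, N^{\frac{\epsilon}{2}+\alpha}}\Big]\,.
\end{align*}
A Hölder split with exponent $p>1$ such that $p\lambda_2(\beta)<\log(1/\pi_d)$ — possible since $\beta\in(0,\beta_{L^2})$ — gives $I_{\mathrm{space}}\leq \E[e^{p\lambda_2(\beta)\mathcal{L}_\infty}]^{1/p}\,\P_0(\max_{n\leq N^{\epsilon}}|S_n|\geq N^{\frac{\epsilon}{2}+\alpha})^{1/q}$, and since $N^{\frac{\epsilon}{2}+\alpha}\gg\sqrt{N^{\epsilon}}$, a maximal inequality applied coordinatewise with the Azuma--Hoeffding bound gives $\P_0(\max_{n\leq N^{\epsilon}}|S_n|\geq N^{\frac{\epsilon}{2}+\alpha})\leq C\exp(-c\,N^{2\alpha})$, so $I_{\mathrm{space}}$ is stretched-exponentially small. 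Combining, $\bbE[(\hat{Z}^{A}_{N,\beta}(x))^2]\leq I_{\mathrm{time}}+I_{\mathrm{space}}\leq C_\beta N^{-\epsilon(\frac{d}{2}-1)}\leq C_\beta N^{-\lambda(\frac{d}{2}-1)}$ for every $\lambda\in(0,\epsilon)$ and all $N\geq1$, which is the claim.

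The step I expect to be the main obstacle is the $I_{\mathrm{time}}$ bound. Decomposing naively at the first renewal point beyond $N^{\epsilon}$ produces a boundary term in which the jump straddling $N^{\epsilon}$ need not be long, so the decay cannot be extracted from that single jump; the remedy is the two-regime split above, which transfers the decay onto the renewal-mass tail $\sum_{m>N^{\epsilon}/2}V(m)$, and the delicate point is then to carry out the renewal-tail estimate sharply enough (via the i.i.d. representation and a union bound, using $\sigma^2 R_\infty<1$) to recover the exponent $1-\frac{d}{2}$ exactly — this is what makes the conclusion hold for \emph{every} $\lambda<\epsilon$ rather than only for $\lambda$ below some threshold obtained, say, from a Hölder argument.
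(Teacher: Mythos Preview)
Your proof is correct and takes a genuinely different route from the paper's.

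The paper proceeds by observing that if the path $(n_i,z_i)_{1\leq i\leq k}$ exits $A_N^x$, then at least one \emph{increment} satisfies $n_i-n_{i-1}>\tfrac{1}{k}N^\epsilon$ or $|z_i-z_{i-1}|>\tfrac{1}{k}N^{\frac{\epsilon}{2}+\alpha}$. After passing to increment variables and decoupling them, this produces a factor $k$ from the union bound, and the temporal contribution becomes $\sum_k k\,\sigma^{2k}R_N^{k-1}\sum_{\ell>N^\epsilon/k}q_{2\ell}(0)$. The paper then splits at $k\leq N^{\epsilon-\lambda}$ versus $k>N^{\epsilon-\lambda}$: for small $k$ the inner sum is $\leq R_N-R_{N^\lambda}=O(N^{-\lambda(\frac{d}{2}-1)})$, for large $k$ the geometric decay of $(\sigma^2 R_\infty)^k$ dominates. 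This is elementary but inherently loses the endpoint, yielding only $O(N^{-\lambda(\frac{d}{2}-1)})$ for $\lambda<\epsilon$.

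Your argument instead conditions on the \emph{last renewal point $m=n_{j-1}$ before time $N^\epsilon$}, factors the future as a geometric series, and reduces the temporal contribution to $\sum_{m\leq N^\epsilon}V(m)(N^\epsilon-m+1)^{1-\frac{d}{2}}$. The two-regime split $m\lessgtr N^\epsilon/2$ then isolates the renewal-mass tail $\sum_{m>T}V(m)=\sum_\ell(\sigma^2R_\infty)^\ell\,\P(\xi_1+\dots+\xi_\ell>T)$, which you bound via a union bound on the heavy-tailed $\xi_i$. This is slightly more sophisticated but delivers the sharp exponent $O(N^{-\epsilon(\frac{d}{2}-1)})$, from which the statement for every $\lambda<\epsilon$ follows trivially. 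Your spatial-overshoot bound via the probabilistic rewriting $\sum_k\sigma^{2k}\binom{L}{k}=e^{\lambda_2(\beta)L}-1$, H\"older, and a maximal inequality is also different from (and arguably cleaner than) the paper's increment-based moderate-deviation estimate.
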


\begin{proof}
	Let us fix a positive $\lambda \in (0,\epsilon).$ We  then have that
	\begin{align*}
		\bbE\Big[\big(\hat{Z}^{A}_{N,\beta}(x)\big)^2\Big]=\sum_{k=1}^N \sigma^{2k} \sumthree{1\leq n_1<...<n_k\leq N}{x:=z_0,z_1,...,z_k \in \Z^d}{\exists \, i \, \in \,  \{1,...,k\}: \: (n_i,z_i)\, \notin A^{x}_N} \prod_{i=1}^k q^2_{n_i-n_{i-1}}(z_i-z_{i-1}) \, .
	\end{align*}
	{Since the rightmost summation is over sequences of $k$ space-time points $ (n_i,z_i)_{1 \leq i \leq k}$, such that at least one of the points $ (n_i,z_i)_{1 \leq i \leq k}$ is not in $A_N^x$, for every such sequence, there exists at least one index $i \in \{1,...,k\}$, such that $|n_i-n_{i-1}|>\frac{1}{k}N^{\epsilon}$ or $|z_i-z_{i-1}|>\frac{1}{k}N^{\frac{\epsilon}{2}+\alpha}$; recall the definition of $A_N^x$ from \eqref{Asets}.} Thus, by changing variables $w_i:=z_i-z_{i-1}$, $\ell_i:=n_i-n_{i-1}$ and extending the range of summation from $1\leq \ell_1 +...+\ell_k \leq N$ to $\ell_1,...,\ell_k \in \{1,...,N\}$, we obtain that

	\begin{align*}
		{\bbE\Big[\big(\hat{Z}^{A}_{N,\beta}(x)\big)^2\Big]\leq \sum_{k=1}^N \sigma^{2k} \sumtwo{\ell_1,...,\ell_k \in \{1,...,N\}}{w_1,...,w_k \in \Z^d} \sum_{j=1}^k \big( \ind_{\{\ell_j >\frac{1}{k}N^{\epsilon}\}}+ \ind_{\{ \ell_j\leq \frac{1}{k}N^{\epsilon}, |w_j|>\frac{1}{k}N^{\frac{\epsilon}{2}+\alpha}   \}} \big) \prod_{i=1}^k q^2_{\ell_i}(w_i) \, .}
	\end{align*}
	By changing the order of summation, for each $i \neq j$ we have that $\sum_{\ell_i=1}^N \sum_{w_i \in \Z^d} q^2_{\ell_i}(w_i) = \sum_{\ell_i=1}^N q_{2\ell_i} (0) =R_N $. Thus, we have
	\begin{align} \label{split}
		{\hspace{-0.6cm} \bbE\Big[\big(\hat{Z}^{A}_{N,\beta}(x)\big)^2\Big]\leq \sum_{k=1}^N \sigma^{2k}  R^{k-1}_N k  \sumtwo{\ell \in \{1,...,N\}}{w \in \Z^d} \big( \ind_{\{\ell >\frac{1}{k}N^{\epsilon}\}}+ \ind_{\{ \ell \leq \frac{1}{k}N^{\epsilon}, |w|>\frac{1}{k}N^{\frac{\epsilon}{2}+\alpha}   \}} \big) q^2_{\ell}(w) \, .}
	\end{align}
	Let us consider the contribution of the two indicator functions separately. For the first one, by summing $w \in \Z^d$, one obtains, for $N$ large enough,
	\begin{align} \label{hatestimate}
		\sum_{k=1}^N \sigma^{2k}   R^{k-1}_N  k \sum_{\frac{1}{k}N^{\epsilon} < \ell \leq N } q_{2\ell}(0) & \leq  \sum_{k=1}^{N^{\epsilon-\lambda}} \sigma^{2k} R^{k-1}_N k \sum_{\frac{1}{k}N^{\epsilon} < \ell \leq N } q_{2\ell}(0)+ \sum_{k>N^{\epsilon-\lambda}}^N \sigma^{2k}  R^{k}_N k \notag \\
		                                                                                                   & \leq \sum_{k=1}^{N^{\epsilon-\lambda}} \sigma^{2k} R^{k-1}_N k (R_N-R_{N^{\lambda}})+\sum_{k>N^{\epsilon-\lambda}}^N \sigma^{2k}  R^{k}_N k                                      \notag   \\
		                                                                                                   & \leq (R_N-R_{N^\lambda}) \sum_{k=1}^{\infty} k a(\beta)^k + \sum_{k>N^{\epsilon-\lambda}}^{\infty} k a(\beta)^k \, ,
	\end{align}
	where $a(\beta):=\sigma^2(\beta)R_{\infty}$ and $R_{\infty}=\sum_{\ell=1}^{\infty}q_{2\ell}(0)$. Note that, since $\beta$ lies in the $L^2$-region, we have that $a(\beta)<1$. Therefore, the sum $\sum_{k=1}^{\infty} k a^k$ is finite.
	Using the local limit theorem one obtains that
	\begin{align*}
		R_N-R_{N^\lambda} \leq C \sum_{\ell>N^{\lambda}}^{\infty}  \frac{1}{\ell^{\frac{d}{2}}} =O\big( N^{-\lambda(\frac{d}{2}-1)} \big) \, ,
	\end{align*}
	for some $C>0$. Moreover, since $a(\beta)<1$, there exists $p>1$ very close to $1$, so that $p  a<1$ and for every $k\geq k_0$, for some $k_0 \in \N$, we have that $ka^k \leq (pa)^k $. Therefore,
	\begin{align*}
		\sum_{k>N^{\epsilon-\lambda}}^{\infty} k a^k  \leq  \sum_{k>N^{\epsilon-\lambda}}^{\infty}  (pa)^k  \leq C  (p a)^{N^{\epsilon-\lambda }} = o\, \big( N^{-\lambda(\frac{d}{2}-1)}\big) \, .
	\end{align*}
	As a consequence, the rate at which the sum \eqref{hatestimate} decays, is  at least $N^{-\lambda(\frac{d}{2}-1)}$, for any $ \lambda \in (0,\epsilon)$.
	For the contribution of the second indicator function in \eqref{split}, namely the sum
	\begin{align*}
		\sum_{k=1}^N \sigma^{2k}  R^{k-1}_N k  \sumtwo{\ell \, \in \, \{1,...,N\}}{w \in \Z^d}  \ind_{\big \{ \ell \leq \frac{1}{k}N^{\epsilon}, |w|>\frac{1}{k}N^{\frac{\epsilon}{2}+\alpha}   \big\}} q^2_{\ell}(w) \, ,
	\end{align*}
	one can see using moderate deviation estimates for the simple random walk and  following the proof of equation (3.4) in \cite{CSZ18b}, that it can be bounded by $\tilde{C}_{\beta}  (\eta_\beta)^{N^\alpha}=o\, \big( N^{-\lambda(\frac{d}{2}-1)}\big)$, for some $\eta_\beta<1$ and some constant $\tilde{C}_\beta>0$
	(we remind that the exponent $\alpha$ is defined in \eqref{Asets}).
	Therefore, one obtains that there exists a constant $C_\beta>0$ such that $\bbE\Big[\big(\hat{Z}^{A}_{N,\beta}(x)\big)^2\Big]\leq C_{\beta} N^{-\lambda (\frac{d}{2}-1)}$.
\end{proof}

\begin{proof}[Proof of Proposition \ref{remainder}]
	\smallskip

	It suffices to prove that:
	\begin{align*}
		N^{\frac{d-2}{4}}\bbE\Big[\big|O_N(x)\big|\Big] \longrightarrow 0 \, ,
	\end{align*}
	as $N \longrightarrow \infty$.

	\justify As in \cite{CSZ18b} {this is a careful Taylor estimate. We define }
	\begin{align*}
		D_N^{\pm}:=\bigg \{\pm \frac{\hat{Z}^{A}_{N,\beta}(x)}{Z^{A}_{N,\beta}(x)}> N^{-p}\bigg \} \hspace{0.5cm} \text{and} \hspace{0.5cm} D_N:=D_N^{+}\cup D_N^{-} =\bigg\{\, \bigg|\frac{\hat{Z}^{A}_{N,\beta}(x)}{Z^{A}_{N,\beta}(x)}\bigg|>N^{-p} \bigg\} \, ,
	\end{align*}
	for $ p=\frac{d-2}{4}p^{*}$, with $0<p^{*}<1$ to be defined later. For  $q=\frac{d-2}{4}q^{*}$  with $0<q^{*}<1$, also to be specified later, we have that
	\begin{align} \label{preDN}
		\bbP(D_N) & \leq  \bbP\Big(D_N\cap \Big \{Z^A_{N,\beta}(x)\geq N^{-q}\big \}\big)+ \bbP\Big(D_N\cap \Big \{Z^A_{N,\beta}(x)< N^{-q}\big \}\big) \notag \\
		          & \leq
		\bbP\Big(\big|\hat{Z}^{A}_{N,\beta}(x)\big|>N^{-(p+q)}\Big)
		+\bbP\Big(Z^A_{N,\beta}(x)<N^{-q}\Big) \notag                                                                                                          \\
		          & \leq  N^{2(p+q)}\bbE\Big[(\hat{Z}^{A}_{N,\beta}(x))^2\Big]+\bbP\Big(Z^A_{N,\beta}(x)<N^{-q}\Big) \, .
	\end{align}
	{For the last inequality we used Chebyshev inequality.}	By Lemma \ref{hat} we have that $\bbE\Big[(\hat{Z}^{A}_{N,\beta}(x))^2\Big] \leq C_\beta N^{-\lambda (\frac{d}{2}-1)} $ for some constant $C_\beta$ and for every $\lambda \in (0,\epsilon)$. {By Proposition \ref{lefttail} we have that $\bbP\Big(Z^A_{N,\beta}(x)<N^{-q}\Big)$ vanishes super-polynomially i.e.}
	\begin{align*}
		\bbP \bigg(Z^A_{N,\beta}(x)<N^{-q}\bigg) \leq c_{\beta} \exp \bigg(\frac{-q^{\gamma} (\log N)^{\gamma}}{c_{\beta}}  \bigg) \,  , \qquad \gamma>1\, .
	\end{align*}
	Therefore, by plugging those estimates into \eqref{preDN} we get that for a constant $\hat{C}_\beta>C_\beta$,
	\begin{align} \label{DN}
		\bbP(D_N) \leq \hat{C}_\beta N^{2(p+q)-\lambda (\frac{d}{2}-1)}\, .
	\end{align}
	For {a} constant $C < \infty$, it is true that,
	\begin{align*}
		|\log (1+y) - y| \le C \cdot \begin{cases}
			\sqrt{\frac{|y|}{1+y}} & \text{if } -1 < y < 0                         \\
			y^2                    & \text{if } -\frac{1}{2} \le y \le \frac{1}{2} \\
			|y|                    & \text{if } 0 < y < \infty
		\end{cases} \,.
	\end{align*}
	Hence,
	\begin{align} \label{ON}
		\bbE\Big[\big|O_N(x)\big|\Big]\leq \bbE\bigg[\bigg(\frac{\hat{Z}^{A}_{N,\beta}(x)}{Z^{A}_{N,\beta}(x)}\bigg)^2\ind_{D_N^{c}}\bigg]+
		\bbE\bigg[\bigg|\frac{\hat{Z}^{A}_{N,\beta}(x)}{Z^{A}_{N,\beta}(x)}\bigg|\ind_{D_N^{+}}\bigg]+
		\bbE\bigg[\sqrt{\bigg|\frac{\hat{Z}^{A}_{N,\beta}(x)}{Z_{N,\beta}(x)}\bigg|}\ind_{D_N^{-}}\bigg] \,.
	\end{align}
	Let us deal with each term separately. We have that
	\begin{align} \label{term1}
		\bbE\bigg[\bigg(\frac{\hat{Z}^{A}_{N,\beta}(x)}{Z^{A}_{N,\beta}(x)}\bigg)^2\ind_{D_N^{c}}\bigg] \leq N^{-2p} \, ,
	\end{align}
	by the definition of $D_N$. We split the second term as follows:

	\begin{align} \label{term2}
		 & \bbE\bigg[\bigg|\frac{\hat{Z}^{A}_{N,\beta}(x)}{Z^{A}_{N,\beta}(x)}\bigg|\ind_{D_N^{+}}\bigg] = \bbE\bigg[\bigg|\frac{\hat{Z}^{A}_{N,\beta}(x)}{Z^{A}_{N,\beta}(x)}\bigg|\ind_{D_N^{+} \cap \{Z^A_{N,\beta}(x)\geq N^{-q} \}}\bigg] + \bbE\bigg[\bigg|\frac{\hat{Z}^{A}_{N,\beta}(x)}{Z^{A}_{N,\beta}(x)}\bigg|\ind_{D_N^{+} \cap \{Z^A_{N,\beta}(x)< N^{-q} \}}\bigg]\,.
	\end{align}
	For the first summand of \eqref{term1} we have that
	\begin{align*}
		\bbE\bigg[\bigg|\frac{\hat{Z}^{A}_{N,\beta}(x)}{Z^{A}_{N,\beta}(x)}\bigg|\ind_{D_N^{+} \cap \{Z^A_{N,\beta}(x)\geq N^{-q} \}}\bigg]
		\leq    & N^q \, \bbE\Big[\big|\hat{Z}^{A}_{N,\beta}(x)\big|\ind_{D_N^{+} \cap \{Z^A_{N,\beta}(x)\geq N^{-q} \}}\Big] \notag \\
		\leq \, & N^q \, \bbE\Big[\big|\hat{Z}^{A}_{N,\beta}(x)\big|\ind_{D_N^{+}}\Big] \notag                                       \\
		\leq\,  & N^q \, \bbE\Big[\big(\hat{Z}^{A}_{N,\beta}(x)\big)^2 \Big]^{\frac{1}{2}} \bbP(D_N)^{\frac{1}{2}} \, ,
	\end{align*}
	by Cauchy-Schwarz. By Lemma \ref{hat}, we get that $\bbE\Big[(\hat{Z}^{A}_{N,\beta}(x))^2\Big] \leq C_\beta N^{-\lambda (\frac{d}{2}-1)} $ and $ \bbP(D_N) \leq \hat{C}_\beta N^{2(p+q)-\lambda (\frac{d}{2}-1)}$ by \eqref{DN}. Hence,
	\begin{align*}
		\bbE\bigg[\bigg|\frac{\hat{Z}^{A}_{N,\beta}(x)}{Z^{A}_{N,\beta}(x)}\bigg|\ind_{D_N^{+} \cap \{Z^A_{N,\beta}(x)\geq N^{-q} \}}\bigg] \leq\, & \,\hat{C}_{\beta} N^q N^{- \lambda (\frac{d-2}{4})} N^{p+q - \lambda (\frac{d-2}{4})} \notag \\
		=\,                                                                                                                                        & \, \hat{C}_\beta  N^{p+2q-2\lambda (\frac{d-2}{4})} \, .
	\end{align*}
	For the second summand of \eqref{term1} we use Hölder inequality with exponents $a=\frac{1}{2}, b=c=\frac{1}{4}$ to obtain that
	\begin{align*}
		\bbE\bigg[\bigg|\frac{\hat{Z}^{A}_{N,\beta}(x)}{Z^{A}_{N,\beta}(x)}\bigg|\ind_{D_N^{+} \cap \{Z^A_{N,\beta}(x)< N^{-q} \}}\bigg] \leq \bbE\Big[ (\hat{Z}^{A}_{N,\beta}(x))^2 \Big ]^{\frac{1}{2}} \bbE\Big[ \frac{1}{(Z^A_{N,\beta}(x))^4}\Big]^{\frac{1}{4}} \bbP(Z^A_{N,\beta}(x)<N^{-q})^{\frac{1}{4}} \, .
	\end{align*}
	The term $\bbP(Z^A_{N,\beta}(x)<N^{-q})^{\frac{1}{4}}$ vanishes super-polynomially therefore, {recalling \eqref{term2}} we conclude that
	\begin{align} \label{secondtermest}
		\bbE\bigg[\bigg|\frac{\hat{Z}^{A}_{N,\beta}(x)}{Z^{A}_{N,\beta}(x)}\bigg|\ind_{D_N^{+}}\bigg]  \leq C_{1,\beta} \,  N^{p+2q-2\lambda (\frac{d-2}{4})} \, .
	\end{align}
	for some constant $C_{1,\beta}>0$. The second summand of \eqref{ON} can be treated similarly. In particular, we split it as follows
	\begin{align} \label{secondsummandsplit}
		\bbE\bigg[{\bigg|\frac{\hat{Z}^{A}_{N,\beta}(x)}{Z_{N,\beta}(x)}\bigg|}^{\frac{1}{2}}\ind_{D_N^{-}}\bigg]   = \bbE\bigg[{\bigg|\frac{\hat{Z}^{A}_{N,\beta}(x)}{Z_{N,\beta}(x)}\bigg|}^{\frac{1}{2}}\ind_{D_N^{-}\cap\{Z_{N,\beta}(x)\geq N^{-q} \} }\bigg]+
		\bbE\bigg[{\bigg|\frac{\hat{Z}^{A}_{N,\beta}(x)}{Z_{N,\beta}(x)}\bigg|}^{\frac{1}{2}}\ind_{D_N^{-}\cap \{Z_{N,\beta}(x)< N^{-q} \}}\bigg] \, .
	\end{align}
	For the first term we have that
	\begin{align} \label{term1secsa}
		\bbE\bigg[{\bigg|\frac{\hat{Z}^{A}_{N,\beta}(x)}{Z_{N,\beta}(x)}\bigg|}^{\frac{1}{2}}\ind_{D_N^{-}\cap\{Z_{N,\beta}(x)\geq N^{-q} \} }\bigg]
		\leq & N^{\frac{q}{2}} \bbE\Big[ |\hat{Z}^{A}_{N,\beta}(x)|^\frac{1}{2} \ind_{D^{-}_N} \Big] \notag             \\
		\leq & N^{\frac{q}{2}} \bbE\Big[ |\hat{Z}^{A}_{N,\beta}(x)|^\frac{1}{2} \ind_{D_N} \Big] \notag                 \\
		\leq & N^{\frac{q}{2}} \bbE\Big[(\hat{Z}^{A}_{N,\beta}(x))^{2} \Big]^{\frac{1}{4}} \bbP(D_N)^{\frac{3}{4}} \, .
	\end{align}
	by H\"{o}lder inequality. By Lemma \ref{hat} we have that $ \bbE\Big[(\hat{Z}^{A}_{N,\beta}(x))^{2} \Big ] \leq C_\beta N^{-\lambda(\frac{d}{2}-1)}$ { for $\lambda \in (0,\epsilon)$} and by bound \eqref{DN} we have that $\bbP(D_N) \leq \hat{C}_\beta N^{2(p+q)-\lambda (\frac{d}{2}-1)}$. Combining {these} two estimates we get that
	\begin{align} \label{term1secsb}
		N^{\frac{q}{2}} \bbE\Big[(\hat{Z}^{A}_{N,\beta}(x))^{2} \Big]^{\frac{1}{4}} \bbP(D_N)^{\frac{3}{4}} \leq & \, \hat{C}_{\beta} N^{\frac{q}{2}} N^{-\frac{\lambda}{2} (\frac{d-2}{4})} N^{\frac{3}{2} (p+q-\lambda(\frac{d-2}{4}))} \notag \\
		=                                                                                                        & \,\hat{C}_{\beta} N^{\frac{3}{2}p+2q-2\lambda(\frac{d-2}{4})} \, ,
	\end{align}
	where we used H\"{o}lder inequality for the last inequality as well {as bound} \eqref{DN} and Lemma \ref{hat}. For the second term in \eqref{secondsummandsplit} we can proceed as before, namely
	\begin{align} \label{term2secs}
		\bbE\bigg[{\bigg|\frac{\hat{Z}^{A}_{N,\beta}(x)}{Z_{N,\beta}(x)}\bigg|}^{\frac{1}{2}}\ind_{D_N^{-}\cap \{Z_{N,\beta}(x)< N^{-q} \}}\bigg] \leq \bbE\Big[ (\hat{Z}^{A}_{N,\beta}(x))^2 \Big ]^{\frac{1}{4}} \bbE\Big[ \frac{1}{(Z^A_{N,\beta}(x))^4}\Big]^{\frac{1}{4}} \bbP(Z_{N,\beta}(x)<N^{-q})^{\frac{1}{2}} \, ,
	\end{align}
	by H\"{o}lder inequality. The super-polynomial decay of $\bbP(Z_{N,\beta}(x)<N^{-q})$ together with the bounds \eqref{DN}, \eqref{term1secsa}, \eqref{term1secsb},\eqref{term2secs} and Proposition \ref{lefttail}, allows us to conclude that
	\begin{align} \label{thirdtermest}
		\bbE\bigg[{\bigg|\frac{\hat{Z}^{A}_{N,\beta}(x)}{Z_{N,\beta}(x)}\bigg|}^{\frac{1}{2}}\ind_{D_N^{-}}\bigg] \leq C_{2,\beta} N^{\frac{3}{2}p+2q-2\lambda(\frac{d-2}{4})} \, ,
	\end{align}
	for some constant $ C_{2,\beta}>0$.
	Recall now that we wanted to prove that $N^{\frac{d-2}{4}}\bbE \big[ |O_N(x)| \big] \to 0$ as $N \to  \infty$.   By the estimates \eqref{term1}, \eqref{secondtermest} and \eqref{thirdtermest} {respectively}, we see that it suffices to find exponents $p^{*},q^{*}$ and $\lambda$, so that
	\begin{align*}
		1-2p^{*}<0 ,             \qquad 1-2\lambda+p^{*}+2q^{*}<0, \qquad      1-2\lambda+\frac{3}{2}p^{*}+2q^{*}<0 \, .
	\end{align*}
	{Since we can consider $\lambda \in (0,\epsilon)$ arbitrarily close to $\epsilon$ and also because the second inequality is implied by the third, it suffices to find exponents $p^{*},q^{*}$ and $\epsilon$, so that}
	\begin{align*}
		{1-2p^{*}<0 ,     \qquad    1-2\epsilon+\frac{3}{2}p^{*}+2q^{*}<0 \, .}
	\end{align*}
	{This would lead to $\epsilon>\frac{1}{2}(1+\frac{3}{2}p^{*}+2q^*)$ and since we can take $p^*>\frac{1}{2}$ arbitrarily close to $\frac{1}{2}$ and $q^* >0$ arbitrarily small, it suffices to choose $\epsilon>\frac{7}{8}$} {in the definition of the sets $A_N^x$, recall \eqref{Asets}}.
\end{proof}
We proceed now to the proof of Proposition \ref{factorisation}.
\begin{proof}[Proof of Proposition \ref{factorisation}]
	We need to prove that
	\begin{align} \label{quotfluct}
		N^{\frac{d-2}{4}}  \sum_{x \in \Z^d} \varphi_N(x)\, \bigg(\frac{\hat{Z}^{A}_{N,\beta}(x)}{Z^{A}_{N,\beta}(x)}- \big(Z_{N,\beta}^{B^{\geq}}(x)-1\big) \bigg) \xrightarrow[N \to \infty]{L^1(\bbP)} 0 \, .
	\end{align}
	We remind the reader that $ B^{\geq}_N:=\big( (N^\rho,N] \cap \N \big) \times \Z^d$ for some $\rho \in (\epsilon,1)$, the choice of which is specified by \eqref{finalrho}. We also define the sets
	\begin{align*}
		 & B_N:=\big( (N^\epsilon,N] \cap \N \big) \times \Z^d \, ,                                                                \notag   \\
		 & C^{x}_N:=\big\{ (n,z) \in \N\times \Z^d: \: 1\leq n\leq N^{\epsilon}, \: \: |z-x|\geq N^{\frac{\epsilon}{2}+\alpha} \big \} \, .
	\end{align*}
	We decompose $\hat{Z}^{A}_{N,\beta}(x)$ into two parts
	\begin{align*}
		\hat{Z}^{A}_{N,\beta}(x)=Z^{A,B}_{N,\beta}(x)+Z^{A,C}_{N,\beta}(x) \, ,
	\end{align*}
	where
	\begin{align} \label{ZAC}
		 & Z^{A,B}_{N,\beta}(x):=\sum_{\tau \subset A^x_N \cup B_N: \: \tau \cap B_N \neq \eset} \sigma^{|\tau|}{q}^{(0,x)}(\tau) {\eta}(\tau)     \, ,         \notag \\
		 & Z^{A,C}_{N,\beta}(x):=\sum_{\tau \subset\{1,\ldots, N\} \times \Z^d: \: \tau \cap C_N^x \neq \eset} \sigma^{|\tau|}{q}^{(0,x)}(\tau) {\eta}(\tau) \, .
	\end{align}
	and if $\tau=(n_i,z_i)_{1\leq i \leq k}$,
	\begin{align*}
		q^{(0,x)}(\tau) :=q_{n_1}(z_1-x)\prod_{i=2}^k q_{n_i-n_{i-1}}(z_i-z_{i-1})\, .
	\end{align*}
	The proof will consist of three steps.\\

	\textbf{(Step 1)} The first task will be to show that $Z^{A,C}_{N,\beta}(x)$ has a negligible contribution to~\eqref{quotfluct}. The proof of this is based on the fact that $Z^{A,C}_{N,\beta}(x)$ consists of random walk paths which are super-diffusive: the walk will have to travel at distance greater than $N^{\frac{\epsilon}{2}+\alpha}$ from $x$ within time $N^{\epsilon}$. Therefore, by standard moderate deviation estimates one can show that
	\begin{align*}
		N^{\frac{d-2}{4}}  \sum_{x \in \Z^d} \varphi_N(x)\, \cdot \frac{Z^{A,C}_{N,\beta}(x)}{Z^{A}_{N,\beta}(x)} \xrightarrow[N \to \infty]{L^2(\bbP)} 0\, ,
	\end{align*}
	super-polynomially.The proof follows the same lines of the proof of  Prop. 2.3. in \cite{CSZ18b} and for this reason we omit the details.

	\textbf{ (Step 2)}
	The second step will be to show that in the chaos expansion of $Z_{N,\beta}^{A,B}(x)$,
	the contribution {from sampling disorder $\eta_{r,z}$, with  $r<N^{\rho}$ }is negligible, for every $\rho \in (\epsilon,1)$.
		{In} particular, let us denote by $\Strip$ the set $\Strip:= \Big \{(n,z) \in (N^{\epsilon},N^{\rho}]\times \Z^d \Big \}$. We can decompose $Z_{N,\beta}^{A,B}(x)$ into two parts $Z_{N,\beta}^{A,B}(x)=Z_{N,\beta}^{A,B^{<}}(x)+Z_{N,\beta}^{A,B^{\geq}}(x)$
	such that
	\begin{align}  \label{Zstrip}
		Z_{N,\beta}^{A,B^<}(x) := \sum_{k=1}^N \sigma^k \sumthree{0:=n_0< n_1<...<n_k \leq N}{x:=z_0, z_1,...,z_k \in \Z^d}{(n_i,z_i)_{1 \leq i \leq k}\cap \Strip \neq \eset}  \prod_{i=1}^k q_{n_i-n_{i-1}}(z_i-z_{i-1})   \eta_{n_i,z_i} \, .
	\end{align}
	and
	\begin{align}  \label{Znostrip}
		Z_{N,\beta}^{A,B^\geq}(x) := \sum_{k=1}^N \sigma^k \sumthree{0:=n_0< n_1<...<n_k \leq N}{x:=z_0, z_1,...,z_k \in \Z^d}{(n_i,z_i)_{1 \leq i \leq k}\cap \Strip = \eset}  \prod_{i=1}^k q_{n_i-n_{i-1}}(z_i-z_{i-1})   \eta_{n_i,z_i} \, .
	\end{align}
	In this step we will show that
	\begin{align*}
		N^{\frac{d-2}{4}}  \sum_{x \in \Z^d} \varphi_N(x)\, \cdot \frac{Z^{A,B^{<}}_{N,\beta}(x)}{Z^{A}_{N,\beta}(x)} \xrightarrow[N \to \infty]{L^2(\bbP)} 0\, ,
	\end{align*}
	or equivalently
	\begin{align} \label{L2strip}
		N^{\frac{d}{2}-1} \sum_{x,y \in \Z^d} \varphi_N(x,y) \: \bbE \Bigg[ \frac{Z_{N,\beta}^{A,B^<}(x)}{Z_{N,\beta}^A(x)} \cdot \frac{Z_{N,\beta}^{A,B^<}(y)}{Z_{N,\beta}^A(y)} \Bigg] \,\xrightarrow[N\to\infty]\, 0 \, .
	\end{align}
	Let us denote by $S^x,S^y$ the paths of two independent random walks starting from $x,y$ respectively. Let us also use the following notation
	\begin{align} \label{FNdef}
		F_N(x,y):=\E_{x,y}\big[(e^{\sfH^x(\omega)}-1)(e^{\sfH^y(\omega)}-1) \ind_{{S^x \cap S^y} \cap \Strip \neq \eset}\big]\, ,
	\end{align}
	{where
	\begin{align*}
		\sfH^x(\omega):=\sum_{(n,z) \in \N\times \Z^d} (\beta \omega_{n,z}-\lambda(\beta))\ind_{S^x_n=z} \, ,
	\end{align*}}
	\noindent and
	\begin{align}\label{FNnsdef}
		F^{\sfn \sfs}_N(x,y):=\E_{x,y}\big[(e^{\sfH_{\sfn \sfs}^x(\omega)}-1)(e^{\sfH_{\sfn \sfs}^y(\omega)}-1) \ind_{S^x \cap S^y \cap \Strip \neq \eset}\big] \, ,
	\end{align}
	where
	\begin{align*}
		\sfH_{\sfn \sfs}^x(\omega):=\sum_{(n,z) \in (\Strip)^c} (\beta \omega_{n,z}-\lambda(\beta))\ind_{S^x_n=z}\,.
	\end{align*} is the energy which does not contain disorder indexed by space-time points in the region $\Strip$. Note that, even though in the definition \eqref{FNdef} of $F^{\sfn \sfs}_N(x,y)$, the energies $\sfH_{\sfn \sfs}^x(\omega),\sfH_{\sfn \sfs}^y(\omega)$ do not contain disorder indexed by $\Strip$, there is still the constraint that the two random walks $S^x,S^y$ meet at some point in $\Strip.$

	We will control \eqref{L2strip}, by showing that
	\begin{align} \label{hatequalsdif}
		\bbE \Bigg[ \frac{Z_{N,\beta}^{A,B^<}(x)}{Z_{N,\beta}^A(x)} \cdot \frac{Z_{N,\beta}^{A,B^<}(y)}{Z_{N,\beta}^A(y)} \Bigg] = \bbE\Bigg[\frac{F_N(x,y)-F^{\sfn \sfs}_N(x,y)}{Z_{N,\beta}^{A}(x)Z_{N,\beta}^A(y)} \Bigg] \, ,
	\end{align}
	{and then showing that when the right-hand side is inserted into \eqref{L2strip}, then it leads to vanishing contribution.} {Let us check first the equality \eqref{hatequalsdif}.}
	The chaos expansion of $F_N(x,y)$ is
	\begin{align*}
		F_N(x,y)= & \E_{x,y}\big[(e^{\sfH^x(\omega)}-1)(e^{\sfH^y(\omega)}-1) \ind_{S^x \cap S^y \cap \Strip \neq \eset}\big] \notag                                                                                                                                                               \\
		=         & \sum_{1 \leq k,\ell \leq N} \sigma^{k+\ell} \sumtwo{(n_i,z_i)_{1\leq i \leq k}}{(m_j,{w_j})_{1\leq j \leq \ell} } \E_{x,y}\big[  \prodtwo{1 \leq i \leq k}{1\leq j \leq \ell} \ind_{S^{x}_{n_i}=z_i} \ind_{S^{y}_{m_j}=w_j}   \ind_{S^x \cap S^y \cap \Strip \neq \eset} \big] \\ \times & \prodtwo{1 \leq i \leq k}{1\leq j \leq \ell}\eta_{n_i,z_i} \eta_{m_j,w_j} \, .
	\end{align*}
	Similarly,
	\begin{align*}
		F^{\sfn \sfs}_N(x,y)= & \E_{x,y}\big[(e^{\sfH_{\sfn \sfs}^x(\omega)}-1)(e^{\sfH_{\sfn \sfs}^y(\omega)}-1) \ind_{S^x \cap S^y \cap \Strip \neq \eset}\big] \notag                                                                                                                                                                                   \\
		=                     & \sum_{1 \leq k,\ell \leq N} \sigma^{k+\ell} \sumtwo{(n_i,z_i)_{1\leq i \leq k}\cap \Strip =\eset}{(m_j,{w_j})_{1\leq j \leq \ell} \cap \Strip =\eset } \E_{x,y}\big[  \prodtwo{1 \leq i \leq k}{1\leq j \leq \ell} \ind_{S^{x}_{n_i}=z_i} \ind_{S^{y}_{m_j}=w_j}   \ind_{S^x \cap S^y \cap \Strip \neq \eset} \big] \notag \\
		\times                & \prodtwo{1 \leq i \leq k}{1\leq j \leq \ell}\eta_{n_i,z_i} \eta_{m_j,w_j} \, .
	\end{align*}
	The constraints $(n_i,z_i)_{1\leq i \leq k}\cap \Strip =\eset$ and $(m_j,w_j)_{1\leq j \leq \ell} \cap \Strip =\eset$ come from the fact that the energies $H^x_{\sfn \sfs}(\omega),H^y_{\sfn \sfs}(\omega)$ do not sample points from $\Strip$.
	The chaos expansion of the difference, $F_N(x,y)-F^{\sfn \sfs}_N(x,y)$, is then
	\begin{align*}
		 & F_N(x,y)-F^{\sfn \sfs}_N(x,y)=  \notag                                                                                                                                                                                                                                                                                                                                                                                                   \\
		 & \sum_{1 \leq k,\ell \leq N} \sigma^{k+\ell} \sumthree{(n_i,z_i)_{1\leq i \leq k}\cap \Strip \neq \eset \vspace{0.1cm}}{\textbf{or}}{(m_j,{w_j})_{1\leq j \leq \ell} \cap \Strip \neq \eset } \E_{x,y}\big[  \prodtwo{1 \leq i \leq k}{1\leq j \leq \ell} \ind_{S^{x}_{n_i}=z_i} \ind_{S^{y}_{m_j}=w_j}   \ind_{S^x \cap S^y \cap \Strip \neq \eset} \big] \prodtwo{1 \leq i \leq k}{1\leq j \leq \ell}\eta_{n_i,z_i} \eta_{m_j,w_j} \, .
	\end{align*}
	Therefore, the expansion of $\displaystyle \bbE\Bigg[\frac{F_N(x,y)-F^{\sfn \sfs}_N(x,y)}{Z_{N,\beta}^{A}(x)Z_{N,\beta}^A(y)} \Bigg]$ is
	\begin{align} \label{stripest3}
		\bbE\Bigg[\frac{F_N(x,y)-F^{\sfn \sfs}_N(x,y)}{Z_{N,\beta}^{A}(x)Z_{N,\beta}^A(y)} \Bigg] = & \bbE \Bigg [ \frac{1}{Z_{N,\beta}^{A}(x)Z_{N,\beta}^A(y)} \sum_{1 \leq k,\ell \leq N} \sigma^{k+\ell} \sumthree{(n_i,z_i)_{1\leq i \leq k}\cap \Strip \neq \eset \vspace{0.1cm}}{\textbf{or}}{(m_j,{w_j})_{1\leq j \leq \ell} \cap \Strip \neq \eset }  \notag \\
		\times                                                                                      & \E_{x,y}\big[  \prodtwo{1 \leq i \leq k}{1\leq j \leq \ell} \ind_{S^{x}_{n_i}=z_i} \ind_{S^{y}_{m_j}=w_j}   \ind_{S^x \cap S^y \cap \Strip \neq \eset} \big]  \prodtwo{1 \leq i \leq k}{1\leq j \leq \ell}\eta_{n_i,z_i} \eta_{m_j,w_j} \Bigg ] \, .
	\end{align}
	Note that if for example $(n_i,z_i)_{1\leq i \leq k}\cap \Strip \neq \eset$, the expectation $\bbE[\cdot]$ will impose that also $(m_j,z_j)_{1\leq j \leq \ell} \cap \Strip \neq \eset$ and in particular, $(n_i,z_i)_{1\leq i \leq k}\cap \Strip= (m_j,{w_j})_{1\leq j \leq \ell} \cap \Strip$, due to the fact that the $\eta$ variables indexed by space-time points with time index $t>N^{\epsilon}$ appearing in the expansion of $F_N(x,y)-F^{\sfn \sfs}_N(x,y)$ have to match pairwise, because they are independent of $Z_{N,\beta}^A(x),Z_{N,\beta}^A(y)$, and so if a disorder variable $\eta_{n_i,z_i}$ or $\eta_{m_j,w_j}$ is unmatched, their mean zero property will lead to vanishing of the whole expectation $\bbE[\cdot]$. Thus, the indicator $\ind_{S^x \cap S^y \cap \Strip \neq \eset}$ will always be equal to $1$ for every summand of the last expansion, since we are summing space-time sequences, such that $(n_i,z_i)_{1\leq i \leq k}\cap(m_j,z_j)_{1\leq j \leq \ell}\cap \Strip \neq \eset$. Therefore, the expansion of $\displaystyle \bbE\Bigg[\frac{F_N(x,y)-F^{\sfn \sfs}_N(x,y)}{Z_{N,\beta}^{A}(x)Z_{N,\beta}^A(y)} \Bigg]$ is actually equal to
	\begin{align*}
		\bbE\Bigg[\frac{F_N(x,y)-F^{\sfn \sfs}_N(x,y)}{Z_{N,\beta}^{A}(x)Z_{N,\beta}^A(y)} \Bigg] = & \bbE \Bigg [ \frac{1}{Z_{N,\beta}^{A}(x)Z_{N,\beta}^A(y)} \sum_{1 \leq k,\ell \leq N} \sigma^{k+\ell} \sumtwo{(n_i,z_i)_{1\leq i \leq k}\cap \Strip \neq \eset }{(m_j,{w_j})_{1\leq j \leq \ell} \cap \Strip \neq \eset }  \notag \\
		\times                                                                                      & \E_{x,y}\big[  \prodtwo{1 \leq i \leq k}{1\leq j \leq \ell} \ind_{S^{x}_{n_i}=z_i} \ind_{S^{y}_{m_j}=w_j}  \big]  \prodtwo{1 \leq i \leq k}{1\leq j \leq \ell}\eta_{n_i,z_i} \eta_{m_j,w_j} \Bigg ] \, .
	\end{align*}
	Recalling \eqref{Zstrip}, we have that
	\begin{align*}
		\bbE \Bigg[ \frac{Z_{N,\beta}^{A,B^<}(x)}{Z_{N,\beta}^A(x)} \cdot \frac{Z_{N,\beta}^{A,B^<}(y)}{Z_{N,\beta}^A(y)} \Bigg] = & \bbE \Bigg [ \frac{1}{Z_{N,\beta}^{A}(x)Z_{N,\beta}^A(y)} \sum_{1 \leq k,\ell \leq N} \sigma^{k+\ell} \sumtwo{(n_i,z_i)_{1\leq i \leq k}\cap \Strip \neq \eset }{(m_j,{w_j})_{1\leq j \leq \ell} \cap \Strip \neq \eset }  \notag \\
		\times                                                                                                                     & \E_{x,y}\big[  \prodtwo{1 \leq i \leq k}{1\leq j \leq \ell} \ind_{S^{x}_{n_i}=z_i} \ind_{S^{y}_{m_j}=w_j}  \big]  \prodtwo{1 \leq i \leq k}{1\leq j \leq \ell}\eta_{n_i,z_i} \eta_{m_j,w_j} \Bigg ]\, .
	\end{align*}
	Therefore, we conclude that
	\begin{align*}
		\bbE \Bigg[ \frac{Z_{N,\beta}^{A,B^<}(x)}{Z_{N,\beta}^A(x)} \cdot \frac{Z_{N,\beta}^{A,B^<}(y)}{Z_{N,\beta}^A(y)} \Bigg]=\bbE\Bigg[\frac{F_N(x,y)-F^{\sfn \sfs}_N(x,y)}{Z_{N,\beta}^{A}(x)Z_{N,\beta}^A(y)} \Bigg]\, .
	\end{align*}
	\smallskip

	Having established this equality,
	to finish the proof of  \eqref{L2strip}, we will prove that
	\begin{align} \label{stripest4}
		N^{\frac{d}{2}-1} \sum_{x,y \in \Z^d} \varphi_N(x,y) \: \bbE \Bigg[ \frac{F_N(x,y)}{Z_{N,\beta}^A(x)Z_{N,\beta}^A(y)}  \Bigg]   \,\xrightarrow[N\to\infty]\, 0 \, ,
	\end{align} and
	\begin{align} \label{stripest5}
		N^{\frac{d}{2}-1} \sum_{x,y \in \Z^d} \varphi_N(x,y) \: \bbE \Bigg[ \frac{F^{\sfn \sfs}_N(x,y)}{Z_{N,\beta}^A(x)Z_{N,\beta}^A(y)}  \Bigg]\,\xrightarrow[N\to\infty]\, 0 \,.
	\end{align}
	We start by showing the validity of \eqref{stripest4}, since \eqref{stripest5} can be treated with the same arguments. In view of \eqref{FNdef} we have that
	\begin{align} \label{FNdecomp}
		F_N(x,y)= & \E_{x,y}\big[(e^{\sfH^x(\omega)}-1)(e^{\sfH^y(\omega)}-1) \ind_{S^x \cap S^y \cap \Strip \neq \eset}\big] \notag                           \\
		=         & \E_{x,y}\big[e^{\sfH^x(\omega)+\sfH^y(\omega)} \ind_{S^x \cap S^y \cap \Strip \neq \eset}\big]
		-\E_{x,y}\big[e^{\sfH^x(\omega)} \ind_{S^x \cap S^y \cap \Strip \neq \eset}\big]\notag                                                                 \\
		-         & \E_{x,y}\big[e^{\sfH^y(\omega)} \ind_{S^x \cap S^y \cap \Strip \neq \eset}\big]+\P_{x,y}\big(S^x \cap S^y \cap \Strip \neq \eset \big)\, .
	\end{align}
	We begin by showing that
	\begin{align*}
		N^{\frac{d}{2}-1} \sum_{x,y \in \Z^d} \varphi_N(x,y) \: \bbE \Bigg[ \frac{\E_{x,y}\big[e^{\sfH^x(\omega)+\sfH^y(\omega)} \ind_{S^x \cap S^y \cap \Strip \neq \eset}\big]}{Z_{N,\beta}^A(x)Z_{N,\beta}^A(y)}  \Bigg]   \,\xrightarrow[N\to\infty]\, 0 \,.
	\end{align*}
	The main point here will be to remove the denominators. Consider the set $E_N:=\{ Z^A_{N,\beta}(x),  Z^A_{N,\beta}(y) \geq{ N^{-\sfh} }\} $ for {some $\sfh \in (0,\frac{1-\rho}{2})$}. We have that
	\begin{align} \label{ENdecomposition}
		\bbE\bigg[ \frac{\E_{x,y}\big[e^{\sfH^x(\omega)+\sfH^y(\omega)} \ind_{S^x \cap S^y \cap \Strip \neq \eset}\big]}{Z_{N,\beta}^A(x)Z_{N,\beta}^A(y)} \bigg]
		= & \bbE\bigg[ \frac{\E_{x,y}\big[e^{\sfH^x(\omega)+\sfH^y(\omega)} \ind_{S^x \cap S^y \cap \Strip \neq \eset}\big]}{Z_{N,\beta}^A(x)Z_{N,\beta}^A(y)}  \ind_{E_N} \bigg] \notag   \\
		+ & \bbE\bigg[ \frac{\E_{x,y}\big[e^{\sfH^x(\omega)+\sfH^y(\omega)} \ind_{S^x \cap S^y \cap \Strip \neq \eset}\big]}{Z_{N,\beta}^A(x)Z_{N,\beta}^A(y)}  \ind_{E^{c}_N} \bigg] \, .
	\end{align}
	We can bound the first summand using the definition of the sets $E_N$, as follows
	\begin{align} \label{stripest1}
		\bbE\bigg[ \frac{\E_{x,y}\big[e^{\sfH^x(\omega)+\sfH^y(\omega)} \ind_{S^x \cap S^y \cap \Strip \neq \eset}\big]}{Z_{N,\beta}^A(x)Z_{N,\beta}^A(y)}  \ind_{E_N} \bigg]
		\leq & \, N^{2\sfh} \, \bbE\bigg[  \E_{x,y}\big[e^{\sfH^x(\omega)+\sfH^y(\omega)} \ind_{S^x \cap S^y \cap \Strip \neq \eset}\big] \bigg]\notag \\
		=    & \, N^{2\sfh}\, \E_{x,y}\big[e^{\lambda_2(\beta)\cL_N(x,y)}\ind_{S^x \cap S^y \cap \Strip \neq \eset}\big] \, .
	\end{align}
	We condition on the first time, $\tau_{x,y}$, that the two random walk paths meet,  to obtain that
	\begin{align*}
		\E_{x,y}\big[e^{{\lambda_2(\beta)} \cL_N(x,y)}\ind_{S^x \cap S^y \cap \Strip \neq \eset}\big]= & \sum_{n=1}^{N^\rho} \E_{x,y}\big[e^{\lambda_2(\beta)\cL_N(x,y)} \ind_{S^x \cap S^y \cap \Strip \neq \eset} \big| \tau_{x,y}=n  \big] \P(\tau_{x,y}=n) \notag \\
		\leq                                                                                           & \sum_{n=1}^{N^\rho} \E_{x,y}\big[e^{\lambda_2(\beta)\cL_N(x,y)}  \big| \tau_{x,y}=n  \big] \P(\tau_{x,y}=n) \, .
	\end{align*}
	By the Markov property
	\begin{align} \label{stripest2}
		\sum_{n=1}^{N^\rho} \E_{x,y}\big[e^{\lambda_2(\beta)\cL_N(x,y)}  \big| \tau_{x,y}=n  \big] \P_{x,y}(\tau_{x,y}=n)= & \sum_{n=1}^{N^\rho} \E\big[e^{\lambda_2(\beta)(\cL_{N-n}+1)}  \big] \P_{x,y}(\tau_{x,y}=n) \notag                    \\
		=                                                                                                                  & \sum_{n=1}^{N^\rho} e^{\lambda_2(\beta)}\, \E\big[e^{\lambda_2(\beta)\cL_{N-n}}  \big] \P_{x,y}(\tau_{x,y}=n) \notag \\
		\leq                                                                                                               & \, e^{\lambda_2(\beta)} \,\E\big[e^{\lambda_2(\beta)\cL_{\infty}}  \big] \sum_{n=1}^{N^\rho}  q_{2n}(x-y) \, .
	\end{align}
	We set $\tilde{C}_{\beta}:=e^{\lambda_2(\beta)}\, \E\big[e^{\lambda_2(\beta)\cL_{\infty}}  \big]$
	and remind the reader that $\E\big[e^{\lambda_2(\beta)\cL_{\infty}}\big]<\infty$ because $\beta \in (0,\beta_{L^2})$. Therefore, if we combine \eqref{stripest1},\eqref{stripest2}, we deduce the estimate
	\begin{align*}
		     & N^{\frac{d}{2}-1} \sum_{x,y \in \Z^d}  \varphi_N(x,y) \, \bbE\bigg[ \frac{\E_{x,y}\big[e^{\sfH^x(\omega)+\sfH^y(\omega)} \ind_{S^x \cap S^y \cap \Strip \neq \eset}\big]}{Z_{N,\beta}^A(x)Z_{N,\beta}^A(y)}  \ind_{E_N} \bigg] \notag \\
		\leq & \, \tilde{C}_{\beta} \, N^{\frac{d}{2}-1+2\sfh} \sum_{x,y \in \Z^d}  \varphi_N(x,y) \sum_{n=1}^{N^\rho} q_{2n}(y-x) \, .
	\end{align*}
	The last bound vanishes because $\sfh \in (0,\frac{1-\rho}{2})$, see \eqref{erbound} for the derivation of this fact.

	We now deal with the complementary event $E^c_N$ in \eqref{ENdecomposition}. Recall that
	\begin{align*}
		E^c_N=\{ Z^A_{N,\beta}(x) <N^{-\sfh} \} \cup  \{ Z^A_{N,\beta}(y) <N^{-\sfh} \} \, .
	\end{align*}
	By Proposition \ref{lefttail} and a union bound we obtain that
	\begin{align} \label{superpolynomial}
		{\bbP(E^c_N) \leq 2 \bbP \big(Z^A_{N,\beta}(x)<N^{-\sfh}\big) \leq 2c_{\beta} \exp \bigg(\frac{-\sfh^{\gamma} (\log N)^{\gamma}}{c_{\beta}}  \bigg)} \, .
	\end{align}
	Recall that we need to show that
	\begin{align*}
		N^{\frac{d}{2}-1} \sum_{x,y \in \Z^d} \varphi_N(x,y) \: \bbE \Bigg[ \frac{\E_{x,y}\big[e^{\sfH^x(\omega)+\sfH^y(\omega)} \ind_{S^x \cap S^y \cap \Strip \neq \eset}\big]}{Z_{N,\beta}^A(x)Z_{N,\beta}^A(y)} \ind_{E^c_N} \Bigg]   \,\xrightarrow[N\to\infty]\, 0 \,.
	\end{align*}
	We have that
	\begin{align*}
		\bbE \Bigg[ \frac{\E_{x,y}\big[e^{\sfH^x(\omega)+\sfH^y(\omega)} \ind_{S^x \cap S^y \cap \Strip \neq \eset}\big]}{Z_{N,\beta}^A(x)Z_{N,\beta}^A(y)} \ind_{E^c_N} \Bigg]
		\leq & \, \bbE \Bigg[ \frac{\E_{x,y}\big[e^{\sfH^x(\omega)+\sfH^y(\omega)} \big]}{Z_{N,\beta}^A(x)Z_{N,\beta}^A(y)} \ind_{E^c_N} \Bigg] \notag \\
		=    & \, \bbE \Bigg[ \frac{Z_{N,\beta}(x)}{Z_{N,\beta}^A(x)}\cdot \frac{Z_{N,\beta}(y)}{Z_{N,\beta}^A(y)}\ind_{E^c_N} \Bigg] \, .
	\end{align*}
	In order to bound the last expectation, we use H\"{o}lder inequality with exponents $p,p,q>1$, so that
	$\frac{2}{p}+\frac{1}{q}=1$, with
	$p\in (2,\infty)$ sufficiently close to $2$ so that $\bbE\big[(Z_{N,\beta}(x))^p\big]<\infty$, {thanks to} Proposition \ref{hyperc}. In particular, we obtain that
	\begin{align*}
		\bbE \Bigg[ \frac{Z_{N,\beta}(x)}{Z_{N,\beta}^A(x)}\cdot \frac{Z_{N,\beta}(y)}{Z_{N,\beta}^A(y)}\ind_{E^c_N} \Bigg] \leq \bbE \Bigg[ \bigg(\frac{Z_{N,\beta}(0)}{Z^A_{N,\beta}(0)}\bigg)^p \Bigg]^{\frac{2}{p}} \bbP(E_N^c)^{\frac{1}{q}} \, .
	\end{align*}
	We apply H\"{o}lder inequality again {on the first term}, with exponents $r,s>1$, so that $\frac{1}{r}+\frac{1}{s}=1$
	and $r>1$ is sufficiently close to $1$ so that we have $\bbE\bigg[\Big(Z_{N,\beta}(0)\Big)^{pr}\bigg]<\infty$, by Proposition \ref{hyperc}. This way, we obtain that
	\begin{align*}
		\bbE \Bigg[ \bigg(\frac{Z_{N,\beta}(0)}{Z^A_{N,\beta}(0)}\bigg)^p \Bigg]^{\frac{2}{p}} \leq \bbE\bigg[\Big(Z_{N,\beta}(0)\Big)^{pr}\bigg]^{\frac{2}{p r}}  \bbE\bigg[\Big(Z^A_{N,\beta}(0)\Big)^{-ps}\bigg]^{\frac{2}{ps}} \, .
	\end{align*}
	By Proposition \ref{negmom}, we also have that $\displaystyle \bbE\bigg[\Big(Z^A_{N,\beta}(0)\Big)^{-ps}\bigg]< \infty$. Therefore,
	we have showed that there exists a constant $\hat{C}_\beta$, such that
	\begin{align*}
		\bbE \Bigg[ \frac{\E_{x,y}\big[e^{\sfH^x(\omega)+\sfH^y(\omega)} \ind_{S^x \cap S^y \cap \Strip \neq \eset}\big]}{Z_{N,\beta}^A(x)Z_{N,\beta}^A(y)} \ind_{E^c_N} \Bigg] \leq \hat{C}_\beta P(E_N^c)^{\frac{1}{q}} \, .
	\end{align*}
	for some $q>1$. Thus,
	\begin{align*}
		     & N^{\frac{d}{2}-1} \sum_{x,y \in \Z^d} \varphi_N(x,y) \: \bbE \Bigg[ \frac{\E_{x,y}\big[e^{\sfH^x(\omega)+\sfH^y(\omega)} \ind_{S^x \cap S^y \cap \Strip \neq \eset}\big]}{Z_{N,\beta}^A(x)Z_{N,\beta}^A(y)} \ind_{E^c_N} \Bigg] \notag \\
		\leq & \hat{C}_\beta c_\beta   N^{\frac{d}{2}-1} \sum_{x,y \in \Z^d} \varphi_N(x,y) \: \exp \bigg(\frac{- \sfh^{\gamma} {(\log N)}^{\gamma}}{q \,c_{\beta}}  \bigg) \xrightarrow[N \to \infty]{}0 \, ,
	\end{align*}
	because $\gamma>1$. {Recall now decomposition} \eqref{FNdecomp}. We have shown that
	\begin{align} \label{firstpartFN}
		N^{\frac{d}{2}-1} \sum_{x,y \in \Z^d} \varphi_N(x,y) \: \bbE \Bigg[ \frac{\E_{x,y}\big[e^{\sfH^x(\omega)+\sfH^y(\omega)} \ind_{S^x \cap S^y \cap \Strip \neq \eset}\big]}{Z_{N,\beta}^A(x)Z_{N,\beta}^A(y)}  \Bigg]   \,\xrightarrow[N\to\infty]\, 0 \,.
	\end{align}
	{Similarly, we can show that}
	\begin{align} \label{3estimates}
		 & N^{\frac{d}{2}-1} \sum_{x,y \in \Z^d} \varphi_N(x,y) \: \bbE \Bigg[ \frac{\E_{x,y}\big[e^{\sfH^x(\omega)}\ind_{S^x \cap S^y \cap \Strip \neq \eset}\big]}{Z_{N,\beta}^A(x)Z_{N,\beta}^A(y)}  \Bigg]   \,\xrightarrow[N\to\infty]\, 0 \,, \notag \\
		 & N^{\frac{d}{2}-1} \sum_{x,y \in \Z^d} \varphi_N(x,y) \: \bbE \Bigg[ \frac{\E_{x,y}\big[e^{\sfH^y(\omega)}\ind_{S^x \cap S^y \cap \Strip \neq \eset}\big]}{Z_{N,\beta}^A(x)Z_{N,\beta}^A(y)}  \Bigg]   \,\xrightarrow[N\to\infty]\, 0 \,, \notag \\
		 & N^{\frac{d}{2}-1} \sum_{x,y \in \Z^d} \varphi_N(x,y) \: \bbE \Bigg[\frac{1}{Z_{N,\beta}^A(x)Z_{N,\beta}^A(y)}   \Bigg]  \P_{x,y}\big( S^x \cap S^y \cap \Strip \neq \eset \big) \,\xrightarrow[N\to\infty]\, 0 \,.
	\end{align}
	The steps to do that are quite similar to the steps we followed to prove \eqref{firstpartFN}. Therefore, the proof of \eqref{stripest4} has been completed.  Then, the proof of \eqref{stripest5} follows exactly the same lines, since $F_N^{\sfn \sfs}(x,y)$ admits a similar decomposition to \eqref{FNdecomp}.

	\textbf{ (Step 3)}
	Recall from \eqref{quotfluct} that we have to show that
	\begin{align*}
		N^{\frac{d-2}{4}}  \sum_{x \in \Z^d} \varphi_N(x)\, \bigg(\frac{\hat{Z}^{A}_{N,\beta}(x)}{Z^{A}_{N,\beta}(x)}- \big(Z_{N,\beta}^{B^{\geq}}(x)-1\big) \bigg) \xrightarrow[N \to \infty]{L^1(\bbP)} 0 \, .
	\end{align*}
	In Steps $1$ and $2$ we showed that if one decomposes $\hat{Z}^{A}_{N,\beta}(x)$ as
	$\hat{Z}^{A}_{N,\beta}(x)=Z_{N,\beta}^{A,C}(x)+Z_{N,\beta}^{A,B^{<}}(x)+Z_{N,\beta}^{A,B^{\geq}}(x)$ (recall their definitions from \eqref{ZAC},\eqref{Zstrip},\eqref{Znostrip}) then one has that
	\begin{align*}
		N^{\frac{d-2}{4}}  \sum_{x \in \Z^d} \varphi_N(x)\,  \frac{Z^{A,C}_{N,\beta}(x)}{Z^{A}_{N,\beta}(x)} \xrightarrow[N \to \infty]{L^2(\bbP)} 0 \,.
	\end{align*}
	and
	\begin{align*}
		N^{\frac{d-2}{4}}  \sum_{x \in \Z^d} \varphi_N(x)\,  \frac{Z^{A,B^{<}}_{N,\beta}(x)}{Z^{A}_{N,\beta}(x)} \xrightarrow[N \to \infty]{L^2(\bbP)} 0 \,.
	\end{align*}
	Therefore, this last step will be devoted {to showing that}
	\begin{align*}
		N^{\frac{d-2}{4}}  \sum_{x \in \Z^d} \varphi_N(x)\, \bigg(\frac{{Z}^{A,B^{\geq}}_{N,\beta}(x)}{Z^{A}_{N,\beta}(x)}- \big(Z_{N,\beta}^{B^{\geq}}(x)-1\big) \bigg) \xrightarrow[N \to \infty]{L^1(\bbP)} 0 \,.
	\end{align*}
	We can rewrite the expansion of $	Z_{N,\beta}^{A,B^\geq}(x)$, according to the last point that the polymer samples inside $A_N^x$ and the first point that it samples in $B_N^{\geq }$, where we recall the definition of $B_N^{\geq }$, from \eqref{rhodef}. In particular,
	\begin{align} \label{Znostripalt}
		Z_{N,\beta}^{A,B^\geq}(x) =
		\sum_{(t,w)\in A_N^x, \, (r,z)\in B_N^\geq}
		Z_{0,t,\beta}^A(x,w) \cdot\, q_{r-t}(z-w) \, \cdot \sigma \, \eta_{r,z}
		\cdot \,\,Z_{r,N,\beta} (z) \,.
	\end{align}
	where $Z_{0,t,\beta}^A(x,w)$ is the point-to-point partition function from $(0,x)$ to $(t,w)$, defined by $Z_{0,t,\beta}^A(x,w):=1$ if $(t,w)=(0,x)$ and by
	\begin{align} \label{ptppartf}
		Z_{0,t,\beta}^A(x,w):=\sum_{\tau \subset A_N^x \cap([0,t]\times \Z^d): \tau \ni (t,w)} \sigma^{|\tau|}q^{(0,x)}(\tau)\eta(\tau) \, .
	\end{align}
	We will show that if we replace $q_{r-t}(z-w)$ by $q_r(z-x)$ in the expansion of
	\begin{align*}
		N^{\frac{d-2}{4}}  \sum_{x \in \Z^d} \varphi_N(x)\, \cdot \frac{{Z}^{A,B^{\geq}}_{N,\beta}(x)}{Z^{A}_{N,\beta}(x)} \, ,
	\end{align*}
	{via \eqref{Znostripalt}}, then the corresponding error vanishes in $L^{1}(\bbP)$, as $N \to \infty.$ Note that if we perform this replacement, then the right hand side of \eqref{Znostripalt} becomes exactly equal to $Z^{A}_{N,\beta}(x)(Z^{B^\geq}_{N,\beta}(x) -1)$ {and this will lead to the cancellation of the corresponding denominator.}
	We define the set
	\begin{align*}
		B_{N}^\geq (x) :=\big\{(r,z) \in B^\geq_N\colon \, |z-x|<r^{\tfrac{1}{2}+\alpha}\big\} \, .
	\end{align*}
	{where $\alpha$ is defined in~\eqref{Asets}.}
	Then by first restricting to $(r,z) \in B_{N}^\geq(x)$, we want to show that the $L^{1}(\bbP)$ norm of
	\begin{align} \label{l1est}
		 & N^{\frac{d-2}{4}}\sum_{x\in \Z^d} \varphi_N(x)\,
		\sumtwo{(t,w)\in A_N^x}{(r,z)\in B_{N}^\geq(x)}
		\frac{Z_{0,t,\beta}^A(x,w)}{Z_{N,\beta}^A(x)} \Big( q_{r-t}(z-w)-q_r(z-x)\Big) \cdot
		\sigma\,\eta_{r,z} \cdot Z_{r,N,\beta}(z) \,,
	\end{align}
	vanishes as $N \to \infty$.{ We note that the rightmost sum in \eqref{l1est} is essentially over points $(t,w) \in A_N^x$, so that $q_t(x-w)\neq 0$, because otherwise the point to point partition function $Z_{0,t,\beta}^A(x,w)$ is zero. In that case, we observe that if due to the periodicity of the random walk, $q_{r-t}(z-w)=0$ then we also have that $q_r(z-x)=0$, since $q_t(x-w)\neq 0$. Therefore, we shall assume that $q_{r-t}(z-w), q_{r}(z-x)\neq 0$ from now on.}  By Theorem 2.3.11 in \cite{LL10}, we have that for $(r,z) \in B_N^{\geq}(x)$,
	\begin{align} \label{refined1}
		q_r(z-x)  = & {2g_{\frac{r}{d}}(z-x)}\exp\Big(O\big(\tfrac{1}{r}+\tfrac{|z-x|^4}{r^3}\big)\Big)\cdot \ind_{q_r(z-x)\neq 0} \notag \\
		=           & {2g_{\frac{r}{d}}(z-x)} \exp\big(O(r^{-1+4\alpha})\big) \cdot \ind_{q_r(z-x)\neq 0} \, .
	\end{align}
	Furthermore, for $(t,w) \in A_N^x$ we have that
	\begin{align} \label{refined2}
		\quad \quad q_{r-t}(z-w)= & {2g_{\frac{r-t}{d}}(z-w)}\exp\Big(O\big(\tfrac{1}{r-t}+\tfrac{|z-w|^4}{(r-t)^3}\big)\Big)\cdot \ind_{q_{r-t}(z-w)\neq 0} \notag \\
		=                         & {2g_{\frac{r-t}{d}}(z-w)} \exp\big(O(r^{-1+4\alpha})\big)\cdot \ind_{q_{r-t}(z-w)\neq 0} \, ,
	\end{align}
	because we have that $|z-w|\leq |z-x|+|x-w|\leq r^{\frac{1}{2}+\alpha}+N^{\frac{\epsilon}{2}+\alpha} \leq 2r^{\frac{1}{2}+\alpha}$,
	for large $N$ since $r\in (N^\epsilon,N^\rho)$. Also, we have that for large $N$, $|r-t|\geq \frac{1}{2} r$, since $t\leq N^\epsilon$. It is a matter of simple computations to see that
	\begin{align}\label{kernel_est}
		\sup\Bigg\{\bigg|\frac{g_{\frac{r}{d}}(z-x)}{g_{\frac{r-t}{d}}(z-w)} -1 \bigg|  \colon  r>N^{\rho} \,,\, t\leq N^{\epsilon}\,,
		|w-x|<N^{\frac{\epsilon}{2}+\alpha},
		|z-x|<r^{\tfrac{1}{2}+\alpha}  \Bigg\}
		= O\Big(N^{c\,(\epsilon-\rho)}\Big) \, ,
	\end{align}
	for some positive constant $c>0$, {by choosing $\alpha$ sufficiently small. }By Cauchy-Schwarz we obtain the following estimate for the $L^1$-norm of \eqref{l1est},
	\begin{align*}
		     & N^{\frac{d-2}{4}} \sum_{x\in \Z^d} |\varphi_N(x)| \,
		\bbE \,\Bigg[ \frac{1}{Z_{N,\beta}^A(x)} \notag                                                                                                     \\
		     & \qquad \times \Big|\sumtwo{(t,w)\in A_N^x}{(r,z)\in  B_{N}^\geq(x)}  \!\!\! Z_{0,t,\beta}^A(x,w) \Big(q_r(z-x)-q_{r-t}(z-w)\Big)
		\cdot \sigma\,\eta_{r,z} \cdot Z_{r,N,\beta}(z) \Big|\,\Bigg] \notag                                                                                \\
		\leq & \, N^{\frac{d-2}{4}} \sum_{x\in \Z^d} |\varphi_N(x)|
		\, \bbE  \Big[\frac{1}{Z_{N,\beta}^A(x)^2 }\Big]^{1/2}                                                                                              \\
		     & \qquad \times  \bbE \Bigg[\Bigg(\sumtwo{(t,w)\in A_N^x}{(r,z)\in B_{N}^\geq(x)}  \!\!\! Z_{0,t,\beta}^A(x,w) \Big(q_r(z-x)-q_{r-t}(z-w)\Big)
		\cdot \sigma\,\eta_{r,z} \cdot Z_{r,N,\beta}(z) \Bigg)^2\,\Bigg]^{1/2} \notag \, .
	\end{align*}
	By the negative moment estimate, i.e. Proposition \ref{negmom} we have that $\bbE \Big[\big(Z_{N,\beta}^A(x)\big)^{-2}\Big]<\infty$. By expanding the square in the second expectation we have that it is equal to
	\begin{align*}
		        & \sumtwo{(t,w)\in A_N^x}{(r,z)\in B_{N}^\geq(x)}  \bbE \big[\,Z_{0,t,\beta}^A(x,w)^2\big]\,\,  \Big(q_r(z-x)-q_{r-t}(z-w)\Big)^2
		\, \sigma^2 \,\, \bbE\big[Z_{r,N,\beta}(z)^2 \,\big]                                                                                                                                                                         \\
		=
		        & \sumtwo{(t,w)\in A_N^x}{(r,z)\in B_{N}^\geq(x)}  \bbE \big[\,Z_{0,t,\beta}^A(x,w)^2\big]\,\,  \Bigg\{ 1-\frac{q_r(z-x)}{q_{r-t}(z-w)}\Bigg\}^2\, \,q^2_{r-t}(z-w)  \sigma^2 \, \bbE\big[Z_{r,N,\beta}(z)^2 \,\big] \\
		\leq \, & O(N^{2c(\epsilon-\rho)}) \sumtwo{(t,w)\in A_N^x}{(r,z)\in B_{N}^\geq(x)}  \bbE \big[\,Z_{0,t,\beta}^A(x,w)^2\big]\, \,q^2_{r-t}(z-w)  \sigma^2 \, \bbE\big[Z_{r,N,\beta}(z)^2 \,\big] \, ,
	\end{align*}
	by using estimate \eqref{kernel_est} and \eqref{refined1},\eqref{refined2}.
	The last sum is bounded by $\bbE \Big[\big(Z_{N,\beta}^{A,B^\geq}(0)\big)^2\Big]$. By adapting the proof of Lemma \ref{hat}, one can show that	$\bbE \Big[\big(Z_{N,\beta}^{A,B^\geq}(0)\big)^2\Big] =O\big(N^{-\theta(\frac{d}{2}-1)}\big)$, for every $\theta<\rho$. Therefore,
	\begin{align*}
		     & N^{\frac{d-2}{4}} \sum_{x\in \Z^d}  \quad|\varphi_N(x)| \,
		\bbE \,\Bigg[ \frac{1}{Z_{N,\beta}^A(x)} \notag                                                                                                                             \\
		     & \quad  \times \Big|\sumtwo{(t,w)\in A_N^x}{(r,z)\in  B_{N}^\geq(x)}  \!\!\! Z_{0,t,\beta}^A(x,w) \,\,\Bigg\{ 1-\frac{q_r(z-x)}{q_{r-t}(z-w)}\Bigg\}\, \,q_{r-t}(z-w)
		\cdot \sigma\,\eta_{r,z} \cdot Z_{r,N,\beta}(z) \Big|\,\Bigg] \notag                                                                                                        \\
		\leq & \, C \norm{\varphi}_1 \bbE \Big[\big(Z_{N,\beta}^A(x)\big)^{-2} \Big]^{\frac{1}{2}} N^{\frac{d-2}{4}} N^{c(\epsilon-\rho)} N^{-\theta(\frac{d-2}{4})} \, .
	\end{align*}
	In order for the last bound to vanish we need that
	\begin{align*}
		{ (1-\theta)\frac{d-2}{4} +c(\epsilon-\rho)<0} \, .
	\end{align*}
	Since, $\theta \in (0,\rho)$ can be chosen arbitrarily close to $\rho$, it suffices that
	\begin{align*}
		{ (1-\rho)\frac{d-2}{4} +c(\epsilon-\rho)<0} \, .
	\end{align*}
	Rearranging this inequality, we need that
	\begin{align} \label{finalrho}
		\frac{\, c \, \epsilon+\tfrac{d-2}{4} \,}{\, c+\tfrac{d-2}{4} \, }<\rho \, .
	\end{align}
	This is possible since, given a choice of $\epsilon \in (0,1)$, {we proved in Step 2 that \eqref{L2strip} is valid for any $\rho \in (\epsilon,1)$}, therefore we can choose $\rho$, {large enough}, so that \eqref{finalrho} is satisfied.
	To complete Step 3, one needs to show that we can lift the restriction $(r,z) \in B_{N}^\geq(x)$, that is, allow $(r,z) \in B^{\geq}_{N}$, such that $|z-x|\geq r^{\frac{1}{2}+\alpha}$ but this follows by standard moderate deviation estimates and is quite to similar to the proof of \cite{CSZ18b}, thus we omit the details.
\end{proof}

In order to complete the steps needed to prove Theorem \ref{loggaussianity}, one has to show that also Proposition \ref{fourth2} is valid.
But, this is a corollary of Theorem \ref{gaussianity}. Since we are using the diffusive scaling, the fact that $Z^{B \geq}_{N,\beta}(x)$ is the partition function of a polymer which starts sampling noise after time $N^{\rho}$ for some $\rho \in (0,1)$, does not change the asymptotic distribution.
\begin{proof}[Proof of Proposition \ref{fourth2}]
	This Proposition is a corollary of Theorem  \ref{gaussianity}, since one can see that the difference of
	\begin{align*}
		N^{\frac{d-2}{4}} \sum_{x \in \Z^d} \varphi_N(x)\,\big( Z_{N,\beta}(x)-1 \big) \quad \text{      and      }\quad  N^{\frac{d-2}{4}} \sum_{x \in \Z^d} \varphi_N(x)\,\big( Z^{B\geq}_{N,\beta}(x)-1 \big) \, .
	\end{align*}
	vanishes in $L^2(\bbP)$. More specifically, we have that
	\begin{align*}
		     & \norm{ N^{\frac{d-2}{4}} \sum_{x \in \Z^d} \varphi_N(x)\,\big( Z_{N,\beta}(x)-1 \big)- N^{\frac{d-2}{4}} \sum_{x \in \Z^d} \varphi_N(x)\,\big( Z^{B\geq}_{N,\beta}(x)-1 \big)}^{2}_{L^2(\bbP)} \notag \\
		\leq & \,  N^{\frac{d}{2}-1}  \sum_{n=1}^{N^{\rho}} \sigma^2\sum_{x,y \in \Z^d} \varphi_N(x,y)\,q_{2n}(x-y)  \E[e^{\lambda_2(\beta)\mathcal{L}_{N-n}}] \, .
	\end{align*}
	by recalling expression \eqref{convul}. We can bound the last quantity as follows
	\begin{align*}
		     & N^{\frac{d}{2}-1}  \sum_{n=1}^{N^{\rho}} \sigma^2\sum_{x,y \in \Z^d} \varphi_N(x,y)\,q_{2n}(x-y)  \E[e^{\lambda_2(\beta)\mathcal{L}_{N-n}}] \notag      \\
		\leq & \, \E[e^{\lambda_2(\beta)\mathcal{L}_{\infty}}] \,N^{\frac{d}{2}-1}  \sum_{n=1}^{N^{\rho}}\sigma^2\sum_{x,y \in \Z^d} \varphi_N(x,y)\,q_{2n}(x-y)  \, .
	\end{align*}
	By Lemma  \ref{conver} the main contribution to the sum
	\begin{align*}
		N^{\frac{d}{2}-1}  \sum_{n=1}^{N^{\rho}} \sigma^2\sum_{x,y \in \Z^d} \varphi_N(x,y)\,q_{2n}(x-y) \, .
	\end{align*}
	comes from $n \in [\theta N,N]$  for $\theta$ small, therefore it converges to $0$ as $N \to \infty$.
\end{proof}

\bigskip

\noindent \textbf{Acknowledgements.} \,
We thank Francesco Caravenna and Rongfeng Sun for useful comments and Cl\'ement
Cosco for bringing to our attention work \cite{CNN20}. {D. L.} acknowledges financial support from EPRSC through grant EP/HO23364/1 as part of the MASDOC DTC at the University of Warwick.
	{N. Z.
		acknowledges support from EPRSC through grant EP/R024456/1. }

\end{document}